\begin{document}

\newtheorem{The}{Theorem}[section]
\newtheorem{Lem}[The]{Lemma}
\newtheorem{Prop}[The]{Proposition}
\newtheorem{Cor}[The]{Corollary}
\newtheorem{Rem}[The]{Remark}
\newtheorem{Obs}[The]{Observation}
\newtheorem{SConj}[The]{Standard Conjecture}
\newtheorem{Titre}[The]{\!\!\!\! }
\newtheorem{Conj}[The]{Conjecture}
\newtheorem{Question}[The]{Question}
\newtheorem{Prob}[The]{Problem}
\newtheorem{Def}[The]{Definition}
\newtheorem{Not}[The]{Notation}
\newtheorem{Claim}[The]{Claim}
\newtheorem{Conc}[The]{Conclusion}
\newtheorem{Ex}[The]{Example}
\newtheorem{Fact}[The]{Fact}
\newtheorem{Formula}[The]{Formula}
\newtheorem{Formulae}[The]{Formulae}
\newcommand{\C}{\mathbb{C}}
\newcommand{\R}{\mathbb{R}}
\newcommand{\N}{\mathbb{N}}
\newcommand{\Z}{\mathbb{Z}}
\newcommand{\Q}{\mathbb{Q}}
\newcommand{\Proj}{\mathbb{P}}
\newcommand{\Rc}{\mathcal{R}}
\newcommand{\Oc}{\mathcal{O}}
\newcommand{\diff}{\textit{diff}}

\begin{center}

{\Large\bf Non-K\"ahler Mirror Symmetry of the Iwasawa Manifold}

\end{center}

\begin{center}

{\large Dan Popovici}

\end{center}

\vspace{1ex}

\noindent{\small{\bf Abstract.} We propose a new approach to the Mirror Symmetry Conjecture in a form suitable to possibly non-K\"ahler compact complex manifolds whose canonical bundle is trivial. We apply our methods by proving that the Iwasawa manifold $X$, a well-known non-K\"ahler compact complex manifold of dimension $3$, is its own mirror dual to the extent that its Gauduchon cone, replacing the classical K\"ahler cone that is empty in this case, corresponds to what we call the local universal family of essential deformations of $X$. These are obtained by removing from the Kuranishi family the two ``superfluous'' dimensions of complex parallelisable deformations that have a similar geometry to that of the Iwasawa manifold. The remaining four dimensions are shown to have a clear geometric meaning including in terms of the degeneration at $E_2$ of the Fr\"olicher spectral sequence. On the local moduli space of ``essential'' complex structures, we obtain a canonical Hodge decomposition of weight $3$ and a variation of Hodge structures, construct coordinates and Yukawa couplings while implicitly proving a local Torelli theorem. On the metric side of the mirror, we construct a variation of Hodge structures parametrised by a subset of the complexified Gauduchon cone of the Iwasawa manifold using the sGG property of all the small deformations of this manifold proved in earlier joint work of the author with L. Ugarte. Finally, we define a mirror map linking the two variations of Hodge structures and we highlight its properties.}

\vspace{1ex}

\section{Introduction}\label{section:introduction} The standard mirror symmetry conjecture predicts that the Calabi-Yau (C-Y) threefolds, defined as compact K\"ahler manifolds of complex dimension $3$ whose canonical bundle is trivial, ought to occur in pairs $(X,\,\widetilde{X})$ such that the local universal family of deformations of the complex structure (i.e. the Kuranishi family) of $X$ is isomorphic to the moduli space of K\"ahler structures enriched with $B$-fields (i.e. the complexified K\"ahler cone) of $\widetilde{X}$, and vice-versa. 

As is well known, there is an obvious cohomological obstruction to some K\"ahler C-Y threefolds $X$ having K\"ahler mirror duals $\widetilde{X}$. The Kuranishi family $(X)_{t\in\Delta}$ of a given K\"ahler C-Y manifold $X=X_0$ is unobstructed (i.e. its base space $\Delta$ is {\bf smooth}, hence can be viewed as an open ball in the classifying space $H^{0,\,1}(X,\,T^{1,\,0}X)$) by the Bogomolov-Tian-Todorov theorem ([Bog78], [Tia87], [Tod89]). The triviality of the canonical bundle $K_X$ implies the isomorphism $H^{0,\,1}(X,\,T^{1,\,0}X)\simeq H^{n-1,\,1}(X,\,\C) = H^{2,\,1}(X,\,\C)$, where the last identity follows from the assumption $\mbox{dim}_{\C}X  :=n=3$. On the other hand, the complexified K\"ahler cone $\widetilde{\cal K}_{\widetilde{X}}$ of $\widetilde{X}$ is an open subset of $H^{1,\,1}(\widetilde{X},\,\C)$. So a necessary condition for $X$ and $\widetilde{X}$ to be mirror dual is that the tangent space to $\Delta$ at $0$ (i.e. $H^{2,\,1}(X,\,\C)$) be isomorphic to the tangent space to the complexified K\"ahler cone $\widetilde{\cal K}_{\widetilde{X}}$ at some point (i.e. $H^{1,\,1}(\widetilde{X},\,\C)$), and vice-versa. It is thus necessary to have

$$h^{2,\,1}(X) = h^{1,\,1}(\widetilde{X}) \hspace{3ex} \mbox{and} \hspace{3ex} h^{2,\,1}(\widetilde{X}) = h^{1,\,1}(X).$$

\noindent However, there exist K\"ahler C-Y threefolds $X$ such that $h^{2,\,1}(X)=0$ (the so-called {\it rigid} such threefolds, those that do not deform). Consequently, the mirror dual $\widetilde{X}$, if it exists, cannot be K\"ahler since $h^{1,\,1}(\widetilde{X})=0$.

This standard observation has prompted many authors so far to conjecture the mirror symmetry only for {\it generic} K\"ahler C-Y threefolds 
so that the discussion is confined to the K\"ahler realm. 
The idea of investigating the possible existence of a mirror symmetry phenomenon beyond the K\"ahler world was loosely suggested in [Rei87] 
and received attention recently in [LTY15]. This investigation is our main motivation in the present work. Our methods and point of view are very different from those in [LTY15].  

The standard approach to the study of the K\"ahler side of the mirror is to use Gromov-Witten invariants attached to pseudo-holomorphic curves and to count rational curves. However, on many non-K\"ahler compact complex threefolds with trivial canonical bundle, there exist no rational curves.

\vspace{2ex}

We propose in this paper a new approach to mirror symmetry by means of transcendental methods in the general, possibly non-K\"ahler context of compact complex manifolds whose canonical bundle is trivial. By extension of the classical definition, we shall still call them {\bf Calabi-Yau (C-Y) manifolds}. We test our new point of view on the Iwasawa manifold, a well-known non-K\"ahler compact complex C-Y manifold, and take full advantage of the explicit nature of extensive computations for this particular manifold found in the works [Nak75], [Ang11] and [Ang14] of Nakamura and Angella.

We hope that our methods will apply to larger classes of C-Y manifolds in the future and that this paper is the first in a series. One of the new ideas it introduces is the notion of local universal family of {\it essential deformations}, viewed as a subfamily of the Kuranishi family, of the Iwasawa manifold $X$. Three equivalent definitions are given: by removing the complex parallelisable small deformations from the Kuranishi family; by selecting the small deformations that have a kind of {\it polarisation} by the holomorphic non-closed $1$-form $\gamma$ associated with $X$ (cf. Definition \ref{Def:Delta_gamma}); and by selecting the vector subspace of the Dolbeault cohomology space $H^{n-1,\,1}(X,\,\C)$ (known to parametrise all the small deformations of a C-Y manifold $X$, while the complex dimension of $X$ is $n=3$ here) that is naturally isomorphic to the vector space $E_2^{n-1,\,1}(X)$ featuring in bidegree $(n-1,\,1)$ on the second page of the Fr\"olicher spectral sequence of $X$.

Looking ahead beyond the special case of the Iwasawa manifold treated in this paper, we come up against the question of what makes a deformation of a general, possibly non-K\"ahler, C-Y manifold {\it essential}. Our hunch is that a definition in terms of the Fr\"olicher spectral sequence, that will yield a replacement for the Hodge decomposition in middle degree $n$, is the best bet in a general pattern that will hopefully emerge in the future after further examples of C-Y manifolds have been investigated.

\subsection{The Gauduchon cone}\label{subsection:G-cone}

One of our main ideas in this work is to overcome the double whammy of a possible non-existence of both K\"ahler metrics and rational curves by using the Gauduchon cone (cf. [Pop15], see definition reminder (\ref{eqn:G-cone_def}) below) of the given non-K\"ahler C-Y manifold $X$. This furnishes both an alternative to the classical K\"ahler cone (that is empty on a non-K\"ahler manifold) and a transcendental substitute for cohomology classes of (currents of integration on) curves (e.g. by virtue of its elements' bidegree $(n-1,\,n-1)$, but also in a far deeper sense). We stress that the Gauduchon cone is relevant even on projective and on K\"ahler non-projective manifolds where it might be preferable to the K\"ahler cone in certain circumstances (for example, when it is strictly bigger, allowing for more flexibility).

Recall that if $X$ is a compact complex manifold with $\mbox{dim}_\C X=n$, a Hermitian metric on $X$ is any $C^{\infty}$ positive definite $(1,\,1)$-form $\omega>0$ on $X$. 
It is called a {\bf Gauduchon metric} (cf. [Gau77]) if $\partial\bar\partial\omega^{n-1}=0$ 
and it is called a {\bf strongly Gauduchon (sG) metric} (cf. [Pop13a]) if $\partial\omega^{n-1}$ is $\bar\partial$-exact. 
Every strongly Gauduchon metric is Gauduchon. Gauduchon metrics always exist (cf. [Gau77]), while strongly Gauduchon metrics need not exist although they do on many manifolds.

The compact complex manifolds $X$ on which every Gauduchon metric is strongly Gauduchon were introduced under the name of {\bf sGG manifolds} and studied in [PU14]. 
They contain the Iwasawa manifold and all its small deformations, so they will feature prominently in this paper (cf. sections \ref{section:metric-side} and \ref{section:mirror-map}).

On the other hand, recall that the following two cohomologies are especially relevant on non-K\"ahler compact complex manifolds $X$. For $p,q=0,\dots , n$, 
\begin{equation}\label{eqn:B-C_A_cohomologies_def}H^{p,\,q}_{BC}(X,\,\C)=\frac{\ker\partial\cap\ker\bar\partial}{\mbox{Im}\,\partial\bar\partial} \hspace{2ex} 
\mbox{and} \hspace{2ex} H^{p,\,q}_A(X,\,\C)=\frac{\ker\partial\bar\partial}{\mbox{Im}\,\partial + \mbox{Im}\,\bar\partial}\end{equation}
stand for the Bott-Chern, respectively Aeppli cohomology groups of bidegree $(p,\,q)$ of $X$, 
where all the spaces involved are sub-quotients of the space $C^{\infty}_{p,\,q}(X,\,\C)$ of smooth $(p,\,q)$-forms on $X$. 
As with the other familiar cohomology theories, these spaces can be computed using either smooth forms or currents. 
Although it is the coarsest one, the Aeppli cohomology is suited to the study of Gauduchon metrics (since $\omega^{n-1}\in\ker\partial\bar\partial$ in that case) 
and its consideration in bidegree $(n-1,\,n-1)$ enables one to eventually get information in bidegree $(1,\,1)$ thanks to the following canonical bilinear pairing
\begin{equation}\label{eqn:duality}H^{1,\,1}_{BC}(X,\,\C)\times H^{n-1,\,n-1}_A(X,\,\C)\to\C, \hspace{3ex} ([\alpha]_{BC},\,[\beta]_A)\mapsto\int\limits_X\alpha\wedge\beta,\end{equation}
being non-degenerate (cf. e.g. [Aep62], or [Sch07], or [Pop15]), hence establishing a {\bf duality} in the transcendental context that parallels the classical duality in algebraic geometry between divisors (defining $(1,\,1)$-cohomology classes) and curves (defining $(n-1,\,n-1)$-cohomology classes).

Bringing the metric and cohomological points of view together, the {\bf Gauduchon cone} of $X$ (cf. [Pop15]) 
is defined as the set of Aeppli cohomology classes of $(n-1)^{st}$ powers of all the Gauduchon metrics on $X$:
\begin{equation}\label{eqn:G-cone_def}{\cal G}_X
:=\bigg\{[\omega^{n-1}]_A\in H^{n-1,\,n-1}_A(X,\,\R)\, \mid \, \omega \mbox{ is a Gauduchon metric on } X\bigg\}.\end{equation}
It is an open convex cone in $H^{n-1,\,n-1}_A(X,\,\R)$.

\subsection{Hodge decomposition on certain non-K\"ahler manifolds}\label{subsection:H-decomp}

Although the class of manifolds described in this subsection does not contain the Iwasawa manifold, 
it furnishes us with a model situation into which the Iwasawa manifold will partially fit after suitable adjustments. 

Recall that a compact complex manifold $X$ is said to be a {\bf $\partial\bar\partial$-manifold} if for every bidegree $(p,\,q)$ 
and every smooth $d$-closed $(p,\,q)$-form $u$ on $X$, the following exactness conditions are equivalent:
\begin{equation}\label{eqn:dd-bar_def}u\in\mbox{Im}\,\partial \iff u\in\mbox{Im}\,\bar\partial \iff u\in\mbox{Im}\,d  \iff u\in\mbox{Im}\,\partial\bar\partial.\end{equation}
The $\partial\bar\partial$ property is equivalent to all the canonical linear maps $H^{p,\,q}_{BC}(X,\,\C)\to H^{p,\,q}_A(X,\,\C)$ being isomorphisms for all bidegrees $(p,\,q)$. 
Every compact K\"ahler (or merely {\it class} ${\cal C}$) manifold is known to be a $\partial\bar\partial$-manifold, but there are examples (see e.g. [Pop14, Observation 4.10]) 
of $\partial\bar\partial$-manifolds that are not in the {\it class} ${\cal C}$  (i.e. are not bimeromorphically equivalent to a compact K\"ahler manifold). 
Thus, the class of $\partial\bar\partial$-manifolds is much larger than the K\"ahler class.

The $\partial\bar\partial$ property implies the Hodge decomposition and the Hodge symmetry 
in the sense that for all $k\in\{0,\dots , 2n\}$ and all $p,q\in\{0,\dots , n\}$ there exist {\it canonical} (i.e. depending only on the complex structure) isomorphisms
\begin{equation*}H^k_{DR}(X,\,\C)\stackrel{\simeq}{\longrightarrow}\bigoplus\limits_{p+q=k}H^{p,\,q}_{\bar\partial}(X,\,\C) \hspace{2ex} 
\mbox{and} \hspace{2ex} H^{p,\,q}_{\bar\partial}(X,\,\C)\stackrel{\simeq}{\longrightarrow}\overline{H^{q,\,p}_{\bar\partial}(X,\,\C)},\end{equation*}
\noindent where the last isomorphism is defined by conjugation on $H^k_{DR}(X,\,\C)=H^k_{DR}(X,\,\R)\otimes\C$. 
See [DGMS75] for the origin of the notion of $\partial\bar\partial$-manifold and e.g. [Pop14] for a rundown on the basic facts about this class. 
Moreover, if $X$ is a $\partial\bar\partial$-manifold with trivial canonical bundle $K_X$, 
the Bogomolov-Tian-Todorov unobstructedness theorem mentioned above in the K\"ahler context still holds 
(see e.g. [Pop13b]). Since the $\partial\bar\partial$ property is open under deformations of the complex structure (cf. [Wu06]), 
the facts just mentioned add up to the following picture.

\begin{Conc}\label{Conc:picture_dd-bar_def} For every $\partial\bar\partial$-manifold $X$ such that $K_X$ is trivial, 
the base $\Delta$ of the Kuranishi family $(X_t)_{t\in\Delta}$ of $X=X_0$ is {\bf smooth} 
and can be viewed as an open ball in $H^{0,\,1}(X,\,T^{1,\,0}X)\simeq H^{n-1,\,1}(X,\,\C)$, 
while all the fibres $X_t$ with $t\in\Delta$ sufficiently close to $0$ are again $\partial\bar\partial$-manifolds with trivial canonical bundle $K_{X_t}$. 
Hence, for every $t\in\Delta$, we have a Hodge decomposition and a Hodge symmetry in the form of canonical isomorphisms
\begin{equation}\label{eqn:H-decomp}H^k(X,\,\C)\stackrel{\simeq}{\longrightarrow}\bigoplus\limits_{p+q=k}H^{p,\,q}_{\bar\partial}(X_t,\,\C) \hspace{2ex} 
\mbox{and} \hspace{2ex} H^{p,\,q}_{\bar\partial}(X_t,\,\C)\stackrel{\simeq}{\longrightarrow}\overline{H^{q,\,p}_{\bar\partial}(X_t,\,\C)},  \hspace{3ex} t\in\Delta,\end{equation}
where $H^k(X,\,\C)\simeq H^k_{DR}(X_t,\,\C)$ for all $t\in\Delta$ is the constant bundle ${\cal H}^k$ over $\Delta$ 
induced by the $C^{\infty}$ triviality of the family $(X_t)_{t\in\Delta}$ (that implies the invariance of the De Rham cohomology w.r.t. the complex structure of the fibre $X_t$).

Moreover, the Hodge numbers $h^{p,\,q}(t):=\mbox{dim}_\C H^{p,\,q}(X_t,\,\C)$ are independent of $t\in\Delta$ after possibly shrinking $\Delta$ about $0$. 
Hence, thanks to classical results of Kodaira and Spencer [KS60], we get $C^{\infty}$ vector bundles ${\cal H}^{p,\,q}$ over $\Delta$ defined as
$$\Delta\ni t\mapsto H^{p,\,q}_{\bar\partial}(X_t,\,\C):={\cal H}^{p,\,q}_t, \hspace{3ex} p,q=0, \dots , n,$$
\noindent and {\bf holomorphic} subbundles $F^p{\cal H}^k$ of the constant bundles ${\cal H}^k$ over $\Delta$ defined as
$$\Delta\ni t\mapsto F^pH^k(X_t,\,\C):=\bigoplus\limits_{l\geq p}H^{l,\,k-l}(X_t,\,\C):=F^p{\cal H}^k_t$$
\noindent that make up the Hodge filtration $F^0{\cal H}^k\supset\dots\supset F^p{\cal H}^k\supset F^{p+1}{\cal H}^k\supset\dots\supset F^k{\cal H}^k = {\cal H}^{k,\,0}$.
\end{Conc}


\subsection{The Iwasawa manifold}\label{subsection:Iwasawa}

Our main object of study in this paper will be the standard {\it Iwasawa manifold} $X=G/\Gamma$, defined as the quotient of the Heisenberg group
$$G:=\left\{\begin{pmatrix}1 & z_1 & z_3\\   
			  0 & 1 & z_2\\
		  0 & 0 & 1\end{pmatrix}\,\, ; \,\, z_1, z_2, z_3\in\C\right\}\subset GL_3(\C)$$
\noindent  by its discrete subgroup $\Gamma\subset G$ of matrices with entries $z_1, z_2, z_3\in\Z[i]$.

The map $(z_1,z_2,z_3)\mapsto (z_1,z_2)$ factors through the action of $\Gamma$ to a (holomorphically locally trivial) proper holomorphic submersion
$$\pi : X\to B,$$
where the base $B=\C^2/\Z[i]\oplus \Z[i]=\C/\Z[i]\times\C/\Z[i] $ is a two-dimensional Abelian variety (the product of two elliptic curves)
and where all the fibres are isomorphic to the Gauss elliptic curve $\C/\Z[i] $. 
This description displays the non-existence on $X$ of curves normalised by smooth rational curves, 
as any map from such a curve to any factor $\C/\Z[i]$ would be constant. 
(Indeed, thanks to the Riemann-Hurwitz formula, any non-constant map between two smooth curves is genus-decreasing.)

Since $G$ is a connected, simply connected, {\it nilpotent} complex Lie group, $X$ is a {\it nilmanifold}. 
Furthermore, $X$ is a {\it complex parallelisable} compact complex manifold (i.e. its holomorphic tangent bundle $T^{1,\,0}X$ is trivial) of complex dimension $3$. 
In particular, its canonical bundle $K_X$ is trivial, so $X$ is a Calabi-Yau manifold in our generalised sense.

It is well known that $X$ is not a $\partial\bar\partial$-manifold (in particular, it is not K\"ahler). 
In fact, its Fr\"olicher spectral sequence does not even degenerate at $E_1$, so there is no Hodge decomposition either canonical or non-canonical on $X$. 
(For a review of these and other facts, see e.g. [Pop14, $\S. 3.2$]).  

However, despite $X$ lacking the $\partial\bar\partial$ property, Nakamura [Nak75] showed that the Kuranishi family $(X_t)_{t\in\Delta}$ of $X=X_0$ is {\it unobstructed}, 
so its base $\Delta$ is smooth and can be identified with an open ball in $H^{0,\,1}(X,\,T^{1,\,0}X)\simeq H^{2,\,1}_{\bar\partial}(X,\,\C)$. 
It can be easily checked (see e.g. $\S.$\ref{subsection:2-1} below) that there is no Hodge decomposition of weight $3$ 
since the Dolbeault cohomology group $H^{2,\,1}_{\bar\partial}(X,\,\C)$ does not inject canonically into $H^3_{DR}(X,\,\C)$. 
In fact, $b_3=10$ while $h^{3,\,0} = h^{0,\,3} =1$ and $h^{2,\,1}=h^{1,\,2}=6$, 
so in a sense the vector space $H^{2,\,1}_{\bar\partial}(X,\,\C)$ is ``too large'' to fit into $H^3_{DR}(X,\,\C)$.

\vspace{2ex}

The main result of this paper can be loosely stated as follows (see Theorem \ref{The:mirror-map} for a precise statement).
\begin{The}\label{The:introd_Iwasawa-own-mirror} The Iwasawa manifold is its own mirror dual in the sense that its local universal family of essential deformations corresponds to its complexified Gauduchon cone.

\end{The}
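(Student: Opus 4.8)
\medskip
\emph{Plan of proof.} Since this statement is made precise in Theorem~\ref{The:mirror-map}, I outline how one establishes the latter; everything is done on the level of left-invariant objects on the nilmanifold and then matched with the genuine cohomology. First I would fix the standard coframe of left-invariant holomorphic $1$-forms $\varphi_1,\varphi_2,\varphi_3$ on $X$, with $d\varphi_1=d\varphi_2=0$ and $d\varphi_3=-\varphi_1\wedge\varphi_2$, and write out Nakamura's explicit unobstructed Kuranishi family $(X_t)_{t\in\Delta}$, with $\Delta\subset H^{0,1}(X,T^{1,0}X)\simeq H^{2,1}_{\bar\partial}(X,\C)\cong\C^6$ smooth. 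Of the six Kuranishi directions, two generate deformations that remain complex-parallelisable nilmanifolds of the same type as $X$; I would discard these and let the \emph{essential family} be the $4$-dimensional slice $\Delta_{\mathrm{ess}}\subset\Delta$ cut out by freezing those two ``superfluous'' parameters at $0$, whose tangent space at $0$ is the $E_2$-level space $E_2^{2,1}(X)\cong\C^4$ --- the ``$E_2$-truncation'' of the too-large $H^{2,1}_{\bar\partial}(X,\C)$. Using that every $X_t$, $t\in\Delta_{\mathrm{ess}}$, is a nilmanifold whose Dolbeault, Bott--Chern, Aeppli and de Rham cohomologies are all computed by the left-invariant complex, the degeneration of the Fr\"olicher spectral sequence of $X_t$ at $E_2$ --- with $E_2=E_\infty$ numbers $(1,4,4,1)$ in total degree $3$ --- becomes a finite computation, uniform in $t$. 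This produces a \emph{canonical weight-$3$ Hodge decomposition} $H^3(X,\C)=\bigoplus_{p+q=3}\mathcal{H}^{p,q}_t$ depending holomorphically on $t\in\Delta_{\mathrm{ess}}$.

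Second, I would upgrade this to a variation of Hodge structures over $\Delta_{\mathrm{ess}}$. Inside the constant bundle $\mathcal{H}^3=H^3(X,\C)\times\Delta_{\mathrm{ess}}$ (constancy from $C^\infty$-triviality of the family, as in Conclusion~\ref{Conc:picture_dd-bar_def}) one forms the Hodge filtration $F^p\mathcal{H}^3$, checks its holomorphicity, and verifies Griffiths transversality $\nabla F^p\subset F^{p-1}\otimes\Omega^1_{\Delta_{\mathrm{ess}}}$ for the Gauss--Manin connection by writing the period matrix explicitly in the invariant coframe. As $\dim_\C F^3\mathcal{H}^3=h^{3,0}=1$ and $K_{X_t}$ is trivialised by a deformation of $\varphi_1\wedge\varphi_2\wedge\varphi_3$, the holomorphic volume forms $\sigma_t$ give a nowhere-zero section of $F^3\mathcal{H}^3$; the cubic form $(\theta_i,\theta_j,\theta_k)\mapsto\langle\sigma_t,\nabla_{\theta_i}\nabla_{\theta_j}\nabla_{\theta_k}\sigma_t\rangle$, computed through the cup-product pairing on $H^3$ and well defined by transversality, is the \emph{Yukawa coupling}; flatness of $\nabla$ together with transversality yields distinguished (``flat'') coordinates on $\Delta_{\mathrm{ess}}$; and the linear independence in $\mathcal{H}^3/F^3$ of the classes $\nabla_{\partial/\partial t_j}\sigma$, read off the period matrix, is exactly a \emph{local Torelli theorem}.

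Third, I would build the mirror --- metric --- side. From the invariant complex one computes $H^{1,1}_{BC}(X,\C)$ and $H^{2,2}_A(X,\C)$ and deduces from the non-degeneracy of~(\ref{eqn:duality}) that $\dim_\C H^{2,2}_A(X,\C)=\dim_\C H^{1,1}_{BC}(X,\C)=4$, so the Gauduchon cone ${\cal G}_X$ of~(\ref{eqn:G-cone_def}) is an open convex cone in a $4$-dimensional real vector space and its $B$-field complexification ${\cal G}_X^{\C}\subset H^{2,2}_A(X,\C)$ is open of complex dimension $4=\dim_\C\Delta_{\mathrm{ess}}$. Then, exploiting that $X$ and all its small deformations are sGG ([PU14]) --- so every Gauduchon metric $\omega$ is strongly Gauduchon and $\partial\omega^2$ is $\bar\partial$-exact --- I would construct \emph{by hand}, from the invariant forms, a weight-$3$ variation of Hodge structures over a suitable open subset $U\subseteq{\cal G}_X^{\C}$ with Hodge numbers $(1,4,4,1)$, establish Griffiths transversality and a local Torelli statement for it directly, and compute its Yukawa couplings. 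Finally I would define the \emph{mirror map} as the explicit biholomorphism $\Delta_{\mathrm{ess}}\xrightarrow{\ \sim\ }U$ matching the flat coordinates on the two sides, show that it carries one variation of Hodge structures onto the other and intertwines the Yukawa couplings, and record its remaining properties; this is the content of Theorem~\ref{The:mirror-map}.

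The main obstacle is the genuinely non-K\"ahler core of the problem. Since $X$ admits no Hodge decomposition at all --- its Fr\"olicher spectral sequence does not degenerate at $E_1$ and $H^{2,1}_{\bar\partial}(X,\C)$ is strictly too large to embed in $H^3(X,\C)$ --- one must show that the $4$-dimensional essential truncation of the Kuranishi family is \emph{precisely} the locus on which the $E_2$-pieces glue into an honest weight-$3$ Hodge structure and a genuine variation, and --- symmetrically and most delicately --- one must \emph{produce} a variation of Hodge structures on a piece of the complexified Gauduchon cone, where classically no such structure need exist, and then make the dimensions, Hodge numbers, transversality and Yukawa couplings on the two sides agree through a single mirror map. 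Constructing that metric-side variation, and the mirror map intertwining it with the complex-structure one, is where the real work lies.
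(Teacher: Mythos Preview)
Your outline follows the paper's overall architecture, but several of the claims you plan to prove are stronger than what actually holds, and one structural ingredient is missing.

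\medskip
\textbf{The Albanese tori are the link.} The paper's mechanism for identifying the two VHS's is the holomorphic family $(B_t)_{t\in\Delta_{[\gamma]}}$ of Albanese tori $B_t=\mathrm{Alb}(X_t)$, over which each $X_t$ is a locally trivial elliptic fibration. Both the complex-structure-side filtration $F^2{\cal H}^3_{[\gamma]}\supset F^3{\cal H}^3$ and the metric-side filtration $F_{\cal G}{\cal H}^4\supset F'_{\cal G}{\cal H}^4$ are shown to be $C^\infty$ isomorphic to the standard K\"ahler Hodge filtration $F^1{\cal H}^2(B)\supset F^2{\cal H}^2(B)$ of this family of $2$-tori (Corollary~\ref{Cor:holomorphic-bundle-isomorphisms}, Conclusion~\ref{Conc:VHS_two-families-metrics}). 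Your plan makes no mention of this intermediary; without it you have not explained how the two VHS's are actually matched.

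\medskip
\textbf{Overstated regularity.} You propose to ``check holomorphicity'' of the full Hodge filtration and to ``verify Griffiths transversality $\nabla F^p\subset F^{p-1}$''. In fact the paper proves that $F^1{\cal H}^3_{[\gamma]}$ is \emph{not} a holomorphic subbundle of ${\cal H}^3$, and that transversality is established only for $p=3$; whether it holds for $p=1,2$ is left open (Theorem~\ref{The:VHS_3_Delta}\,(ii),(iii)). Similarly, the isomorphism of VHS's is only $C^\infty$: it is holomorphic on the rank-$1$ pieces (multiplication by $\gamma_t$) but \emph{anti}-holomorphic on the rank-$4$ pieces (multiplication by $\bar\gamma_t$, cf.\ Observation~\ref{Obs:H21_gamma-H22_isomorphism} and Theorem~\ref{The:mirror-map}\,(iv)). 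Your plan to have the mirror map ``carry one VHS onto the other'' holomorphically would not go through.

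\medskip
\textbf{The metric-side structure is not a full $(1,4,4,1)$ Hodge structure.} The paper does not construct a weight-$3$ VHS with Hodge numbers $(1,4,4,1)$ over the Gauduchon cone. It builds only a two-step filtration of ranks $5\supset 1$ inside the constant bundle ${\cal H}^3\oplus{\cal H}^4$, obtained from the sGG-induced injection $Q_{\omega_t}:H^{2,2}_A(X_t,\C)\hookrightarrow H^4(X,\C)$ together with $H^{2,0}(B_t,\C)\hookrightarrow H^3(X,\C)$ (Conclusion~\ref{Conc:VHS_two-families-metrics}, Lemma~\ref{Lem:F_cal-G_H4_hol}).

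\medskip
\textbf{The mirror map.} The paper does not define $\widetilde{\cal M}$ by matching flat coordinates; it writes down an explicit affine map into $H^{2,2}_A(X_0,\C)$ (Definition~\ref{Def:G-cone-complexified}\,(iii)) and then checks directly that $d\widetilde{\cal M}_0$ equals the canonical isomorphism $H^{2,1}_{[\gamma]}(X,\C)\xrightarrow{\ \cdot\wedge\bar\gamma\ }H^{2,2}_A(X,\C)$.
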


The meaning of ``corresponds'' will be made precise by the end of the paper. Loosely speaking, it will mean that there exists a local biholomorphism between the local universal family of essential deformations and the complexified Gauduchon cone of the Iwasawa manifold and there exists an induced $C^\infty$ isomorphism of variations of Hodge structures (VHS) that exist on either side of the mirror. Moreover, this isomorphism is holomorphic at the level of the rank-$1$ components and anti-holomorphic at the level of the rank-$4$ components of these VHS.  

Here is a summary of the main steps and ideas.

\subsection{Outline of our approach}\label{subsection:outline}

 $(I)$\, On the {\bf complex-structure side of the mirror}, the starting point of our method is the observation that a natural Hodge decomposition of weight $3$ exists on the Iwasawa manifold $X$ 
if $H^{2,\,1}_{\bar\partial}(X,\,\C)$ is ``pared down'' to a $4$-dimensional vector subspace $H^{2,\,1}_{[\gamma]}(X,\,\C)\subset H^{2,\,1}_{\bar\partial}(X,\,\C)$ 
that injects canonically into $H^3_{DR}(X,\,\C)$ and parametrises what we call the {\it essential deformations} of $X$. 
Specifically, recalling that $\Delta\subset H^{2,\,1}_{\bar\partial}(X,\,\C)$ is an open ball, if we put
$$\Delta_{[\gamma]}:=\Delta\cap H^{2,\,1}_{[\gamma]}(X,\,\C),$$
\noindent we implicitly remove from the Kuranishi family $(X_t)_{t\in\Delta}$ the two dimensions corresponding to complex parallelisable deformations $X_t$ of $X$ 
(that have a similar geometry to that of $X$, so no geometric information is lost) 
and we are left with a family $(X_t)_{t\in\Delta_{[\gamma]}}$ of non-complex parallelisable deformations that we call {\it essential}. 
This description of the local deformations of $X$ is made possible by Nakamura's explicit calculations in [Nak75]. 
The holomorphic tangent space to $\Delta_{[\gamma]}$ at any of its points $t$ is isomorphic via the Kodaira-Spencer map 
to the analogue $H^{2,\,1}_{[\gamma]}(X_t,\,\C)$ at $t$ of $H^{2,\,1}_{[\gamma]}(X,\,\C) = H^{2,\,1}_{[\gamma]}(X_0,\,\C)$. 
We get a Hodge decomposition of weight $3$ for every $t\in\Delta_{[\gamma]}$ (cf. Proposition \ref{Prop:H21_gamma_Hodge}) in the following form.

\begin{Prop}\label{Prop:introd_H3_Hodge-decomp} There exist canonical isomorphisms
\begin{equation}\label{eqn:introd_H3_Hodge-decomp}H^3_{DR}(X,\,\C)\simeq H^{3,\,0}_{\bar\partial}(X_t,\,\C) \oplus H^{2,\,1}_{[\gamma]}(X_t,\,\C) 
\oplus H^{1,\,2}_{[\gamma]}(X_t,\,\C) \oplus H^{0,\,3}_{\bar\partial}(X_t,\,\C), \hspace{3ex} t\in\Delta_{[\gamma]},\end{equation}  
\noindent (where $H^{1,\,2}_{[\gamma]}(X_t,\,\C)\subset H^{1,\,2}_{\bar\partial}(X_t,\,\C)$ is defined by analogy with $H^{2,\,1}_{[\gamma]}(X_t,\,\C)$) and
\begin{equation}\label{eqn:introd_H3_Hodge-symm}H^{3,\,0}_{\bar\partial}(X_t,\,\C)\simeq\overline{H^{0,\,3}_{\bar\partial}(X_t,\,\C)} \hspace{2ex} 
\mbox{and} \hspace{2ex} H^{2,\,1}_{[\gamma]}(X,\,\C)\simeq\overline{H^{1,\,2}_{[\gamma]}(X,\,\C)}, \hspace{3ex} t\in\Delta_{[\gamma]}.\end{equation}
\end{Prop}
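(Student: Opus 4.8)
The plan is to work entirely with left-invariant forms on the nilmanifold $X = G/\Gamma$, using the fact (due to Nakamura, and exploitable via the Nomizu-type theorems available for nilmanifolds) that the relevant cohomology groups of $X$ and of each complex-parallelisable-free deformation $X_t$ are computed by the finite-dimensional complex of invariant forms built from the global coframe $\varphi^1, \varphi^2, \varphi^3$ with $d\varphi^1 = d\varphi^2 = 0$ and $d\varphi^3 = -\varphi^1 \wedge \varphi^2$. First I would fix an explicit invariant representative $\gamma$ of the relevant $(2,1)$-class and write down the subspace $H^{2,\,1}_{[\gamma]}(X_t,\,\C) \subset H^{2,\,1}_{\bar\partial}(X_t,\,\C)$ concretely in terms of the $\varphi^j_t$-coframe of the deformed structure, so that all four summands in (\ref{eqn:introd_H3_Hodge-decomp}) become explicit subspaces of the $10$-dimensional space of invariant $3$-forms. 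The point is that $H^{3,\,0}_{\bar\partial}$ and $H^{0,\,3}_{\bar\partial}$ are each one-dimensional, spanned by $\varphi^1_t\wedge\varphi^2_t\wedge\varphi^3_t$ and its conjugate, while $H^{2,\,1}_{[\gamma]}$ and $H^{1,\,2}_{[\gamma]}$ are each four-dimensional, and $1+4+4+1 = 10 = b_3$; this dimension count is the first sanity check.

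Next I would produce the canonical map into de Rham cohomology: for a $\bar\partial$-closed $(p,q)$-form $u$ in one of the four pieces, exhibit an invariant $d$-closed form $u + (\text{lower-type correction})$ representing a well-defined class in $H^3_{DR}(X_t,\,\C)$, and check that this assignment is independent of the choices made (this is exactly the content of the phrase ``injects canonically''). For the two extreme pieces $H^{3,0}$ and $H^{0,3}$ this is immediate since $\varphi^1_t\wedge\varphi^2_t\wedge\varphi^3_t$ is itself $d$-closed (as $d\varphi^3_t$ is of type $(2,0)$ plus possibly a small $(0,1)\wedge(1,0)$ piece that wedges to zero against $\varphi^1_t\wedge\varphi^2_t$); for the middle pieces it uses precisely the defining property of the subspace $[\gamma]$ — this is why $H^{2,1}_{[\gamma]}$ rather than all of $H^{2,1}_{\bar\partial}$ is needed, the complementary two dimensions being the ``superfluous'' complex-parallelisable directions whose classes genuinely fail to lift to $d$-closed forms. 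Once the four maps into $H^3_{DR}$ are constructed, directness of the sum follows from the dimension count together with a check that the images meet only in zero (again an explicit invariant-forms computation, organised by bidegree of the leading term), giving the isomorphism (\ref{eqn:introd_H3_Hodge-decomp}).

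For the Hodge symmetry (\ref{eqn:introd_H3_Hodge-symm}), I would use complex conjugation on invariant forms: conjugation swaps type $(p,q)$ with $(q,p)$ and commutes with $d$, so it carries $d$-closed invariant $(3,0)$-forms to $d$-closed invariant $(0,3)$-forms, giving the first isomorphism; for the second, the crucial point is that the subspace $H^{2,1}_{[\gamma]}$ has been defined symmetrically enough (via a $d$-closedness / ``essential'' condition stable under conjugation) that $\overline{H^{2,1}_{[\gamma]}(X,\,\C)} = H^{1,2}_{[\gamma]}(X,\,\C)$, which I would verify directly on the explicit bases. The deformation-independence — i.e. that the whole picture holds for every $t\in\Delta_{[\gamma]}$, not just $t=0$ — rides on two facts already available in the excerpt: that $\Delta_{[\gamma]}$ parametrises exactly the non-complex-parallelisable (``essential'') deformations, for which Nakamura's tables give uniform Hodge numbers, and that the Kodaira–Spencer map identifies the tangent space to $\Delta_{[\gamma]}$ at $t$ with $H^{2,1}_{[\gamma]}(X_t,\,\C)$.

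The main obstacle, I expect, is not any single computation but pinning down the \emph{right} definition of the subspace $H^{2,1}_{[\gamma]}$ so that all three things hold simultaneously: (i) it is four-dimensional and complementary to the complex-parallelisable directions, (ii) every class in it lifts canonically to $H^3_{DR}$, and (iii) it is conjugation-symmetric in the sense needed for (\ref{eqn:introd_H3_Hodge-symm}). Once $[\gamma]$ is correctly isolated — which is where Nakamura's explicit deformation computations in [Nak75] do the heavy lifting — the rest is a bookkeeping exercise in the $10$-dimensional algebra of invariant $3$-forms, uniform in $t$ over $\Delta_{[\gamma]}$.
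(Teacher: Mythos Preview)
Your proposal is essentially correct and follows the same route as the paper: explicit invariant-form representatives, the dimension count $1+4+4+1=10=b_3$, verification of $d$-closedness, and transversality of the images in $H^3_{DR}$. A couple of points where the paper's execution differs from your sketch are worth noting.

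First, the paper does \emph{not} define $H^{1,2}_{[\gamma]}$ by conjugation. It defines $H^{1,2}_{[\gamma]}(X_t,\C):=\iota_t\bigl(H^{2,1}_{[\gamma]}(X_t,\C)\bigr)$ via the Hodge-star isomorphism $\iota_t$ associated to the canonical metric $\omega_t$, and only afterwards checks (by an explicit $\star$-computation on generators, Lemma~\ref{Lem:star_generators_21_gamma}) that this coincides with $\overline{H^{2,1}_{[\gamma]}}$. Your ``conjugation-symmetric'' description happens to give the same space, and is arguably more direct; but the paper's choice via $\star$ is what makes the signature computation of $H$ on $F^2_{[\gamma]}H^3$ (Corollary~\ref{Cor:H_signature}) immediate later on.

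Second, your ``lower-type correction'' idea for the middle pieces is not how the paper handles $t\neq 0$. At $t=0$ the four generators $\alpha\wedge\gamma\wedge\bar\alpha,\dots$ are already $d$-closed with no correction needed. For $t\in\Delta_{[\gamma]}\setminus\{0\}$ the paper imports from Angella's work the specific forms $\Gamma_j(t)$ of~(\ref{eqn:Gamma_j-forms_t}), each of which is $\alpha_t\wedge\gamma_t\wedge\bar\alpha_t$ (etc.) \emph{plus} a scalar multiple of $\alpha_t\wedge\beta_t\wedge\bar\gamma_t$, and proves directly (Lemma~\ref{Lem:H21_gamma_t_injection}) that these $\Gamma_j(t)$ are simultaneously $\Delta''_t$- and $\Delta_t$-harmonic. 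So the ``correction'' is not manufactured on the fly but is already built into the chosen $\bar\partial$-harmonic representatives; the injection into $H^3_{DR}$ then comes from $\Delta_t$-harmonicity rather than from a lifting argument. This is the one place where your sketch is vaguer than the paper's argument actually requires.
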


We go on to show that $\Delta_{[\gamma]}\ni t\mapsto H^{2,\,1}_{[\gamma]}(X_t,\,\C)$ is a $C^{\infty}$ vector bundle of rank $4$ (cf. Proposition \ref{Prop:H2-1_gamma_vbundle}) and that (\ref{eqn:introd_H3_Hodge-decomp}) and (\ref{eqn:introd_H3_Hodge-symm}) define a Hodge filtration 

$$F^2{\cal H}^3_{[\gamma]}\supset F^3{\cal H}^3$$ 

\noindent of {\it holomorphic} vector subbundles over $\Delta_{[\gamma]}$ of the constant bundle ${\cal H}^3$ of fibre $H^3_{DR}(X,\,\C)$. This induces a variation of Hodge structures (VHS) endowed with a Gauss-Manin connection satisfying the Griffiths transversality condition (cf. Theorem \ref{The:VHS_3_Delta}).

Thus, after restricting attention to the {\it essential deformations} of the non-$\partial\bar\partial$ Iwasawa manifold, 
we get a picture similar to the one described in Conclusion \ref{Conc:picture_dd-bar_def} for $\partial\bar\partial$-manifolds.  

Two further crucial observations cement the role played by the space $H^{2,\,1}_{[\gamma]}(X,\,\C)$ in this approach and its {\it canonical} nature. (By an isomorphism being canonical, we will mean that it is defined in an obvious way, not involving arbitrary choices, by the three standard holomorphic $1$-forms $\alpha, \beta, \gamma$ that generate the whole cohomology of the Iwasawa manifold and are induced by the canonical basis of $\C^3$ as recalled in $\S.$\ref{section:preliminaries}.)

The first observation (cf. Proposition \ref{Prop:H21_gamma_E2_21}, $(c)$) is the following

\begin{Prop}\label{Prop:introd_H21-gamma_E_2-21} There exists a canonical isomorphism
\begin{equation}\label{eqn:introd_H21_gamma_E_2-21} H^{2,\,1}_{[\gamma]}(X,\,\C)\simeq E_2^{2,\,1}(X,\,\C)\end{equation}
\noindent where $E_2^{2,\,1}(X,\,\C)$ is the space featuring at the second step of the Fr\"olicher spectral sequence of $X$ (known to degenerate at $E_2$ as do its counterparts for all the small deformations $X_t$). 
\end{Prop}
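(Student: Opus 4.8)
The plan is to prove the canonical isomorphism $H^{2,\,1}_{[\gamma]}(X,\,\C)\simeq E_2^{2,\,1}(X,\,\C)$ by identifying both sides as subquotients of the space of $(2,\,1)$-forms built from the invariant forms on the Iwasawa nilmanifold, using Nakamura's explicit basis. First I would recall the structure equations: writing $\varphi_1, \varphi_2, \varphi_3$ for the standard invariant holomorphic $(1,\,0)$-forms on $X$ with $d\varphi_1 = d\varphi_2 = 0$ and $d\varphi_3 = -\varphi_1\wedge\varphi_2$, one has that Dolbeault cohomology of $X$ is computed (by Nomizu-type/Sakane results for complex parallelisable nilmanifolds) from the invariant forms. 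I would then write down explicitly the six-dimensional space $H^{2,\,1}_{\bar\partial}(X,\,\C)$ in terms of the $\varphi_i\wedge\varphi_j\wedge\bar\varphi_k$, and recall from the earlier part of the paper how the four-dimensional subspace $H^{2,\,1}_{[\gamma]}(X,\,\C)$ is singled out — namely as the classes represented by $\bar\partial$-closed $(2,\,1)$-forms $u$ for which $\partial u$ is $\bar\partial$-exact (equivalently, those whose Dolbeault class lifts to a $d$-closed form, equivalently those in the image of $H^3_{DR}\to H^{2,\,1}_{\bar\partial}$, up to the precise characterization ``$[\gamma]$'' used in the text).

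The core of the argument is the identification of $E_2^{2,\,1}$. Recall $E_1^{p,\,q} = H^{p,\,q}_{\bar\partial}(X,\,\C)$ and $d_1 : E_1^{p,\,q}\to E_1^{p+1,\,q}$ is induced by $\partial$; then $E_2^{p,\,q} = \ker d_1 / \operatorname{Im} d_1$ at the $(p,\,q)$ spot. So $E_2^{2,\,1} = \ker(d_1 : H^{2,\,1}_{\bar\partial}\to H^{3,\,1}_{\bar\partial}) \big/ \operatorname{Im}(d_1 : H^{1,\,1}_{\bar\partial}\to H^{2,\,1}_{\bar\partial})$. The plan is to show:
\begin{itemize}[noitemsep]
\item $\ker(d_1 : H^{2,\,1}_{\bar\partial}\to H^{3,\,1}_{\bar\partial})$ consists exactly of Dolbeault classes $[u]$ with $\partial u$ $\bar\partial$-exact, which is precisely the characterization of $H^{2,\,1}_{[\gamma]}(X,\,\C)$ (by the definition recalled earlier in the paper);
\item the map $d_1 : H^{1,\,1}_{\bar\partial}\to H^{2,\,1}_{\bar\partial}$ is zero on the Iwasawa manifold, so the quotient by $\operatorname{Im} d_1$ is trivial.
\end{itemize}
For the first bullet I would use the standard description of $d_1$: a class $[u]_{\bar\partial}\in E_1^{2,\,1}$ lies in $\ker d_1$ iff $\partial u$ represents zero in $H^{3,\,1}_{\bar\partial}$, i.e. $\partial u = \bar\partial v$ for some $v$, which matches the defining condition of $H^{2,\,1}_{[\gamma]}$ verbatim (this is exactly why the $[\gamma]$-subspace was introduced). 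For the second bullet I would compute on invariant forms: $H^{1,\,1}_{\bar\partial}(X,\,\C)$ is spanned by classes of $\varphi_i\wedge\bar\varphi_j$ (with appropriate index restrictions), and $\partial$ applied to these lands in $\partial\bar\partial$-exact combinations of $\varphi_i\wedge\varphi_j\wedge\bar\varphi_k$, hence in $\operatorname{Im}\bar\partial$ at worst, so $d_1$ vanishes; alternatively, one can invoke dimension bookkeeping — since the Frölicher sequence degenerates at $E_2$, $\sum_{p+q=k}\dim E_2^{p,\,q} = b_k$, and comparing with the known Betti numbers ($b_3 = 10$) and the dimensions $h^{3,\,0}=h^{0,\,3}=1$, $\dim E_2^{2,\,1}=\dim E_2^{1,\,2}=4$ forces the claimed count, while the vanishing of $\operatorname{Im}(d_1:H^{1,\,1}\to H^{2,\,1})$ follows because otherwise $E_2^{2,\,1}$ would be too small.

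Finally I would check that the resulting isomorphism is \emph{canonical}, i.e. independent of the choice of invariant representatives: the condition ``$\partial u\in\operatorname{Im}\bar\partial$'' on a Dolbeault class is intrinsic, and the map sending such a class in $H^{2,\,1}_{[\gamma]}$ to its image in $E_2^{2,\,1}=\ker d_1$ (there being no nontrivial quotient) is manifestly well-defined and functorial. I expect the main obstacle to be the bookkeeping needed to verify that $H^{2,\,1}_{[\gamma]}$ as \emph{defined earlier in the paper} (in terms of a specific generator or family $\gamma$) coincides on the nose with $\ker d_1$ rather than merely being abstractly isomorphic to it — in particular pinning down that the ``superfluous'' two dimensions of $H^{2,\,1}_{\bar\partial}$ removed to form $H^{2,\,1}_{[\gamma]}$ are exactly the two dimensions on which $d_1$ acts injectively into $H^{3,\,1}_{\bar\partial}$ (equivalently, the two complex-parallelisable deformation directions). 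This is a finite explicit check using Nakamura's tables, but it is where the real content of Proposition \ref{Prop:introd_H21-gamma_E_2-21} lies.
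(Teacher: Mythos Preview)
Your framework is right --- compute $E_2^{2,\,1}$ as $\ker d_1/\operatorname{Im} d_1$ on invariant forms --- but both of your key computational claims are backwards, and this matters.

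First, $\ker(d_1:H^{2,\,1}_{\bar\partial}\to H^{3,\,1}_{\bar\partial})$ is \emph{all} of $H^{2,\,1}_{\bar\partial}$, not just the four-dimensional $H^{2,\,1}_{[\gamma]}$. Every invariant $(2,\,1)$-generator is $\partial$-closed: the ones containing $\gamma$ satisfy $\partial(\alpha\wedge\gamma\wedge\bar\alpha)=-\alpha\wedge\partial\gamma\wedge\bar\alpha=\alpha\wedge(\alpha\wedge\beta)\wedge\bar\alpha=0$, and the ones of the form $\alpha\wedge\beta\wedge\bar\alpha$, $\alpha\wedge\beta\wedge\bar\beta$ are trivially $\partial$-closed since $\alpha,\beta$ are closed. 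So your first bullet mis-identifies the kernel. (Relatedly, $H^{2,\,1}_{[\gamma]}$ is \emph{not} defined in the paper by the condition ``$\partial u\in\operatorname{Im}\bar\partial$''; it is defined as the span of the four explicit classes $[\alpha\wedge\gamma\wedge\bar\alpha]_{\bar\partial}$, etc., i.e.\ as the image of $LH^{0,\,1}(X,\pi^\star T^{1,\,0}B)$ under the Calabi--Yau isomorphism.)

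Second, $d_1:H^{1,\,1}_{\bar\partial}\to H^{2,\,1}_{\bar\partial}$ is \emph{not} zero: since $\partial\gamma=-\alpha\wedge\beta$, one has $\partial(\gamma\wedge\bar\alpha)=-\alpha\wedge\beta\wedge\bar\alpha$ and $\partial(\gamma\wedge\bar\beta)=-\alpha\wedge\beta\wedge\bar\beta$, and these are nonzero Dolbeault classes. Hence $\operatorname{Im}(d_1)=\langle[\alpha\wedge\beta\wedge\bar\alpha]_{\bar\partial},\,[\alpha\wedge\beta\wedge\bar\beta]_{\bar\partial}\rangle$ is exactly the two ``superfluous'' directions. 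The correct picture is therefore
\[
E_2^{2,\,1}\;=\;H^{2,\,1}_{\bar\partial}\big/\langle[\alpha\wedge\beta\wedge\bar\alpha]_{\bar\partial},\,[\alpha\wedge\beta\wedge\bar\beta]_{\bar\partial}\rangle,
\]
and $H^{2,\,1}_{[\gamma]}$ maps isomorphically onto this quotient because it is a complement to $\operatorname{Im}(d_1)$ inside $H^{2,\,1}_{\bar\partial}$. Your final parenthetical (``the two dimensions on which $d_1$ acts injectively into $H^{3,\,1}$'') repeats the same reversal: the superfluous directions are killed as the image of the incoming $d_1$, not as the non-kernel of the outgoing one.
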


Moreover, the Hodge decomposition (\ref{eqn:introd_H3_Hodge-decomp}) reflects precisely this $E_2$ degeneration since there exist isomorphisms (cf. (\ref{eqn:E_2-splitting}))
\begin{equation}\label{eqn:introd_E_2_Hodge-decomp}H^3_{DR}(X,\,\C)\simeq E_2^{3,\,0}(X_t,\,\C) \oplus E_2^{2,\,1}(X_t,\,\C) \oplus E_2^{1,\,2}(X_t,\,\C) \oplus E_2^{0,\,3}(X_t,\,\C), \hspace{3ex} t\in\Delta_{[\gamma]},\end{equation}
\noindent in which each of the four spaces on the r.h.s. is isomorphic to the corresponding space on the r.h.s. of (\ref{eqn:introd_H3_Hodge-decomp}).

\vspace{2ex}

The second observation (cf. Observation \ref{Obs:H21_gamma-H22_isomorphism}) is the following 
\begin{Prop}\label{Prop:introd_H21-gamma_H22_A} There exists a canonical isomorphism 
\begin{equation}\label{eqn:introd_H21_gamma_H22_A}H^{2,\,1}_{[\gamma]}(X,\,\C)\simeq H^{2,\,2}_A(X,\,\C).\end{equation}
\end{Prop}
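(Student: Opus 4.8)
The plan is to realise the isomorphism (\ref{eqn:introd_H21_gamma_H22_A}) concretely as \emph{wedging with the distinguished anti-holomorphic $1$-form of the fibration} $\pi\colon X\to B$, and to verify it by the standard reduction to left-invariant forms. Recall that on the Iwasawa nilmanifold $X$ the De Rham, Dolbeault, Bott--Chern and Aeppli cohomologies are all computed by the finite-dimensional bicomplex of left-invariant forms (classical; see e.g. [Nak75], [PU14] and the references therein), so I would work throughout inside that bicomplex. Fix the standard invariant holomorphic coframe $\varphi_1,\varphi_2,\varphi_3$ with $d\varphi_1=d\varphi_2=0$, $d\varphi_3=-\,\varphi_1\wedge\varphi_2$; then $\partial\bar\varphi_j=0$ and $\bar\partial\varphi_j=0$ for all $j$, $\bar\partial\bar\varphi_3=-\,\bar\varphi_1\wedge\bar\varphi_2$, $\partial$ annihilates every invariant $(2,1)$-form, and $\bar\partial$ annihilates every invariant $(2,2)$-form. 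By Proposition~\ref{Prop:introd_H21-gamma_E_2-21} I may identify $H^{2,1}_{[\gamma]}(X,\C)$ with $E_2^{2,1}(X,\C)$, the sub-quotient of $H^{2,1}_{\bar\partial}(X,\C)$ obtained by passing to $\ker\big(d_1\colon E_1^{2,1}\to E_1^{3,1}\big)$ and quotienting by $\mbox{Im}\,\big(d_1\colon E_1^{1,1}\to E_1^{2,1}\big)$, with $d_1$ induced by $\partial$.

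Then I would introduce the linear map
\begin{equation*}
\Phi\colon\ \big\{\,\alpha\in C^{\infty}_{2,1}(X,\C)\ \text{left-invariant}\ :\ \bar\partial\alpha=0\,\big\}\ \longrightarrow\ H^{2,2}_A(X,\C),\qquad \alpha\longmapsto\big[\,\alpha\wedge\bar\varphi_3\,\big]_A ,
\end{equation*}
which is legitimate since $\alpha\wedge\bar\varphi_3$ is an invariant $(2,2)$-form, hence $\partial\bar\partial$-closed. As no invariant $(2,1)$-form is $\bar\partial$-exact and $\partial$ kills all invariant $(2,1)$-forms, $\Phi$ already represents a map out of $H^{2,1}_{\bar\partial}(X,\C)=\ker d_1^{2,1}$; the one point requiring a (short) computation is that $\Phi$ kills $\mbox{Im}\,d_1^{1,1}$, namely: if $\alpha=\partial\mu$ with $\mu$ an invariant $\bar\partial$-closed $(1,1)$-form, then, since $\partial\bar\varphi_3=0$, one has $\alpha\wedge\bar\varphi_3=\partial(\mu\wedge\bar\varphi_3)\in\mbox{Im}\,\partial$, which is zero in $H^{2,2}_A(X,\C)$. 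Hence $\Phi$ descends to a well-defined map $\overline{\Phi}\colon H^{2,1}_{[\gamma]}(X,\C)\to H^{2,2}_A(X,\C)$.

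Finally I would show $\overline{\Phi}$ is an isomorphism and that it is canonical. Both sides are $4$-dimensional --- the source by hypothesis, the target by a direct count in the invariant bicomplex (equivalently, $H^{2,2}_A(X,\C)\cong H^{1,1}_{BC}(X,\C)^{\ast}$ by (\ref{eqn:duality}), with $h^{1,1}_{BC}(X)=4$) --- so it suffices to check surjectivity, which is the finite linear-algebra fact that the basis $\{[\varphi_1\wedge\varphi_3\wedge\bar\varphi_k]_{\bar\partial},\,[\varphi_2\wedge\varphi_3\wedge\bar\varphi_k]_{\bar\partial}:k=1,2\}$ of $E_2^{2,1}(X,\C)$ is sent by $\Phi$ to the basis $\{[\varphi_1\wedge\varphi_3\wedge\bar\varphi_k\wedge\bar\varphi_3]_A,\,[\varphi_2\wedge\varphi_3\wedge\bar\varphi_k\wedge\bar\varphi_3]_A:k=1,2\}$ of $H^{2,2}_A(X,\C)$. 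For canonicity, $\overline{\Phi}$ uses $\bar\varphi_3$ only through its wedges with $\bar\partial$-closed invariant $(2,1)$-forms modulo $\mbox{Im}\,\partial+\mbox{Im}\,\bar\partial$; replacing $\bar\varphi_3$ by $\bar\varphi_3+\bar\partial g$ or by $\bar\varphi_3+c_1\bar\varphi_1+c_2\bar\varphi_2$ changes $\alpha\wedge\bar\varphi_3$ only by an element of $\mbox{Im}\,\partial+\mbox{Im}\,\bar\partial$ (one-line checks), so $\overline{\Phi}$ depends only on the complex structure of $X$ and on a choice of nonzero scalar --- equivalently, on the canonical line $\mbox{Im}\,\big(\bar\partial\colon C^{\infty}_{0,1}(X,\C)\to C^{\infty}_{0,2}(X,\C)\big)=\C\cdot(\bar\varphi_1\wedge\bar\varphi_2)$ --- and is an isomorphism for every such choice. (Invariantly, $\overline{\Phi}$ is the composite of the duality (\ref{eqn:duality}) with the pairing $([\alpha]_{\bar\partial},[\beta]_{BC})\mapsto\int_X\alpha\wedge\beta\wedge\bar\varphi_3$ on $H^{2,1}_{[\gamma]}(X,\C)\times H^{1,1}_{BC}(X,\C)$, so the proposition is equivalent to the non-degeneracy of this pairing.)

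I expect the main obstacle to be bookkeeping rather than analysis: correctly pinning down the $2$-dimensional ``superfluous'' subspace $\ker\big(H^{2,1}_{\bar\partial}(X,\C)\twoheadrightarrow H^{2,1}_{[\gamma]}(X,\C)\big)=\mbox{Im}\,d_1^{1,1}$ and verifying that $\Phi$ annihilates \emph{exactly} it, and then checking that the resulting isomorphism is genuinely coframe-independent rather than merely well defined. Thanks to the invariance of all the cohomologies involved, everything else is finite-dimensional linear algebra in the invariant bicomplex.
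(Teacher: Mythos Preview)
Your proposal is correct and follows essentially the same approach as the paper: both realise the isomorphism as ``wedge with $\bar\gamma$'' (your $\bar\varphi_3$) and verify it by checking that the four basis classes $[\varphi_i\wedge\varphi_3\wedge\bar\varphi_k]$ ($i,k\in\{1,2\}$) are sent to the four basis classes of $H^{2,2}_A(X,\C)$. The only difference is packaging: the paper works directly with $H^{2,1}_{[\gamma]}(X,\C)$ as a \emph{subspace} of $H^{2,1}_{\bar\partial}(X,\C)$ and simply observes that the map carries one explicit basis to another, whereas you route through the \emph{quotient} description $E_2^{2,1}$ (via Proposition~\ref{Prop:introd_H21-gamma_E_2-21}) and add a careful canonicity discussion showing independence of the choice of $\bar\varphi_3$ modulo $\langle\bar\varphi_1,\bar\varphi_2\rangle$---a point the paper leaves implicit.
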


The isomorphism (\ref{eqn:introd_H21_gamma_H22_A}) justifies us in choosing the {\it essential deformations} of $X$ on the complex-structure side of the mirror 
and the {\it Gauduchon cone} of $X$ on the metric side of the mirror as the two main structures mirroring each other. 
Indeed, $H^{2,\,1}_{[\gamma]}(X,\,\C)$ is the tangent space to $\Delta_{[\gamma]}$ at $0$, while $H^{2,\,2}_A(X,\,\C)$ is the tangent space to the complexified Gauduchon cone 
(see Definition \ref{Def:G-cone-complexified}) at any of its points. 
The Aeppli-Gauduchon class $[\omega_0^2]\in{\cal G}_{X_0}$ of a natural Gauduchon metric $\omega_0$ induced on $X_0$ by the complex parallalisable structure of $X_0$ 
will be the privileged point chosen in the Gauduchon cone. 
It is the image of $0\in\Delta_{[\gamma]}$ under the mirror map that will be defined in Defintions \ref{Def:mirror-map_def} and \ref{Def:G-cone-complexified}. 
Isomorphism (\ref{eqn:introd_H21_gamma_H22_A}) is the single most powerful piece of initial motivating evidence in 
favour of the new mirror symmetry phenomenon that we highlight in this paper. 

\vspace{3ex}

 $(II)$\, On the {\bf metric side of the mirror}, we start off by constructing a $C^{\infty}$ family $(\omega_t)_{t\in\Delta_{[\gamma]}}$ of Gauduchon metrics 
on the fibres $(X_t)_{t\in\Delta_{[\gamma]}}$ (cf. Lemma \ref{Lem:omega_t_Gauduchon}) 
and a $C^{\infty}$ family $(\omega_t^{1,\,1})_{t\in\Delta}$ of Gauduchon metrics on $X_0$ (cf. Lemma \ref{Prop:omega_t_11}). 
The $\omega_t^{1,\,1}$'s are the $(1,\,1)$-components of the $\omega_t$'s w.r.t. the complex structure $J_0$ of $X_0$. 

Then we prove (cf. Corollary \ref{Cor:subbundle_H22_H4}) that the Aeppli cohomology groups of bidegree $(2,\,2)$ 
of the local essential deformations $X_t$ of the Iwasawa manifold $X=X_0$, namely the vector spaces
$$\Delta_{[\gamma]}\ni t\mapsto H^{2,\,2}_A(X_t,\,\C),$$
\noindent define a $C^{\infty}$ vector bundle ${\cal H}_A^{2,\,2}$ of rank $4$ that injects as a $C^{\infty}$ vector subbundle of the constant bundle 
${\cal H}^4\rightarrow\Delta_{[\gamma]}$ of fibre given by the De Rham cohomology group $H^4_{DR}(X_t,\,\C) = H^4(X,\,\C)$. 
This injection is proved by using in a crucial way the {\bf sGG property} (cf. [PU14]) of all the fibres $X_t$ 
and the family $(\omega_t)_{t\in\Delta_{[\gamma]}}$ of Gauduchon metrics thereon. 
Denoting by $\widetilde{H^{2,\,2}_{\omega_t}}$ the image of $H^{2,\,2}_A(X_t,\,\C)$ into $H^4(X,\,\C)$ under this $\omega_t$-induced injection, 
we get a $C^{\infty}$ vector bundle $\widetilde{{\cal H}^{2,\,2}_\omega}$ of rank $4$
$${\cal G}_{X_0}\ni[(\omega_t^{1,\,1})^2]_A\mapsto \widetilde{H^{2,\,2}_{\omega_t}}\subset H^4(X,\,\C)$$
\noindent after suitable identifications of certain spaces depending on $\omega_t$ with spaces depending on $\omega_t^{1,\,1}$ (cf. Conclusion \ref{Conc:VHS_two-families-metrics}).

 This produces a Hodge filtration

$$F_{{\cal G}}{\cal H}^4: = {\cal H}^{2,\,0}(B)\oplus\widetilde{{\cal H}^{2,\,2}_\omega}\supset F'_{{\cal G}}{\cal H}^4:={\cal H}^{2,\,0}(B)$$

\noindent of {\it holomorphic} vector bundles over the complexification $\widetilde{{\cal G}_0}$ of the subset ${\cal G}_0$ of the Gauduchon cone ${\cal G}_{X_0}$ consisting of the classes $[(\omega_t^{1,\,1})^2]_A$ with $t\in\Delta_{[\gamma]}$, where ${\cal H}^{2,\,0}(B)$ is a holomorphic line bundle over $\Delta_{[\gamma]}$ induced by the Albanese tori $B_t$ of the fibres $X_t$.

\vspace{3ex}

 $(III)$\, The {\bf link between the two sides of the mirror} is provided by the holomorphic family $(B_t)_{t\in\Delta}$ of $2$-dimensional complex Albanese tori $B_t = \mbox{Alb}(X_t)$ of the small deformations $X_t$ of the Iwasawa manifold $X=X_0$. Indeed, every small deformation $X_t$ of $X$ is a locally trivial holomorphic fibration $\pi_t:X_t\to B_t$ over its Albanese torus $B_t$. We get a holomorphic vector bundle of rank $5$

\begin{equation}\label{eqn:introd_subbundle_G_rank5_X0}
\widetilde{{\cal G}_0}\ni[(\omega_t^{1,\,1})^2]_A\mapsto H^{2,\,0}(B_t,\,\C)\oplus \widetilde{H^{2,\,2}_{\omega_t}}\subset H^3(X,\,\C) \oplus H^4(X,\,\C)\end{equation}
\noindent and a VHS parametrised by the complexification $\widetilde{{\cal G}_0}$ of the subset 
$${\cal G}_0:=\{[(\omega_t^{1,\,1})^2]_A\,\mid\,t\in\Delta_{[\gamma]}\}$$
\noindent of the Gauduchon cone ${\cal G}_{X_0}$ of $X_0$ (cf. Conclusion \ref{Conc:VHS_two-families-metrics}).

The VHS (\ref{eqn:introd_subbundle_G_rank5_X0}), constructed on the {\it metric side of the mirror}, is then proved to be $C^\infty$ {\it isomorphic} to the VHS induced by (\ref{eqn:introd_H3_Hodge-decomp}) and (\ref{eqn:introd_H3_Hodge-symm}) on the {\it complex-structure side of the mirror}. This $C^\infty$ isomorphism is actually {\it holomorphic} at the level of the $1$-dimensional parts of the two VHS's and {\it anti-holomorphic} at the level of the $4$-dimensional parts. This regularity meshes with the {\it sesquilinear} self-duality of the Iwasawa manifold highlighted in the next work [Pop17] of the author. This isomorphism will be obtained by proving (cf. Corollary \ref{Cor:holomorphic-bundle-isomorphisms}) that each of the two Hodge filtrations is $C^\infty$ {\it isomorphic} to the Hodge filtration $F^1{\cal H}^2(B)\supset F^2{\cal H}^2(B)$ of holomorphic vector bundles induced by the family of tori $(B_t)_{t\in\Delta_{[\gamma]}}$ over the moduli space $\Delta_{[\gamma]}$ of {\it essential deformations} of the Iwasawa manifold. 

 We also define explicitly (cf. Definition \ref{Def:G-cone-complexified}) a {\bf mirror map}
$$\widetilde{\cal M}:\Delta_{[\gamma]}\to\widetilde{\cal G}_X.$$
\noindent It has the property of taking the point $0\in\Delta_{[\gamma]}$ (i.e. the Iwasawa manifold $X=X_0$, the marked point in $\Delta_{[\gamma]}$) to the Aeppli cohomology class $[\omega_0^2]_A\in{\cal G}_X$ of the canonical Gauduchon metric $\omega_0$ on $X$ (the marked point in the Gauduchon cone ${\cal G}_X$). The mirror map $\widetilde{\cal M}$ is then proved in Theorem \ref{The:mirror-map} to be a {\it local biholomorphism} whose differential at $0\in\Delta_{[\gamma]}$ is the canonical isomorphism $H^{2,\,1}_{[\gamma]}(X,\,\C)\simeq H^{2,\,2}_A(X,\,\C)$ of Proposition \ref{Prop:introd_H21-gamma_H22_A}. The analogous statement holds at every $t\in\Delta_{[\gamma]}$ after we observe a canonical isomorphism $H^{2,\,1}_{[\gamma]}(X_t,\,\C)\simeq H^{2,\,2}_A(X_t,\,\C)$ (cf. Observation \ref{Obs:H21_gamma-H22_isomorphism}).

The mirror map is defined by ``complexification'' of what we call the {\bf positive mirror map} defined (cf. Definition \ref{Def:mirror-map_def}) by
$${\cal M}:\Delta_{[\gamma]}\to{\cal G}_X, \hspace{3ex} t\mapsto \bigg[(\omega_t^{1,\,1})^2\bigg]_A.$$

 We hope that these methods can be extended to other classes of compact complex manifolds. The ultimate goal is to get a general mirror symmetry theory asserting that every compact complex $n$-dimensional {\it sGG manifold} $X$ (possibly, but not necessarily, assumed to be $\partial\bar\partial$) whose canonical bundle $K_X$ is {\it trivial} and having some other familiar properties (e.g. {\it unobstructedness} of its Kuranishi family, degeneration at $E_2$ of its Fr\"olicher spectral sequence, etc) admits a {\it mirror dual} $\widetilde{X}$ such that the moduli space {\it EssDef}\,$(X)$ of {\it essential deformations of the complex structure} of $X$ (defined, e.g. using the space $E_2^{n-1,\,1}$ on the second page of the Fr\"olicher spectral sequence of $X$) corresponds via a local biholomorphism to the complexified Gauduchon cone $\widetilde{\cal G}_{\widetilde{X}}$ of $\widetilde{X}$ and vice versa. This local biholomorphism ought to induce an isomorphism of variations of Hodge structures parametrised respectively by {\it EssDef}\,$(X)$ and $\widetilde{\cal G}_{\widetilde{X}}$. This isomorphism may turn out to be {\it holomorphic} at the level of certain parts of the two VHS's and {\it anti-holomorphic} for the other parts. Certain non-linear PDEs (e.g. of the Monge-Amp\`ere or Hessian type) are expected to produce canonical metrics representing Aeppli cohomology classes in the Gauduchon cone. Some classes of nilmanifodls and solvmanifolds provide a fertile testing ground for this conjecture.

\vspace{2ex}

\noindent {\bf Acknowledgments.} Work on this paper started in Montr\'eal as a joint project with C. Mourougane when we were both visiting the UMI CNRS-CRM and the CIRGET at the UQ\`AM. The author is very grateful to C. Mourougane for contributing ideas, observations and expertise on various Hodge-theoretical topics, especially in the complex-structure part of the paper (sections $\S.$\ref{section:preliminaries} -- $\S.$\ref{section:coordinates_Delta}). Many thanks are also due to the CNRS for making the author's stay in Montr\'eal possible and to V. Apostolov, S. Lu and E. Giroux for stimulating discussions and their interest in this work.

\section{Preliminaries}\label{section:preliminaries}

We use the set up of the paragraph~\ref{subsection:Iwasawa}.
Recall that the $\C^3$-valued holomorphic $1$-form on $G$
$$G\ni M=\begin{pmatrix}1 & z_1 & z_3\\   
			  0 & 1 & z_2\\
		  0 & 0 & 1\end{pmatrix}  \mapsto M^{-1}\, dM = \begin{pmatrix}0 & dz_1 & dz_3-z_1\, dz_2\\   
			  0 & 0 & dz_2\\
		  0 & 0 & 0\end{pmatrix}  $$ 
\noindent is invariant under the action of $\Gamma$, hence descends to a holomorphic $1$-form on $X$ 
giving rise to three holomorphic $1$-forms $\alpha,\beta, \gamma$ on the Iwasawa manifold induced respectively by the forms $dz_1, dz_2, dz_3-z_1dz_2$ on $\C^3$. 
Thus, $\alpha, \beta, \gamma \in C^{\infty}_{1,\,0}(X,\,\C)$ and $$\bar\partial\alpha = \bar\partial\beta = \bar\partial\gamma = 0.$$ 
Since $dz_1, dz_2$ are closed and $d(dz_3-z_1dz_2) = - dz_1\wedge dz_2$, we get
\begin{equation}\label{eqn:alpha,beta,gamma_d}d\alpha = d\beta = 0 \hspace{2ex} 
\mbox{and}  \hspace{2ex} d\gamma = \partial\gamma = -\alpha\wedge\beta \neq 0 \hspace{3ex} \mbox{on}\hspace{1ex} X.\end{equation}
From the exact sequence
$$0\to\pi^\star\Omega^1_B\to\Omega^1_X\to\Omega^1_{X/B}\to 0,$$
as the map $H^1(\pi^\star\Omega^1_B)=H^1(\Oc_X)\otimes H^0(\pi^\star\Omega^1_B)\to H^1(\Oc_X)\otimes H^0(\Omega^1_X)=H^1(\Omega_X^1)$ is injective 
due to the triviality of $\Omega^1_B$ and $\Omega^1_X$, we get the simple presentation
$$0\to H^0(\pi^\star\Omega^1_B)\to H^0(\Omega^1_X)\to H^0(\Omega^1_{X/B})\to 0.$$
Thus, the form $\gamma$ is a representative of $H^0(\Omega^1_{X/B})$ in $H^0(\Omega^1_X)$. 
In other words, the forms $\alpha$ and $\beta$ are {\it horizontal} (i.e. coming from $B$), while $\gamma$ is {\it vertical} (i.e. lives on the fibres). 
It follows that the De Rham cohomology of $X$ reads
\begin{eqnarray}\label{eqn:DeRham}
\nonumber H^1(X,\C)&=&\bigg\langle \{\alpha\}_{DR},\, \{\beta\}_{DR} , \{\bar\alpha\}_{DR},\, \{\bar\beta\}_{DR}\bigg\rangle
=\pi^\star H^1(B,\C),\\
\nonumber\pi^\star H^2(B,\C)&=&\bigg\langle \{\alpha\wedge\bar\alpha\}_{DR},\, \{\alpha\wedge\bar\beta\}_{DR} , \{\beta\wedge\bar\alpha\}_{DR},\, 
\{\beta\wedge\bar\beta\}_{DR}\bigg\rangle 
\simeq  H^{1,\,1}_{BC}(X,\,\C) \simeq \pi^\star H^{1,\,1}(B,\C)\\
\nonumber H^2(X,\C)&=&\pi^\star H^2(B,\C)\oplus\bigg\langle \{\gamma\wedge\alpha\}_{DR},\, \{\gamma\wedge\beta\}_{DR} \bigg\rangle
\oplus\bigg\langle \{\bar\gamma\wedge\bar\alpha\}_{DR},\, \{\bar\gamma\wedge\bar\beta\}_{DR}\bigg\rangle,\\
\nonumber \pi^\star H^3(B,\C)&=&0,\\
\nonumber H^3(X,\C)&=&\bigg\langle\{\alpha\wedge\beta\wedge\gamma\}_{DR}\bigg\rangle\oplus
\{\gamma\wedge\pi^\star H^{1,\,1}(B,\C)\}_{DR}\oplus\{\bar\gamma\wedge\pi^\star H^{1,\,1}(B,\C)\}_{DR}\\
\nonumber & & \oplus \ \bigg\langle\{\bar\alpha\wedge\bar\beta\wedge\bar\gamma\}_{DR}\bigg\rangle\\
\nonumber H^4_{DR}(X,\C) & = & \bigg\langle \{\alpha\wedge\beta\wedge\gamma\wedge\bar\alpha\}_{DR},\, \{\alpha\wedge\beta\wedge\gamma\wedge\bar\beta\}_{DR}\bigg\rangle \\
\nonumber & &\oplus \ \bigg\langle \{\alpha\wedge\gamma\wedge\bar\alpha\wedge\bar\gamma\}_{DR},\, \{\alpha\wedge\gamma\wedge\bar\beta\wedge\bar\gamma\}_{DR},
\, \{\beta\wedge\gamma\wedge\bar\alpha\wedge\bar\gamma\}_{DR},\, \{\beta\wedge\gamma\wedge\bar\beta\wedge\bar\gamma\}_{DR} \bigg\rangle \\
  & &\oplus \ \bigg\langle \{\alpha\wedge\bar\alpha\wedge\bar\beta\wedge\bar\gamma\}_{DR}, \, \{\beta\wedge\bar\alpha\wedge\bar\beta\wedge\bar\gamma\}_{DR}\bigg\rangle.
\end{eqnarray}

Since the holomorphic $1$-form $\gamma$ is not closed, the Fr\"olicher spectral sequence of $X$ does not degenerate at $E_1$. 
Furthermore, thanks to (\ref{eqn:alpha,beta,gamma_d}), the triple Massey product of the De Rham cohomology classes $\{\alpha\}, \{\beta\}, \{\beta\}\in H^1_{DR}(X,\,\C)$ is
$$\langle\alpha,\beta,\beta\rangle=\{\beta\wedge\gamma\}_{DR}\in H^2(X,\C)\slash\{\alpha\}\cup H^1(X,\,\C) + \{\beta\}\cup H^1(X,\C).$$ 
\noindent Thus, $\langle\alpha,\beta,\beta\rangle\neq 0$ thanks to (\ref{eqn:DeRham}) and to $\{\alpha\}\cup H^1(X,\,\C) + \{\beta\}\cup H^1(X,\C)=\pi^\star H^2(B,\C)$. 
Therefore, no complex structure on the $C^{\infty}$ manifold underlying the Iwasawa manifold (in particular, no deformation of $X$) is K\"ahler or even $\partial\bar\partial$.
However, the Iwasawa manifold supports {\it balanced} metric structures, as is well known (cf. e.g. [AB91, Remark 3.1]).

It is known ([Nak75], [Sch07], [Ang11]) that the forms $\alpha, \beta, \gamma$ generate the entire cohomology of $X$. 
For example, we shall need the following descriptions in terms of generators of the following cohomology groups:
\begin{eqnarray}\label{eqn:cohomology_explicit}
\nonumber H^{1,\,0}_{\bar\partial}(X,\C) & = & \bigg\langle [\alpha]_{\bar\partial},\, [\beta]_{\bar\partial},\, [\gamma]_{\bar\partial}\bigg\rangle, \hspace{2ex} H^{0,\,1}_{\bar\partial}(X,\C) = \bigg\langle [\overline{\alpha}]_{\bar\partial},\, [\overline{\beta}]_{\bar\partial}\bigg\rangle=\pi^\star H^{0,\,1}_{\bar\partial}(B,\C),\\
\nonumber H^{1,\,1}_{\bar\partial}(X,\,\C) & = & \bigg\langle [\alpha\wedge\overline{\alpha}]_{\bar\partial},\, [\alpha\wedge\overline{\beta}]_{\bar\partial},\, [\beta\wedge\overline{\alpha}]_{\bar\partial},\, [\beta\wedge\overline{\beta}]_{\bar\partial},\,  [\gamma\wedge\overline{\alpha}]_{\bar\partial},\, [\gamma\wedge\overline{\beta}]_{\bar\partial}\bigg\rangle,\\
\nonumber H^{3,\,0}_{\bar\partial}(X,\C) & = & \bigg\langle [\alpha\wedge\beta\wedge\gamma]_{\bar\partial}\bigg\rangle, \hspace{2ex} H^{0,\,3}_{\bar\partial}(X,\C) = \bigg\langle [\overline{\alpha}\wedge\overline{\beta}\wedge\overline{\gamma}]_{\bar\partial}\bigg\rangle,  \\
\nonumber H^{2,\,1}_{\bar\partial}(X,\,\C) & = & \bigg\langle [\alpha\wedge\gamma\wedge\overline{\alpha}]_{\bar\partial},\, [\alpha\wedge\gamma\wedge\overline{\beta}]_{\bar\partial},\, [\beta\wedge\gamma\wedge\overline{\alpha}]_{\bar\partial},\, [\beta\wedge\gamma\wedge\overline{\beta}]_{\bar\partial}\bigg\rangle \oplus  \bigg\langle [\alpha\wedge\beta\wedge\overline{\alpha}]_{\bar\partial},\, [\alpha\wedge\beta\wedge\overline{\beta}]_{\bar\partial}\bigg\rangle \\
  & = &  [\gamma\wedge\pi^\star H^{1,\,1}_{\bar\partial}(B,\,\C)]_{\bar\partial}\oplus \pi^\star H^{2,\,1}_{\bar\partial}(B,\,\C),  \\
\nonumber H^{1,\,2}_{\bar\partial}(X,\,\C) & = & \bigg\langle [\alpha\wedge\overline{\alpha}\wedge\overline{\gamma}]_{\bar\partial},\, [\beta\wedge\overline{\alpha}\wedge\overline{\gamma}]_{\bar\partial},\, [\alpha\wedge\overline{\beta}\wedge\overline{\gamma}]_{\bar\partial},\, [\beta\wedge\overline{\beta}\wedge\overline{\gamma}]_{\bar\partial}\bigg\rangle \oplus  \bigg\langle [\gamma\wedge\overline{\alpha}\wedge\overline{\gamma}]_{\bar\partial},\, [\gamma\wedge\overline{\beta}\wedge\overline{\gamma}]_{\bar\partial}\bigg\rangle.
\end{eqnarray}

\section{Deformations}

\subsection{The Calabi-Yau isomorphism}\label{subsection:C-Y_isomorphism}

Since $T^{1,\,0}X$ is trivial, the Iwasawa manifold $X$ is, in particular, a Calabi-Yau manifold. 
Since its Kuranishi family $(X_t)_{t\in\Delta}$ is {\it unobstructed} by Nakamura [Nak75], 
its base $\Delta$ can be identified with an open ball in the Dolbeault cohomology group $H^{0,\,1}(X,\,T^{1,\,0}X)$ 
of classes of smooth $\bar\partial$-closed $(0,\,1)$-forms with values in the holomorphic tangent bundle $T^{1,\,0}X$. 
In particular, the holomorphic tangent space $T^{1,\,0}_0\Delta$ to $\Delta$ at $0$ is isomorphic, via the Kodaira-Spencer map $\rho$, to $H^{0,\,1}(X,\,T^{1,\,0}X)$.

On the other hand, the Calabi-Yau structure of $X$ is defined by any nowhere-vanishing holomorphic $(3,\,0)$-form $\Omega$ on $X$. 
All such forms are equal up to a multiplicative constant, so we may choose, for example, $\Omega:=\alpha\wedge\beta\wedge\gamma$. 
We get the following isomorphisms, the second of which will be called the {\it Calabi-Yau isomorphism}: 
\begin{eqnarray}\label{eqn:CY_isomorphism} 
T^{1,\,0}_0\Delta \underset{\simeq}{\overset{\rho}{\longrightarrow}} H^{0,\,1}(X,\,T^{1,\,0}X) & \underset{\simeq}{\overset{T_{\Omega}}{\longrightarrow}} 
& H^{2,\,1}_{\bar\partial}(X,\,\C),  \\
\nonumber [\theta]_{\bar\partial} & \longmapsto &  [\theta\lrcorner \Omega]_{\bar\partial}.
\end{eqnarray} 

The Calabi-Yau isomorphism can be described explicitly in the case of the Iwasawa manifold. 
Let $\xi_{\alpha}, \xi_{\beta}, \xi_{\gamma}\in H^0(X,\,T^{1,\,0}X)$ be the frame of holomorphic vector fields of type $(1,\,0)$ dual to the frame $\{\alpha,\,\beta,\,\gamma\}$. 
Thus, 
\begin{equation*}
\xi_{\alpha}=p_\star\bigg(\frac{\partial}{\partial z_1}\bigg), \xi_{\beta}=p_\star\bigg(\frac{\partial}{\partial z_2}+z_1\frac{\partial}{\partial z_3}\bigg)
\hspace{2ex} \mbox{and} \hspace{2ex} \xi_{\gamma}=p_\star\bigg(\frac{\partial}{\partial z_3}\bigg)
\end{equation*}
where $p_\star$ stands for the differential of the quotient map $p: G\to X$.

Now, $T^{1,\,0}X$ being trivial, $H^{0,\,1}(X,\,T^{1,\,0}X)=H^{0,\,1}(X,\,\C)\otimes H^0(X,\,T^{1,\,0}X)$
is generated (cf. [Nak75]) by the Dolbeault cohomology classes 
\begin{equation}\label{eqn:H^01_generators}H^{0,\,1}(X,\,T^{1,\,0}X)=\bigg\langle[\overline{\alpha}\otimes\xi_{\alpha}], [\overline{\alpha}\otimes\xi_{\beta}], 
[\overline{\alpha}\otimes\xi_{\gamma}], [\overline{\beta}\otimes\xi_{\alpha}], [\overline{\beta}\otimes\xi_{\beta}], 
[\overline{\beta}\otimes\xi_{\gamma}]\bigg\rangle.\end{equation}
\noindent  In particular, $\dim_{\C}H^{0,\,1}(X,\,T^{1,\,0}X) = 6$, so the Kuranishi family of $X$ is $6$-dimensional. 
The images under the Calabi-Yau isomorphism $T_{\Omega}$ of these generators 
are $[(\overline{\alpha}\otimes\xi_{\alpha})\lrcorner(\alpha\wedge\beta\wedge\gamma)]_{\bar\partial} 
= [\beta\wedge\gamma\wedge\overline{\alpha}]_{\bar\partial}$ 
and its analogues for the remaining five generators, hence the description of $H^{2,\,1}_{\bar\partial}(X,\,\C)$ in (\ref{eqn:cohomology_explicit}).

For future reference, we recall the following standard piece of notation. We let $\alpha_1=\alpha, \alpha_2=\beta, \xi_1=\xi_\alpha, \xi_2=\xi_\beta, \xi_3=\xi_\gamma$ and denote by $t_{i\lambda}$, with $1\leq\lambda\leq 2$ and $1\leq i\leq 3$, the coordinates induced on $H^{0,\,1}(X,T^{1,\,0}X)$ by the basis $([\overline{\alpha}_\lambda\otimes\xi_i])_{1\leq\lambda\leq 2\atop 1\leq i\leq 3}$ (cf. (\ref{eqn:H^01_generators})). Since $\Delta$ is an open ball about the origin in $H^{0,\,1}(X,T^{1,\,0}X)$, we can view $(t_{11},\,t_{12},\,t_{21},\,t_{22},\,t_{31},\,t_{32})$ as coordinates on $\Delta$. Thus, the points $t\in\Delta\subset H^{0,\,1}(X,T^{1,\,0}X)$ can be written uniquely as

$$t=\sum _{1\leq\lambda\leq 2\atop 1\leq i\leq 3} t_{i\lambda}\,\overline{\alpha}_\lambda\xi_i\in H^{0,\,1}(X,T^{1,\,0}X).$$

\subsection{The essential deformations}\label{subsection:essential-def}

The sequence of low-degree terms in the Leray spectral sequence induced by $\pi$ and $TX$ (the sheaf associated with the holomorphic tangent bundle $T^{1,\,0}X$) 
whose second page is given by $E_2^{p,q}=H^p(B,\Rc^q\pi_\star TX)$, together with the cohomologies of the short exact sequence 
$$0\to T\pi\to TX\to \pi^\star TB\to 0$$
\noindent defining the relative tangent bundle to the submersion $\pi$, reads~\footnote{The notation used here refers to sheaves. 
We shall often use in the sequel the vector-bundle notation. For example, $H^1(B,\,TB)$ (in sheaf notation) coincides with $H^{0,\,1}(B,\,T^{1,\,0}B)$ (in vector-bundle notation).}
$$
  \xymatrix{ 
0\ar[r]  &H^1(B,\pi_\star T\pi)\ar[d] \ar[r]           &     H^1(X,T\pi)\ar[d] \ar[r]                           & H^0(B,\Rc^1\pi_\star T\pi)\ar[d]            
  \\
0\ar[r]&H^1(B,\pi_\star TX)\ar[d]\ar[r]^{iso}&H^1(X,TX)\ar[d] \ar[r]&H^0(B,\Rc^1\pi_\star TX)\ar[d]\\
0\ar[r]  &H^1(B, TB)  \ar[r]^{iso}         &      H^1(X,\pi^\star TB) \ar[r]   \ar@/_2em/[u]_L        & H^0(B,\Rc^1\pi_\star\Oc_X\otimes TB).             
  }
$$

As $TX$ is trivial and as all $(0,1)$-Dolbeault cohomology classes on $X$ come from classes on $B$ 
(i.e. in terms of the Leray filtration, we have $H^1(X,\Oc_X)=\pi^\star H^1(B,\Oc_B)=F^1H^1(X,\Oc_X)$), 
the horizontal map $H^1(B,\pi_\star TX)\to H^1(X,TX)$ is an isomorphism. As $\bar\gamma\otimes\xi_{\cdot}$ is $\bar\partial_{\pi}$-closed 
(i.e. $\bar\partial(\bar\gamma\otimes\xi_{\cdot})=-\bar\alpha\wedge\bar\beta\otimes\xi_{\cdot}$ vanishes on the fibres of $\pi$), 
it defines an element in $H^0(B,\Rc^1\pi_\star T\pi)$, i.e. a deformation of the fibres of $\pi$. 
However, since $\bar\gamma\otimes\xi_{\cdot}$ is not $\bar\partial$-closed, this does not lift to a global deformation of $X$.

\noindent Now, consider the quotient map 
$$H^{0,\,1}(X,T^{1,\,0}X)=H^{0,\,1}(X)\otimes H^0(X,TX)\to H^{0,\,1}(X)\otimes H^0(X,\pi^\star TB)=H^{0,\,1}(X,\pi^\star T^{1,\,0}B)$$
given by the differential of the submersion $\pi$ 
and choose its lift $L :H^{0,\,1}(X,\pi^\star T^{1,\,0}B) \to H^{0,\,1}(X,T^{1,\,0}X)$ defined by
$$L : (\pi\circ p)_\star\bigg(\frac{\partial}{\partial z_1}\bigg)\mapsto \xi_{\alpha}, \hspace{3ex} (\pi\circ p)_\star\bigg(\frac{\partial}{\partial z_2}\bigg)\mapsto\xi_{\beta}.$$

\noindent Consider the subspace of $H^{0,\,1}(X,\,T^{1,\,0}X)$ defined by 
\begin{equation}\label{eqn:H^01_gamma_generators}
H^{0,\,1}_{[\gamma]}(X,\,T^{1,\,0}X):= L H^{0,\,1}(X,\,\pi^\star T^{1,\,0} B)=
\bigg\langle [\overline{\alpha}\otimes\xi_{\alpha}],\, [\overline{\alpha}\otimes\xi_{\beta}],\, [\overline{\beta}\otimes\xi_{\alpha}],\,
[\overline{\beta}\otimes\xi_{\beta}]\bigg\rangle
\end{equation}
This amounts to singling out, for every first-order deformation of $B$ 
(i.e. for every element of $H^{0,\,1}(B,T^{1,\,0}B)$), a suitable first-order automorphism in $H^1(B,\pi_\star T\pi)$ of the fibres of $\pi$.

\begin{Lem}\label{Lem:H21_gamma_contraction} 
The map $H^{0,\,1}(X,\,T^{1,\,0}X) \stackrel{\cdot\lrcorner[\gamma]_{\bar\partial}}{\longrightarrow} H^{0,\,1}_{\bar\partial}(X,\,\C), 
\hspace{2ex} [\theta]\mapsto[\theta\lrcorner\gamma]_{\bar\partial},$ is well defined and its kernel is precisely $H^{0,\,1}_{[\gamma]}(X,\,T^{1,\,0}X)$, i.e.
\begin{equation}\label{eqn:H21_gamma_contraction}H^{0,\,1}_{[\gamma]}(X,\,T^{1,\,0}X)=\bigg\{[\theta]\in H^{0,\,1}(X,\,T^{1,\,0}X)
\bigg\slash [\theta\lrcorner\gamma] = 0\in H^{0,\,1}_{\bar\partial}(X,\,\C)\bigg\}.\end{equation}
\end{Lem}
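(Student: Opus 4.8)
The plan is to reduce the whole statement to the explicit description of the cohomology of the Iwasawa manifold recalled in~(\ref{eqn:cohomology_explicit}) and in~(\ref{eqn:H^01_generators}), together with the elementary fact that $\xi_\alpha,\xi_\beta,\xi_\gamma$ is the frame dual to $\alpha,\beta,\gamma$, so that $\xi_\alpha\lrcorner\gamma=\xi_\beta\lrcorner\gamma=0$ and $\xi_\gamma\lrcorner\gamma=1$ identically on $X$. First I would settle well-definedness. Since $\gamma$ is a \emph{holomorphic} $1$-form, i.e. $\bar\partial\gamma=0$, and the natural pairing $T^{1,\,0}X\otimes\Omega^1_X\to\Oc_X$ is a map of holomorphic bundles, contracting a $T^{1,\,0}X$-valued form against $\gamma$ intertwines the Dolbeault operators: for every $T^{1,\,0}X$-valued $(0,\,q)$-form $\theta$ one has $\bar\partial(\theta\lrcorner\gamma)=(\bar\partial\theta)\lrcorner\gamma$ (there is no second term because $\bar\partial\gamma=0$). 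Hence $\bar\partial\theta=0$ forces $\bar\partial(\theta\lrcorner\gamma)=0$, and if $\theta=\bar\partial\zeta$ with $\zeta\in C^\infty(X,\,T^{1,\,0}X)$ then $\theta\lrcorner\gamma=\bar\partial(\zeta\lrcorner\gamma)$ is $\bar\partial$-exact; so $[\theta\lrcorner\gamma]_{\bar\partial}\in H^{0,\,1}_{\bar\partial}(X,\,\C)$ depends only on $[\theta]$, and the map $\cdot\lrcorner[\gamma]_{\bar\partial}$ is well defined.

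For the kernel I would simply evaluate the map on the six generators of $H^{0,\,1}(X,\,T^{1,\,0}X)$ listed in~(\ref{eqn:H^01_generators}). Using $(\bar v\otimes\xi)\lrcorner\gamma=\bar v\,(\xi\lrcorner\gamma)$ and the duality relations above, the four classes $[\overline{\alpha}\otimes\xi_\alpha],[\overline{\alpha}\otimes\xi_\beta],[\overline{\beta}\otimes\xi_\alpha],[\overline{\beta}\otimes\xi_\beta]$ are sent to $0$, while $[\overline{\alpha}\otimes\xi_\gamma]\mapsto[\overline{\alpha}]_{\bar\partial}$ and $[\overline{\beta}\otimes\xi_\gamma]\mapsto[\overline{\beta}]_{\bar\partial}$. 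Consequently $H^{0,\,1}_{[\gamma]}(X,\,T^{1,\,0}X)$, which by~(\ref{eqn:H^01_gamma_generators}) is exactly the span of those first four classes, is contained in $\ker(\cdot\lrcorner[\gamma]_{\bar\partial})$; conversely, writing a general class as $[\theta]=\sum_{j\in\{\alpha,\beta\},\,k\in\{\alpha,\beta,\gamma\}}c_{jk}\,[\overline{j}\otimes\xi_k]$, one gets $[\theta\lrcorner\gamma]_{\bar\partial}=c_{\alpha\gamma}[\overline{\alpha}]_{\bar\partial}+c_{\beta\gamma}[\overline{\beta}]_{\bar\partial}$, which vanishes iff $c_{\alpha\gamma}=c_{\beta\gamma}=0$ because $[\overline{\alpha}]_{\bar\partial},[\overline{\beta}]_{\bar\partial}$ form a basis of $H^{0,\,1}_{\bar\partial}(X,\,\C)$ by~(\ref{eqn:cohomology_explicit}). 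This is precisely $[\theta]\in H^{0,\,1}_{[\gamma]}(X,\,T^{1,\,0}X)$, giving~(\ref{eqn:H21_gamma_contraction}); equivalently, the map is surjective of rank $2$ and rank--nullity applied to $\dim_\C H^{0,\,1}(X,\,T^{1,\,0}X)=6$, $\dim_\C H^{0,\,1}_{[\gamma]}(X,\,T^{1,\,0}X)=4$ closes the argument.

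There is essentially no hard step: the only points that require a little care are the sign-free identity $\bar\partial(\theta\lrcorner\gamma)=(\bar\partial\theta)\lrcorner\gamma$, which is where the holomorphy $\bar\partial\gamma=0$ is used, and the input — due to Nakamura and already recorded in~(\ref{eqn:H^01_generators}) and~(\ref{eqn:cohomology_explicit}) — that the six (resp. two) displayed classes are genuinely linearly independent in $H^{0,\,1}(X,\,T^{1,\,0}X)$ (resp. in $H^{0,\,1}_{\bar\partial}(X,\,\C)$). Granting these, the lemma follows from the two short computations above.
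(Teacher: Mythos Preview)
Your proof is correct and follows essentially the same approach as the paper: both establish well-definedness via the Leibniz-type identity using $\bar\partial\gamma=0$, and both deduce the kernel description from the explicit generators~(\ref{eqn:H^01_generators}) and~(\ref{eqn:H^01_gamma_generators}). Your version is merely more explicit, spelling out the images of all six generators and invoking the linear independence of $[\overline\alpha]_{\bar\partial},[\overline\beta]_{\bar\partial}$ in $H^{0,\,1}_{\bar\partial}(X,\C)$, whereas the paper simply remarks that the identity ``follows at once'' from the two cited descriptions.
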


\begin{proof}

For every $[\theta]\in H^{0,\,1}(X,\,T^{1,\,0}X)$, we have $\bar\partial(\theta\lrcorner\gamma) = (\bar\partial\theta)\lrcorner\gamma + \theta\lrcorner(\bar\partial\gamma) = 0$ 
since $\bar\partial\theta = 0$ (where $\bar\partial$ is the canonical $(0,\,1)$-connection of the holomorphic vector bundle $T^{1,\,0}X$ 
and $\theta$ is viewed as a $\bar\partial$-closed $(0,\,1)$-form with values in this bundle) and $\bar\partial\gamma=0$. 
Thus, $\theta\lrcorner\gamma$ defines indeed a Dolbeault cohomology class of type $(0,\,1)$ 
which furthermore is independent of the choice of representative $\theta$ of the class $[\theta]\in H^{0,\,1}(X,\,T^{1,\,0}X)$. 
To see this last point, take two cohomologous $\theta_1, \theta_2$. Then, $\theta_1 - \theta_2 = \bar\partial\xi$ for some $\xi\in C^{\infty}(X,\,T^{1,\,0}X)$.
We have $\bar\partial(\xi\lrcorner\gamma) = (\bar\partial\xi)\lrcorner\gamma - \xi\lrcorner(\bar\partial\gamma) = (\bar\partial\xi)\lrcorner\gamma$. 
This proves the well-definedness of the map $\cdot\lrcorner[
\gamma]_{\bar\partial}$. Identity (\ref{eqn:H21_gamma_contraction}) follows at once from (\ref{eqn:H^01_generators}) and (\ref{eqn:H^01_gamma_generators}).  

\end{proof}

\begin{Def}\label{Def:Delta_gamma} Bearing in mind that $\Delta\subset H^{0,\,1}(X,\,T^{1,\,0}X)$ is an open subset, let 
\begin{equation*}
\Delta_{[\gamma]}:=\Delta\cap H^{0,\,1}_{[\gamma]}(X,\,T^{1,\,0}X).\end{equation*} 

\end{Def}

So formally, thanks to (\ref{eqn:H21_gamma_contraction}) and  by analogy with polarising $(1,\,1)$-classes
\footnote{Recall that in the standard case of a K\"ahler class $[\omega]$ on $X_0$, the fibres $X_t$ {\it polarised} by $[\omega]$,
i.e. the fibres $X_t$ for which $[\omega]$ remains of $J_t$-type $(1,\,1)$, are precisely those corresponding to $[\theta]\in H^{0,\,1}(X_0,\,T^{1,\,0}X_0)$ 
satisfying the condition $[\theta\lrcorner\omega]=0$ in $H^{0,\,2}(X_0,\,\C)$.}, 
the family of deformations $(X_t)_{t\in\Delta_{[\gamma]}}$ is ``polarised'' by the $(1,\,0)$-class $[\gamma]_{\bar\partial}\in H^{1,\,0}_{\bar\partial}(X,\,\C)$.

It follows from Nakamura's description of the Kuranishi family of the Iwasawa manifold ([Nak75, p. 96]) 
that the manifolds $X_t$ with $t\in\Delta_{[\gamma]}\setminus\{0\}$ are contained in the union of Nakamura's classes $(ii)$ and $(iii)$. 
They are not complex parallelisable. 
Meanwhile, the removed deformations $X_t$ with $t\in\Delta\setminus\{0\}$ corresponding to 
$[\theta\lrcorner\Omega]\in\langle[\alpha\wedge\beta\wedge\overline{\alpha}]_{\bar\partial},\, 
[\alpha\wedge\beta\wedge\overline{\beta}]_{\bar\partial}\rangle \subset H^{2,\,1}_{\bar\partial}(X,\,\C)$ make up Nakamura's class $(i)$. 
They are all complex parallelisable (and, in a sense, have the {\it same} geometry as the Iwasawa manifold $X=X_0$). 
So, no geometric information is lost by these removals.
For this reason, we call $(X_t)_{t\in\Delta_{[\gamma]}}$ the local universal family of {\bf essential deformations} of $X$.

In terms of coordinates, we see that $(t_{11},\,t_{12},\,t_{21},\,t_{22})$ define coordinates on $\Delta_{[\gamma]}$. Consequently, the points $t\in\Delta_{[\gamma]}\subset H^{0,\,1}_{[\gamma]}(X,T^{1,\,0}X)$ can be written uniquely as

$$t=\sum _{1\leq\lambda\leq 2\atop 1\leq i\leq 2} t_{i\lambda}\,\overline{\alpha}_\lambda\xi_i\in H^{0,\,1}_{[\gamma]}(X,T^{1,\,0}X).$$

\section{Weight-three Hodge decomposition}\label{section:Hodge-decomp}

\subsection{The $(3,0)$-part}\label{subsection:3-0}

We start with a simple general observation.
\begin{Lem}\label{Lem:Hn0_injection} Let $Y$ be an arbitrary compact complex manifold with $\mbox{dim}_{\C}Y=n$. 
Then, there is a  canonical injection $H^{n,\,0}_{\bar\partial}(Y,\,\C)\hookrightarrow H^n_{DR}(Y,\,\C)$.
\end{Lem}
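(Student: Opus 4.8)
The statement asserts that on any compact complex manifold $Y$ of dimension $n$, the space $H^{n,0}_{\bar\partial}(Y,\C)$ of holomorphic $n$-forms injects canonically into $H^n_{DR}(Y,\C)$. The plan is to exploit the fact that a holomorphic $n$-form has no room to acquire a $\bar\partial$-exact correction and is automatically $d$-closed, so the obvious map ``take the Dolbeault class of a holomorphic $n$-form to its De Rham class'' is both well defined and injective.

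First I would observe that $H^{n,0}_{\bar\partial}(Y,\C) = \ker(\bar\partial\colon C^\infty_{n,0}(Y,\C)\to C^\infty_{n,1}(Y,\C))$, since there are no nonzero $(n,-1)$-forms, so every Dolbeault class in bidegree $(n,0)$ has a \emph{unique} representative, namely the holomorphic $n$-form itself; equivalently $H^{n,0}_{\bar\partial}(Y,\C) = H^0(Y,\Omega^n_Y)$. Next, for such a form $u$ we have $du = \partial u + \bar\partial u = \partial u$, and $\partial u$ is an $(n+1,0)$-form on an $n$-dimensional manifold, hence $\partial u = 0$. Therefore $u$ is $d$-closed and defines a class $\{u\}_{DR}\in H^n_{DR}(Y,\C)$. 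This assignment $[u]_{\bar\partial}\mapsto\{u\}_{DR}$ is manifestly well defined because the representative $u$ is unique, and it is $\C$-linear and canonical (it depends only on the complex structure).

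For injectivity, suppose $u$ is a holomorphic $n$-form with $\{u\}_{DR}=0$, i.e. $u = dv$ for some smooth $(n-1)$-form $v$. Decompose $v = v^{n-1,0} + v^{n-2,1}$ by bidegree (all other components vanish for degree reasons). Then $dv = \partial v^{n-2,1}$ in bidegree $(n,0)$ forces $u = \partial v^{n-2,1}$, while comparing the $(n-1,1)$ and $(n-2,2)$ components gives $\bar\partial v^{n-1,0} + \partial v^{n-2,1}=0$ and $\bar\partial v^{n-2,1}=0$... more directly: from $u = dv$ and $u$ of type $(n,0)$, the $(n,0)$-component of $dv$ is $\partial v^{n-1,0}$, so $u=\partial v^{n-1,0}$ with $v^{n-1,0}$ an $(n-1,0)$-form. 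Hence $u$ is $\partial$-exact. But $u$ is also $d$-closed and of bidegree $(n,0)$; pairing $u$ against $\bar u$ and using Stokes, $\int_Y u\wedge\bar u = \int_Y (\partial v^{n-1,0})\wedge \bar u = \pm\int_Y v^{n-1,0}\wedge\partial\bar u = \pm\int_Y v^{n-1,0}\wedge d\bar u = 0$ since $\bar u$ is $d$-closed (it is anti-holomorphic of bidegree $(0,n)$, so $d\bar u = \bar\partial\bar u + \partial\bar u = 0$, the first term vanishing by anti-holomorphy and the second for degree reasons). As $i^{n^2}u\wedge\bar u$ is a nonnegative volume form, $\int_Y u\wedge\bar u = 0$ forces $u\equiv 0$, hence $[u]_{\bar\partial}=0$.

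The only mild subtlety — and the place a careless argument could go wrong — is the passage from ``$u$ is $d$-exact'' to ``$u$ is $\partial$-exact'': one must check that only the $(n-1,0)$-component of the primitive $v$ contributes to the $(n,0)$-part of $dv$, which is immediate once $v$ is decomposed by bidegree. Everything else is bookkeeping with bidegrees and a single application of Stokes' theorem together with the positivity of $i^{n^2}u\wedge\bar u$; I do not anticipate any real obstacle.
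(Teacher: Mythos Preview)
Your argument is correct and follows essentially the same line as the paper's proof: unique representatives in bidegree $(n,0)$, $d$-closedness for bidegree reasons, then the Stokes/positivity computation $\int_Y i^{n^2}u\wedge\bar u = 0 \Rightarrow u=0$ (the paper integrates by parts via $\bar u = \bar\partial\bar v$ rather than via $u = \partial v^{n-1,0}$, but these are conjugate versions of the same step). One cosmetic slip: your claim that $v$ has only components of bidegree $(n-1,0)$ and $(n-2,1)$ is false in general --- it can have all bidegrees $(n-1-k,k)$ --- but since only $v^{n-1,0}$ contributes to the $(n,0)$-part of $dv$ (as you correctly note after your self-correction), this does not affect the argument.
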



\begin{proof}
It is clear that $H^{n,\,0}_{\bar\partial}(Y,\,\C) = C^{\infty}_{n,\,0}(Y,\,\C)\cap\ker\bar\partial$ since every Dolbeault cohomology class $[u]_{\bar\partial}$ 
of bidegree $(n,\,0)$ has a unique representative $u$. Indeed, zero is the only $\bar\partial$-exact $(n,\,0)$-form. 
Moreover, every such $(n,0)$-form $u$ is $d$-closed since $\partial u=0$ for bidegree reasons. Therefore, the following map is well defined~:
\begin{equation*}H^{n,\,0}_{\bar\partial}(Y,\,\C)\longrightarrow H^n_{DR}(Y,\,\C), \hspace{3ex} [u]_{\bar\partial}\longmapsto\{u\}_{DR}.\end{equation*}
\noindent It remains to prove that this map is injective, i.e. that $u=0$ whenever $u$ is $d$-exact. Suppose that for a $\bar\partial$-closed $(n,\,0)$-form $u$, 
we have $u=dv$. Then $u = \partial v$ for bidegree reasons. Hence 
$$0\leq\int_Yi^{n^2}\,u\wedge\bar{u} = \int_Yi^{n^2}\,u \wedge \bar\partial\overline{v} 
= (-1)^n\,i^{n^2}\,\int_Y\bar\partial(u\wedge\overline{v})
= (-1)^n\,i^{n^2}\,\int_Y d (u\wedge\overline{v})=0,$$ 
\noindent where the last identity follows from Stokes's theorem.

Since the smooth $(n,\,n)$-form $i^{n^2}\,u\wedge\bar{u}$ is non-negative at every point, this can only happen if $i^{n^2}\,u\wedge\bar{u} = 0$ at every point. 
We get $u=0$ on $X$. Indeed, writing $u=f\,dz_1\wedge\dots\wedge dz_n$ in local coordinates, 
we see that $i^{n^2}\,u\wedge\bar{u} = |f|^2\,idz_1\wedge d\bar{z}_1\wedge\dots\wedge idz_n\wedge d\bar{z}_n$, hence $f=0$ in our situation.  

\end{proof}

\subsection{The $(2,1)$-part: definition of $H^{2,\,1}_{[\gamma]}(X,\,\C)$}\label{subsection:2-1}

The space $H^{2,\,1}_{\bar\partial}(X,\,\C)$ does not inject canonically into $H^3_{DR}(X,\,\C)$ as can be seen from (\ref{eqn:DeRham}) and (\ref{eqn:cohomology_explicit}), 
so there is no standard Hodge decomposition for $H^3_{DR}(X,\,\C)$ on the Iwasawa manifold $X$. 
This can also be seen by a simple dimension count: $b_3=10$, while $h^{3,\,0} + h^{2,\,1} + h^{1,\,2} + h^{0,\,3} = 1 + 6 + 6 + 1 =14 >10$. 
However, we shall shrink the Dolbeault cohomology group of bidegree $(2,\,1)$ in order to make it {\it fit} into $H^3_{DR}(X,\,\C)$ 
and shall thus obtain a corresponding Hodge decomposition of weight $3$ that will be seen to have a precise geometric meaning in terms of the essential deformations of $X$ 
defined in $\S.$\ref{subsection:essential-def}.

\begin{Def}
The $4$-dimensional subspace $H^{2,\,1}_{[\gamma]}(X,\,\C)$ of $H^{2,\,1}_{\bar\partial}(X,\,\C)$ is defined as the image 
of $H^{0,\,1}_{[\gamma]}(X,\,T^{1,\,0}X)= L H^{0,\,1}(X,\,\pi^\star T^{1,\,0} B)$ 
under the Calabi-Yau isomorphism $T_{\Omega}: H^{0,\,1}(X,\,T^{1,\,0}X)\to H^{2,\,1}_{\bar\partial}(X,\,\C)$.

\end{Def}

Thus, by (\ref{eqn:H^01_gamma_generators}), we get
\begin{equation}\label{eqn:H21_gamma_def} 
H^{2,\,1}_{[\gamma]}(X,\,\C)= \bigg\langle [\alpha\wedge\gamma\wedge\overline{\alpha}]_{\bar\partial},\, [\alpha\wedge\gamma\wedge\overline{\beta}]_{\bar\partial},\, 
[\beta\wedge\gamma\wedge\overline{\alpha}]_{\bar\partial},\, [\beta\wedge\gamma\wedge\overline{\beta}]_{\bar\partial}\bigg\rangle
=[\gamma\wedge\pi^\star H^{1,1}(B,\C)]_{\bar\partial}.\end{equation}
\noindent We see from~\eqref{eqn:DeRham} that $H^{2,\,1}_{[\gamma]}(X,\,\C)$ injects into $H^3_{DR}(X,\C)$. Note that, since $[\xi_{\gamma}\lrcorner \Omega]=[\alpha\wedge\beta]=[-d\gamma]=0\in H^2_{DR}(X,\C)$ while $\bar\alpha$ and $\bar\beta$ are closed, 
the image of  $H^{2,\,1}_{[\gamma]}(X,\,\C)$ in $H^3_{DR}(X,\C)$ does depend neither on the choice of the lift $L$~\footnote{It is also the image of the map
$H^0(X,\Omega^1_{X/B})\times \pi^\star H^{1,1}_{\bar\partial}(B,\C)\to H^3_{DR}(X,\C), ([\gamma],[u]_{\bar\partial})
\mapsto\{u\wedge\gamma\}_{DR}$} nor on the choice of $[\gamma]$ in $\frac{H^0(X,\Omega^1_X)}{H^0(\pi^\star\Omega^1_B)}$.

 We get isomorphisms 
\begin{equation*}
T^{1,\,0}_0\Delta_{[\gamma]} \underset{\simeq}{\overset{\rho}{\longrightarrow}} H^{0,\,1}_{[\gamma]}(X,\,T^{1,\,0}X) \underset{\simeq}{\overset{T_{\Omega}}{\longrightarrow}} 
T_{\Omega}(H^{0,\,1}_{[\gamma]}(X,\,T^{1,\,0}X))=:H^{2,\,1}_{[\gamma]}(X,\,\C).
\end{equation*}

\subsection{The $(1,2)$-part}\label{subsection:1-2}

Recall now that if a Hermitian metric $\omega$ has been fixed on an arbitrary compact complex $n$-dimensional manifold $Y$, 
the corresponding Hodge star operator $\star$ (defined by $u\wedge\star\bar{v}=\langle u,v\rangle_\omega\, dV_\omega$) 
leads to the following  isomorphisms for every bidegree $(p,\,q)$~:
\begin{equation*}
\iota : H^{p,\,q}_{\bar\partial}(Y,\,\C)\stackrel{\simeq}{\longrightarrow} H^{n-q,\,n-p}_{\partial}(Y,\,\C)\stackrel{\simeq}{\longrightarrow} {H^{n-p,\,n-q}_{\bar\partial}(Y,\,\C)}.
\end{equation*}

\noindent Indeed, the first  isomorphism is given by $\star$ since $\star\,\Delta'' = \Delta'\,\star$, while the second one, which is $\C$-anti-linear, is defined by conjugation.

In our case, $n=3$ and the Iwasawa manifold $X_0$ is endowed with the canonical metric 
\begin{eqnarray}\label{eqn:Omega_def}
 \omega = \omega_0:=i\alpha\wedge\bar\alpha+i\beta\wedge\bar\beta+i\gamma\wedge\bar\gamma,
\end{eqnarray}
  so we get
\begin{equation*} 
H^{3,\,0}_{\bar\partial}(X,\,\C)\stackrel{\simeq}{\longrightarrow} {H^{0,\,3}_{\bar\partial}(X,\,\C)} \hspace{2ex} 
\mbox{and} \hspace{2ex} H^{2,\,1}_{\bar\partial}(X,\,\C)\stackrel{\simeq}{\longrightarrow} {H^{1,\,2}_{\bar\partial}(X,\,\C)}.
\end{equation*}

Accordingly, we define   \begin{eqnarray}\label{eqn:H12_gamma_def}\nonumber 
H^{1,\,2}_{[\gamma]}(X,\,\C): & = &\iota H^{2,\,1}_{[\gamma]}(X,\,\C) = \bigg\langle  [\star(\beta\wedge\overline{\beta}\wedge\overline{\gamma})]_{\bar\partial},
\, [\star(\alpha\wedge\overline{\beta}\wedge\overline{\gamma})]_{\bar\partial},
\, [\star(\beta\wedge\overline{\alpha}\wedge\overline{\gamma})]_{\bar\partial},
\, [\star(\alpha\wedge\overline{\alpha}\wedge\overline{\gamma})]_{\bar\partial}
\bigg\rangle \\
   & = & \bigg\langle  [\beta\wedge\overline{\beta}\wedge\overline{\gamma}]_{\bar\partial},
\, [\alpha\wedge\overline{\beta}\wedge\overline{\gamma}]_{\bar\partial},
\, [\beta\wedge\overline{\alpha}\wedge\overline{\gamma}]_{\bar\partial},
\, [\alpha\wedge\overline{\alpha}\wedge\overline{\gamma}]_{\bar\partial}
\bigg\rangle    \subset H^{3}_{DR}(X,\,\C),\end{eqnarray}

\noindent where $\star = \star_\omega$ is the Hodge star operator associated with $\omega$. The fact that $\star$ can be dropped from the above definition of $H^{1,\,2}_{[\gamma]}(X,\,\C)$ to give the expression on the second line follows from Lemma \ref{Lem:star_generators_21_gamma} below.

\begin{Prop}\label{Prop:H21_gamma_Hodge} 
Let $X$ be the Iwasawa manifold. 

\noindent There are canonical injections $H^{2,\,1}_{[\gamma]}(X,\,\C)\hookrightarrow H^3_{DR}(X,\,\C)$ and $H^{1,\,2}_{[\gamma]}(X,\,\C)\hookrightarrow H^3_{DR}(X,\,\C)$ 
giving rise to a canonical isomorphism
\begin{equation}\label{eqn:H3_Hodge-decomp}H^3_{DR}(X,\,\C)\simeq H^{3,\,0}_{\bar\partial}(X,\,\C) 
\oplus H^{2,\,1}_{[\gamma]}(X,\,\C) \oplus H^{1,\,2}_{[\gamma]}(X,\,\C) \oplus H^{0,\,3}_{\bar\partial}(X,\,\C)\end{equation}
\noindent that will be called the {\bf essential weight-three Hodge decomposition} of the Iwasawa manifold. 
Moreover, there are canonical isomorphisms given by conjugation
\begin{equation}\label{eqn:H3_Hodge-symm}
H^{3,\,0}_{\bar\partial}(X,\,\C)\simeq\overline{H^{0,\,3}_{\bar\partial}(X,\,\C)} \hspace{2ex} 
\mbox{and} \hspace{2ex} H^{2,\,1}_{[\gamma]}(X,\,\C)\simeq\overline{H^{1,\,2}_{[\gamma]}(X,\,\C)}
~\footnote{\textrm{Note from  the explicit descriptions in~\eqref{eqn:cohomology_explicit}
that this isomorphism does not hold for the full Dolbeault cohomology}}
\end{equation}    
\noindent that will be called the {\bf essential weight-three Hodge symmetry} of the Iwasawa manifold.     
\end{Prop}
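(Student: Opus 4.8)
The plan is to verify all the claimed injections and isomorphisms by working with the explicit generators listed in \eqref{eqn:DeRham} and \eqref{eqn:cohomology_explicit}, since on the Iwasawa manifold the relevant cohomology groups are all finite-dimensional with completely explicit bases built from $\alpha,\beta,\gamma$ and their conjugates. First I would record the auxiliary fact (stated as Lemma \ref{Lem:star_generators_21_gamma}, which I may assume) that on $(X_0,\omega_0)$ one has $\star(\alpha\wedge\overline\alpha\wedge\overline\gamma)$ cohomologous to $\beta\wedge\overline\alpha\wedge\overline\gamma$ and similarly for the other three generators; this is a direct computation with the canonical metric $\omega_0 = i\alpha\wedge\overline\alpha + i\beta\wedge\overline\beta + i\gamma\wedge\overline\gamma$, using that $\alpha,\beta,\overline\alpha,\overline\beta$ are $d$-closed while $d\gamma=-\alpha\wedge\beta$ and $d\overline\gamma=-\overline\alpha\wedge\overline\beta$. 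Granting this, the second expression for $H^{1,\,2}_{[\gamma]}(X,\,\C)$ in \eqref{eqn:H12_gamma_def} holds and, in particular, $H^{1,\,2}_{[\gamma]}(X,\,\C)$ is spanned by the classes $[\alpha\wedge\overline\alpha\wedge\overline\gamma]_{\bar\partial}$, $[\alpha\wedge\overline\beta\wedge\overline\gamma]_{\bar\partial}$, $[\beta\wedge\overline\alpha\wedge\overline\gamma]_{\bar\partial}$, $[\beta\wedge\overline\beta\wedge\overline\gamma]_{\bar\partial}$.

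Next I would establish the two injections into $H^3_{DR}(X,\,\C)$. For $H^{2,\,1}_{[\gamma]}$ this is essentially already observed after \eqref{eqn:H21_gamma_def}: each generator $\mu\wedge\gamma\wedge\overline\nu$ with $\mu,\nu\in\{\alpha,\beta\}$ is $d$-closed (since $d\gamma=-\alpha\wedge\beta$ and $\mu\wedge\alpha\wedge\beta\wedge\overline\nu$ is automatically a multiple of a form that is zero or lands in a controlled piece — in fact $\mu\wedge d\gamma\wedge\overline\nu = -\mu\wedge\alpha\wedge\beta\wedge\overline\nu$ which for $\mu\in\{\alpha,\beta\}$ vanishes), and the resulting De Rham classes are precisely the generators of $\{\gamma\wedge\pi^\star H^{1,1}(B,\C)\}_{DR}$ appearing in the weight-$3$ line of \eqref{eqn:DeRham}, hence linearly independent. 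The map $[\,\cdot\,]_{\bar\partial}\mapsto\{\,\cdot\,\}_{DR}$ is well defined because a $\bar\partial$-exact representative $\bar\partial(\xi)$ of such a class, when $d$-closed, must actually be $d$-exact modulo the ambiguity already quotiented out; concretely one checks on the $4$-dimensional space that the composite to $H^3_{DR}$ has trivial kernel by comparing bases. The injection for $H^{1,\,2}_{[\gamma]}$ is the conjugate statement applied to the generators $\mu\wedge\overline\nu\wedge\overline\gamma$, landing in $\{\overline\gamma\wedge\pi^\star H^{1,1}(B,\C)\}_{DR}$ in \eqref{eqn:DeRham}. Together with Lemma \ref{Lem:Hn0_injection} applied to $Y=X$ (giving $H^{3,\,0}_{\bar\partial}(X,\,\C)\hookrightarrow H^3_{DR}(X,\,\C)$ with image $\langle\{\alpha\wedge\beta\wedge\gamma\}_{DR}\rangle$) and its conjugate for $H^{0,\,3}_{\bar\partial}$, I then observe that the four subspaces $\langle\{\alpha\wedge\beta\wedge\gamma\}\rangle$, $\{\gamma\wedge\pi^\star H^{1,1}(B,\C)\}$, $\{\overline\gamma\wedge\pi^\star H^{1,1}(B,\C)\}$, $\langle\{\overline\alpha\wedge\overline\beta\wedge\overline\gamma\}\rangle$ form exactly the direct-sum decomposition of $H^3_{DR}(X,\,\C)$ displayed in \eqref{eqn:DeRham} — a dimension count $1+4+4+1=10=b_3$ confirms there is no overlap and nothing missing — which gives the canonical isomorphism \eqref{eqn:H3_Hodge-decomp}.

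Finally, for the Hodge symmetry \eqref{eqn:H3_Hodge-symm} I would note that complex conjugation is a real-linear involution on $H^3_{DR}(X,\,\R)\otimes\C$ that visibly swaps $\{\alpha\wedge\beta\wedge\gamma\}\leftrightarrow\{\overline\alpha\wedge\overline\beta\wedge\overline\gamma\}$ and swaps the space spanned by $\{\mu\wedge\gamma\wedge\overline\nu\}$ with the space spanned by $\{\overline\mu\wedge\overline\gamma\wedge\nu\} = \{\nu\wedge\overline\gamma\wedge\overline\mu\}$ up to sign; using the second (star-free) description of $H^{1,\,2}_{[\gamma]}(X,\,\C)$ from \eqref{eqn:H12_gamma_def} one sees directly that $\overline{H^{2,\,1}_{[\gamma]}(X,\,\C)} = H^{1,\,2}_{[\gamma]}(X,\,\C)$ inside $H^3_{DR}(X,\,\C)$, and likewise $\overline{H^{3,\,0}_{\bar\partial}(X,\,\C)} = H^{0,\,3}_{\bar\partial}(X,\,\C)$. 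The footnote's warning — that the analogous symmetry fails for the \emph{full} Dolbeault groups — is consistent with this, since $H^{2,\,1}_{\bar\partial}$ also contains $\langle[\alpha\wedge\beta\wedge\overline\alpha]_{\bar\partial},[\alpha\wedge\beta\wedge\overline\beta]_{\bar\partial}\rangle$ whose conjugates $[\gamma\wedge\overline\alpha\wedge\overline\gamma]_{\bar\partial}, [\gamma\wedge\overline\beta\wedge\overline\gamma]_{\bar\partial}$ do lie in $H^{1,\,2}_{\bar\partial}$, but the Dolbeault classes themselves do not match up canonically; restricting to the $[\gamma]$-parts is exactly what repairs this. I expect the main obstacle to be the well-definedness and injectivity of the map $H^{2,\,1}_{[\gamma]}(X,\,\C)\to H^3_{DR}(X,\,\C)$: one must be careful that passing from a $\bar\partial$-cohomology class to a $d$-cohomology class is legitimate (a chosen representative need not be $d$-closed a priori) and that no nonzero $\bar\partial$-class becomes $d$-exact. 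Both points are handled by the explicit bases, but they require checking that the six-dimensional $H^{2,\,1}_{\bar\partial}$ does \emph{not} inject while its four-dimensional subspace $H^{2,\,1}_{[\gamma]}$ does — i.e. that the "superfluous" generators $\alpha\wedge\beta\wedge\overline\alpha,\alpha\wedge\beta\wedge\overline\beta$ are the precise source of the failure, which follows since $\alpha\wedge\beta\wedge\overline\alpha = -d\gamma\wedge\overline\alpha = -d(\gamma\wedge\overline\alpha)$ is $d$-exact while nonzero in $H^{2,\,1}_{\bar\partial}$.
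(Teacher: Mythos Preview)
Your argument is correct and follows essentially the same route as the paper's proof: both verify the injections and the decomposition by reading off the explicit generators in \eqref{eqn:DeRham}, \eqref{eqn:cohomology_explicit}, \eqref{eqn:H21_gamma_def}, \eqref{eqn:H12_gamma_def}, invoke Lemma~\ref{Lem:Hn0_injection} for $H^{3,0}$, and close with the dimension count $1+4+4+1=10=b_3$; your extra observation that $\alpha\wedge\beta\wedge\overline\alpha=-d(\gamma\wedge\overline\alpha)$ is $d$-exact is a nice explicit confirmation of why the full $H^{2,1}_{\bar\partial}$ fails to inject. One small slip in your peripheral footnote discussion: the conjugate of $\alpha\wedge\beta\wedge\overline\alpha$ is $\alpha\wedge\overline\alpha\wedge\overline\beta$ (which is $\bar\partial$-exact, equal to $\bar\partial(\alpha\wedge\overline\gamma)$), not $\gamma\wedge\overline\alpha\wedge\overline\gamma$; the failure of Hodge symmetry for the full Dolbeault groups is rather that conjugation kills the ``extra'' two-dimensional pieces on each side instead of matching them.
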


\begin{proof} The canonical injections follow obviously from the descriptions (\ref{eqn:H21_gamma_def}), (\ref{eqn:H12_gamma_def}) 
and (\ref{eqn:cohomology_explicit}) of $H^{2,\,1}_{[\gamma]}(X,\,\C)$, $H^{1,\,2}_{[\gamma]}(X,\,\C)$ and resp. $H^3_{DR}(X,\,\C)$. 
On the other hand, $H^{3,\,0}_{\bar\partial}(X,\,\C)$ injects canonically into $H^3_{DR}(X,\,\C)$ by Lemma \ref{Lem:Hn0_injection}, 
while $H^{0,\,3}_{\bar\partial}(X,\,\C)$ injects canonically thanks to its explicit description in (\ref{eqn:cohomology_explicit}). 
Since the images in $H^3_{DR}(X,\,\C)$ of $H^{3,\,0}_{\bar\partial}(X,\,\C), H^{2,\,1}_{[\gamma]}(X,\,\C), H^{1,\,2}_{[\gamma]}(X,\,\C), H^{0,\,3}_{\bar\partial}(X,\,\C)$ 
are mutually transversal by the explicit description of the injections 
and since $10= \mbox{dim}\, H^3 = \mbox{dim}\, H^{3,\,0} + \mbox{dim}\, H^{2,\,1}_{[\gamma]} + \mbox{dim}\, H^{1,\,2}_{[\gamma]} + \mbox{dim}\, H^{0,\,3} = 1+4+4+1$, 
we get the isomorphism (\ref{eqn:H3_Hodge-decomp}). The isomorphisms (\ref{eqn:H3_Hodge-symm}) 
follow from (\ref{eqn:cohomology_explicit}), (\ref{eqn:H21_gamma_def}) and (\ref{eqn:H12_gamma_def}). 
\end{proof}

\subsection{Hodge decomposition for small essential deformations of $X$}

 Recall that $\Delta_{[\gamma]} = \{t\in\Delta\,\mid\, t_{31}=t_{32}=0\}$, so $(t_{11},\,t_{12},\,t_{21},\,t_{22})$ are coordinates on $\Delta_{[\gamma]}$.

\begin{Prop}\label{Prop:H2-1_gamma_vbundle} 
Let $(X_t)_{t\in\Delta}$ be the Kuranishi family of the Iwasawa manifold $X=X_0$. 
Then the space $H^{2,\,1}_{[\gamma]}(X,\,\C) = H^{2,\,1}_{[\gamma]}(X_0,\,\C)$ described in (\ref{eqn:H21_gamma_def}) is the fibre over $t=0$ of a $C^{\infty}$ vector bundle $\Delta_{[\gamma]}\ni t\mapsto H^{2,\,1}_{[\gamma]}(X_t,\,\C)$ of rank $4$ on $\Delta_{[\gamma]}$ 
that will be denoted by ${\cal H}^{2,\,1}_{[\gamma]}$. 
\end{Prop}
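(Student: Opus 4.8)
The plan is to exhibit explicit smooth families of forms on the fibres $X_t$, $t \in \Delta_{[\gamma]}$, whose $\bar\partial_t$-cohomology classes span a $4$-dimensional space varying smoothly with $t$, thereby producing the desired trivialising frame. First I would invoke Nakamura's explicit description of the Kuranishi family: for $t=(t_{11},t_{12},t_{21},t_{22}) \in \Delta_{[\gamma]}$ the deformed complex structure $J_t$ admits a coframe of $(1,0)_t$-forms $\varphi_1^t, \varphi_2^t, \varphi_3^t$ depending holomorphically (or at least smoothly) on $t$, with $\varphi_1^t, \varphi_2^t$ the deformations of $\alpha,\beta$ (still $d$-closed, coming from the deformed Albanese torus $B_t$) and $\varphi_3^t$ the deformation of $\gamma$, satisfying a structure equation of the shape $d\varphi_3^t = -\varphi_1^t \wedge \varphi_2^t + (\text{terms linear in }t)$. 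The key point is that $\varphi_1^t, \varphi_2^t$ remain $d$-closed on $\Delta_{[\gamma]}$ precisely because we have removed the two ``superfluous'' directions $t_{31}, t_{32}$; this is what makes the construction work on $\Delta_{[\gamma]}$ but not on all of $\Delta$.

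Next I would write down the candidate frame: the four forms
$$e_1^t := \varphi_1^t \wedge \varphi_3^t \wedge \overline{\varphi_1^t}, \quad e_2^t := \varphi_1^t \wedge \varphi_3^t \wedge \overline{\varphi_2^t}, \quad e_3^t := \varphi_2^t \wedge \varphi_3^t \wedge \overline{\varphi_1^t}, \quad e_4^t := \varphi_2^t \wedge \varphi_3^t \wedge \overline{\varphi_2^t},$$
which are of $J_t$-type $(2,1)$, reduce at $t=0$ to the generators in (\ref{eqn:H21_gamma_def}), and depend smoothly on $t$. I would check that each $e_i^t$ is $\bar\partial_t$-closed (using $\bar\partial_t \varphi_j^t = 0$ for $j=1,2$ and the explicit form of $\bar\partial_t\varphi_3^t$, which contributes only terms already killed by the wedge with $\varphi_1^t\wedge\varphi_3^t$ or $\varphi_2^t\wedge\varphi_3^t$ — i.e. a computation entirely parallel to (\ref{eqn:alpha,beta,gamma_d})), so they define classes in $H^{2,1}_{\bar\partial}(X_t,\C)$. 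One then defines $H^{2,1}_{[\gamma]}(X_t,\C)$ as their span, matching the intended analogue of $[\gamma \wedge \pi^\star H^{1,1}(B_t,\C)]_{\bar\partial}$. To see this is a smooth subbundle of rank $4$, I would argue that the four classes are linearly independent for $t$ near $0$: independence is an open condition (e.g. the induced pairing with four test currents, or the injection into $H^3_{DR}(X_t,\C)\simeq H^3(X,\C)$ coming from $d$-closedness, has a $4\times4$ minor that is nonzero at $0$ hence near $0$), so after shrinking $\Delta_{[\gamma]}$ we get $\dim_\C H^{2,1}_{[\gamma]}(X_t,\C) = 4$ with a smooth frame $e_1^t,\dots,e_4^t$; this is the definition of a $C^\infty$ vector bundle. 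Alternatively one can phase this through Kodaira–Spencer upper-semicontinuity together with the lower bound supplied by the four explicit classes.

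The main obstacle, I expect, is the verification that the four forms $e_i^t$ really are $\bar\partial_t$-closed and remain \emph{linearly independent in Dolbeault cohomology} (not merely as forms) for all small $t$ — since $H^{2,1}_{\bar\partial}(X_t,\C)$ could in principle jump, and one must rule out that some combination $\sum c_i(t) e_i^t$ becomes $\bar\partial_t$-exact. Here I would lean on the fact, recalled in the introduction, that the Frölicher spectral sequence of every small deformation $X_t$ degenerates at $E_2$ and on the forthcoming canonical isomorphism $H^{2,1}_{[\gamma]}(X_t,\C) \simeq E_2^{2,1}(X_t,\C)$ (Proposition \ref{Prop:introd_H21-gamma_E_2-21}): the $E_2^{2,1}$ spaces form a smooth bundle by Kodaira–Spencer-type constancy of the relevant dimensions (the Hodge and Frölicher numbers of $X_t$ being constant on $\Delta_{[\gamma]}$ after shrinking, as is classical for the Iwasawa family), and the explicit forms $e_i^t$ provide a global frame of it. Combining the smooth dependence of the representatives with the constancy of the dimension yields the $C^\infty$ vector bundle structure, with fibre over $0$ equal to $H^{2,1}_{[\gamma]}(X_0,\C)$ as described in (\ref{eqn:H21_gamma_def}).
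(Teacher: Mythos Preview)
Your approach has the right overall shape, but there is a genuine gap at the key computational step: the naive forms $e_i^t$ you propose are \emph{not} $\bar\partial_t$-closed for $t\in\Delta_{[\gamma]}\setminus\{0\}$. The analogy with (\ref{eqn:alpha,beta,gamma_d}) breaks down because at $t=0$ one has $\bar\partial\gamma=0$, whereas for essential deformations the structure equations read
\[
\bar\partial_t\gamma_t \;=\; \sigma_{1\bar1}(t)\,\alpha_t\wedge\bar\alpha_t + \sigma_{1\bar2}(t)\,\alpha_t\wedge\bar\beta_t + \sigma_{2\bar1}(t)\,\beta_t\wedge\bar\alpha_t + \sigma_{2\bar2}(t)\,\beta_t\wedge\bar\beta_t,
\]
with the $\sigma_{i\bar j}(t)$ not all zero when $t\neq 0$ (their non-vanishing is precisely what characterises the non--complex-parallelisable fibres). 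A direct computation then gives, for instance,
\[
\bar\partial_t(\alpha_t\wedge\gamma_t\wedge\bar\alpha_t) \;=\; -\,\alpha_t\wedge(\bar\partial_t\gamma_t)\wedge\bar\alpha_t \;=\; \sigma_{2\bar2}(t)\,\alpha_t\wedge\beta_t\wedge\bar\alpha_t\wedge\bar\beta_t \;\neq\; 0,
\]
so your $e_1^t$ does not even define a Dolbeault class. The wedge with $\alpha_t$ and $\bar\alpha_t$ kills three of the four terms in $\bar\partial_t\gamma_t$, but not the $\beta_t\wedge\bar\beta_t$ term. (Incidentally, $\alpha_t=d\zeta_1(t)$ and $\beta_t=d\zeta_2(t)$ are $d$-closed for \emph{all} $t\in\Delta$, not only on $\Delta_{[\gamma]}$; the restriction to $\Delta_{[\gamma]}$ is not what makes this work.)

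The paper repairs this by adding a correction term proportional to $\alpha_t\wedge\beta_t\wedge\bar\gamma_t$: one sets
\[
\Gamma_1(t) \;:=\; \alpha_t\wedge\gamma_t\wedge\bar\alpha_t \;-\; \frac{\sigma_{2\bar2}(t)}{\bar\sigma_{12}(t)}\,\alpha_t\wedge\beta_t\wedge\bar\gamma_t,
\]
and analogously for $\Gamma_2,\Gamma_3,\Gamma_4$, quoting [Ang14] for the fact that these are $\Delta''_t$-harmonic with respect to the natural metric $\omega_t$. The point is that $\bar\partial_t(\alpha_t\wedge\beta_t\wedge\bar\gamma_t)=\bar\sigma_{12}(t)\,\alpha_t\wedge\beta_t\wedge\bar\alpha_t\wedge\bar\beta_t$, so the correction exactly cancels the obstruction. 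Harmonicity (not merely $\bar\partial_t$-closedness) then gives linear independence of the Dolbeault classes for free, and the $C^\infty$ dependence of the $\Gamma_j(t)$ on $t$ yields the bundle structure directly. Your fallback via $E_2^{2,1}$ and Kodaira--Spencer could in principle supply the dimension count, but it does not by itself single out the subspace $H^{2,1}_{[\gamma]}(X_t,\C)\subset H^{2,1}_{\bar\partial}(X_t,\C)$; for that one still needs genuine representatives, and those require the correction terms.
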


\begin{proof}

Recall that by [Nak75, p. 95], for $t=\sum _{1\leq\lambda\leq 2\atop 1\leq i\leq 3}
t_{i\lambda}\,\overline{\alpha}_\lambda\xi_i\in H^{0,\,1}(X_0,T^{1,\,0}X_0)$~\footnote{where $\alpha_1=\alpha, \alpha_2=\beta, \xi_1=\xi_\alpha, \xi_2=\xi_\beta, \xi_3=\xi_\gamma$},
a system of local holomorphic coordinates $(\zeta_1(t),\, \zeta_2(t),\,\zeta_3(t))$ on $X_t=\C^3/\Gamma_t$ 
is given in terms of a system of local holomorphic coordinates $(z_1, \,z_2,\,z_3)$ on $X=X_0$ by the formulae 
\begin{equation}\label{eqn:coordinates_t}\zeta_1(t) = z_1 + \sum\limits_{\lambda=1}^2t_{1\lambda}\,\bar{z}_{\lambda},
\hspace{2ex} \zeta_2(t) = z_2 + \sum\limits_{\lambda=1}^2t_{2\lambda}\,\bar{z}_{\lambda},\hspace{2ex} \zeta_3(t) = z_3 + 
\sum\limits_{\lambda=1}^2(t_{3\lambda} + t_{2\lambda}\,z_1)\,\bar{z}_{\lambda} + A_t(\bar{z})  - D(t)\,\bar{z}_3,\end{equation}
\noindent where \begin{eqnarray*}
                 A_t(\bar{z})&:=& \frac{1}{2}\,[t_{11}\,t_{21}\,\bar{z}_1^2 + 2\,t_{11}\,t_{22}\,\bar{z}_1\,\bar{z}_2 + t_{12}\,t_{22}\,\bar{z}_2^2] \\
\mbox{and} \hspace{6ex} D(t)&:=&t_{11}\,t_{22} - t_{12}\,t_{21}.
                \end{eqnarray*}
Note that the $\zeta_j(t)$'s depend holomorphically on $t$. The projection map given in coordinates by 

$$(\zeta_1(t),\, \zeta_2(t),\,\zeta_3(t))\stackrel{\pi_t}{\mapsto}(\zeta_1(t),\, \zeta_2(t))$$

\noindent displays $X_t$ as fibred over an {\it Abelian surface} $B_t = \mbox{Alb}(X_t)$, the Albanse torus of $X_t$.
These coordinates induce ([Ang11, $\S.4.3$]), for every $t\in\Delta$ close to $0$, the co-frame
\begin{equation}\label{eqn:alpha-beta-gamma_t_def}\alpha_t:=d\zeta_1(t), \hspace{2ex} \beta_t:=d\zeta_2(t), 
\hspace{2ex} \gamma_t:= d\zeta_3(t) - z_1\,d\zeta_2(t) - (t_{21}\,\bar{z}_1 + t_{22}\,\bar{z}_2)\,d\zeta_1(t)
\end{equation}
\noindent of $(1,\,0)$-forms on $X_t$ (i.e. a $\Gamma_t$-invariant co-frame of $(1,\,0)$-forms on $\C^3$) varying in a holomorphic way with $t$. Note that $\alpha_t, \beta_t, \gamma_t$ are linearly independent at every point of $X_t$ if $t$ is sufficiently close to $0$ by mere continuity of their dependence on $t$ 
since $\alpha_0=\alpha$, $\beta_0=\beta$ and $\gamma_0=\gamma$ are linearly independent at every point of $X_0$. 
Also note that $\gamma_t$ need not be $\bar\partial_t$-closed when $t\neq 0$. 
Actually, the complex structure of $X_t$ is complex parallelisable iff $\bar\partial_t\gamma_t = 0$ iff $X_t$ is in Nakamura's class $(i)$ (see [Nak75, p. 94-96]). 

Moreover, for $t$ in one of Nakamura's classes $(ii)$ or $(iii)$ (in particular, for $t\in\Delta_{[\gamma]}$), the {\bf structure equations} for $\gamma_t$ (cf. [Ang11, $\S.4.3$]) read 
\begin{eqnarray}\label{eq:structure}
\nonumber
\bar\partial_t\gamma_t&=& \sigma_{1\bar{1}}(t)\,\alpha_t\wedge\bar\alpha_t + \sigma_{1\bar{2}}(t)\,\alpha_t\wedge\bar\beta_t + \sigma_{2\bar{1}}(t)\,\beta_t\wedge\bar\alpha_t + \sigma_{2\bar{2}}(t)\,\beta_t\wedge\bar\beta_t,\\
\partial_t\gamma_t&=& \sigma_{12}(t)\,\alpha_t\wedge\beta_t,  
\end{eqnarray}
where $\sigma_{12}$ and $\sigma_{i\bar{j}}$ are $C^{\infty}$ functions of $t\in\Delta_{[\gamma]}$ that depend only on $t$ (so $\sigma_{12}(t)$ and $\sigma_{i\bar{j}}(t)$ are complex numbers for every fixed $t\in\Delta_{[\gamma]}$) and satisfy $\sigma_{12}(0) = -1$ and $\sigma_{i\bar{j}}(0) = 0$ for all $i, j$.

Now, for every $t\in\Delta$ close to $0$, the $J_t$-$(1,\,1)$-form 
\begin{equation}\label{eqn:omega_t_def}\omega_t:=i\alpha_t\wedge\overline{\alpha}_t + i\beta_t\wedge\overline{\beta}_t + i\gamma_t\wedge\overline{\gamma}_t\end{equation} 
\noindent is positive definite, hence it defines a Hermitian metric on $X_t$ that varies in a $C^{\infty}$ way with $t$. 
Note that $\omega_0$ is {\it canonically} induced by the complex parallelisable structure of the Iwasawa manifold $X_0$. This feature will play a key role further down.

Let $\Delta''_t=\bar\partial_t\bar\partial_t^{\star} + \bar\partial_t^{\star}\bar\partial_t$ be the $\bar\partial$-Laplacian on $X_t$ defined by $\omega_t$. 
According to [Ang14, p. 80], for every $t$ in one of Nakamura's classes $(ii)$ or $(iii)$ (in particular, for every $t\in\Delta_{[\gamma]}$), 
the following $J_t$-$(2,\,1)$-forms
\begin{eqnarray}\label{eqn:Gamma_j-forms_t}\nonumber \Gamma_1(t):=\alpha_t\wedge\gamma_t\wedge\bar{\alpha}_t - \frac{\sigma_{2\bar{2}}(t)}{\overline{\sigma}_{12}(t)}\,
\alpha_t\wedge\beta_t\wedge\bar{\gamma}_t, & & \Gamma_2(t):=\alpha_t\wedge\gamma_t\wedge\bar{\beta}_t - \frac{\sigma_{2\bar{1}}(t)}{\overline{\sigma}_{12}(t)}\,
\alpha_t\wedge\beta_t\wedge\bar{\gamma}_t, \\
    \Gamma_3(t):=\beta_t\wedge\gamma_t\wedge\bar{\alpha}_t - \frac{\sigma_{1\bar{2}}(t)}{\overline{\sigma}_{12}(t)}\,\alpha_t\wedge\beta_t\wedge\bar{\gamma}_t, 
    & & \Gamma_4(t):=\beta_t\wedge\gamma_t\wedge\bar{\beta}_t - \frac{\sigma_{1\bar{1}}(t)}{\overline{\sigma}_{12}(t)}\,\alpha_t\wedge\beta_t\wedge\bar{\gamma}_t\end{eqnarray}
\noindent are linearly independent $\Delta''_t$-harmonic forms. So, their Dolbeault classes are linearly independent.

\begin{Def}\label{Def:H21_gamma_t} We define
 
\begin{eqnarray}\label{eqn:H21_gamma_t}
H^{2,\,1}_{[\gamma]}(X_t,\,\C):= \bigg\langle [\Gamma_1(t)]_{\bar\partial},\, [\Gamma_2(t)]_{\bar\partial},\, [\Gamma_3(t)]_{\bar\partial},
\, [\Gamma_4(t)]_{\bar\partial}\bigg\rangle
\subset H^{2,\,1}_{\bar\partial}(X_t,\,\C)  \hspace{3ex} \mbox{for every} \hspace{1ex} t\in\Delta_{[\gamma]}.
\end{eqnarray}

\end{Def}

The families $(\Gamma_k(t))_{t\in\Delta_{[\gamma]}}$ are $C^{\infty}$ families of $\Delta''_t$-harmonic $(2,\,1)$-forms (inducing $C^\infty$ families $([\Gamma_k(t)]_{\bar\partial})_{t\in\Delta_{[\gamma]}}$ of $\bar\partial$-cohomology classes) on the fibres of $(X_t)_{t\in\Delta_{[\gamma]}}$ such that $\Gamma_1(0) = \alpha\wedge\gamma\wedge\bar{\alpha}, \hspace{2ex} \Gamma_2(0) = \alpha\wedge\gamma\wedge\bar{\beta}, \hspace{2ex} \Gamma_3(0)
= \beta\wedge\gamma\wedge\bar{\alpha}, \hspace{2ex} \Gamma_4(0) = \beta\wedge\gamma\wedge\bar{\beta}.$ Note that the $\Gamma_k(t)$'s do not depend holomorphically on $t$.

Therefore, we get a $C^{\infty}$ vector bundle ${\cal H}^{2,\,1}_{[\gamma]}\longrightarrow\Delta_{[\gamma]}$ of rank $4$,
$\Delta_{[\gamma]}\ni t\mapsto H^{2,\,1}_{[\gamma]}(X_t,\,\C) = {\cal H}^{2,\,1}_{[\gamma],\,t}$
whose fibre above $t=0$ is $H^{2,\,1}_{[\gamma]}(X,\,\C)$ defined in (\ref{eqn:H21_gamma_def})~\footnote{Alternatively, we could have displayed ${\cal H}^{2,\,1}_{[\gamma],\,t}$ as the bundle of kernels of a smooth family of elliptic differential operators involving a $\mbox{zero}^{th}$-order perturbation by the $\gamma_t$.}.   
\end{proof}

\begin{Rem} {\rm By analogy with $\S.$\ref{subsection:C-Y_isomorphism}, for every $t\in\Delta_{[\gamma]}$ we consider the $J_t$-$(3,\,0)$-form $$\Omega_t:=\alpha_t\wedge\beta_t\wedge\gamma_t.$$ Then $\Omega_t$ depends holomorphically on $t$, hence (by continuity) it is non-vanishing on $X_t$ for all $t$ sufficiently close to zero since $\Omega_0$ is non-vanishing. Moreover, $\Omega_t$ is holomorphic since $\bar\partial_t\Omega_t = \alpha_t\wedge\beta_t\wedge\bar\partial_t\gamma_t =0$, the last identity being a consequence of the special shape of the structure equations~\eqref{eq:structure} (displaying the form $\bar\partial_t\gamma_t$ 
as lying in $\pi_t^\star\mathcal{C}^\infty_{1,\,1}(B_t,\C)$). This shows again that the canonical bundle of $X_t$ is trivial. By analogy with (\ref{eqn:CY_isomorphism}), for every $t\in\Delta_{[\gamma]}$ we define the {\it Calabi-Yau isomorphism} of $X_t$ by

\begin{equation}\label{eqn:CY_isomorphism_t} H^{0,\,1}(X_t,\,T^{1,\,0}X_t)  \underset{\simeq}{\overset{T_{\Omega_t}}{\longrightarrow}}  H^{2,\,1}_{\bar\partial}(X_t,\,\C),  \hspace{3ex} [\theta]  \longmapsto  [\theta\lrcorner \Omega_t],\end{equation}

\noindent and finally, using the subspace $H^{2,\,1}_{[\gamma]}(X_t,\,\C) \subset H^{2,\,1}_{\bar\partial}(X_t,\,\C)$ introduced in Definition \ref{Def:H21_gamma_t}, we put

\begin{equation}\label{eqn:H01_gamma_t}H^{0,\,1}_{[\gamma]}(X_t,\,T^{1,\,0}X_t) := T_{\Omega_t}^{-1}(H^{2,\,1}_{[\gamma]}(X_t,\,\C))\subset H^{0,\,1}(X_t,\,T^{1,\,0}X_t), \hspace{3ex} t\in\Delta_{[\gamma]}.\end{equation}

\noindent In particular, the family $(T_{\Omega_t})_{t\in\Delta_{[\gamma]}}$ of Calabi-Yau isomorphisms is holomorphic and $T^{1,\,0}_t\Delta_{[\gamma]}\simeq H^{0,\,1}_{[\gamma]}(X_t,\,T^{1,\,0}X_t)$ for all $t\in\Delta_{[\gamma]}$.}

\end{Rem}

\vspace{3ex}

The following statement follows from definitions~\eqref{eqn:Gamma_j-forms_t} and the structure equations~\eqref{eq:structure}. For $t=0$, it overlaps with Lemma \ref{Lem:del-star-closedness}.

\begin{Lem}\label{Lem:H21_gamma_t_injection} Let $(X_t)_{t\in\Delta}$ be the Kuranishi family of the Iwasawa manifold $X=X_0$. 
Then, for every $t\in\Delta_{[\gamma]}$, the $J_t$-$(2,\,1)$-forms $\Gamma_1(t), \Gamma_2(t), \Gamma_3(t), \Gamma_4(t)$ of (\ref{eqn:Gamma_j-forms_t}) are $d$-closed and $\bar\partial_t^\star$-closed, where $\bar\partial_t^\star$ is the formal adjoint of $\bar\partial_t$ w.r.t. the metric $\omega_t$ defined in (\ref{eqn:omega_t_def}). When $t=0$, they are also $\partial_0^\star$-closed.
\end{Lem}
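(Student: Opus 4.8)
The plan is to verify the two closedness conditions directly on explicit representatives, exploiting the special structure equations \eqref{eq:structure} available for $t\in\Delta_{[\gamma]}$. First I would check $d$-closedness. Since each $\Gamma_k(t)$ is a $J_t$-$(2,\,1)$-form, we have $d\Gamma_k(t) = \partial_t\Gamma_k(t) + \bar\partial_t\Gamma_k(t)$, where $\partial_t\Gamma_k(t)$ is of type $(3,\,1)$ and $\bar\partial_t\Gamma_k(t)$ is of type $(2,\,2)$. The forms $\Gamma_k(t)$ are already known to be $\Delta''_t$-harmonic (cited from [Ang14, p.\ 80] in the proof of Proposition \ref{Prop:H2-1_gamma_vbundle}), so in particular $\bar\partial_t\Gamma_k(t) = 0$; alternatively this is a short direct computation using that $\alpha_t,\beta_t$ are $d$-closed and $\bar\partial_t\gamma_t$ lies in $\pi_t^\star\mathcal{C}^\infty_{1,\,1}(B_t,\C)$, which makes the two $\bar\partial_t$-contributions in each $\Gamma_k(t)$ cancel by the choice of the coefficients $\sigma_{i\bar j}(t)/\overline\sigma_{12}(t)$. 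For $\partial_t\Gamma_k(t)$, I would use that $\partial_t\alpha_t = \partial_t\beta_t = 0$ and $\partial_t\gamma_t = \sigma_{12}(t)\,\alpha_t\wedge\beta_t$, together with $\partial_t\bar\gamma_t = \overline{\bar\partial_t\gamma_t}$ being a combination of $\bar\alpha_t\wedge\bar\beta_t$ terms, to compute each $(3,\,1)$-component. Taking $\Gamma_1(t) = \alpha_t\wedge\gamma_t\wedge\bar\alpha_t - (\sigma_{2\bar 2}(t)/\overline\sigma_{12}(t))\,\alpha_t\wedge\beta_t\wedge\bar\gamma_t$ as the model case: $\partial_t(\alpha_t\wedge\gamma_t\wedge\bar\alpha_t) = -\alpha_t\wedge(\partial_t\gamma_t)\wedge\bar\alpha_t = -\sigma_{12}(t)\,\alpha_t\wedge\alpha_t\wedge\beta_t\wedge\bar\alpha_t = 0$, while $\partial_t(\alpha_t\wedge\beta_t\wedge\bar\gamma_t) = \alpha_t\wedge\beta_t\wedge\partial_t\bar\gamma_t$, which is a $(3,\,1)$-form built from $\alpha_t\wedge\beta_t\wedge\bar\alpha_t\wedge\bar\beta_t$ — but this has four distinct factors out of a three-dimensional space of $(1,0)$-forms wedged with two $(0,1)$-forms, so for degree reasons $\alpha_t\wedge\beta_t\wedge(\bar\alpha_t\wedge\bar\beta_t) $ could survive; here I need that $\partial_t\bar\gamma_t$ involves only $\bar\alpha_t\wedge\bar\beta_t$, so $\alpha_t\wedge\beta_t\wedge\partial_t\bar\gamma_t$ is proportional to $\alpha_t\wedge\beta_t\wedge\bar\alpha_t\wedge\bar\beta_t$, a genuine $(3,\,1)$-form that need not vanish. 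The resolution is that this term is actually $\partial_t$ of the full $\Gamma_1(t)$, so I must recheck: the harmonicity already quoted gives $\bar\partial_t\Gamma_k(t)=0$ and $\bar\partial_t^\star\Gamma_k(t)=0$, and since the $\Gamma_k(t)$ are pure-degree forms on a manifold where the $E_2$-degeneration of the Frölicher spectral sequence holds, I should instead argue that $d$-closedness follows from the stronger fact that these are representatives surviving to $E_2^{2,1}$ — concretely, $\partial_t\Gamma_k(t) = 0$ should be checked to hold, which I expect reduces to the identity $\overline{\sigma_{12}(0)} = -1 \neq 0$ making the denominators legitimate and to the vanishing $\alpha_t\wedge\beta_t\wedge\bar\alpha_t\wedge\bar\beta_t$ when paired correctly.

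Then I would check $d_t^\star$-closedness. Here I would use the decomposition $d_t^\star = \partial_t^\star + \bar\partial_t^\star$ and the $\omega_t$-Hodge star. Since $\Gamma_k(t)$ is $\Delta''_t$-harmonic, we already have $\bar\partial_t^\star\Gamma_k(t) = 0$, so it remains to show $\partial_t^\star\Gamma_k(t) = 0$. The cleanest route is via the Hodge star: $\partial_t^\star = -\star_t\,\bar\partial_t\,\star_t$ on the appropriate degree, so $\partial_t^\star\Gamma_k(t) = 0$ is equivalent to $\bar\partial_t(\star_t\Gamma_k(t)) = 0$, i.e. $\star_t\Gamma_k(t)$ being $\bar\partial_t$-closed. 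Since $\omega_t$ is the diagonal metric \eqref{eqn:omega_t_def} in the coframe $\alpha_t,\beta_t,\gamma_t$, the Hodge star acts on wedge monomials in these forms by an explicit combinatorial rule, so $\star_t\Gamma_k(t)$ is again an explicit combination of $(1,\,2)$-monomials in $\alpha_t,\beta_t,\gamma_t,\bar\alpha_t,\bar\beta_t,\bar\gamma_t$, and one computes $\bar\partial_t$ of it using $\bar\partial_t\bar\gamma_t = 0$, $\bar\partial_t\alpha_t=\bar\partial_t\beta_t=0$, and the structure equation for $\bar\partial_t\gamma_t$. I expect this to match, by the same cancellation that produced harmonicity, because $\Gamma_k(t)$ being $\Delta''_t$-harmonic and the pairing structure force $\star_t\Gamma_k(t)$ to lie in the analogous harmonic space in bidegree $(1,\,2)$.

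The main obstacle I anticipate is bookkeeping the $(3,\,1)$-component $\partial_t\Gamma_k(t)$: naively each summand $\alpha_t\wedge\gamma_t\wedge\bar\alpha_t$ has $\partial_t$-image zero (as $\partial_t\gamma_t = \sigma_{12}(t)\alpha_t\wedge\beta_t$ and $\alpha_t\wedge\alpha_t = 0$), but the corrective term $(\sigma_{2\bar 2}(t)/\overline\sigma_{12}(t))\,\alpha_t\wedge\beta_t\wedge\bar\gamma_t$ contributes $(\sigma_{2\bar 2}(t)/\overline\sigma_{12}(t))\,\alpha_t\wedge\beta_t\wedge\partial_t\bar\gamma_t$, and $\partial_t\bar\gamma_t = \overline{\bar\partial_t\gamma_t}$ involves $\bar\alpha_t\wedge\bar\beta_t$. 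So $\partial_t\Gamma_k(t)$ need not vanish termwise — rather, the assertion is that for each fixed $k$ the relevant coefficient is zero, which should follow from the precise relation between $\sigma_{i\bar j}(t)$ and the complex-conjugated structure constants. I would double-check against Angella's computation [Ang14] that the $\Gamma_k(t)$ are in fact $d$-harmonic (not merely $\Delta''_t$-harmonic), which would immediately give both $d\Gamma_k(t) = 0$ and $d_t^\star\Gamma_k(t) = 0$ and short-circuit the whole verification; if only $\bar\partial$-harmonicity is available in the literature, the $\partial_t$ and $\partial_t^\star$ parts are the genuine content and I would present them as the two explicit computations sketched above, keeping careful track of the $\overline\sigma_{12}(t)\neq 0$ assumption (valid near $t=0$ since $\sigma_{12}(0) = -1$) that legitimizes the quotients appearing in \eqref{eqn:Gamma_j-forms_t}.
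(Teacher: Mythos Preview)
Your overall scaffold matches the paper's: use $\Delta''_t$-harmonicity to get $\bar\partial_t\Gamma_k(t)=0$ and $\bar\partial_t^\star\Gamma_k(t)=0$, then verify $\partial_t\Gamma_k(t)=0$ and $\partial_t^\star\Gamma_k(t)=0$ by direct computation. The difficulty you flag as ``the main obstacle'' is not a real obstacle, however, because you miscompute $\partial_t\bar\gamma_t$.

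The concrete error is this: you write that ``$\partial_t\bar\gamma_t = \overline{\bar\partial_t\gamma_t}$ involves $\bar\alpha_t\wedge\bar\beta_t$''. This is wrong on two counts. First, $\bar\gamma_t$ has type $(0,1)$, so $\partial_t\bar\gamma_t$ has type $(1,1)$, not $(0,2)$. Second, conjugating the structure equation \eqref{eq:structure} gives
\[
\partial_t\bar\gamma_t \;=\; \overline{\bar\partial_t\gamma_t} \;=\; \overline{\sigma_{1\bar 1}(t)}\,\bar\alpha_t\wedge\alpha_t + \overline{\sigma_{1\bar 2}(t)}\,\bar\alpha_t\wedge\beta_t + \overline{\sigma_{2\bar 1}(t)}\,\bar\beta_t\wedge\alpha_t + \overline{\sigma_{2\bar 2}(t)}\,\bar\beta_t\wedge\beta_t,
\]
so every term carries a factor $\alpha_t$ or $\beta_t$. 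Consequently $\alpha_t\wedge\beta_t\wedge\partial_t\bar\gamma_t = 0$ termwise (each term contains $\alpha_t\wedge\alpha_t$ or $\beta_t\wedge\beta_t$), and hence $\partial_t(\alpha_t\wedge\beta_t\wedge\bar\gamma_t)=0$. Combined with your (correct) observation that $\partial_t(\alpha_t\wedge\gamma_t\wedge\bar\alpha_t)=0$ and its analogues, this gives $\partial_t\Gamma_k(t)=0$ immediately --- no cancellation between the two summands of $\Gamma_k(t)$ is needed, no appeal to $E_2$-degeneration, and no need to hope that [Ang14] already records $d$-harmonicity. You apparently confused $\partial_t\bar\gamma_t$ with $\bar\partial_t\bar\gamma_t = \overline{\partial_t\gamma_t} = \overline{\sigma_{12}(t)}\,\bar\alpha_t\wedge\bar\beta_t$, which \emph{is} of type $(0,2)$.

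For $\partial_t^\star$-closedness, your Hodge-star route $\partial_t^\star = -\star_t\bar\partial_t\star_t$ is a legitimate alternative, but the paper instead argues directly via inner products: it suffices to check $\langle\langle \Gamma_k(t),\,\partial_t u\rangle\rangle_{\omega_t}=0$ for each generator $u$ of $C^\infty_{1,1}(X_t,\C)$, and the only non-$\partial_t$-closed generators are those containing $\gamma_t$ or $\bar\gamma_t$; for these, $\partial_t u$ is built from $\alpha_t,\beta_t,\bar\alpha_t,\bar\beta_t$ only (again by the structure equations), hence is $\omega_t$-orthogonal to each monomial appearing in $\Gamma_k(t)$, all of which contain $\gamma_t$ or $\bar\gamma_t$. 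Either route works; the paper's is shorter because it avoids computing $\star_t\Gamma_k(t)$ explicitly.
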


\begin{proof}
Thanks to (\ref{eqn:alpha-beta-gamma_t_def}), we have $d\alpha_t = d\beta_t = 0$. Meanwhile, $\partial_t\gamma_t = \sigma_{12}(t)\,\alpha_t\wedge\beta_t$ comes from a form of type $(2,0)$ on $B_t$ by (\ref{eq:structure}). Hence, 
\begin{eqnarray*}\nonumber\partial_t(\alpha_t\wedge\gamma_t\wedge\bar{\alpha}_t) & = & -\alpha_t\wedge\partial_t\gamma_t\wedge\bar{\alpha}_t =0
\end{eqnarray*}
and also 
$$ \partial_t(\beta_t\wedge\gamma_t\wedge\bar{\alpha}_t)=\partial_t(\alpha_t\wedge\gamma_t\wedge\bar{\beta}_t)=\partial_t(\beta_t\wedge\gamma_t\wedge\bar{\beta}_t)=0.$$
\noindent From (\ref{eq:structure}), we get 
$$\partial_t\bar\gamma_t = \overline{\bar\partial_t\gamma_t} 
= \overline{\sigma_{1\bar{1}}(t)}\,\bar\alpha_t\wedge\alpha_t + \overline{\sigma_{1\bar{2}}(t)}\,\bar\alpha_t\wedge\beta_t 
+ \overline{\sigma_{2\bar{1}}(t)}\,\bar\beta_t\wedge\alpha_t + \overline{\sigma_{2\bar{2}}(t)}\,\bar\beta_t\wedge\beta_t,$$ 
\noindent hence
$$\partial_t(\alpha_t\wedge\beta_t\wedge\bar{\gamma}_t) = \alpha_t\wedge\beta_t\wedge\partial_t\bar{\gamma}_t = 0$$
\noindent since all the terms in the resulting sum contain a product $\alpha_t\wedge\alpha_t =0$ or $\beta_t\wedge\beta_t =0$. These identities, together with (\ref{eqn:Gamma_j-forms_t}), prove that $\partial_t\Gamma_j(t) = 0$ for all $t\in\Delta_{[\gamma]}$ and all $j=1,2,3,4$. 

 On the other hand, $\bar\partial_t\Gamma_j(t) = 0$ and $\bar\partial_t^\star\Gamma_j(t) = 0$ since the forms $\Gamma_j(t)$ are $\Delta''_t$-harmonic ([Ang14, p. 80]). Therefore, they are all $d$-closed and $\bar\partial_t^\star$-closed.   

Thanks to (\ref{eqn:Gamma_j-forms_t}), checking whether or not the forms $\Gamma_j(t)$ lie in the kernel of $\partial_t^\star$ involves computing the quantities $\langle\langle\partial_t^\star(\alpha_t\wedge\gamma_t\wedge\bar{\alpha}_t),\,u\rangle\rangle, \,\, \langle\langle\partial_t^\star(\beta_t\wedge\gamma_t\wedge\bar{\alpha}_t),\,u\rangle\rangle, \,\, \langle\langle\partial_t^\star(\alpha_t\wedge\gamma_t\wedge\bar{\beta}_t),\,u\rangle\rangle,$
$ \langle\langle\partial_t^\star(\beta_t\wedge\gamma_t\wedge\bar{\beta}_t),\,u\rangle\rangle, \,\, \langle\langle\partial_t^\star(\alpha_t\wedge\beta_t\wedge\bar{\gamma}_t),\,u\rangle\rangle$ for all forms $u\in C^\infty_{1,\,1}(X_t,\,\C)$ in a system of generators. Now, among the generators $\alpha_t\wedge\bar\alpha_t$, $\alpha_t\wedge\bar\beta_t$, $\alpha_t\wedge\bar\gamma_t$, $\beta_t\wedge\bar\alpha_t$, $\beta_t\wedge\bar\beta_t$, $\beta_t\wedge\bar\gamma_t$, $\gamma_t\wedge\bar\alpha_t$, $\gamma_t\wedge\bar\beta_t$, $\gamma_t\wedge\bar\gamma_t$ of the space $C^\infty_{1,\,1}(X_t,\,\C)$, only those containing $\gamma_t$ or $\bar\gamma_t$ are not $\partial_t$-closed. Moreover, when $u$ is one of these except $\gamma_t\wedge\bar\gamma_t$, $\partial_t u$ is a sum of factors none of which is either $\gamma_t$ or $\bar\gamma_t$, so the above $L^2_{\omega_t}$ inner products vanish. 

 Indeed, for example, if $u = \alpha_t\wedge\bar\gamma_t$, then 

$$\partial_t u = -\alpha_t\wedge\partial_t\bar\gamma_t = -\alpha_t\wedge[\overline{\sigma_{1\bar{2}}(t)}\,\bar\alpha_t\wedge\beta_t + \overline{\sigma_{2\bar{2}}(t)}\,\bar\beta_t\wedge\beta_t],$$

\noindent where the last identity follows from (\ref{eq:structure}). We get

$$\langle\langle\partial_t^\star(\alpha_t\wedge\gamma_t\wedge\bar{\alpha}_t),\,u\rangle\rangle = \langle\langle\alpha_t\wedge\gamma_t\wedge\bar{\alpha}_t,\,\partial_t u\rangle\rangle = 0$$

\noindent since $\alpha_t\wedge\gamma_t\wedge\bar{\alpha}_t$ is $L^2_{\omega_t}$-orthogonal onto $\alpha_t\wedge\bar\alpha_t\wedge\beta_t$ and onto $\alpha_t\wedge\bar\beta_t\wedge\beta_t$. This orthogonality follows from the basis of $(1,\,0)$-forms $\alpha_t, \beta_t, \gamma_t$ being $L^2_{\omega_t}$-orthonormal.

However, when $u=\gamma_t\wedge\bar\gamma_t$, we get \begin{eqnarray}\nonumber\partial_t u = \partial_t\gamma_t\wedge\bar\gamma_t - \gamma_t\wedge\partial_t\bar\gamma_t & = & \sigma_{12}(t)\,\alpha_t\wedge\beta_t\wedge\bar\gamma_t + \overline{\sigma_{1\bar{1}}(t)}\,\gamma_t\wedge\alpha_t\wedge\bar\alpha_t + \overline{\sigma_{1\bar{2}}(t)}\,\gamma_t\wedge\beta_t\wedge\bar\alpha_t \\
  \nonumber & + & \overline{\sigma_{2\bar{1}}(t)}\,\gamma_t\wedge\alpha_t\wedge\bar\beta_t + \overline{\sigma_{2\bar{2}}(t)}\,\gamma_t\wedge\beta_t\wedge\bar\beta_t.\end{eqnarray}

\noindent Hence $\langle\langle\alpha_t\wedge\gamma_t\wedge\bar{\alpha}_t,\,\partial_t u\rangle\rangle = - \sigma_{1\bar{1}}(t)$ and $\langle\langle\frac{\sigma_{2\bar{2}}(t)}{\overline{\sigma_{12}(t)}}\,\alpha_t\wedge\beta_t\wedge\bar{\gamma}_t,\,\partial_t u\rangle\rangle = \sigma_{2\bar{2}}(t)$, so $\partial_t^\star\Gamma_1(t)=0$ if and only if $\sigma_{2\bar{2}}(t) = -\sigma_{1\bar{1}}(t)$. There is no reason for this to happen when $t\neq 0$, but it does happen at $t=0$ since $\sigma_{i\bar{j}}(0)=0$ for all $i,j$.

The forms $\Gamma_2(t), \Gamma_3(t), \Gamma_4(t)$ can be treated in a similar way.   \end{proof}

\begin{Cor}\label{Cor:H21_gamma_t_injection} For every $t\in\Delta_{[\gamma]}$ sufficiently close to $0$, we have a linear {\bf injection}

\begin{equation}\label{eqn::H21_injection_DeRham} H^{2,\,1}_{[\gamma]}(X_t,\,\C)\longrightarrow H^3_{DR}(X,\,\C),  \hspace{6ex} [\Gamma_j(t)]_{\bar\partial}\mapsto\{\Gamma_j(t)\}_{DR} \hspace{3ex} \mbox{for} \hspace{1ex} j=1,\dots , 4,\end{equation}

\noindent where $X$ is the $C^\infty$ manifold underlying the fibres $X_t$. 

\end{Cor}

\noindent {\it Proof.} The $\Delta_0$-harmonicity of the linearly independent forms $\Gamma_1(0), \Gamma_2(0), \Gamma_3(0), \Gamma_4(0)$ implies that the De Rham classes they define are linearly independent in $H^3_{DR}(X,\,\C)$. Thus, the linear map defined in (\ref{eqn::H21_injection_DeRham}) is an injection when $t=0$. Then, by continuity, it remains an injection for $t\in\Delta_{[\gamma]}$ sufficiently close to $0$.  \hfill $\Box$

\vspace{3ex}

As earlier on, we define
\begin{eqnarray}\label{def:h12_gammat}
H^{1,\,2}_{[\gamma]}(X_t,\,\C):=\iota_t(H^{2,\,1}_{[\gamma]}(X_t,\,\C))= \bigg\langle [\star_t\overline\Gamma_1(t)]_{\bar\partial},\, [\star_t\overline\Gamma_2(t)]_{\bar\partial},\, [\star_t\overline\Gamma_3(t)]_{\bar\partial},\, [\star_t\overline\Gamma_4(t)]_{\bar\partial}\bigg\rangle \subset H^{1,\,2}_{\bar\partial}(X_t,\,\C),
\end{eqnarray}

\noindent where $\star_t:=\star_{\omega_t}$ is the Hodge star operator associated with the metric $\omega_t$ defined in (\ref{eqn:omega_t_def}) on $X_t$. 

\subsection{Identification of $H^{2,\,1}_{[\gamma]}(X_t,\,\C)$ with $E_2^{2,\, 1}(X_t)$}\label{subsection:H21_gamma_E2_21}

We shall now give a cohomological interpretation of the spaces $H^{2,\,1}_{[\gamma]}(X_t,\,\C)$ in terms of the groups $E_2^{2,\, 1}(X_t,\,\C)$ featuring at the second step of the Fr\"olicher spectral sequence of each small deformation $X_t$ of the Iwasawa manifold $X=X_0$. At least the first conclusion of the following statement was observed in [COUV16].

\begin{Prop}\label{Prop:H21_gamma_E2_21} Let $(X_t)_{t\in\Delta}$ be the Kuranishi family of the Iwasawa manifold $X=X_0$. Then
\begin{itemize}
\item[$(a)$]\, the Fr\"olicher spectral sequence of $X_t$ degenerates at $E_2$ for every $t\in\Delta$ sufficiently close to $0$;
\item[$(b)$]\, at the second step of the Fr\"olicher spectral sequence, we have $\dim E_2^{2,\,1}(X_t,\,\C)=4$ for $t=0$ 
and for every $X_t$ in any of Nakamura's classes $(ii)$ and $(iii)$ (in particular, for every $t\in\Delta_{[\gamma]}$);
\item[$(c)$]\, there is a canonical isomorphism $E_2^{2,\,1}(X_t,\,\C)\simeq H^{2,\,1}_{[\gamma]}(X_t,\,\C)$ for $t=0$ 
and for every $X_t$ in any of Nakamura's classes $(ii)$ and $(iii)$ (in particular, for every $t\in\Delta_{[\gamma]}$). 
\end{itemize}
\end{Prop}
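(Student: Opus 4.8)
The plan is to prove all three items by direct computation using the Fr\"olicher spectral sequence of the nilmanifold $X_t$, which (by Sakane--Nomizu-type theorems, valid here since the relevant complex structures are left-invariant / of the Nakamura type) can be computed at the level of left-invariant forms, i.e. on the finite-dimensional complex generated by $\alpha_t,\beta_t,\gamma_t$ and their conjugates together with the structure equations \eqref{eq:structure}. For part $(a)$, I would first recall that the Fr\"olicher spectral sequence of the Iwasawa manifold $X_0$ itself is classically known to degenerate exactly at $E_2$ (this is in Nakamura's paper and worked out explicitly in, e.g., [Sch07]); for nearby $t$ one either invokes the known computation for Nakamura's classes $(i)$, $(ii)$, $(iii)$ (all of which have $E_2$-degeneration, as recorded in [COUV16], [Ang14]) or, more conceptually, uses upper-semicontinuity of the $E_r$-dimensions in $t$ together with the lower bound coming from the constancy of the Betti numbers: since $\dim_{\mathbb C} E_\infty^{p,q}$ is a deformation invariant summing to $b_k$, and $\dim E_r^{p,q}$ is non-increasing in $r$ and upper-semicontinuous in $t$, the degeneration at $E_2$ for $X_0$ forces it for all nearby $X_t$.

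For parts $(b)$ and $(c)$ I would argue simultaneously. The key mechanism is the standard description of $E_2^{p,q}$ as a subquotient of $E_1^{p,q}=H^{p,q}_{\bar\partial}(X_t)$: concretely, $E_2^{2,1}(X_t)\cong \ker\big(d_1\colon H^{2,1}_{\bar\partial}\to H^{3,1}_{\bar\partial}\big)\big/\mathrm{im}\big(d_1\colon H^{1,1}_{\bar\partial}\to H^{2,1}_{\bar\partial}\big)$, where $d_1$ is induced by $\partial$. So the plan is: (i) using the structure equations \eqref{eq:structure} and the explicit generators of $H^{1,1}_{\bar\partial}(X_t)$, $H^{2,1}_{\bar\partial}(X_t)$, $H^{3,1}_{\bar\partial}(X_t)$ analogous to \eqref{eqn:cohomology_explicit} (for $t$ in Nakamura's classes $(ii)$, $(iii)$ the relevant cohomology has been tabulated in [Ang11], [Ang14]), compute the two $d_1$ maps explicitly; (ii) show $\ker d_1$ on $H^{2,1}_{\bar\partial}(X_t)$ is exactly $5$-dimensional and $\mathrm{im}\,d_1$ from $H^{1,1}_{\bar\partial}$ is $1$-dimensional (this is where the number $4$ comes from — note $h^{2,1}_{\bar\partial}=6$ and the "superfluous" parallelisable directions $[\alpha_t\wedge\beta_t\wedge\bar\alpha_t]$, $[\alpha_t\wedge\beta_t\wedge\bar\beta_t]$ get killed, one in the kernel, one in the image, by the failure of $\gamma_t$ to be $\partial_t$-closed encoded in $\sigma_{12}(t)\neq 0$); (iii) exhibit the natural map: the $\Delta_t''$-harmonic representatives $\Gamma_1(t),\dots,\Gamma_4(t)$ of \eqref{eqn:Gamma_j-forms_t} are in fact $d$-closed by Lemma \ref{Lem:H21_gamma_t_injection}, hence $\partial_t$-closed, hence their Dolbeault classes lie in $\ker d_1$ and survive to $E_2^{2,1}(X_t)$; since they are linearly independent in $E_1^{2,1}$ and the four-dimensional span $H^{2,1}_{[\gamma]}(X_t,\mathbb C)$ meets the one-dimensional $\mathrm{im}\,d_1$ trivially (the latter being spanned by a class of the form $[\alpha_t\wedge\beta_t\wedge\bar\cdot]$, which is transverse to the $\Gamma_j(t)$'s by the orthonormality of $\{\alpha_t,\beta_t,\gamma_t\}$), the composite $H^{2,1}_{[\gamma]}(X_t,\mathbb C)\hookrightarrow \ker d_1 \twoheadrightarrow E_2^{2,1}(X_t,\mathbb C)$ is injective, and by the dimension count in $(b)$ it is an isomorphism; canonicity follows because $\Gamma_j(t)$ being $d$-closed makes this independent of the metric $\omega_t$ used to define harmonic representatives (it is simply the inclusion of $d$-closed $(2,1)$-forms into the subquotient $E_2^{2,1}$).

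The main obstacle I anticipate is item $(ii)$ above: carrying out the $d_1$ computation uniformly in $t\in\Delta_{[\gamma]}$ rather than just at $t=0$. At $t=0$ everything is transparent from \eqref{eqn:alpha,beta,gamma_d} and \eqref{eqn:cohomology_explicit}, but for $t\neq 0$ one must be careful that (a) the generators of $H^{p,q}_{\bar\partial}(X_t)$ genuinely have the expected form — the structure functions $\sigma_{i\bar j}(t)$, $\sigma_{12}(t)$ deform and could in principle change which combinations are $\bar\partial_t$-harmonic — and (b) that $\sigma_{12}(t)$ stays nonzero on $\Delta_{[\gamma]}$ (it does, by continuity from $\sigma_{12}(0)=-1$, after shrinking $\Delta$), so that the quotient defining the $\Gamma_j(t)$'s and the rank-one image of $d_1$ are both well-defined. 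I would handle this by restricting to $\Delta_{[\gamma]}$ from the outset, where the structure equations \eqref{eq:structure} keep the clean shape $\bar\partial_t\gamma_t\in\pi_t^\star\mathcal C^\infty_{1,1}(B_t)$ and $\partial_t\gamma_t=\sigma_{12}(t)\,\alpha_t\wedge\beta_t$, and by citing the explicit Dolbeault cohomology tables of [Ang11] for Nakamura's classes $(ii)$--$(iii)$ to avoid re-deriving the harmonic spaces by hand; the degeneration and dimension statements then reduce to the single rank computation $\mathrm{rank}\,d_1\colon H^{1,1}_{\bar\partial}(X_t)\to H^{2,1}_{\bar\partial}(X_t)$ equals $1$, which the structure equations give immediately since only $[\gamma_t\wedge\bar\alpha_t]$, $[\gamma_t\wedge\bar\beta_t]$ fail to be $\partial_t$-closed and $\partial_t(\gamma_t\wedge\bar\cdot)$ lands in the line $\mathbb C\,[\alpha_t\wedge\beta_t\wedge\bar\cdot]$.
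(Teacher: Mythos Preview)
Your overall strategy---compute $E_2^{2,1}$ as $\ker d_1/\mathrm{im}\,d_1$ on explicit Dolbeault generators and show that the $d$-closed forms $\Gamma_j(t)$ descend to a basis---is exactly the paper's approach. However, your specific dimension count is wrong, and the error stems from not realising that the three cases $t=0$, class $(ii)$, class $(iii)$ behave \emph{differently} at the level of Hodge numbers and $d_1$-ranks.

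You assert $h^{2,1}_{\bar\partial}=6$, $\dim\ker d_1=5$, $\dim\mathrm{im}\,d_1=1$ uniformly. In fact:
\begin{itemize}
\item At $t=0$: $h^{2,1}_{\bar\partial}=6$, and \emph{all} of $H^{2,1}_{\bar\partial}$ lies in $\ker d_1$ (both $\alpha\wedge\beta\wedge\bar\alpha$ and $\alpha\wedge\beta\wedge\bar\beta$ are $\partial$-closed), while $\mathrm{im}\,d_1=\langle[\alpha\wedge\beta\wedge\bar\alpha],[\alpha\wedge\beta\wedge\bar\beta]\rangle$ is $2$-dimensional. So $6-2=4$.
\item In class $(ii)$: $h^{2,1}_{\bar\partial}=5$ (the span $\langle[\alpha_t\wedge\beta_t\wedge\bar\alpha_t],[\alpha_t\wedge\beta_t\wedge\bar\beta_t]\rangle$ collapses to dimension $1$), $\ker d_1$ is all of $H^{2,1}_{\bar\partial}$, and $\mathrm{im}\,d_1$ is that same $1$-dimensional line. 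So $5-1=4$.
\item In class $(iii)$: $h^{2,1}_{\bar\partial}=4$ and the spectral sequence already degenerates at $E_1$ (the Betti and Hodge numbers match), so $d_1=0$ and $E_2^{2,1}=H^{2,1}_{\bar\partial}=H^{2,1}_{[\gamma]}$ directly.
\end{itemize}
Your ``one in the kernel, one in the image'' heuristic for killing the two parallelisable directions does not match any of these. The paper proceeds by treating the three cases separately; your attempt to run a single uniform computation cannot succeed because $h^{2,1}_{\bar\partial}(t)$ itself jumps. A secondary issue: in class $(ii)$ the generators of $H^{1,1}_{\bar\partial}(X_t)$ that fail to be $\partial_t$-closed are not only $\gamma_t\wedge\bar\alpha_t,\gamma_t\wedge\bar\beta_t$ but also those involving $\bar\gamma_t$ (since $\partial_t\bar\gamma_t=\overline{\bar\partial_t\gamma_t}\neq 0$), though these additional contributions still land in the same span.
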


\begin{proof}

\begin{itemize}
\item[$(a)$]\, This follows from Theorem 5.6 in [COUV16]. Indeed, the $X_t$'s are nilmanifolds of real dimension $6$ endowed 
with invariant complex structures and admitting sG metrics. This last property follows from the Iwasawa manifold $X_0$ being balanced, 
hence sG, and from the sG property being deformation open ([Pop14, Theorem 3.1]). 
\item[ $(b)$ and $(c )$]\, For $X=X_0$, the part of the $E_1$ page of the Fr\"olicher spectral sequence relevant to us is
$$ \cdots \stackrel{\partial}{\longrightarrow} H^{1,\,1}_{\bar\partial}(X,\,\C) \stackrel{\partial}{\longrightarrow} H^{2,\,1}_{\bar\partial}(X,\,\C)
= H^{2,\,1}_{[\gamma]}(X,\,\C) \oplus \bigg\langle [\alpha\wedge\beta\wedge\bar\alpha]_{\bar\partial},
\, [\alpha\wedge\beta\wedge\bar\beta]_{\bar\partial}\bigg\rangle  \stackrel{\partial}{\longrightarrow} H^{3,\,1}_{\bar\partial}(X,\,\C) \stackrel{\partial}{\longrightarrow} 0,$$
\noindent where $\partial$ is defined in cohomology by $\partial([u]_{\bar\partial}) = [\partial u]_{\bar\partial}$ 
and the direct-sum splitting follows from (\ref{eqn:cohomology_explicit}) and (\ref{eqn:H21_gamma_def}). 
Now, we see that much like $\alpha\wedge\beta\wedge\bar\alpha$ and $\alpha\wedge\beta\wedge\bar\beta$, the representatives $\alpha\wedge\gamma\wedge\bar\alpha$, $\alpha\wedge\gamma\wedge\bar\beta$, $\beta\wedge\gamma\wedge\bar\alpha$ 
and $\beta\wedge\gamma\wedge\bar\beta$ of the four $(2,\,1)$-classes generating $H^{2,\,1}_{[\gamma]}(X,\,\C)$ (cf. (\ref{eqn:H21_gamma_def})) are $\partial$-closed. Indeed, for example, $\partial(\alpha\wedge\gamma\wedge\bar\alpha) = -\alpha\wedge\partial\gamma\wedge\bar\alpha = \alpha\wedge(\alpha\wedge\beta)\wedge\bar\alpha = 0$ (cf. (\ref{eqn:alpha,beta,gamma_d})). Hence, the whole of $H^{2,\,1}_{\bar\partial}(X,\,\C)$ is contained in the kernel of $\partial$. Using the explicit description~\eqref{eqn:cohomology_explicit} of $H^{1,\,1}_{\bar\partial}(X,\,\C)$ 
and the structure equation $\partial\gamma = -\alpha\wedge\beta$ of (\ref{eqn:alpha,beta,gamma_d}),
we infer that the image of the map $\partial : H^{1,\,1}_{\bar\partial}(X,\,\C) \longrightarrow H^{2,\,1}_{\bar\partial}(X,\,\C)$
is $\langle[\alpha\wedge\beta\wedge\bar\alpha]_{\bar\partial},\, [\alpha\wedge\beta\wedge\bar\beta]_{\bar\partial}\rangle$. This proves that
\begin{eqnarray*}
E_2^{2,\,1}(X)&=&\ker\bigg(\partial : H^{2,\,1}_{\bar\partial}(X,\,\C) \longrightarrow H^{3,\,1}_{\bar\partial}(X,\,\C)\bigg)\bigg
/\mbox{Im}\,\bigg(\partial : H^{1,\,1}_{\bar\partial}(X,\,\C) \longrightarrow H^{2,\,1}_{\bar\partial}(X,\,\C)\bigg)\\
& =& H^{2,\,1}_{\bar\partial}(X,\,C)\bigg /\bigg\langle [\alpha\wedge\beta\wedge\bar\alpha]_{\bar\partial},
\, [\alpha\wedge\beta\wedge\bar\beta]_{\bar\partial}\bigg\rangle \simeq H^{2,\,1}_{[\gamma]}(X,\,\C)
\end{eqnarray*}
which is $(c)$ for $t=0$. In particular, $\mbox{dim}\, E_2^{2,\,1}(X) =4$ since $H^{2,\,1}_{[\gamma]}(X,\,\C)$ has dimension $4$ by construction.

We now analyse the case when $X_t$ is in Nakamura's class $(iii)$ and show that the Fr\"olicher spectral sequence degenerates even at $E_1$. 
Indeed, the Betti numbers (deformation invariant) and the Hodge numbers of any such $X_t$ computed in [Nak75] read
$$b_1 = 4 = 2 + 2 = h^{1,\,0}(t) + h^{0,\,1}(t), \hspace{2ex} b_2 = 8 = 1 + 5 + 2 = h^{2,\,0}(t) + h^{1,\,1}(t) + h^{0,\,2}(t), $$
$$b_3 = 10 = 1 + 4 + 4 +1 = h^{3,\,0}(t) + h^{2,\,1}(t) + h^{1,\,2}(t) + h^{0,\,3}(t).  $$

\noindent By Poincar\'e and Serre duality, we also get $b_4 = 8 = 2 + 5 +1 = h^{3,\,1}(t) + h^{2,\,2}(t) + h^{1,\,3}(t)$ 
and $b_5 = 4 = 2 + 2 = h^{3,\,2}(t) + h^{2,\,3}(t)$. These identities amount to $E_1(X_t) = E_{\infty}(X_t)$ for every $X_t$ in Nakamura's class $(iii)$. 
In particular, $E_2^{2,\,1}(X_t) = E_1^{2,\,1}(X_t) = H^{2,\,1}_{\bar\partial}(X_t,\,\C)$ whose dimension is $h^{2,\,1}(t)=4$. 
Since the vector subspace $H^{2,\,1}_{[\gamma]}(X_t,\,\C)\subset H^{2,\,1}_{\bar\partial}(X_t,\,\C)$ has the same dimension $4$ (cf. (\ref{eqn:H21_gamma_def})), 
we get $E_2^{2,\,1}(X_t) = H^{2,\,1}_{\bar\partial}(X_t,\,\C) = H^{2,\,1}_{[\gamma]}(X_t,\,\C)$. This proves $(b)$ and $(c)$ for $X_t$ in Nakamura's class $(iii)$.

Suppose now that $X_t$ is in Nakamura's class $(ii)$. Using the description (cf. [Ang11, Appendix A])
\begin{equation*}\label{eqn:H21_t}
\noindent H^{2,\,1}_{\bar\partial}(X_t,\,\C) = H^{2,\,1}_{[\gamma]}(X_t,\,\C)\oplus\bigg\langle[\alpha_t\wedge\beta_t\wedge\bar\alpha_t]_{\bar\partial},
\,[\alpha_t\wedge\beta_t\wedge\bar\beta_t]_{\bar\partial}\bigg\rangle,\end{equation*}
where  $\mbox{dim}\,\langle[\alpha_t\wedge\beta_t\wedge\bar\alpha_t]_{\bar\partial},\,[\alpha_t\wedge\beta_t\wedge\bar\beta_t]_{\bar\partial}\rangle =1$,
and Lemma~\ref{Lem:H21_gamma_t_injection},
we find that the map $\partial_t: H^{2,\,1}_{\bar\partial}(X_t,\,\C)\longrightarrow H^{3,\,1}_{\bar\partial}(X_t,\,\C)$ is identically zero.

Recall that, thanks to [Ang11], we have the splitting
\begin{equation*}
H^{1,\,1}_{\bar\partial}(X_t,\,\C) = \pi_t^\star H^{1,\,1}(B_t,\,\C)\oplus H^{1,\,1}_{vert}(X_t,\,\C)\end{equation*}
in which $H^{1,\,1}_{vert}(X_t,\,\C)$ is of dimension $2$ and is generated by classes represented by forms (containing the vertical form $\gamma$) of the shape 
$E\,\alpha_t\wedge\bar\gamma_t + F\,\beta_t\wedge\bar\gamma_t + G\,\gamma_t\wedge\bar\alpha_t + H\,\gamma_t\wedge\bar\beta_t$, where $E, F, G, H$ are constants. Since $d\alpha_t = d\beta_t = 0$, $\partial_t(\pi_t^\star H^{1,\,1}(B_t,\,\C))=0$. Meanwhile, immediate computations and the use of \eqref{eq:structure} give
\begin{eqnarray*}\partial_t(\alpha_t\wedge\bar\gamma_t) & = & -\alpha_t\wedge\overline{\bar\partial_t\gamma_t} 
= -\alpha_t\wedge(\overline{\sigma_{1\bar{2}}(t)}\,\bar\alpha_t\wedge\beta_t + \overline{\sigma_{2\bar{2}}(t)}\,\bar\beta_t\wedge\beta_t),  \\
\partial_t(\beta_t\wedge\bar\gamma_t) & = & -\beta_t\wedge\overline{\bar\partial_t\gamma_t} 
= -\beta_t\wedge(\overline{\sigma_{1\bar{1}}(t)}\,\bar\alpha_t\wedge\alpha_t + \overline{\sigma_{2\bar{1}}(t)}\,\bar\beta_t\wedge\alpha_t), \\
\partial_t(\gamma_t\wedge\bar\alpha_t) & = & \partial_t\gamma_t\wedge\bar\alpha_t 
= \sigma_{12}(t)\,\alpha_t\wedge\beta_t\wedge\bar\alpha_t, \hspace{2ex}  \partial_t(\gamma_t\wedge\bar\beta_t) 
= \partial_t\gamma_t\wedge\bar\beta_t = \sigma_{12}(t)\,\alpha_t\wedge\beta_t\wedge\bar\beta_t.\end{eqnarray*}
Thus, $\partial_t(H^{1,\,1}_{\bar\partial}(X_t,\,\C)) = \langle[\alpha_t\wedge\beta_t\wedge\bar\alpha_t]_{\bar\partial},\,[\alpha_t\wedge\beta_t\wedge\bar\beta_t]_{\bar\partial}\rangle$.
This settles the case of Nakamura's class~(ii).
\end{itemize}
\end{proof}

 The conclusion of these considerations is summed up in the following

\begin{The}\label{The:VHS_3_Delta} Let $(X_t)_{t\in\Delta}$ be the Kuranishi family of the Iwasawa manifold $X=X_0$.

\vspace{1ex}

$(i)$\, There exists over $\Delta_{[\gamma]}$ a {\bf variation of Hodge structures (VHS)} of weight $3$
\begin{equation}\label{eqn:VHS_3_Delta}{\cal H}^3 = {\cal H}^{3,\, 0} \oplus{\cal H}^{2,\,1}_{[\gamma]} \oplus{\cal H}^{1,\,2}_{[\gamma]} \oplus{\cal H}^{0,\,3},\end{equation}
\noindent where ${\cal H}^3$ is the local system of fibre $H^3_{DR}(X,\,\C)$, ${\cal H}^{3,\, 0}$ is the holomorphic line bundle $\Delta_{[\gamma]}\ni t\mapsto H^{3,\,0}_{\bar\partial}(X_t,\,\C)$, ${\cal H}^{2,\,1}_{[\gamma]}$ is the $C^{\infty}$ vector bundle $\Delta_{[\gamma]}\ni t\mapsto H^{2,\,1}_{[\gamma]}(X_t,\,\C)\simeq E_2^{2,\,1}(X_t,\,\C)$ of rank $4$, while ${\cal H}^{1,\,2}_{[\gamma]} \simeq \overline{{\cal H}^{2,\,1}_{[\gamma]}}$ and ${\cal H}^{0,\,3} = \overline{{\cal H}^{3,\, 0}}$.

\vspace{1ex}

$(ii)$\, The vector subbundles $F^3{\cal H}^3:={\cal H}^{3,\, 0}\subset{\cal H}^3$ and $F^2{\cal H}^3_{[\gamma]}:= {\cal H}^{3,\, 0} \oplus{\cal H}^{2,\,1}_{[\gamma]}\subset{\cal H}^3$ are {\bf holomorphic}. 

The $C^{\infty}$ vector subbundle $F^1{\cal H}^3_{[\gamma]}:= {\cal H}^{3,\, 0} \oplus{\cal H}^{2,\,1}_{[\gamma]} \oplus{\cal H}^{1,\,2}_{[\gamma]}\subset{\cal H}^3$ is {\bf not holomorphic}. This is one of two possible deviations from the behaviour of a standard Hodge filtration. 

\vspace{1ex}

$(iii)$\, As in the standard case, there is a flat connection $\nabla : {\cal H}^3 \longrightarrow {\cal H}^3\otimes\Omega_{\Delta_{[\gamma]}}$ (the {\bf Gauss-Manin connection}) satisfying the {\bf Griffiths transversality condition}
\begin{equation}\label{eqn:transversality_Kuranishi}\nabla F^3{\cal H}^3 \subset F^2{\cal H}^3_{[\gamma]}\otimes\Omega_{\Delta_{[\gamma]}}.\end{equation}

Moreover, in the case of $F^1{\cal H}^3_{[\gamma]}$, the {\bf orthogonality relations} derived from a possible transversality statement remain true:
\begin{eqnarray}\label{ortho}
  Q(\nabla F^1{\cal H}^3_{[\gamma]}, F^0{\cal H}^3_{[\gamma]})=0.
\end{eqnarray}

\end{The}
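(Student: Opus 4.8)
The plan is to prove the four assertions in the order $(i)$, $(ii)$, $(iii)$, building each on the explicit frames of harmonic forms already constructed. For $(i)$ the building blocks are in place: ${\cal H}^{3,0}$ is the holomorphic line bundle with fibre $H^{3,0}_{\bar\partial}(X_t,\C)=\langle[\Omega_t]_{\bar\partial}\rangle$ (holomorphic dependence on $t$ by the Remark following Proposition \ref{Prop:H2-1_gamma_vbundle}, where $\Omega_t=\alpha_t\wedge\beta_t\wedge\gamma_t$ depends holomorphically on $t$); ${\cal H}^{2,1}_{[\gamma]}$ is the rank-$4$ $C^\infty$ bundle of Proposition \ref{Prop:H2-1_gamma_vbundle}, which is canonically isomorphic to $E_2^{2,1}(X_t,\C)$ fibrewise by Proposition \ref{Prop:H21_gamma_E2_21}$(c)$; and ${\cal H}^{1,2}_{[\gamma]}$, ${\cal H}^{0,3}$ are their images under the fibrewise conjugation operators $\overline{(\cdot)}$ (note ${\cal H}^{0,3}=\overline{{\cal H}^{3,0}}$ and ${\cal H}^{1,2}_{[\gamma]}=\overline{{\cal H}^{2,1}_{[\gamma]}}$ make sense inside the \emph{constant} real bundle ${\cal H}^3$, where complex conjugation is defined). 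The direct-sum decomposition (\ref{eqn:VHS_3_Delta}) as subbundles of ${\cal H}^3$ is then exactly the fibrewise statement of Proposition \ref{Prop:H21_gamma_Hodge} applied to each $X_t$ with $t\in\Delta_{[\gamma]}$, which is legitimate because the dimension count $1+4+4+1=10=b_3$ is deformation-invariant and the transversality of the four images in $H^3_{DR}(X,\C)$ is an open condition that holds at $t=0$; one should remark that Corollary \ref{Cor:H21_gamma_t_injection} already supplies the injection ${\cal H}^{2,1}_{[\gamma],t}\hookrightarrow H^3_{DR}(X,\C)$ for every $t\in\Delta_{[\gamma]}$, and the analogous injection for ${\cal H}^{1,2}_{[\gamma],t}$ follows by applying $\iota_t$.

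For $(ii)$, holomorphicity of $F^3{\cal H}^3={\cal H}^{3,0}$ is immediate from the holomorphic dependence of $\Omega_t$ on $t$. For $F^2{\cal H}^3_{[\gamma]}={\cal H}^{3,0}\oplus{\cal H}^{2,1}_{[\gamma]}$ I would argue as in the classical setting via the Calabi-Yau isomorphism: by the Remark after Proposition \ref{Prop:H2-1_gamma_vbundle} the family $(T_{\Omega_t})_{t\in\Delta_{[\gamma]}}$ is holomorphic and identifies ${\cal H}^{2,1}_{[\gamma]}$ with the bundle $t\mapsto H^{0,1}_{[\gamma]}(X_t,T^{1,0}X_t)\simeq T^{1,0}_t\Delta_{[\gamma]}$; combined with the general fact that $F^p$ of the Hodge filtration attached to a period map is holomorphic whenever the relevant Dolbeault spaces assemble into a holomorphic bundle (here guaranteed by the deformation-invariance of the Hodge numbers and the holomorphicity of $\Omega_t$ and of the Kodaira-Spencer data), one gets that $F^2{\cal H}^3_{[\gamma]}$ is a holomorphic subbundle of the constant bundle ${\cal H}^3$ --- equivalently, that local holomorphic frames can be chosen whose De Rham classes vary holomorphically. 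That $F^1{\cal H}^3_{[\gamma]}$ is \emph{not} holomorphic is seen by exhibiting the obstruction: the forms $\star_t\overline{\Gamma_k(t)}$ generating ${\cal H}^{1,2}_{[\gamma],t}$ depend on $t$ \emph{anti}-holomorphically (through $\overline{\Gamma_k(t)}$ and through the metric $\omega_t$, which itself is built from the anti-holomorphic $\bar\alpha_t,\bar\beta_t,\bar\gamma_t$), so the $\bar\partial_t$-derivative of a frame section of $F^1$ along $\Delta_{[\gamma]}$ has a nonzero component outside $F^1$; a single explicit computation of $\partial\big(\star_0\overline{\Gamma_1(t)}\big)/\partial\bar t_{11}\big|_{t=0}$ using the structure equations (\ref{eq:structure}) and the $t$-derivatives of the $\sigma$'s suffices to certify non-vanishing.

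For $(iii)$ the Gauss-Manin connection $\nabla$ is just the flat connection on the constant (local) system ${\cal H}^3$ whose horizontal sections are the constant De Rham classes --- its existence and flatness are automatic. Griffiths transversality (\ref{eqn:transversality_Kuranishi}) follows by differentiating a holomorphic frame of $F^3{\cal H}^3$, i.e.\ the class $[\Omega_t]_{DR}$: writing $\nabla_{\partial/\partial t_{i\lambda}}[\Omega_t]$ in terms of the Kodaira-Spencer class $\theta_{i\lambda}$ one gets, by the standard Griffiths computation, a class lying in $F^2$, namely $-[\theta_{i\lambda}\lrcorner\Omega_t]_{\bar\partial}$ up to an $F^3$-term; the key point is that $\theta_{i\lambda}$ may be taken in $H^{0,1}_{[\gamma]}(X_t,T^{1,0}X_t)$ for $t\in\Delta_{[\gamma]}$, so $[\theta_{i\lambda}\lrcorner\Omega_t]_{\bar\partial}\in H^{2,1}_{[\gamma]}(X_t,\C)$ by the very definition of ${\cal H}^{2,1}_{[\gamma]}$, giving $\nabla F^3{\cal H}^3\subset F^2{\cal H}^3_{[\gamma]}\otimes\Omega_{\Delta_{[\gamma]}}$. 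Finally the orthogonality relation (\ref{ortho}): $F^0{\cal H}^3_{[\gamma]}={\cal H}^3$, and the polarization form $Q(u,v)=\int_X u\wedge v$ on $H^3_{DR}(X,\C)$ is $\nabla$-flat (Stokes), so differentiating $Q(F^1,F^0)$ --- here the whole point is that $F^1$ pairs to zero with its complementary piece $\overline{F^3}\oplus\overline{F^2_{[\gamma]}}$ under $Q$ by bidegree, but since $F^0$ is everything, one instead uses flatness of $Q$ together with transversality of $\nabla$ \emph{on the holomorphic part} $F^2$ and $F^3$ and conjugation to propagate; concretely, $Q(\nabla F^1,F^0)=\nabla Q(F^1,F^0)-Q(F^1,\nabla F^0)$, and since $\nabla F^0\subset F^0\otimes\Omega$ trivially while $Q(F^1,F^0)$ need not vanish, the correct derivation is to note that the Hodge-Riemann bilinear relations $Q(F^p,F^{4-p})=0$ hold fibrewise --- for $p=2$ this is $Q(F^2_{[\gamma]},F^2_{[\gamma]})=0$, for $p=3$ it is $Q(F^3,F^1_{[\gamma]})=0$ --- and differentiating $Q(F^3,F^1_{[\gamma]})=0$ using flatness of $Q$ and transversality $\nabla F^3\subset F^2_{[\gamma]}\otimes\Omega$ yields $Q(F^2_{[\gamma]},\nabla F^1_{[\gamma]})\subset Q(F^2_{[\gamma]},?)$, forcing the stated vanishing $Q(\nabla F^1{\cal H}^3_{[\gamma]},F^0{\cal H}^3_{[\gamma]})=0$ once one observes $F^0=F^0$ and reindexes; this Leibniz-plus-fibrewise-HR argument is exactly the classical derivation of the ``infinitesimal'' orthogonality relations and goes through verbatim because all the ingredients it needs (flatness of $Q$, the fibrewise first and last Hodge-Riemann relations, and transversality of $\nabla$ \emph{out of} $F^3$) are available here even though $\nabla$ out of $F^2_{[\gamma]}$ and $F^1_{[\gamma]}$ is not controlled.

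The main obstacle I expect is $(ii)$ and the transversality computations feeding $(iii)$: everything hinges on checking that $[\theta_{i\lambda}\lrcorner\Omega_t]_{\bar\partial}$ genuinely lands in $H^{2,1}_{[\gamma]}(X_t,\C)$ and not merely in $H^{2,1}_{\bar\partial}(X_t,\C)$ --- which is true by construction on the fibre but requires care about whether the Kodaira-Spencer frame of $T^{1,0}_t\Delta_{[\gamma]}$ is sent to the \emph{harmonic} representatives $\Gamma_k(t)$ rather than to merely cohomologous forms, i.e.\ one must reconcile the contraction description with the explicit $\Gamma_k(t)$ of (\ref{eqn:Gamma_j-forms_t}). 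The honest resolution is that cohomology classes are all that matter for $\nabla$ and $Q$, so this is a matter of bookkeeping rather than a real difficulty, but it is the step where a sign or a stray term is most likely to hide; similarly, the non-holomorphicity claim in $(ii)$ needs one concrete nonzero $\bar\partial_t$-derivative, and producing it cleanly from (\ref{eq:structure}) together with the $t$-dependence of the $\sigma$'s is the one place a genuine (short) computation cannot be avoided.
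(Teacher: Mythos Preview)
Your treatment of $(i)$ and $(iii)$ is essentially the paper's argument: the fibrewise decomposition, the injections from Corollary \ref{Cor:H21_gamma_t_injection}, and the identification of the $(-1,+1)$-part of $\nabla$ with the contraction $[\theta]\lrcorner\cdot$ are exactly what the paper uses.

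The genuine gap is in $(ii)$, specifically the holomorphicity of $F^2{\cal H}^3_{[\gamma]}$. You appeal to ``the general fact that $F^p$ of the Hodge filtration attached to a period map is holomorphic whenever the relevant Dolbeault spaces assemble into a holomorphic bundle,'' justified by deformation-invariance of Hodge numbers and holomorphicity of $\Omega_t$ and the Kodaira--Spencer data. But that general fact relies on the $\partial\bar\partial$-lemma (or a K\"ahler hypothesis), which fails on $X_t$; and the holomorphicity of $T_{\Omega_t}$ does not translate into a holomorphic frame of ${\cal H}^{2,1}_{[\gamma]}$ inside ${\cal H}^3$ --- indeed, the paper explicitly notes after Definition \ref{Def:H21_gamma_t} that the $\Gamma_k(t)$ do \emph{not} depend holomorphically on $t$. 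So your argument is circular: you would need $F^2{\cal H}^3_{[\gamma]}$ holomorphic to push holomorphic frames through $T_{\Omega_t}$, not the other way around. The paper instead performs the direct computation you are trying to avoid: using the explicit expansion of $\Gamma_j(t)$ in the $\alpha,\beta,\gamma,\bar\alpha,\bar\beta,\bar\gamma$ basis (Lemma \ref{Lem:Gamma_1_t_explicit} in the Appendix) together with the vanishing $\partial\sigma_{i\bar j}/\partial\bar t_{k\lambda}(0)=0$ (Lemma \ref{Lem:sigma_22bar_vanishing-anti-hol-deriv}), one finds that the only nonzero anti-holomorphic derivatives $\partial[\Gamma_j]_{\bar\partial}/\partial\bar t_{i\lambda}(0)$ equal $\pm[\alpha\wedge\beta\wedge\gamma]_{\bar\partial}\in H^{3,0}(X_0,\C)\subset F^2$. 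This is precisely the check that $D''$ preserves $F^2$, and it cannot be replaced by a soft argument.

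For the non-holomorphicity of $F^1{\cal H}^3_{[\gamma]}$, your outline is right in spirit but misses the actual mechanism. It is not that $\partial(\star_t\overline{\Gamma_1(t)})/\partial\bar t|_{t=0}$ has a nonzero component \emph{outside} $F^1$; rather, the paper shows (in part $(B)$ following the proof) that this derivative contains the term $i\,\alpha\wedge\bar\gamma\wedge\beta$, which is not even $\bar\partial_0$-closed (since $\bar\partial_0(\alpha\wedge\bar\gamma\wedge\beta)=-\alpha\wedge\bar\alpha\wedge\beta\wedge\bar\beta\neq 0$), so the derivative fails to define a Dolbeault class at all. That is the concrete obstruction you should produce.
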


 It is unclear whether the transversality condition $\nabla F^p{\cal H}^3_{[\gamma]} \subset F^{p-1}{\cal H}^3_{[\gamma]}\otimes\Omega_{\Delta_{[\gamma]}}$ holds for $p =2$ or $p=1$ (the second possible deviation from the behaviour of a standard Hodge filtration).

\begin{proof} $(i)$\, The injection ${\cal H}^{3,\,0}\hookrightarrow{\cal H}^3$ is a consequence of Lemma \ref{Lem:Hn0_injection}, while the injection ${\cal H}^{2,\,1}_{[\gamma]}\hookrightarrow{\cal H}^3$ follows from Corollary \ref{Cor:H21_gamma_t_injection}.
 
Moreover, the property $E_2(X_t) = E_{\infty}(X_t)$ (cf. $(a)$ of Proposition \ref{Prop:H21_gamma_E2_21}) gives an isomorphism 
\begin{equation}\label{eqn:E_2-splitting}H^3_{DR}(X,\,\C)\simeq E_2^{3,\,0}(X_t,\,\C) \oplus E_2^{2,\,1}(X_t,\,\C) \oplus E_2^{1,\,2}(X_t,\,\C) \oplus E_2^{0,\,3}(X_t,\,\C)
\hspace{2ex} \mbox{for every} \hspace{1ex} t\in\Delta.\end{equation}

\noindent We have (cf. $(c)$ of Proposition \ref{Prop:H21_gamma_E2_21}) a canonical isomorphism $E_2^{2,\,1}(X_t,\,\C)\simeq H^{2,\,1}_{[\gamma]}(X_t,\,\C)$, 
while it is easy to prove that $E_2^{3,\,0}(X_t,\,\C) = H^{3,\,0}_{\bar\partial}(X_t,\,\C)$ for every $t\in\Delta_{[\gamma]}$.
Indeed, to see this last point, recall that
\begin{equation}\label{eqn:E_2_30_def}E_2^{3,\,0}(X_t,\,\C) 
= \ker\bigg(\partial_t: H^{3,\,0}_{\bar\partial}(X_t,\,\C)\longrightarrow 0 \bigg)
\bigg\slash \mbox{Im}\,\bigg(\partial_t: H^{2,\,0}_{\bar\partial}(X_t,\,\C)\longrightarrow H^{3,\,0}_{\bar\partial}(X_t,\,\C)\bigg).\end{equation}
\noindent The map $\partial_t$ acting on $H^{3,\,0}_{\bar\partial}(X_t,\,\C)$ arrives in $H^{4,\,0}_{\bar\partial}(X_t,\,\C) = 0$, 
while $H^{2,\,0}_{\bar\partial}(X_t,\,\C)$ is generated by $[\alpha_t\wedge\beta_t]_{\bar\partial}$ when $X_t$ is in Nakamura's class $(iii)$ 
and by $[\alpha_t\wedge\beta_t]_{\bar\partial}$ and either $[\alpha_t\wedge\gamma_t]_{\bar\partial}$ or $[\beta_t\wedge\gamma_t]_{\bar\partial}$ 
when $X_t$ is in Nakamura's class $(ii)$. Now, all the three forms $\alpha_t\wedge\beta_t, \alpha_t\wedge\gamma_t, \beta_t\wedge\gamma_t$ are $\partial_t$-closed 
since $\alpha_t$ and $\beta_t$ are $\partial_t$-closed and $\partial_t\gamma_t$ is a multiple of $\alpha_t\wedge\beta_t$. 
Therefore, $\partial_t(H^{2,\,0}_{\bar\partial}(X_t,\,\C)) = 0$. Thus, we get from (\ref{eqn:E_2_30_def}) that $E_2^{3,\,0}(X_t,\,\C) = H^{3,\,0}_{\bar\partial}(X_t,\,\C)$, as stated.

 It can then be proved from this that $E_2^{2,\,1}(X_t,\,\C)\stackrel{\simeq}{\longrightarrow} \overline{E_2^{1,\,2}(X_t,\,\C)}$ for every $t\in\Delta_{[\gamma]}$.
Now, (\ref{eqn:VHS_3_Delta}) follows by combining these facts with Proposition \ref{Prop:H2-1_gamma_vbundle}.

\vspace{2ex}

$(ii)$\, In the first statement, only the fact that the $C^{\infty}$ vector subbundle $F^2{\cal H}^3_{[\gamma]}\subset{\cal H}^3$ is actually holomorphic still needs a proof. We have to show that the holomorphic structure of $F^2{\cal H}^3_{[\gamma]}$ is the restriction of the holomorphic structure of ${\cal H}^3$. In other words, we have to show that for any $C^\infty$ section $s$ of $F^2{\cal H}^3_{[\gamma]}$, the a priori ${\cal H}^3$-valued $(0,\,1)$-form $D''s$ is actually $F^2{\cal H}^3_{[\gamma]}$-valued, where $D''$ is the canonical $(0,\,1)$-connection of the constant bundle ${\cal H}^3$. We are thus reduced to showing that all the anti-holomorphic first-order derivatives of the $[\Gamma_j(t)]_{\bar\partial}$'s lie in $F^2H^3_{[\gamma]}(X_t,\,\C)$, i.e. that

\begin{equation}\label{eqn:holomorphicity_F2}\frac{\partial[\Gamma_j]_{\bar\partial}}{\partial\bar{t}_{i\lambda}}(t)\in H^{3,\,0}(X_t,\,\C)\oplus H^{2,\,1}_{[\gamma]}(X_t,\,\C) = F^2H^3_{[\gamma]}(X_t,\,\C)  \hspace{3ex} \mbox{for all} \hspace{1ex} t\in\Delta_{[\gamma]} \hspace{1ex} \mbox{all} \hspace{1ex} i,\lambda.\end{equation}

\noindent By way of example, we will show this for the derivatives at $t=0$.

To this end, we will make use of the explicit formula for $\Gamma_1(t)$ and its analogues for $\Gamma_2(t), \Gamma_3(t), \Gamma_4(t)$ obtained in Lemma \ref{Lem:Gamma_1_t_explicit} and also of Lemma \ref{Lem:sigma_22bar_vanishing-anti-hol-deriv} (cf. Appendix). Only the terms on the r.h.s. of that formula that are linear in the $\bar{t}_{i\lambda}$'s give a non-trivial contribution to $(\partial\Gamma_1(t)/\partial\bar{t}_{i\lambda})(0)$. Now, in each of the formulae for $\Gamma_1(t), \Gamma_2(t), \Gamma_3(t), \Gamma_4(t)$, the only such term featuring on the r.h.s. is, respectively, 

$$-\bar{t}_{12}\,\alpha\wedge\beta\wedge\gamma, \hspace{2ex} -\bar{t}_{22}\,\alpha\wedge\beta\wedge\gamma, \hspace{2ex} \bar{t}_{11}\,\alpha\wedge\beta\wedge\gamma, \hspace{2ex} \bar{t}_{21}\,\alpha\wedge\beta\wedge\gamma,$$

\noindent whose derivative in the $\bar{t}_{12}$-direction (respectively the $\bar{t}_{22}$-, $\bar{t}_{11}$-, $\bar{t}_{21}$-direction) is obviously $-\alpha\wedge\beta\wedge\gamma$ (respectively $-\alpha\wedge\beta\wedge\gamma$, $\alpha\wedge\beta\wedge\gamma$, $\alpha\wedge\beta\wedge\gamma$). Thus, for $j\in\{1, 2, 3, 4\}$, the only non-vanishing first-order anti-holomorphic derivatives of the $[\Gamma_j]_{\bar\partial}$'s at $0$ are

$$\frac{\partial[\Gamma_j]_{\bar\partial}}{\partial\bar{t}_{i\lambda}}(0) = \pm\,[\alpha\wedge\beta\wedge\gamma]_{\bar\partial} \in H^{3,\,0}(X_0,\,\C) \subset H^{3,\,0}(X_0,\,\C)\oplus H^{2,\,1}_{[\gamma]}(X_0,\,\C) = F^2H^3_{[\gamma]}(X_0,\,\C).$$

\noindent This proves the contention. Note that this also shows that the $C^\infty$ vector subbundle ${\cal H}^{2,\,1}_{[\gamma]}$ of ${\cal H}^3$ is not a holomorphic subbundle, so the analogy with the standard, K\"ahler, case is preserved. 

 The second statement under $(ii)$ is proved under $(B)$ in the comments that follow the end of this proof. 

\vspace{2ex}

$(iii)$\, The transversality statement is an immediate consequence of the fact that the $(-1,\, +1)$-component of the connection $\nabla_{[\theta]}$ coincides at any point $[\theta]\in T^{1,\,0}_t\Delta_{[\gamma]}\simeq H^{0,\,1}_{[\gamma]}(X_t,\,T^{1,\,0}X_t)$ (for $t\in\Delta_{[\gamma]}$) with the contraction operator $[\theta]\lrcorner\cdot$ (see (\ref{eqn:CY_isomorphism_t}) and (\ref{eqn:H01_gamma_t})). Note that the relation $[\bar\alpha\wedge\bar\beta]_{\bar\partial} =[-\bar\partial\bar\gamma]_{\bar\partial}=0$ implies that the contraction of the forms  of \eqref{eqn:H21_gamma_def} by the elements of \eqref{eqn:H^01_gamma_generators} vanishes, hence we get transversality at $0$:
for all $[\theta]\in T_0^{1,\,0}\Delta_{[\gamma]}\simeq H^{0,\,1}_{[\gamma]}(X,\,T^{1,\,0}X)$,
\begin{eqnarray*}\label{eqn:transversality_0}
\nabla_{[\theta]}H^{3,\,0}(X,\,\C)&\subset& H^{3,\,0}(X,\,\C)\oplus H^{2,\,1}_{[\gamma]}(X,\,\C),
\\
\nabla_{[\theta]}H^{2,\,1}_{[\gamma]}(X,\,\C)&\subset& H^{2,\,1}_{[\gamma]}(X,\,\C)\subset H^{2,\,1}_{[\gamma]}(X,\,\C)\oplus H^{1,\,2}_{[\gamma]}(X,\,\C).
\end{eqnarray*}
\end{proof}

\vspace{3ex}

 We end this discussion with further comments about the Hodge filtration of Theorem \ref{The:VHS_3_Delta}. We notice (cf. Corollary \ref{Cor:holomorphic-bundle-isomorphisms}) that the Hodge filtration $F^2{\cal H}^3_{[\gamma]}\supset F^3{\cal H}^3$ of holomorphic vector bundles over $\Delta_{[\gamma]}$ constructed on the {\it complex-structure side of the mirror} is $C^\infty$ {\it isomorphic} to the Hodge filtration $F^1{\cal H}^2(B)\supset F^2{\cal H}^2(B)$ of holomorphic vector bundles over $\Delta_{[\gamma]}$ determined by the holomorphic family $(B_t)_{t\in\Delta_{[\gamma]}}$ of Albanese tori $B_t = \mbox{Alb}(X_t)$ of the fibres $X_t$. The latter Hodge filtration will be proved to be $C^\infty$ isomorphic to a Hodge filtration that we shall construct on the {\it metric side of the mirror} in section \ref{section:metric-side}, providing thus the link between the two sides.

\vspace{2ex}

$(A)$\, Recall that the fibres $X_t$ are locally trivial holomorphic fibrations $\pi_t:X_t\to B_t$ over complex tori $B_t$ (the Albanese tori of the $X_t$'s) of dimension $2$ varying in a holomorphic family $(B_t)_{t\in\Delta}$. Implicit in the definition of $H^{2,\,1}_{[\gamma]}(X_t,\,\C) \subset H^{2,\,1}_{\bar\partial}(X_t,\,\C)$ (cf. Definition \ref{Def:H21_gamma_t}) are the isomorphisms of complex vector spaces

\begin{equation}\label{eqn:H30_H21_gamma_descriptions}H^{3,\,0}(X_t,\,\C)\simeq [\gamma_t\wedge\pi_t^{\star}H^{2,\,0}(B_t,\,\C)]_{\bar\partial}  \hspace{2ex} \mbox{and}  \hspace{2ex}   H^{2,\,1}_{[\gamma]}(X_t,\,\C)\simeq[\gamma_t\wedge\pi_t^{\star}H^{1,\,1}(B_t,\,\C)]_{\bar\partial}, \hspace{2ex} t\in\Delta_{[\gamma]},\end{equation}

\noindent defined by the descriptions $H^{2,\,0}(B_t,\,\C) = \C\,[\alpha_t\wedge\beta_t]_{\bar\partial}$ and $H^{1,\,1}(B_t,\,\C) = \langle[\alpha_t\wedge\bar\alpha_t]_{\bar\partial},\, [\alpha_t\wedge\bar\beta_t]_{\bar\partial},\,[\beta_t\wedge\bar\alpha_t]_{\bar\partial},\,[\beta_t\wedge\bar\beta_t]_{\bar\partial}\rangle$ of these vector spaces. 

\begin{Cor}\label{Cor:holomorphic-bundle-isomorphisms} The vector space isomorphisms (\ref{eqn:H30_H21_gamma_descriptions}) induce $C^\infty$ isomorphisms of vector bundles over $\Delta_{[\gamma]}$

\begin{equation}\label{eqn:X-B-vb-isom}F^3{\cal H}^3\simeq F^2{\cal H}^2(B)   \hspace{2ex}   \mbox{and}  \hspace{2ex}   F^2{\cal H}^3_{[\gamma]}\simeq F^1{\cal H}^2(B),\end{equation}

\noindent where $F^2{\cal H}^2(B)$ stands for the vector bundle $\Delta_{[\gamma]}\ni t\mapsto H^{2,\,0}(B_t,\,\C)$ and $F^1{\cal H}^2(B)$ stands for the vector bundle $\Delta_{[\gamma]}\ni t\mapsto H^{2,\,0}(B_t,\,\C) \oplus H^{1,\,1}(B_t,\,\C)$.

\end{Cor}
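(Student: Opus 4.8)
The plan is to transport the $C^\infty$ bundle structures along the fibrewise isomorphisms (\ref{eqn:H30_H21_gamma_descriptions}) and check that these isomorphisms vary smoothly with $t\in\Delta_{[\gamma]}$. First I would note that the frames $\alpha_t,\beta_t,\gamma_t$ of $(1,\,0)$-forms on $X_t$ depend holomorphically (hence $C^\infty$-smoothly) on $t$ by (\ref{eqn:alpha-beta-gamma_t_def}), and that $\alpha_t,\beta_t$ are exactly the pullbacks under $\pi_t$ of the translation-invariant $(1,\,0)$-frame on the Albanese torus $B_t$; since the family $(B_t)_{t\in\Delta_{[\gamma]}}$ is holomorphic and each $B_t$ is a complex torus, the Dolbeault cohomology $H^{2,\,0}(B_t,\,\C)=\C\,[\alpha_t\wedge\beta_t]_{\bar\partial}$ and $H^{1,\,1}(B_t,\,\C)=\langle[\alpha_t\wedge\bar\alpha_t]_{\bar\partial},[\alpha_t\wedge\bar\beta_t]_{\bar\partial},[\beta_t\wedge\bar\alpha_t]_{\bar\partial},[\beta_t\wedge\bar\beta_t]_{\bar\partial}\rangle$ are computed by invariant forms, and the standard Kodaira–Spencer argument shows that $t\mapsto H^{2,\,0}(B_t,\,\C)$ and $t\mapsto H^{1,\,1}(B_t,\,\C)$ are $C^\infty$ vector bundles of ranks $1$ and $4$, these being $F^2{\cal H}^2(B)$ and $F^1{\cal H}^2(B)\ominus F^2{\cal H}^2(B)$ respectively.

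Next I would establish that wedging with $\gamma_t$ induces the claimed isomorphisms of vector spaces for each fixed $t\in\Delta_{[\gamma]}$. For $F^3$: the form $\gamma_t\wedge\pi_t^\star(\alpha_t\wedge\beta_t)=\alpha_t\wedge\beta_t\wedge\gamma_t=\Omega_t$ is $\bar\partial_t$-closed (using the structure equations (\ref{eq:structure}), exactly as in the Remark following Proposition \ref{Prop:H2-1_gamma_vbundle}), nowhere vanishing, and generates $H^{3,\,0}_{\bar\partial}(X_t,\,\C)$; this gives the first isomorphism in (\ref{eqn:H30_H21_gamma_descriptions}). For $F^2$: by Definition \ref{Def:H21_gamma_t} and the computation in Lemma \ref{Lem:H21_gamma_t_injection}, the forms $\Gamma_1(t),\dots,\Gamma_4(t)$ of (\ref{eqn:Gamma_j-forms_t}) are the $\Delta''_t$-harmonic representatives of a basis of $H^{2,\,1}_{[\gamma]}(X_t,\,\C)$; modulo the $\bar\partial_t$-exact differences coming from the correction terms $-\frac{\sigma_{i\bar j}(t)}{\overline{\sigma}_{12}(t)}\alpha_t\wedge\beta_t\wedge\bar\gamma_t$, each $\Gamma_k(t)$ is cohomologous to $\gamma_t\wedge(\text{one of }\alpha_t\wedge\bar\alpha_t,\alpha_t\wedge\bar\beta_t,\beta_t\wedge\bar\alpha_t,\beta_t\wedge\bar\beta_t)$, which is $\gamma_t\wedge\pi_t^\star(\text{a generator of }H^{1,\,1}(B_t,\,\C))$. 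Hence $H^{2,\,1}_{[\gamma]}(X_t,\,\C)\simeq[\gamma_t\wedge\pi_t^\star H^{1,\,1}(B_t,\,\C)]_{\bar\partial}$, giving the second isomorphism; combining, $F^2H^3_{[\gamma]}(X_t,\,\C)=H^{3,\,0}(X_t,\,\C)\oplus H^{2,\,1}_{[\gamma]}(X_t,\,\C)\simeq\gamma_t\wedge\pi_t^\star(H^{2,\,0}(B_t,\,\C)\oplus H^{1,\,1}(B_t,\,\C))=F^1{\cal H}^2(B)_t$.

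Finally I would check smoothness in $t$, i.e. that the above fibrewise isomorphisms assemble into $C^\infty$ bundle maps. On the $B$-side the generators $\alpha_t\wedge\beta_t$ and $\alpha_t\wedge\bar\alpha_t,\dots,\beta_t\wedge\bar\beta_t$ vary smoothly (indeed $\alpha_t,\beta_t$ vary holomorphically); on the $X$-side we already know from Proposition \ref{Prop:H2-1_gamma_vbundle} that $\Gamma_k(t)$ and $\Omega_t$ vary smoothly and span the fibres of ${\cal H}^{2,\,1}_{[\gamma]}$ and ${\cal H}^{3,\,0}$; and the map ``wedge with $\gamma_t$ then pass to $\bar\partial_t$-cohomology'' is a smooth family of linear maps because $\gamma_t$ varies holomorphically and the coefficients $\sigma_{i\bar j}(t),\sigma_{12}(t)$ in the exact-difference terms are $C^\infty$ in $t$ with $\sigma_{12}(t)\neq0$ near $0$. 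So the fibrewise isomorphisms are the fibres of $C^\infty$ vector-bundle isomorphisms $F^3{\cal H}^3\simeq F^2{\cal H}^2(B)$ and $F^2{\cal H}^3_{[\gamma]}\simeq F^1{\cal H}^2(B)$ over $\Delta_{[\gamma]}$. The main obstacle I anticipate is the bookkeeping showing that the Dolbeault classes $[\gamma_t\wedge\pi_t^\star u]_{\bar\partial}$ are well defined and nonzero for $u$ ranging over a basis of $H^{2,\,0}(B_t,\,\C)\oplus H^{1,\,1}(B_t,\,\C)$ — i.e. that wedging with $\gamma_t$ is injective on cohomology and lands in the correct $[\gamma]$-subspace uniformly in $t$ — which is precisely where the special shape of the structure equations (\ref{eq:structure}) (the absence of a $\bar\gamma_t$ in $\bar\partial_t\gamma_t$ beyond the $\pi_t^\star$-part) is used, together with the linear independence of $\alpha_t,\beta_t,\gamma_t$ at every point.
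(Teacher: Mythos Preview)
Your overall strategy---define the fibrewise isomorphisms on explicit generators and observe that these generators vary smoothly in $t$---is exactly the paper's (which treats the Corollary as immediate from the preceding definitions and offers no formal proof, only the remark following it). However, there is a genuine error in your middle paragraph: the correction terms $-\frac{\sigma_{i\bar j}(t)}{\overline{\sigma}_{12}(t)}\,\alpha_t\wedge\beta_t\wedge\bar\gamma_t$ are \emph{not} $\bar\partial_t$-exact. In fact $\bar\partial_t(\alpha_t\wedge\beta_t\wedge\bar\gamma_t)=\overline{\sigma_{12}(t)}\,\alpha_t\wedge\beta_t\wedge\bar\alpha_t\wedge\bar\beta_t\neq 0$, and likewise $\gamma_t\wedge\alpha_t\wedge\bar\alpha_t$ is not $\bar\partial_t$-closed for $t\neq 0$ (one computes $\bar\partial_t(\alpha_t\wedge\gamma_t\wedge\bar\alpha_t)=\sigma_{2\bar 2}(t)\,\alpha_t\wedge\beta_t\wedge\bar\alpha_t\wedge\bar\beta_t$). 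So $[\gamma_t\wedge\pi_t^\star u]_{\bar\partial}$ does not literally make sense for $u\in H^{1,1}(B_t,\C)$ when $t\neq 0$, and $\Gamma_k(t)$ is not ``cohomologous modulo an exact term'' to $\gamma_t\wedge(\text{a generator})$: the correction is what \emph{makes} $\Gamma_k(t)$ $\bar\partial_t$-closed in the first place.

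The fix is simple and is precisely what the paper intends (see its remark right after the Corollary): define the isomorphism $H^{1,1}(B_t,\C)\to H^{2,1}_{[\gamma]}(X_t,\C)$ directly on bases, sending $[\alpha_t\wedge\bar\alpha_t]_{\bar\partial},\dots,[\beta_t\wedge\bar\beta_t]_{\bar\partial}$ to $[\Gamma_1(t)]_{\bar\partial},\dots,[\Gamma_4(t)]_{\bar\partial}$. Both sets of generators vary smoothly in $t$ (the $\sigma_{i\bar j}(t)$ and $\sigma_{12}(t)$ are $C^\infty$ with $\sigma_{12}(0)=-1\neq 0$), so this is a $C^\infty$ bundle isomorphism. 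Your smoothness paragraph then goes through unchanged; only the description of the map as ``wedge with $\gamma_t$ then pass to cohomology'' needs to be replaced by ``wedge with $\gamma_t$, subtract the appropriate multiple of $\alpha_t\wedge\beta_t\wedge\bar\gamma_t$, then pass to cohomology''---which, as the paper notes, is why holomorphicity of the second isomorphism is unclear.
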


Although the first isomorphism in (\ref{eqn:X-B-vb-isom}) is holomorphic (because $\gamma_t$ and $\pi_t$ depend holomorphically on $t$), it is unclear whether the second one is holomorphic since the pullback under $\pi_t$ and the subsequent exterior multiplication by $\gamma_t$ are followed by the subtraction of a multiple of $\alpha_t\wedge\beta\wedge\bar\gamma_t$ in the definition (\ref{eqn:Gamma_j-forms_t}) of the $\Gamma_j(t)$'s that need not depend holomorphically on $t$.

 Now, $(B_t)_{t\in\Delta}$ is a holomorphic family of compact K\"ahler manifolds, so its Hodge filtration $F^p{\cal H}^2(B)$ consists of {\it holomorphic} subbundles of the constant bundle $\Delta_{[\gamma]}\ni t\mapsto H^2(B_t)$ (denoted henceforth by ${\cal H}^2(B)$). On the other hand, we know from the conclusion $(ii)$ of Theorem \ref{The:VHS_3_Delta} that the subbundles $F^3{\cal H}^3\longrightarrow\Delta_{[\gamma]}$ and $F^2{\cal H}^3_{[\gamma]}\longrightarrow\Delta_{[\gamma]}$ of the constant bundle ${\cal H}^3\longrightarrow\Delta_{[\gamma]}$ are {\it holomorphic}.

\vspace{3ex}

 $(B)$\, We now prove the last statement in part $(ii)$ of Theorem \ref{The:VHS_3_Delta}. We know from (\ref{def:h12_gammat}) that the vector bundle ${\cal H}^{1,\,2}_{[\gamma]}$ is trivialised in a neighbourhood of $0\in\Delta_{[\gamma]}$ by the Dolbeault cohomology classes of the forms $\star_t\,\overline{\Gamma}_j(t)$ with $j=1,\dots , 4$. 

It will be seen in Lemma \ref{Lem:star_generators_21_gamma} that
 $\star(\alpha\wedge\beta\wedge\overline{\gamma}) = i\,\alpha\wedge\beta\wedge\overline{\gamma}$. This also applies at an arbitrary $t$ as do all the identities in Lemma \ref{Lem:star_generators_21_gamma}, so $\star_t(\alpha_t\wedge\beta_t\wedge\overline{\gamma}_t) = i\,\alpha_t\wedge\beta_t\wedge\overline{\gamma}_t$ for all $t\in\Delta$. Therefore, using (\ref{eqn:Gamma_j-forms_t}) for the first line below and (\ref{eqn:forms_t-forms_0}) for the second line, we get for all $t\in\Delta$
\begin{eqnarray}\label{eqn:star_t_Gamma_1_t}\nonumber\star_t\Gamma_1(t) & = & -i\,\beta_t\wedge\gamma_t\wedge\bar\beta_t - i\,\frac{\sigma_{2\bar{2}}(t)}{\bar\sigma_{12}(t)}\,\alpha_t\wedge\beta_t\wedge\bar\gamma_t \\
\nonumber & = & -i\,(\beta + t_{21}\,\bar\alpha + t_{22}\,\bar\beta)\wedge(\gamma + t_{31}\,\bar\alpha + t_{32}\,\bar\beta - D(t)\,\bar\gamma) \wedge(\bar\beta + \bar{t}_{21}\,\alpha + \bar{t}_{22}\,\beta)\\
\nonumber & - & i\,\frac{\sigma_{2\bar{2}}(t)}{\bar\sigma_{12}(t)}\,(\alpha + t_{11}\,\bar\alpha + t_{12}\,\bar\beta)\wedge (\beta + t_{21}\,\bar\alpha + t_{22}\,\bar\beta)\wedge(\bar\gamma + \bar{t}_{31}\,\alpha + \bar{t}_{32}\,\beta -\overline{D(t)}\,\gamma).\end{eqnarray}

\noindent Thus, the terms of $\overline{\star_t\Gamma_1(t)}$ that are linear in the $\bar{t}_{i\lambda}$'s are contained in 

$$i\,\bar{t}_{21}\,\alpha\wedge\bar\gamma\wedge\beta + i\,\bar{t}_{31}\,\bar\beta\wedge\alpha\wedge\beta + i\, \frac{\overline{\sigma_{2\bar{2}}(t)}}{\sigma_{12}(t)}\,\bar\alpha\wedge\bar\beta\wedge\gamma.$$ 

\noindent Deriving at $t=0$, we get

$$\frac{\partial\,\overline{\star_t\Gamma_1(t)}}{\partial\bar{t}_{21}}_{|t=0} = i\,\alpha\wedge\bar\gamma\wedge\beta + i\,\frac{\partial}{\partial\bar{t}_{21}}\bigg(\frac{\overline{\sigma_{2\bar{2}}(t)}}{\sigma_{12}(t)}\bigg)_{|t=0}\,\bar\alpha\wedge\bar\beta\wedge\gamma.$$

\noindent However, although the form $\bar\alpha\wedge\bar\beta\wedge\gamma$ is $\bar\partial_0$-closed, the form $\alpha\wedge\bar\gamma\wedge\beta$ is not (since $\bar\partial_0(\alpha\wedge\bar\gamma\wedge\beta) = -\alpha\wedge\bar\alpha\wedge\beta\wedge\bar\beta \neq 0$), so the form $(\partial\,\overline{\star_t\Gamma_1(t)})/(\partial\bar{t}_{21})_{|t=0}$ defines no Dolbeault cohomology class for $\bar\partial_0$. In particular, the $C^\infty$ section

$$\Delta_{[\gamma]}\ni t\mapsto [\star_t\,\overline\Gamma_1(t)]_{\bar\partial}$$

\noindent of the $C^\infty$ vector subbundle $\Delta_{[\gamma]}\ni t\mapsto H^{1,\,2}_{[\gamma]}(X_t,\,\C)$ of ${\cal H}^3\longrightarrow\Delta_{[\gamma]}$ does not remain a section of this bundle after derivation in the direction $\bar{t}_{21}$.

 We conclude that $\Delta_{[\gamma]}\ni t\mapsto H^{1,\,2}_{[\gamma]}(X_t,\,\C)$ is not a holomorphic subbundle of ${\cal H}^3\longrightarrow\Delta_{[\gamma]}$.

%
%
%
%
%
%
%
%
%
%
%
%
%
%
%
%
%
%
%
%
%

\section{Coordinates on the base $\Delta_{[\gamma]}$ of essential deformations}\label{section:coordinates_Delta}

\subsection{Signature of the intersection form on $F^2_{[\gamma]}H^3(X,\,\C)$}\label{subsection:signature}

Let $(X_t)_{t\in\Delta}$ be the Kuranishi family of the Iwasawa manifold $X=X_0$. 
Recall that the Hodge-Riemann bilinear intersection form $Q$ can always be canonically defined on $H^n_{DR}(X,\,\C)$ 
for any compact complex $n$-dimensional manifold $X$. It is {\it non-degenerate} and depends only on the differential structure of $X$. 
When $\mbox{dim}_{\C}X=3$, $Q$ is {\it alternating} and reads \begin{equation}\label{eqn:Q-def} Q : H^3_{DR}(X,\,\C)\times H^3_{DR}(X,\,\C)\longrightarrow\C,\hspace{3ex} 
(\{u\}, \{v\})\longmapsto -\,\int\limits_X u\wedge v 
~\footnote{In dimension $n$, the coefficient of the integral is $(-1)^{\frac{n(n-1)}{2}}$.}.\end{equation}

\noindent The associated sesquilinear form  \begin{equation}\label{eqn:H-def} 
H : H^3_{DR}(X,\,\C)\times H^3_{DR}(X,\,\C)\longrightarrow\C,\hspace{3ex}  
(\{u\}, \{v\})\longmapsto -i\,\int\limits_X u\wedge\bar v = i\,Q(\{u\},\,\{\bar v\})
~\footnote{\textrm{For arbitrary $n$, the coefficient of the integral is $(-1)^{\frac{n(n+1)}{2}}\,i^n$.}}\end{equation}\noindent is non-degenerate.

Also recall that if a Hermitian metric $\omega$ has been fixed on an arbitrary compact complex $n$-dimensional manifold $Y$, the corresponding Hodge star operator $\star$ maps $\Delta$-harmonic $n$-forms to $\Delta$-harmonic $n$-forms (where $\Delta:=dd^{\star} + d^{\star}d$ is the usual $d$-Laplacian), hence defines in conjunction with the Hodge isomorphism $H^n_{DR}(Y,\,\C)\simeq\ker(\Delta: C^\infty_n(Y,\,\C)\to C^\infty_n(Y,\,\C))$ a linear map $\star: H^n_{DR}(Y,\,\C)\longrightarrow H^n_{DR}(Y,\,\C)$ satisfying $\star^2 = (-1)^n$. 
When $n=3$, the eigenvalues of the operator $\star$ are $-i, i$ and we get a decomposition
\begin{equation}\label{eqn:H3_sign_decomp}H^3_{DR}(X,\,\C) = H^3_{+}(X,\,\C) \oplus H^3_{-}(X,\,\C),\end{equation} 
\noindent where $H^3_{\pm}(X,\,\C)$ are the eigenspaces of $\star$ corresponding to the eigenvalues $+i$, resp. $-i$.

Suppose now that $\mbox{dim}_{\C}X=3$. It was shown in [Pop13b, Lemmas 5.1 and 5.2] that for any Hermitian metric $\omega$ on $X$, 
$H(\cdot,\,\cdot)$ is positive definite on $H^3_{+}(X,\,\C)$, negative definite on $H^3_{-}(X,\,\C)$ and $H^3_{+}(X,\,\C)$ is $H$-orthogonal to $H^3_{-}(X,\,\C)$. Moreover, 
\begin{equation}\label{eqn:H30_sign}H^{3,\,0}(X,\,\C)\subset H^3_{-}(X,\,\C).\end{equation}
\noindent Similar statements hold in arbitrary dimension $n$ after adjusting for the parity of $n$.

Finally, recall that any compact {\it complex parallelisable} manifold $X$ has a natural inner product defined on its space $C^{\infty}_{p,\,q}(X,\,\C)$ 
of smooth differential forms of any bidegree $(p,\,q)$ (cf. [Nak75, $\S.4$] for a construction going back to Kodaira). 
Indeed, if $n=\mbox{dim}_{\C}X$, the hypothesis on $X$ amounts to the existence of $n$ holomorphic $1$-forms $\varphi_1,\dots , \varphi_n\in C^{\infty}_{1,\,0}(X,\,\C)$ that are linearly independent at every point in $X$. If $\xi_1,\dots , \xi_n\in H^0(X,\,T^{1,\,0}X)$ form the dual basis of holomorphic vector fields, 
every form $\varphi\in C^{\infty}_{0,\,1}(X,\,\C)$ can be written uniquely as $\varphi=\sum\limits_{\lambda=1}^nf_{\lambda}\,\overline{\varphi}_{\lambda}$, 
where the $f_{\lambda}$'s are smooth functions globally defined on $X$. One defines the $L^2$ {\bf inner product} on $C^{\infty}_{0,\,1}(X,\,\C)$ by
\begin{equation}\label{eqn:L2_inner-product_def}\langle\langle\varphi,\,\psi\rangle\rangle:= \int\limits_X\bigg(\sum\limits_{\lambda=1}^nf_{\lambda}\,
\overline{g}_{\lambda}\bigg)\, i^{n^2}\, \varphi_1\wedge\dots\wedge\varphi_n\wedge\overline{\varphi}_1\wedge\dots\wedge\overline{\varphi}_n\end{equation}
\noindent for any smooth $(0,\,1)$-forms $\varphi=\sum\limits_{\lambda=1}^nf_{\lambda}\,\overline{\varphi}_{\lambda}$ 
and $\psi=\sum\limits_{\lambda=1}^ng_{\lambda}\,\overline{\varphi}_{\lambda}$. 
Note that $dV:= i^{n^2}\, \varphi_1\wedge\dots\wedge\varphi_n\wedge\overline{\varphi}_1\wedge\dots\wedge\overline{\varphi}_n >0$ is a $C^{\infty}$ positive $(n,\,n)$-form on $X$ that is used as volume form in (\ref{eqn:L2_inner-product_def}). This means that $\langle\langle\varphi,\,\psi\rangle\rangle = \int_X\langle\varphi,\,\psi\rangle\,dV$, where the pointwise inner product $\langle\varphi,\,\psi\rangle$ on $(0,\,1)$-forms is defined by
\begin{equation*}
\langle\overline{\varphi}_{\lambda},\,\overline{\varphi}_{\mu}\rangle = \delta_{\lambda\,\mu}  \hspace{3ex} \mbox{for all}\hspace{1ex} \lambda, \mu.\end{equation*} 

\noindent This induces a pointwise inner product on $C^{\infty}_{p,\,q}(X,\,\C)$ for every $p,q$.

Now suppose that $X$ is the Iwasawa manifold. Thus, $n=3$ and $X$ is complex parallelisable, so with the notation of $\S.$\ref{section:preliminaries} we can choose 
$$\varphi_1=\alpha, \, \varphi_2=\beta, \, \varphi_3=\gamma \hspace{2ex} \mbox{and} \hspace{2ex} \xi_1=\xi_{\alpha}, \, \xi_2=\xi_{\beta}, \, \xi_3=\xi_{\gamma}.$$
\noindent The inner product defined above, induced by the complex parallelisable structure of $X$, coincides with the inner product induced by the canonical metric $\omega_0$ on $X$ defined in (\ref{eqn:Omega_def}).

We can easily check that the $(2,\,1)$-forms $\alpha\wedge\gamma\wedge\overline{\alpha},\, \alpha\wedge\gamma\wedge\overline{\beta},\, 
\beta\wedge\gamma\wedge\overline{\alpha},\, \beta\wedge\gamma\wedge\overline{\beta}$ representing the Dolbeault cohomology classes that generate $H^{2,\,1}_{[\gamma]}(X,\,\C)$ (cf. (\ref{eqn:H21_gamma_def})) are $\Delta$-harmonic. Indeed, they are $\bar\partial$-closed since they are products of $\bar\partial$-closed forms. They are also $\partial$-closed (even if $\gamma$ isn't), as can easily be checked. For example, using (\ref{eqn:alpha,beta,gamma_d}), 
we get $\partial(\alpha\wedge\gamma\wedge\overline{\alpha}) = -\alpha\wedge\partial\gamma\wedge\overline{\alpha} = \alpha\wedge(\alpha\wedge\beta)\wedge\overline{\alpha} =0$ since $\alpha\wedge\alpha = 0$. Thus, all these forms are $d$-closed. They are also both $\partial^{\star}$-closed and $\bar\partial^{\star}$-closed as shown in the next statement (cf. also Lemma \ref{Lem:H21_gamma_t_injection}).

\begin{Lem}\label{Lem:del-star-closedness} The forms $\alpha\wedge\gamma\wedge\overline{\alpha},\, \alpha\wedge\gamma\wedge\overline{\beta},
\, \beta\wedge\gamma\wedge\overline{\alpha},\, \beta\wedge\gamma\wedge\overline{\beta}$ are all $\partial^{\star}$-closed and $\bar\partial^{\star}$-closed.

Note furthermore that the forms $\alpha\wedge\beta\wedge\overline{\alpha},\, \alpha\wedge\beta\wedge\overline{\beta}$ are $\bar\partial^{\star}$-closed but not $\partial^{\star}$-closed.
\end{Lem}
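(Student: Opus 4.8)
The plan is to compute the Hodge star $\star = \star_{\omega_0}$ explicitly on the relevant $(2,1)$-forms and then invoke $\bar\partial^\star = -\star\,\partial\,\star$ (up to a sign depending on the bidegree) together with its conjugate $\partial^\star = -\star\,\bar\partial\,\star$, reducing $\partial^\star$- and $\bar\partial^\star$-closedness to $\bar\partial$- and $\partial$-closedness of the starred forms. Since $\alpha,\beta,\gamma$ are pointwise $L^2_{\omega_0}$-orthonormal, the Hodge star acts combinatorially on wedge monomials in $\alpha,\beta,\gamma,\bar\alpha,\bar\beta,\bar\gamma$, so each $\star$ of a basis $(2,1)$-monomial is (a unit times) the complementary $(1,2)$-monomial. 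First I would record the identities of the form $\star(\alpha\wedge\gamma\wedge\bar\alpha) = c\,\beta\wedge\bar\beta\wedge\bar\gamma$ and the three analogues, with explicit constants $c$ (a power of $i$). These are exactly the computations packaged in the forthcoming Lemma \ref{Lem:star_generators_21_gamma}, so I would either cite it or do the four monomial computations inline.

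The key step is then: $\bar\partial^\star(\alpha\wedge\gamma\wedge\bar\alpha) = 0$ iff $\star(\alpha\wedge\gamma\wedge\bar\alpha)$ is $\partial$-closed, and $\partial^\star(\alpha\wedge\gamma\wedge\bar\alpha) = 0$ iff $\star(\alpha\wedge\gamma\wedge\bar\alpha)$ is $\bar\partial$-closed. Now $\star(\alpha\wedge\gamma\wedge\bar\alpha)$ is a constant multiple of $\beta\wedge\bar\beta\wedge\bar\gamma$, and one checks $\partial(\beta\wedge\bar\beta\wedge\bar\gamma) = \beta\wedge\bar\beta\wedge\partial\bar\gamma = \beta\wedge\bar\beta\wedge(-\overline{\alpha\wedge\beta}) = 0$ since every term contains $\bar\beta\wedge\bar\beta$ or is killed by $\partial\bar\alpha=\partial\bar\beta=0$; similarly $\bar\partial(\beta\wedge\bar\beta\wedge\bar\gamma) = 0$ because $\bar\partial\bar\gamma = 0$ and $\bar\partial\beta = 0$. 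The same bookkeeping — every starred form is a $(1,2)$-monomial whose only non-closed factor is $\bar\gamma$, and $\partial\bar\gamma = \overline{\partial\gamma} = -\bar\alpha\wedge\bar\beta$ (reading off the conjugate of (\ref{eqn:alpha,beta,gamma_d})) wedges against the remaining factors to give zero — handles all four forms. This proves the first assertion. For the final remark, $\alpha\wedge\beta\wedge\bar\alpha$ and $\alpha\wedge\beta\wedge\bar\beta$ are $\bar\partial^\star$-closed because they are $\bar\partial$-harmonic representatives appearing in (\ref{eqn:cohomology_explicit}); but $\star(\alpha\wedge\beta\wedge\bar\alpha)$ is a multiple of $\gamma\wedge\bar\beta\wedge\bar\gamma$ (the complementary monomial), and $\bar\partial(\gamma\wedge\bar\beta\wedge\bar\gamma) = (\bar\partial\gamma)\wedge\bar\beta\wedge\bar\gamma$; using $\bar\partial\gamma = 0$ this vanishes, so one must instead test $\partial^\star$ via $\partial(\gamma\wedge\bar\beta\wedge\bar\gamma) = (\partial\gamma)\wedge\bar\beta\wedge\bar\gamma = -\alpha\wedge\beta\wedge\bar\beta\wedge\bar\gamma \neq 0$, whence $\partial^\star(\alpha\wedge\beta\wedge\bar\alpha) \neq 0$ (and the analogue with $\bar\beta$).

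The only real subtlety — and hence the main thing to get right — is the precise identification of $\star$ on these monomials and the exact form of the adjoint identity $\bar\partial^\star = \pm\star\,\partial\,\star$ on $(p,q)$-forms of the relevant bidegree, since a sign error there is invisible for the vanishing statements but matters for the non-vanishing claim in the last sentence. I would therefore carry the constants carefully through Lemma \ref{Lem:star_generators_21_gamma}, or alternatively bypass $\star$ altogether and argue directly: for $\bar\partial^\star$-closedness, note each form $\Gamma$ is already $\Delta''$-harmonic (it is one of the $\Gamma_j(0)$ of (\ref{eqn:Gamma_j-forms_t}), or follows from the $t=0$ case there), so $\bar\partial^\star\Gamma = 0$ is free; for $\partial^\star$-closedness, use the inner-product characterisation exactly as in the proof of Lemma \ref{Lem:H21_gamma_t_injection} — pair $\partial^\star(\alpha\wedge\gamma\wedge\bar\alpha)$ against an arbitrary $(1,1)$-form $u$, move $\partial$ onto $u$, and observe that $\langle\langle \alpha\wedge\gamma\wedge\bar\alpha,\,\partial u\rangle\rangle = 0$ because $\partial u$ never contains the monomial $\alpha\wedge\gamma\wedge\bar\alpha$ (the only $(1,1)$-generators with nonzero $\partial$ are those involving $\gamma$ or $\bar\gamma$, and applying $\partial$ to them produces forms with $\alpha\wedge\beta$ in the $(2,0)$-slot, orthogonal to $\alpha\wedge\gamma$). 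This second route avoids Hodge-star sign issues entirely, so I would present that as the main argument and relegate the $\star$-computation to the closing remark where non-vanishing is needed.
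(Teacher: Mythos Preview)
Your second, inner-product route for $\partial^\star$- and $\bar\partial^\star$-closedness is correct and is exactly what the paper does: pair against $\partial u$ for the $(1,1)$-generators $u$ (resp.\ against $\bar\partial w$ for the $(2,0)$-generators $w$) and observe that every resulting $\partial u$ has holomorphic part $\alpha\wedge\beta$, orthogonal to $\alpha\wedge\gamma$ and $\beta\wedge\gamma$, while all $(2,0)$-generators are $\bar\partial$-closed. So your main argument matches the paper.

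The Hodge-star route, however, contains two concrete errors that matter precisely where you intend to use it, namely the non-vanishing claim. First, on a complex $3$-fold $\star$ sends $(2,1)$-forms to $(n-q,n-p)=(2,1)$-forms, not to $(1,2)$-forms; hence $\star(\alpha\wedge\beta\wedge\bar\alpha)$ is (a unit times) $\beta\wedge\gamma\wedge\bar\gamma$, not $\gamma\wedge\bar\beta\wedge\bar\gamma$ as you wrote. Second, conjugation swaps $\partial$ and $\bar\partial$: one has $\bar\partial\bar\gamma=\overline{\partial\gamma}=-\bar\alpha\wedge\bar\beta$ while $\partial\bar\gamma=\overline{\bar\partial\gamma}=0$, so your identities ``$\partial\bar\gamma=-\bar\alpha\wedge\bar\beta$'' and ``$\bar\partial\bar\gamma=0$'' are interchanged (the vanishing conclusions survive by accident). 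With the correct $\star$-image, the non-vanishing does follow from $\bar\partial(\beta\wedge\gamma\wedge\bar\gamma)=-\beta\wedge\gamma\wedge\bar\alpha\wedge\bar\beta\neq 0$. But there is no need for $\star$ here either: the paper simply observes that $\partial(\gamma\wedge\bar\alpha)=-\alpha\wedge\beta\wedge\bar\alpha$, so $\langle\langle\alpha\wedge\beta\wedge\bar\alpha,\,\partial(\gamma\wedge\bar\alpha)\rangle\rangle\neq 0$ and hence $\partial^\star(\alpha\wedge\beta\wedge\bar\alpha)\neq 0$ (similarly with $\bar\beta$). I would drop the $\star$-discussion entirely and run the inner-product argument for both the vanishing and non-vanishing statements.
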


\begin{proof} The identity $\partial^{\star}(\alpha\wedge\gamma\wedge\overline{\alpha})=0$ is equivalent to 
\begin{equation}\label{eqn:alpha-gamma-alpha_bar}\langle\langle\partial^{\star}(\alpha\wedge\gamma\wedge\overline{\alpha}),\,u\rangle\rangle = 0, 
\hspace{2ex} \mbox{i.e. to} \hspace{2ex} \langle\langle\alpha\wedge\gamma\wedge\overline{\alpha},\,\partial u\rangle\rangle = 0\end{equation}
\noindent for every $(1,\,1)$-form $u$ on $X$. Now, the space $C^{\infty}_{1,\,1}(X,\,\C)$ of smooth $(1,\,1)$-forms on $X$ 
is generated by $\alpha\wedge\bar\alpha,\, \alpha\wedge\bar\beta,\, \alpha\wedge\bar\gamma,\, \beta\wedge\bar\alpha,\, \beta\wedge\bar\beta,\, 
\beta\wedge\bar\gamma,\, \gamma\wedge\bar\alpha,\, \gamma\wedge\bar\beta,\, \gamma\wedge\bar\gamma$. Since $d\alpha = d\beta = 0$ and $\bar\partial\gamma=0$, the only generators that are not $d$-closed are those containing $\gamma$. For them, since $\partial\gamma = -\alpha\wedge\beta$, we get:

\begin{enumerate}
 \item if $u=\gamma\wedge\bar\alpha$, then $\partial u = -\alpha\wedge\beta\wedge\bar\alpha$, 
hence $\langle\langle\alpha\wedge\gamma\wedge\overline{\alpha},\,\partial u\rangle\rangle 
= -\langle\langle\alpha\wedge\gamma\wedge\overline{\alpha},\,\alpha\wedge\beta\wedge\bar\alpha\rangle\rangle = 0$;
\item if $u=\gamma\wedge\bar\beta$, then $\partial u = -\alpha\wedge\beta\wedge\bar\beta$, 
hence $\langle\langle\alpha\wedge\gamma\wedge\overline{\alpha},\,\partial u\rangle\rangle 
= -\langle\langle\alpha\wedge\gamma\wedge\overline{\alpha},\,\alpha\wedge\beta\wedge\bar\beta\rangle\rangle = 0$;
\item if $u=\gamma\wedge\bar\gamma$, then $\partial u = -\alpha\wedge\beta\wedge\bar\gamma$, 
hence $\langle\langle\alpha\wedge\gamma\wedge\overline{\alpha},\,\partial u\rangle\rangle 
= -\langle\langle\alpha\wedge\gamma\wedge\overline{\alpha},\,\alpha\wedge\beta\wedge\bar\gamma\rangle\rangle = 0$.
\end{enumerate}
\noindent The three inner products above vanish since the forms $\alpha, \beta, \gamma$ are $\omega_0$-orthonormal. We have thus proved identity (\ref{eqn:alpha-gamma-alpha_bar}). The identities $\partial^{\star}(\alpha\wedge\gamma\wedge\overline{\beta})= \partial^{\star}(\beta\wedge\gamma\wedge\overline{\alpha})= \partial^{\star}(\beta\wedge\gamma\wedge\overline{\beta})=0$ are proved in the same way: all the resulting inner products involve the pairing of a form containing $\gamma$ with a form that does not contain $\gamma$, hence they vanish. 

 This argument does not hold for the forms $\alpha\wedge\beta\wedge\overline{\alpha}$ and $\alpha\wedge\beta\wedge\overline{\beta}$ 
since $\langle\langle\alpha\wedge\beta\wedge\overline{\alpha},\,\partial u\rangle\rangle \neq 0$ when $u=\gamma\wedge\bar\alpha$ 
and $\langle\langle\alpha\wedge\beta\wedge\overline{\beta},\,\partial u\rangle\rangle \neq 0$ when $u=\gamma\wedge\bar\beta$.

 To prove the identities $\bar\partial^{\star}(\alpha\wedge\gamma\wedge\overline{\alpha}) = \bar\partial^{\star}(\alpha\wedge\gamma\wedge\overline{\beta})
= \bar\partial^{\star}(\beta\wedge\gamma\wedge\overline{\alpha})= \bar\partial^{\star}(\beta\wedge\gamma\wedge\overline{\beta})=0$, 
we have to prove that for any form $v\in\{\alpha\wedge\gamma\wedge\overline{\alpha},\, \alpha\wedge\gamma\wedge\overline{\beta},
\, \beta\wedge\gamma\wedge\overline{\alpha},\, \beta\wedge\gamma\wedge\overline{\beta}\}$ and any $w\in C^{\infty}_{2,\,0}(X,\,\C)$, 
we have $\langle\langle v,\,\bar\partial w\rangle\rangle =0$. This is obvious since $C^{\infty}_{2,\,0}(X,\,\C)$ is generated by the $\bar\partial$-closed forms $\alpha\wedge\beta$, $\alpha\wedge\gamma$ and $\beta\wedge\gamma$. The same argument applies to yield the $\bar\partial^{\star}$-closedness of the forms $\alpha\wedge\beta\wedge\overline{\alpha}$ and $\alpha\wedge\beta\wedge\overline{\beta}$.    
\end{proof}


We now compute the Hodge star operator $\star$ induced by the pointwise inner product $\langle\cdot\,,\cdot\rangle$ 
defined by the complex parallelisable structure of $X$ on the $\Delta''$-harmonic representatives of the classes generating $H^{2,\,1}_{[\gamma]}(X,\,\C)$.

\begin{Lem}\label{Lem:star_generators_21_gamma} On the Iwasawa manifold $X$, the following identities hold
\begin{eqnarray*}
\star(\alpha\wedge\gamma\wedge\overline{\alpha}) = -i\,\beta\wedge\gamma\wedge\overline{\beta},
&&
\star(\beta\wedge\gamma\wedge\overline{\beta}) = -i\,\alpha\wedge\gamma\wedge\overline{\alpha},\\
\star(\alpha\wedge\gamma\wedge\overline{\beta}) = i\,\alpha\wedge\gamma\wedge\overline{\beta},
&&
\star(\beta\wedge\gamma\wedge\overline{\alpha}) = i\,\beta\wedge\gamma\wedge\overline{\alpha},\\
\star(\alpha\wedge\beta\wedge\gamma)=-i\alpha\wedge\beta\wedge\gamma, &&
\star(\alpha\wedge\beta\wedge\overline{\gamma}) = i\,\alpha\wedge\beta\wedge\overline{\gamma}.
\end{eqnarray*}

\noindent Consequently, we get
\begin{eqnarray*}
\star(\alpha\wedge\gamma\wedge\overline{\alpha} + \beta\wedge\gamma\wedge\overline{\beta}) & = & -i\,(\alpha\wedge\gamma\wedge\overline{\alpha} + \beta\wedge\gamma\wedge\overline{\beta}), \\
\nonumber \star(\alpha\wedge\gamma\wedge\overline{\alpha} - \beta\wedge\gamma\wedge\overline{\beta}) & = & i\,(\alpha\wedge\gamma\wedge\overline{\alpha} - \beta\wedge\gamma\wedge\overline{\beta}).
\end{eqnarray*}

\end{Lem}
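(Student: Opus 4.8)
\noindent\emph{Proof plan.} The key point is that $\omega_0 = i\alpha\wedge\bar\alpha + i\beta\wedge\bar\beta + i\gamma\wedge\bar\gamma$ was chosen precisely so that $\{\alpha,\beta,\gamma\}$ is a \emph{pointwise unitary coframe} of $(1,\,0)$-forms: as noted just above, the inner product induced by $\omega_0$ is exactly the one of \eqref{eqn:L2_inner-product_def}, for which $\alpha,\beta,\gamma,\bar\alpha,\bar\beta,\bar\gamma$ are orthonormal at every point of $X$. Hence the computation of $\star$ is a purely pointwise piece of linear algebra, identical to the one on $\C^3$ with the Euclidean Hermitian metric; in particular the fact that $\gamma$ is not $d$-closed plays no role in it. The first step is to record that the associated volume form is $dV_{\omega_0} = \omega_0^3/3! = i^{9}\,\alpha\wedge\beta\wedge\gamma\wedge\bar\alpha\wedge\bar\beta\wedge\bar\gamma$, i.e. the form $dV$ appearing in \eqref{eqn:L2_inner-product_def}.

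The computation itself runs through the defining relation $u\wedge\star\eta = \langle u,\bar\eta\rangle_{\omega_0}\,dV_{\omega_0}$, tested on the forms $u$ of the bidegree complementary to $\star\eta$; recall that $\star$ sends $\Lambda^{p,q}$ into $\Lambda^{3-q,3-p}$, so it carries $(2,\,1)$ to $(2,\,1)$, $(1,\,2)$ to $(1,\,2)$ and $(3,\,0)$ to $(3,\,0)$, in accordance with the asserted identities. Since every $\eta$ and every relevant $u$ is a wedge of three of the six orthonormal generators, the number $\langle u,\bar\eta\rangle_{\omega_0}$ is $\pm1$ when $u$ and $\bar\eta$ are built from the same three generators and $0$ otherwise. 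This forces $\star\eta$ to be a single scalar multiple of the ordered wedge of the three generators that do \emph{not} occur in $\bar\eta$ — so, for instance, $\star(\alpha\wedge\gamma\wedge\bar\alpha)$ is a multiple of $\beta\wedge\gamma\wedge\bar\beta$, whereas $\star(\alpha\wedge\gamma\wedge\bar\beta)$ and $\star(\alpha\wedge\beta\wedge\bar\gamma)$ are multiples of themselves — and the scalar is pinned down by wedging that monomial with the matching $u$ and reading off the sign of one reordering of $\alpha\wedge\beta\wedge\gamma\wedge\bar\alpha\wedge\bar\beta\wedge\bar\gamma$ against $dV_{\omega_0}$. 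Carrying this out for $\alpha\wedge\gamma\wedge\bar\alpha$, $\alpha\wedge\gamma\wedge\bar\beta$, $\alpha\wedge\beta\wedge\gamma$ and $\alpha\wedge\beta\wedge\bar\gamma$ produces four of the six identities, and the remaining two follow from the first two via the $\alpha\leftrightarrow\beta$ symmetry, which is an orientation-preserving isometry of $(X,\omega_0)$ and hence commutes with $\star$. The two ``Consequently'' identities then drop out of the $\C$-linearity of $\star$. One could of course also shortcut everything by quoting the classical closed formula for the Hodge star in a unitary coframe.

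The only genuine difficulty is the sign bookkeeping: each identity hinges on the parity of a permutation of six $1$-forms, and a slip there flips $\pm i$. I would therefore fix once and for all the reference ordering $\alpha,\beta,\gamma,\bar\alpha,\bar\beta,\bar\gamma$ and tabulate the single sign needed for each of the six forms. As built-in consistency checks I would verify that $\star^2 = (-1)^3 = -1$ on each of these $3$-forms — which is what forces $\star(\alpha\wedge\gamma\wedge\bar\alpha)$ and $\star(\beta\wedge\gamma\wedge\bar\beta)$ to be paired coherently — and the positivity $\eta\wedge\star\bar\eta = \langle\eta,\eta\rangle_{\omega_0}\,dV_{\omega_0} > 0$ for $\eta = \alpha\wedge\beta\wedge\gamma$, which is exactly what fixes the sign in $\star(\alpha\wedge\beta\wedge\gamma) = -i\,\alpha\wedge\beta\wedge\gamma$.
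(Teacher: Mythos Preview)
Your approach is correct and essentially the same as the paper's: both use the defining relation of $\star$ with respect to the orthonormal coframe $\alpha,\beta,\gamma$, observe that testing against monomial $3$-forms forces $\star\eta$ to be a scalar multiple of the complementary monomial, and then read off the scalar from a single sign computation against the volume form. The paper works with the equivalent form $u\wedge\overline{\star v}=\langle u,v\rangle\,dV$ and does the explicit case $\eta=\alpha\wedge\gamma\wedge\bar\alpha$, declaring the rest ``similar''; your additional $\alpha\leftrightarrow\beta$ symmetry and $\star^2=-1$ consistency checks are sound embellishments but not a different method.
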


\begin{proof}
From the definition of the Hodge star operator we know that
$$u\wedge\overline{\star(\alpha\wedge\gamma\wedge\overline{\alpha})} = \langle u,\,\alpha\wedge\gamma\wedge\overline{\alpha}\rangle\, dV$$
\noindent for every $(2,\,1)$-form $u$. Both sides of this identity vanish if $u$ is the product of three forms chosen 
from $\alpha,\beta,\gamma,\overline{\alpha}, \overline{\beta}, \overline{\gamma}$, except if $u= \alpha\wedge\gamma\wedge\overline{\alpha}$. In this case, we get
$$(\alpha\wedge\gamma\wedge\overline{\alpha})\wedge\overline{\star(\alpha\wedge\gamma\wedge\overline{\alpha})}
= \langle\alpha\wedge\gamma\wedge\overline{\alpha},\,\alpha\wedge\gamma\wedge\overline{\alpha}\rangle\, dV
= i\,\alpha\wedge\overline{\alpha}\wedge i\,\beta\wedge\overline{\beta}\wedge i\,\gamma\wedge\overline{\gamma}
=i\alpha\wedge\gamma\wedge\overline{\alpha}\wedge\beta\wedge\overline{\beta}\wedge\overline{\gamma}$$
\noindent hence $\overline{\star(\alpha\wedge\gamma\wedge\overline{\alpha})}$ must be the form complementary to $\alpha\wedge\gamma\wedge\overline{\alpha}$, 
i.e. $i\beta\wedge\overline{\beta}\wedge\overline{\gamma}$. We get $\star(\alpha\wedge\gamma\wedge\overline{\alpha}) = -i\,\beta\wedge\gamma\wedge\overline{\beta}$.
The remaining identities are proved in a similar way.  \end{proof}

We can now infer from these computations the signature of the sesquilinear intersection form $H$ on $F^2_{[\gamma]}H^3(X,\,\C)$. 
It is different from the one in the standard case of compact K\"ahler Calabi-Yau $3$-folds with $h^{p,\,0}=0$ for $p=1,2$ 
(where the signature of $H$ on the standard $F^2H^3$ is $(-,\,+,\dots , +)$ due to all classes in $H^3$ being primitive thanks to the assumption $h^{0,\,1}=0$
which implies $h^{3,\,2}=0$ by Serre duality). In our non-K\"ahler case of the Iwasawa manifold, primitivity is meaningless for classes in $H^3$ while $h^{0,\,1} = 2\neq 0$. 
The different signature of $H$ is a key feature of our situation compared to the standard one.

\begin{Cor}\label{Cor:H_signature} If $X$ is the Iwasawa threefold, then $\{\alpha\wedge\gamma\wedge\overline{\alpha} + \beta\wedge\gamma\wedge\overline{\beta}\}_{DR}\in H^3_{-}(X,\,\C),$ while

$$\{\alpha\wedge\gamma\wedge\overline{\alpha} - \beta\wedge\gamma\wedge\overline{\beta}\}_{DR}, \, \{\alpha\wedge\gamma\wedge\overline{\beta}\}_{DR},
\, \{\beta\wedge\gamma\wedge\overline{\alpha}\}_{DR}\in H^3_{+}(X,\,\C).$$
\noindent Hence the signature of $H(\cdot\,,\,\cdot)$ on $H^{2,\,1}_{[\gamma]}(X,\,\C)$ is $(-,\,+,\,+,\,+)$, while the signature of $H(\cdot\,,\,\cdot)$ on $F^2_{[\gamma]}H^3(X,\,\C)$ is $(-,\,-,\,+,\,+,\,+)$.
\end{Cor}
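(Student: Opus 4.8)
The plan is to compute the action of the Hodge star operator $\star = \star_{\omega_0}$ on the $\Delta$-harmonic representatives of a basis of $F^2_{[\gamma]}H^3(X,\,\C) = H^{3,\,0}_{\bar\partial}(X,\,\C)\oplus H^{2,\,1}_{[\gamma]}(X,\,\C)$, read off the eigenvalues $\pm i$ to locate each basis class inside the decomposition $H^3_{DR}(X,\,\C) = H^3_+(X,\,\C)\oplus H^3_-(X,\,\C)$ of \eqref{eqn:H3_sign_decomp}, and then invoke the fact (from [Pop13b]) that $H(\cdot,\cdot)$ is positive definite on $H^3_+$ and negative definite on $H^3_-$, with the two eigenspaces being $H$-orthogonal. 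All the necessary star computations are already packaged in Lemma \ref{Lem:star_generators_21_gamma}; in particular $\star(\alpha\wedge\beta\wedge\gamma) = -i\,\alpha\wedge\beta\wedge\gamma$, so $H^{3,\,0}(X,\,\C)\subset H^3_-(X,\,\C)$ (this also follows directly from \eqref{eqn:H30_sign}), and the $(2,1)$-part identities give $\star$ acting as $-i$ on $\alpha\wedge\gamma\wedge\bar\alpha + \beta\wedge\gamma\wedge\bar\beta$ and as $+i$ on the three forms $\alpha\wedge\gamma\wedge\bar\alpha - \beta\wedge\gamma\wedge\bar\beta$, $\alpha\wedge\gamma\wedge\bar\beta$, $\beta\wedge\gamma\wedge\bar\alpha$.

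First I would record that all six relevant forms are $\Delta$-harmonic for $\omega_0$: the $(3,0)$-form $\alpha\wedge\beta\wedge\gamma$ is trivially harmonic, and the four displayed $(2,1)$-forms were shown to be $d$-closed, $\partial^\star$-closed and $\bar\partial^\star$-closed in Lemma \ref{Lem:del-star-closedness}, hence $\Delta$-harmonic; therefore each represents a genuine De Rham class and $\star$ descends to the operator on $H^3_{DR}(X,\,\C)$ used in \eqref{eqn:H3_sign_decomp}. Then, from Lemma \ref{Lem:star_generators_21_gamma}, the class $\{\alpha\wedge\gamma\wedge\bar\alpha + \beta\wedge\gamma\wedge\bar\beta\}_{DR}$ is a $(-i)$-eigenvector of $\star$, hence lies in $H^3_-(X,\,\C)$, while $\{\alpha\wedge\gamma\wedge\bar\alpha - \beta\wedge\gamma\wedge\bar\beta\}_{DR}$, $\{\alpha\wedge\gamma\wedge\bar\beta\}_{DR}$ and $\{\beta\wedge\gamma\wedge\bar\alpha\}_{DR}$ are $(+i)$-eigenvectors, hence lie in $H^3_+(X,\,\C)$. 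Since these four classes together with $\{\alpha\wedge\beta\wedge\gamma\}_{DR}$ form a basis of $F^2_{[\gamma]}H^3(X,\,\C)$ by \eqref{eqn:H21_gamma_def} and \eqref{eqn:H3_Hodge-decomp}, and since they are eigenvectors of $\star$ distributed among the two $H$-orthogonal eigenspaces on which $H$ is definite of opposite signs, the Gram matrix of $H$ in this basis is diagonal with one negative entry from $H^{3,\,0}$, three entries of sign $(-,+,+)$ from $H^{2,\,1}_{[\gamma]}$, giving signature $(-,+,+)$ on $H^{2,\,1}_{[\gamma]}(X,\,\C)$ and $(-,-,+,+,+)$ on $F^2_{[\gamma]}H^3(X,\,\C)$, as claimed. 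It only remains to check that $H$ does not vanish on any of these one-dimensional eigenlines — but that is automatic since $H$ restricted to $H^3_\pm$ is (anti-)positive definite, so a nonzero eigenvector has nonzero $H$-norm.

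I do not expect a genuine obstacle here: the corollary is essentially a bookkeeping consequence of Lemma \ref{Lem:star_generators_21_gamma} combined with the sign statements of [Pop13b, Lemmas 5.1 and 5.2]. The only mildly delicate point is making sure the harmonicity is with respect to the full $d$-Laplacian $\Delta$ (not merely $\Delta''$), so that the operator $\star$ on $H^3_{DR}$ is the one entering \eqref{eqn:H3_sign_decomp}; this is exactly what Lemma \ref{Lem:del-star-closedness} supplies via $\ker\Delta = \ker d\cap\ker d^\star$ together with $\ker d^\star = \ker\partial^\star\cap\ker\bar\partial^\star$ on forms of pure bidegree. Once that is in hand, the eigenvalue readout and the definiteness of $H$ on the eigenspaces finish the proof with no further computation.
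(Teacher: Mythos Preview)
Your approach is correct and essentially identical to the paper's: invoke the $\Delta$-harmonicity of the listed forms via Lemma~\ref{Lem:del-star-closedness}, read off the $\pm i$ eigenvalues of $\star$ from Lemma~\ref{Lem:star_generators_21_gamma}, and conclude the signature from the definiteness of $H$ on $H^3_{\pm}$ together with \eqref{eqn:H30_sign}. There is only a bookkeeping slip in your write-up: you say ``three entries of sign $(-,+,+)$ from $H^{2,\,1}_{[\gamma]}$'' and ``signature $(-,+,+)$ on $H^{2,\,1}_{[\gamma]}(X,\,\C)$'', but $H^{2,\,1}_{[\gamma]}$ is $4$-dimensional with one class in $H^3_-$ and three in $H^3_+$, so the signature there is $(-,+,+,+)$ --- which is also what you need for your final count $(-,-,+,+,+)$ on $F^2_{[\gamma]}H^3$ to come out right. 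Also, the Gram matrix in your basis is only block-diagonal (one block per eigenspace), not necessarily diagonal, but since $H$ is definite on each block this is all you need for the signature.
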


\begin{proof}
We have argued above (cf. Lemma \ref{Lem:del-star-closedness}) that the forms $\alpha\wedge\gamma\wedge\overline{\alpha} + \beta\wedge\gamma\wedge\overline{\beta}$,
$\alpha\wedge\gamma\wedge\overline{\alpha} - \beta\wedge\gamma\wedge\overline{\beta}$, $\alpha\wedge\gamma\wedge\overline{\beta}$ and $\beta\wedge\gamma\wedge\overline{\alpha}$ 
are all $\Delta$-harmonic. Since the splitting (\ref{eqn:H3_sign_decomp}) was defined by the analogous splitting of the space of $\Delta$-harmonic $3$-forms, 
the first statement follows from Lemma \ref{Lem:star_generators_21_gamma}.

The second statement follows from (\ref{eqn:H30_sign}), from the properties of $H(\cdot\,,\,\cdot)$ spelt out above (\ref{eqn:H30_sign}) and from the fact that
$\{[\alpha\wedge\gamma\wedge\overline{\alpha} + \beta\wedge\gamma\wedge\overline{\beta}], \, [\alpha\wedge\gamma\wedge\overline{\alpha} 
- \beta\wedge\gamma\wedge\overline{\beta}], \, [\alpha\wedge\gamma\wedge\overline{\beta}], \, [\beta\wedge\gamma\wedge\overline{\alpha}]\}$ 
is a basis of $H^{2,\,1}_{[\gamma]}(X,\,\C)$.  
\end{proof}

\subsection{Construction of coordinates on $\Delta_{[\gamma]}$}\label{subsection:construction_coord}

\subsubsection{Abstract construction}\label{subsubsection:abstract-construction}

  Let $(X_t)_{t\in\Delta}$ be the Kuranishi family of the Iwasawa manifold $X = X_0$. We know from [Nak75, table on p. 96] that $h^{3,\,0}_{\bar\partial}(X_t)=1$ 
  for all $t\in\Delta$. This implies that $\Delta\ni t\mapsto H^{3,\,0}_{\bar\partial}(X_t,\,\C)$ is a $C^{\infty}$ line bundle by [KS60].
  It is even holomorphic and denoted, as usual, by ${\mathcal H}^{3,\,0}$. Moreover, since $K_{X_0}$ is trivial, the constancy of $h^{3,\,0}(X_t)$ also implies that $K_{X_t}$ is trivial for all $t\in\Delta$. Let us fix, after possibly shrinking $\Delta$ about $0$, a holomorphic section $u=(u_t)_{t\in\Delta}$ of the Hodge bundle ${\mathcal H}^{3,\,0}$ (i.e. a holomorphic family of holomorphic $(3,\,0)$-forms $u_t$ on $X_t$) 
 such that the form $u_t$ is non-vanishing on $X_t$ for every $t\in\Delta$.

  Put, for simplicity, $H^3(X,\,\C):=H^3_{DR}(X,\,\C)$, where by $X$ we mean the $C^{\infty}$ manifold underlying the fibres $X_t$. We know from Lemma \ref{Lem:Hn0_injection} that every space $H^{3,\,0}(X_t,\,\C)$ injects canonically into $H^3(X,\,\C)$, 
  so $u$ can be viewed as a holomorphic function $\Delta\ni t\longmapsto u_t\in H^3(X,\,\C)$.

Meanwhile, $(H^3(X,\,\C),\,Q(\cdot\,,\,\cdot))$ is a symplectic vector space (cf. (\ref{eqn:Q-def})). We shall adapt to our context the presentation in [Voi96, lemme 3.1] to prove that a well-chosen symplectic basis $\{\eta_0,\eta_1,\dots , \eta_4,\,\nu_0, \nu_1,\dots , \nu_4\}$ 
(i.e. such that $Q(\eta_j,\,\eta_k) = Q(\nu_j,\,\nu_k) =0$ and $Q(\eta_j,\,\nu_k) = \delta_{jk}$ for all $j,k$) of $H^3(X,\,\C)$ produces holomorphic coordinates $z_1,\dots , z_4$
near $0$ on $\Delta_{[\gamma]}$. We shall choose all the classes $\eta_j$ and $\nu_k$ to be {\it real}, i.e. $\eta_j=\overline{\eta}_j$ and $\nu_k=\overline{\nu}_k$ for all $j,k$. Consider the following

\vspace{3ex}

\noindent {\bf Setup.} {\it Let $(X_t)_{t\in\Delta}$ be the Kuranishi family of the Iwasawa manifold $X = X_0$ 
on which we have fixed a non-vanishing holomorphic section $u=(u_t)_{t\in\Delta}$ of ${\mathcal H}^{3,\,0}$. 
Let $\eta_0 = \eta_0^{3,\,0} + \eta_0^{2,\,1} + \overline{\eta_0^{2,\,1}} + \overline{\eta_0^{3,\,0}}\in H^3(X,\,\C)$ be a real class 
with $\eta_0^{3,\,0}\in H^{3,\,0}(X_0,\,\C)$, $\eta_0^{2,\,1}\in H^{2,\,1}_{[\gamma]}(X_0,\,\C)$ such that
\begin{equation}\label{eqn:eta_0_assumption} (i)\, Q(u_0,\,\eta_0)\neq 0 \hspace{2ex} \mbox{and} \hspace{2ex} (ii)\, H(\eta_0^{2,\,1},\,\eta_0^{2,\,1})<0.\end{equation}  
\noindent Complete $\eta_0$ to a symplectic basis $\{\eta_0,\eta_1,\dots , \eta_4,\,\nu_0, \nu_1,\dots , \nu_4\}$ of  $(H^3(X,\,\R),\,Q(\cdot\,,\,\cdot))$. 
By continuity, we have $Q(u_t,\,\eta_0)\neq 0$ for all $t$ in a neighbourhood of $0\in\Delta$, so after replacing $u_t$ by $u'_t:=u_t\slash Q(u_t,\,\eta_0)$ we may assume that
\begin{equation*}
  Q(u_t,\,\eta_0) = 1 \hspace{2ex} \mbox{for all}\hspace{1ex} t\in\Delta \hspace{2ex} \mbox{sufficiently close to}\,\,\, 0.\end{equation*}}

We can now state the main result of this subsection.

\begin{Prop}\label{Prop:coordinates} In the setup described above, the functions
\begin{equation}\label{eqn:coordinates_def}z_i(t):= Q(u_t,\,\eta_i) \hspace{2ex} \mbox{for}\hspace{1ex} t\in\Delta_{[\gamma]} \hspace{2ex} 
\mbox{and}\hspace{1ex}  i\in\{1,\dots , 4\}\end{equation}
\noindent define {\bf holomorphic coordinates} on $\Delta_{[\gamma]}$ in a neighbourhood of\, $0$.
\end{Prop}

\begin{proof}
Classes $\eta_0\in H^3(X,\,\C)$ satisfying (\ref{eqn:eta_0_assumption}) do exist. Indeed, for every $3$-class $\eta_0$, $Q(u_0,\,\eta_0) = Q(u_0,\,\eta_0^{0,\,3})$
for bidegree reasons since $u_0$ is of type $(3,\,0)$, so it suffices to choose a class $\eta_0^{0,\,3}\in H^{0,\,3}(X_0,\,\C)$ 
such that $Q(u_0,\,\eta_0^{0,\,3})\neq 0$ for $(i)$ to be satisfied. This is possible since $u_0\neq 0$. Classes $\eta_0^{2,\,1}\in H^{2,\,1}_{[\gamma]}(X_0,\,\C)$ satisfying $(ii)$ exist thanks to the signature of $H$ on $H^{2,\,1}_{[\gamma]}(X_0,\,\C)$ being $(-,\,+,\,+,\, +)$ (cf. Corollary \ref{Cor:H_signature}). We can then put $\eta_0:=\overline{\eta_0^{0,\,3}} + \eta_0^{2,\,1} + \overline{\eta_0^{2,\,1}} + \eta_0^{0,\,3}$
to obtain a {\it real} class $\eta_0$ satisfying (\ref{eqn:eta_0_assumption}). Every class $\eta_0\in H^3(X,\,\C)$ automatically satisfies $Q(\eta_0,\,\eta_0) = 0$ 
since $Q(\eta_0,\,\eta_0) = Q(\eta_0^{3,\,0},\,\eta_0^{0,\,3}) + Q(\eta_0^{2,\,1},\,\eta_0^{1,\,2}) + Q(\eta_0^{1,\,2},\,\eta_0^{2,\,1}) + Q(
\eta_0^{0,\,3},\,\eta_0^{3,\,0})$ while $Q(\eta_0^{3,\,0},\,\eta_0^{0,\,3}) = - Q(\eta_0^{0,\,3},\,\eta_0^{3,\,0})$ and $Q(\eta_0^{2,\,1},\,\eta_0^{1,\,2})
= -Q(\eta_0^{1,\,2},\,\eta_0^{2,\,1})$ since $Q$ is alternating. So $\eta_0$ can be completed to a symplectic basis.

We have to prove that the holomorphic map
\begin{equation*}
\Phi\,:\,\Delta_{[\gamma]} \longrightarrow \C^4, \hspace{2ex} \Phi(t):=(z_1(t),\dots , z_4(t)),\end{equation*}
\noindent is a local diffeomorphism at $0$. Since $\Phi$ is the composition of the maps
$$\Delta_{[\gamma]} \stackrel{u}{\longrightarrow} H^3(X,\,\C) \stackrel{Q_{\eta_1,\dots , \eta_4}}{\longrightarrow} \C^4, \hspace{2ex}
\mbox{where} \hspace{1ex} Q_{\eta_1,\dots , \eta_4}(\cdot):= \bigg(Q(\cdot,\,\eta_1),\dots , Q(\cdot,\,\eta_4)\bigg),$$
\noindent its differential map $d\Phi_0$ at $0$ is the composition of the maps
\begin{equation*}
T^{1,\,0}_0\Delta_{[\gamma]} \underset{\simeq}{\overset{\rho}{\longrightarrow}} H^{0,\,1}_{[\gamma]}(X_0,\,T^{1,\,0}X_0) 
\underset{\simeq}{\overset{\cdot\lrcorner u_0}{\longrightarrow}} H^{2,\,1}_{[\gamma]}(X_0,\,\C) \hookrightarrow H^3(X,\,\C)
\stackrel{Q_{\eta_1,\dots , \eta_4}}{\longrightarrow} \C^4, 
\end{equation*}
\noindent where $\rho$ is the restriction to $\Delta_{[\gamma]}$ of the Kodaira-Spencer map classifying the infinitesimal deformations of $X_0$ 
and the composition of the first three maps is the differential map $du_0\,:\,T^{1,\,0}_0\Delta_{[\gamma]} \longrightarrow H^3(X,\,\C)$ by [Gri68] 
and Proposition \ref{Prop:H21_gamma_Hodge}. Since $T^{1,\,0}_0\Delta_{[\gamma]}$ and $\C^4$ have equal dimensions, it suffices to prove that $d\Phi_0$ is injective.

Reasoning by contradiction, suppose that $d\Phi_0$ is not injective. Then, there exists $\mu\in H^{2,\,1}_{[\gamma]}(X_0,\,\C)$ 
such that $Q(\mu,\,\eta_1) = \dots = Q(\mu,\,\eta_4) = 0$. Since $Q(u_t,\,\eta_0)=1$ for all $t\in\Delta_{[\gamma]}$ close to $0$, $Q(du_0(\xi),\,\eta_0)=0$ 
for every $\xi\in T^{1,\,0}_0\Delta_{[\gamma]}$. Hence $Q(\mu,\,\eta_0) = 0$ because $\mu\in H^{2,\,1}_{[\gamma]}(X_0,\,\C) = (du_0)(T^{1,\,0}_0\Delta_{[\gamma]})$. 
Therefore, $Q(\mu,\,\eta_0) = \dots = Q(\mu,\,\eta_4) = 0$, so $\mu\in\langle\eta_0,\eta_1,\dots , \eta_4\rangle$ 
since the basis $\{\eta_0,\eta_1,\dots , \eta_4,\,\nu_0, \nu_1,\dots , \nu_4\}$ is symplectic. 
This implies that $H(\mu,\,\mu) = 0$ since the subspace $\langle\eta_0,\eta_1,\dots , \eta_4\rangle \subset H^3(X,\,\C)$ is {\it real} and {\it totally $Q$-isotropic}.

On the other hand, $H(\mu,\,\eta_0) = 0$ because $Q(\mu,\,\eta_0) = 0$ and $\eta_0 = \overline{\eta_0}$. 
Thus, $0 = H(\mu,\,\eta_0) = H(\mu,\,\eta_0^{3,\,0}) + H(\mu,\,\eta_0^{2,\,1}) + H(\mu,\,\eta_0^{1,\,2}) + H(\mu,\,\eta_0^{0,\,3}) = H(\mu,\,\eta_0^{2,\,1})$,
where the last identity holds trivially for bidegree reasons since $\mu$ is of type $(2,\,1)$.

Summing up, we have the classes $\mu, \eta_0^{2,\,1}\in H^{2,\,1}_{[\gamma]}(X_0,\,\C)$ with the properties $H(\mu,\,\mu) = 0$ and $H(\mu,\,\eta_0^{2,\,1}) = 0$. 
On the other hand, we know from Corollary \ref{Cor:H_signature} that the restriction of $H$ to $H^{2,\,1}_{[\gamma]}(X_0,\,\C)$ 
is non-degenerate of signature $(-,\,+,\,+,\,+)$, i.e. $H(\cdot\,,\,\cdot): H^{2,\,1}_{[\gamma]}\times H^{2,\,1}_{[\gamma]} \longrightarrow\C$ is a Lorentzian sesquilinear form.

Let $\rho_{\varepsilon}\in H^{2,\,1}_{[\gamma]}(X_0,\,\C)$ such that $H(\rho_{\varepsilon},\,\rho_{\varepsilon})<0$ for every $\varepsilon>0$ 
and $\rho_{\varepsilon}\rightarrow\mu$ as $\varepsilon\rightarrow 0$ (i.e. $(\rho_{\varepsilon})_{\varepsilon>0}$ is an approximation of $\mu$, 
an element in the {\it lightlike cone} of $H$, by elements $\rho_{\varepsilon}$ in the {\it timelike cone} of $H$). 
Let $\eta_{0,\,\varepsilon}^{2,\,1}\rightarrow \eta_0^{2,\,1}$ be an approximation of $\eta_0^{2,\,1}$ such that $H(\rho_{\varepsilon},\,\eta_{0,\,\varepsilon}^{2,\,1}) = 0$ 
for every $\varepsilon$. Since $\rho_{\varepsilon}$ is timelike and the signature of $H$ on $H^{2,\,1}_{[\gamma]}$ is $(-,\,+,\,+,\,+)$, 
the $H$-orthogonal complement $\langle\rho_{\varepsilon}\rangle^{\perp}$ in $H^{2,\,1}_{[\gamma]}$ of the line generated by $\rho_{\varepsilon}$ 
is contained in the subspace $\{\zeta\in H^{2,\,1}_{[\gamma]}\,/\, H(\zeta,\,\zeta)\geq 0\}$. 
(This can be trivially checked by completing $\rho_{\varepsilon}/\sqrt{
|H(\rho_{\varepsilon},\,\rho_{\varepsilon})|}$ to an orthonormal basis of $(H^{2,\,1}_{[\gamma]},\,H)$.) 
Thus, $H(\eta_{0,\,\varepsilon}^{2,\,1},\,\eta_{0,\,\varepsilon}^{2,\,1})\geq 0$ for every $\varepsilon>0$, 
hence for its limit as $\varepsilon\rightarrow 0$ we get $H(\eta_0^{2,\,1},\,\eta_0^{2,\,1})\geq 0$. This contradicts the assumption $(ii)$ of (\ref{eqn:eta_0_assumption}). 
Therefore, $d\Phi_0$ must be injective.  
\end{proof}


\subsubsection{Explicit computations}
The construction of $\S.$\ref{subsubsection:abstract-construction} can be made explicit by choosing 
\begin{eqnarray*}
u_t&=&\alpha_t\wedge\beta_t\wedge\gamma_t,\\
\eta_0^{3,0}&=&\alpha\wedge\beta\wedge\gamma, \ \ \ \ \ \ \ \  \ \ \ \ \ \ \ \ \ \ \ \ \ \ \ \ \ \ \ \ \ \ \ \ \ \ \ \ 
\eta_0^{2,1} = i\,(\alpha\wedge\overline{\alpha} + \beta\wedge\overline{\beta})\wedge\gamma,\\
\eta_0&=&\alpha\wedge\beta\wedge\gamma + i\,(\alpha\wedge\bar\alpha + \beta\wedge\bar\beta)\wedge (\gamma+\bar\gamma) + \bar\alpha\wedge\bar\beta\wedge\bar\gamma, \\
\eta_1&=&\alpha\wedge\beta\wedge\gamma + \bar\alpha\wedge\bar\beta\wedge\bar\gamma, \ \ \ \ \ \ \ \  \ \ \ \ \ \ \ \
\eta_2\ =\ i(\alpha\wedge\bar\alpha-\beta\wedge\bar\beta)\wedge (\gamma+\bar\gamma),\\
\eta_3&=&\alpha\wedge\bar\beta\wedge\gamma+\bar\alpha\wedge\beta\wedge\bar\gamma, \ \ \ \ \ \ \ \  \ \ \ \ \ \ \ \
\eta_4\ =\ \bar\alpha\wedge\beta\wedge\gamma+\alpha\wedge\bar\beta\wedge\bar\gamma.
\end{eqnarray*}

\noindent These forms satisfy condition \eqref{eqn:eta_0_assumption}. Indeed, for example, we have

$$H(\eta_0^{2,1},\,\eta_0^{2,1}) = -\int\limits_X(i\,\alpha\wedge\bar\alpha + i\,\beta\wedge\bar\beta)^2\wedge i\,\gamma\wedge\bar\gamma <0.$$

\noindent The forms $\alpha_t, \beta_t, \gamma_t$ can be computed in terms of $\alpha, \beta, \gamma$ using relations (\ref{eqn:coordinates_t}) and (\ref{eqn:alpha-beta-gamma_t_def}). After recalling the notation $D(t):=t_{11}\,t_{22} - t_{12}\,t_{21}$, we get the following identities for all $t\in\Delta$:
\begin{eqnarray}\label{eqn:forms_t-forms_0}
\nonumber \alpha_t & = & d\zeta_1(t) = dz_1 + (t_{11}\,\bar\alpha + t_{12}\,\bar\beta) = \alpha + t_{11}\,\bar\alpha + t_{12}\,\bar\beta\\
\nonumber \beta_t & = & d\zeta_2(t) = dz_2 + (t_{21}\,\bar\alpha + t_{22}\,\bar\beta) = \beta + t_{21}\,\bar\alpha + t_{22}\,\bar\beta \\
\nonumber \gamma_t & = & d\zeta_3(t) - z_1\,d\zeta_2(t) - (t_{21}\,\bar{z}_1 + t_{22}\,\bar{z}_2)\,d\zeta_1(t) \\
\nonumber & = & [dz_3 + t_{21}\,dz_1\,\bar{z}_1 + (t_{31} + t_{21}\,z_1)\,d\bar{z}_1 + t_{22}\,dz_1\,\bar{z}_2 + (t_{32} + t_{22}\,z_1)
\,d\bar{z}_2 + t_{11}\,t_{21}\,\bar{z}_1\,d\bar{z}_1 + \\
\nonumber & & + t_{11}\,t_{22}\,(\bar{z}_1\,d\bar{z}_2 + \bar{z}_2\,d\bar{z}_1) + t_{12}\,t_{22}\,\bar{z}_2\,d\bar{z}_2-D(t)d\bar{z}_3] \\
 & & -z_1\,(\beta + t_{21}\,\bar\alpha + t_{22}\,\bar{\beta}) - (t_{21}\,\bar{z}_1 + t_{22}\,\bar{z}_2)\,(\alpha + t_{11}\,\bar\alpha + t_{12}\,\bar\beta) \\
\nonumber & \stackrel{(i)}{=} & (\gamma + z_1\,\beta) + t_{21}\,\bar{z}_1\,\alpha + (t_{31} + t_{21}\,z_1)\,\bar\alpha + t_{22}\,\bar{z}_2\,\alpha + (t_{32} + t_{22}\,z_1)
\,\bar\beta + t_{11}\,t_{21}\,\bar{z}_1\bar\alpha \\
\nonumber & & +t_{11}\,t_{22}\,(\bar{z}_1\,\bar\beta + \bar{z}_2\,\bar\alpha) + t_{12}\,t_{22}\,\bar{z}_2\,\bar\beta -D(t)d\bar{z}_3\\
\nonumber&&- z_1\,(\beta + t_{21}\,\bar\alpha + t_{22}\,\bar\beta)-(t_{21}\,\bar{z}_1 + t_{22}\,\bar{z}_2)\,(\alpha + t_{11}\,\bar\alpha + t_{12}\,\bar\beta) \\
 \nonumber   & = & \gamma + t_{31}\,\bar\alpha + t_{32}\,\bar\beta-D(t)d\bar{z}_3 + D(t)\,\bar{z}_1\,\bar\beta\\
  \nonumber   & = & \gamma + t_{31}\,\bar\alpha + t_{32}\,\bar\beta-D(t)\bar\gamma, \hspace{55ex} t\in\Delta,
    \end{eqnarray}

\noindent where $(i)$ followed from $dz_3 = \gamma + z_1\,\beta$. 

 Consequently, we get \begin{eqnarray*}
u_t&=&\left(\alpha + t_{11}\,\bar\alpha + t_{12}\,\bar\beta\right)
\wedge \left(\beta + t_{21}\,\bar\alpha + t_{22}\,\bar\beta\right)\wedge
\left(\gamma + t_{31}\,\bar\alpha + t_{32}\,\bar\beta-D(t)\bar\gamma\right)\\
&=&\alpha\wedge\beta\wedge(\gamma-D(t)\bar\gamma)+D(t)\bar\alpha\wedge\bar\beta\wedge(\gamma-D(t)\bar\gamma)\\
&&+(t_{21}\alpha\wedge\bar\alpha-t_{12}\beta\wedge\bar\beta)\wedge(\gamma-D(t)\bar\gamma)
+(t_{22}\alpha\wedge\bar\beta+t_{11}\bar\alpha\wedge\beta)\wedge(\gamma-D(t)\bar\gamma)
\\
&&+\alpha\wedge\beta\wedge(t_{31}\bar\alpha+t_{32}\bar\beta)
+(t_{21}t_{32}-t_{31}t_{22})\alpha\wedge\bar\alpha\wedge\bar\beta
+(t_{11}t_{32}-t_{12}t_{31})\bar\alpha\wedge\beta\wedge\bar\beta.
 \end{eqnarray*}
Note that the terms are displayed according to their degree and type on the base $B$ of $\pi:X\to B$. The part coming from the base (i.e. the terms on the last line, those containing neither $\gamma$ nor $\bar\gamma$) vanishes on $\Delta_{[\gamma]}$ since $t_{31}=t_{32}=0$ there.

We can now compute the resulting coordinates on $\Delta_{[\gamma]}$. We get for $t\in\Delta_{[\gamma]}$:
\begin{eqnarray*}
 Q(u_t,\eta_0)&=&-\int_X u_t\wedge \eta_0
 =-\int_X u_t\wedge \bigg(\alpha\wedge\beta\wedge\gamma+\bar\alpha\wedge\bar\beta\wedge\bar\gamma+ i(\alpha\wedge\bar\alpha+\beta\wedge\bar\beta)\wedge (\gamma+\bar\gamma)\bigg)\\
 &=&\bigg(i(1+D(t)^2)+(t_{21}-t_{12})(1+D(t))\bigg)\,\int_X i\alpha\wedge\bar\alpha\wedge i\beta\wedge\bar\beta\wedge i\gamma\wedge\bar\gamma = 1, 
\end{eqnarray*}
where the last identity is the normalisation adopted in Proposition \ref{Prop:coordinates}. We also get for $t\in\Delta_{[\gamma]}$:

\begin{eqnarray*}
 Q(u_t,\eta_1)&=&\frac{i(1+D(t)^2)}{i(1+D(t)^2)+(t_{21}-t_{12})(1+D(t))}, \ 
 Q(u_t,\eta_2)=-\frac{(t_{12} + t_{21})(1+D(t))}{i(1+D(t)^2)+(t_{21}-t_{12})(1+D(t))},\\
 Q(u_t,\eta_3)&=&-i\,\frac{t_{11}\,D(t) + t_{22}}{i(1+D(t)^2)+(t_{21}-t_{12})(1+D(t))},\  
 Q(u_t,\eta_4)=-i\,\frac{t_{22}\,D(t) + t_{11}}{i(1+D(t)^2)+(t_{21}-t_{12})(1+D(t))}.
\end{eqnarray*}

\subsection{The $B$-Yukawa coupling}
\begin{Def}

Suppose we have fixed a non-vanishing holomorphic $(3,\,0)$-form $u$ on the Iwasawa manifold $X$. 
It identifies with the class $[u]\in H^{3,\,0}_{\bar\partial}(X,\,\C)\simeq H^0(X,\,K_X)\simeq\C$. The {\bf Yukawa coupling} associated with $u$ is standardly defined as
\begin{eqnarray}\nonumber
Y_2^{(u)}\,:\,H^{0,\,1}(X_0,\,T^{1,\,0}X_0) \times H^{0,\,1}(X_0,\,T^{1,\,0}X_0) \times H^{0,\,1}(X_0,\,T^{1,\,0}X_0) &\longrightarrow& \C\\
\nonumber
([\theta_1],\, [\theta_2],\,[\theta_3])&\mapsto &\bigg\langle u^2,\, [\theta_1]\cdot[\theta_2]\cdot[\theta_3]\bigg\rangle
\end{eqnarray}
\noindent where $u^2$ is viewed as a section $u^2\in H^0(X,\,K_X^{\otimes 2})\simeq H^{3,\,0}(X,\,K_X)$, 
the cup product $[\theta_1]\cdot[\theta_2]\cdot[\theta_3]\in H^{0,\,3}(X,\,\Lambda^3T^{1,\,0}X) = H^{0,\,3}(X,\,K_X^{-1})$ 
and $\langle\cdot,\,\cdot\rangle \,:\,H^{3,\,0}(X,\,K_X)\times H^{0,\,3}(X,\,K_X^{-1}) \longrightarrow \C$ is the Serre duality.
\end{Def}

We can now use the symplectic basis and the coordinates constructed in Proposition \ref{Prop:coordinates} to show, 
by the same method as in the standard K\"ahler case ([BG83]), that the Yukawa couplings $Y_2$ on $T^{1,\,0}_0\Delta_{[\gamma]} \simeq H^{0,\,1}_{[\gamma]}(X_0,\,T^{1,\,0}X_0)$ 
are defined by a potential.

\begin{Prop}\label{Prop:Yukawa2-potential} Let $(X_t)_{t\in\Delta}$ be the Kuranishi family of the Iwasawa manifold $X = X_0$ 
on which we have fixed a non-vanishing holomorphic section $u=(u_t)_{t\in\Delta}$ of ${\mathcal H}^{3,\,0}$ normalised by the choice of a symplectic basis 
as in Proposition \ref{Prop:coordinates}. Let $z_1,\dots , z_4$ be the induced holomorphic coordinates near $0$ on $\Delta_{[\gamma]}$ 
constructed in Proposition \ref{Prop:coordinates}.
Then, there exists a $C^{\infty}$ function $F = F(z_1,\dots , z_4)\,:\,\Delta_{[\gamma]}\longrightarrow\C$ such that
\begin{equation}\label{eqn:Yukawa2-potential}Y_2^{(u)}\bigg(\frac{\partial}{\partial z_i},\,\frac{\partial}{\partial z_j},\,\frac{\partial}{\partial z_k}\bigg)
= \frac{\partial^3F}{\partial z_i\partial z_j\partial z_k}\end{equation}
\noindent for all $\frac{\partial}{\partial z_i},\,\frac{\partial}{\partial z_j},\,\frac{\partial}{\partial z_k}
\in T^{1,\,0}_0\Delta_{[\gamma]}\simeq H^{0,\,1}_{[\gamma]}(X_0,\,T^{1,\,0}X_0)$.
\end{Prop}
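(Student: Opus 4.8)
The plan is to mimic the classical Bryant--Griffiths argument ([BG83]) adapted to the essential Hodge structure of weight $3$ constructed above. The starting point is the period map. Having fixed the normalisation $Q(u_t,\,\eta_0)=1$, the coordinates $z_i(t)=Q(u_t,\,\eta_i)$ for $i=1,\dots,4$ are holomorphic on $\Delta_{[\gamma]}$ near $0$ by Proposition \ref{Prop:coordinates}, and the remaining components of the period vector, namely $w_i(t):=Q(u_t,\,\nu_i)$ for $i=0,\dots,4$, are likewise holomorphic functions of $t$, hence of $z=(z_1,\dots,z_4)$. Expanding $u_t$ in the symplectic basis $\{\eta_0,\dots,\eta_4,\nu_0,\dots,\nu_4\}$ and using $Q(\eta_j,\eta_k)=Q(\nu_j,\nu_k)=0$, $Q(\eta_j,\nu_k)=\delta_{jk}$, one reads off
\begin{equation*}
u_t = w_0(z)\,\eta_0 + \sum_{i=1}^4 w_i(z)\,\eta_i - \nu_0 - \sum_{i=1}^4 z_i\,\nu_i,
\end{equation*}
so that $u_t$ is determined by the single holomorphic function-vector $(w_0,w_1,\dots,w_4)$ of $z$.

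The first key step is to produce the potential $F$ as $F(z):=\tfrac12\,Q(u_t,\,t\cdot u_t)$ in the sense of the classical construction, i.e. to set $F(z):= \tfrac12\big(w_0(z) + \sum_{i=1}^4 z_i\,w_i(z)\big)$, equivalently $-\tfrac12\,Q(u_t,\,E)$ where $E$ is the Euler-type vector field; I would then show $\partial F/\partial z_i = w_i(z)$. This is the usual computation: differentiating $Q(u_t,u_t)=0$ (which holds since $u_t$ is of pure type $(3,0)$, hence $Q$-isotropic for bidegree reasons, exactly as in the proof of Proposition \ref{Prop:coordinates} where $Q(\eta_0,\eta_0)=0$ was checked) and using Griffiths transversality from Theorem \ref{The:VHS_3_Delta}(iii) to control $\partial u_t/\partial z_i$ modulo $F^2{\cal H}^3_{[\gamma]}$. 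Concretely, transversality gives $\partial u_t/\partial z_i \in F^2{\cal H}^3_{[\gamma]}$, so $Q(\partial u_t/\partial z_i,\,\partial u_t/\partial z_j)$ lands in $Q(F^2,F^2)=0$; combined with $Q(u_t,\partial u_t/\partial z_i)=0$ (derivative of the isotropy relation), one gets the symmetry $\partial w_i/\partial z_j = \partial w_j/\partial z_i$, hence the existence of $F$ with $\partial F/\partial z_i=w_i$.

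The second key step is to match the third derivatives of $F$ with the Yukawa coupling. Differentiating the relation $\partial^2 u_t/\partial z_i\partial z_j \in F^1{\cal H}^3_{[\gamma]}$ (two applications of transversality) and pairing via $Q$ against $\partial u_t/\partial z_k$, the terms in $F^2$ drop out by $Q$-isotropy of $F^2$ and only the $(1,2)$-component of the second derivative survives; this yields
\begin{equation*}
\frac{\partial^3 F}{\partial z_i\partial z_j\partial z_k} = -\,Q\!\left(\frac{\partial^2 u_t}{\partial z_i\partial z_j},\,\frac{\partial u_t}{\partial z_k}\right)_{\!|t=0},
\end{equation*}
up to sign. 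On the other hand, under the Calabi--Yau isomorphism $T_\Omega$ and the identification $T^{1,0}_0\Delta_{[\gamma]}\simeq H^{0,1}_{[\gamma]}(X_0,T^{1,0}X_0)$ (from $\S\ref{subsection:2-1}$, $\S\ref{subsection:essential-def}$), the operator $\partial/\partial z_i$ on $u_t$ acts at $0$ as contraction $[\theta_i]\lrcorner\cdot$ — this is exactly the identification of the $(-1,+1)$-component of $\nabla$ with $[\theta]\lrcorner\cdot$ used in the proof of Theorem \ref{The:VHS_3_Delta}(iii) — so the right-hand side above becomes, after unwinding Serre duality on $X_0$, precisely $\langle u_0^2,\,[\theta_i]\cdot[\theta_j]\cdot[\theta_k]\rangle = Y_2^{(u)}([\theta_i],[\theta_j],[\theta_k])$. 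Tracking the sign conventions in $Q$ (the factor $(-1)^{n(n-1)/2}$) and in the Serre-duality pairing, and absorbing any resulting constant into a rescaling of $u$, completes the identification.

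The main obstacle I anticipate is \emph{not} the formal transversality manipulation but the fact — flagged explicitly in Theorem \ref{The:VHS_3_Delta}(iii) — that the full Griffiths transversality $\nabla F^p \subset F^{p-1}$ is only known here for $p=3$, with the cases $p=2,1$ left open, the filtration $F^1{\cal H}^3_{[\gamma]}$ being merely $C^\infty$ and not holomorphic. So the clean classical chain of inclusions $\nabla^k F^3 \subset F^{3-k}$ is not available verbatim. To get around this I would work directly with the explicit $u_t=\alpha_t\wedge\beta_t\wedge\gamma_t$ and the explicit period computations already carried out in $\S\ref{subsubsection:explicit-computations}$ — one there has closed-form expressions for $Q(u_t,\eta_0),\dots,Q(u_t,\eta_4)$ in terms of $t_{11},t_{12},t_{21},t_{22}$ — so that the symmetry $\partial w_i/\partial z_j=\partial w_j/\partial z_i$ and the third-derivative formula can be verified by hand rather than invoked abstractly. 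This is precisely the "same method as in the standard Kähler case ([BG83])" referred to in the text: the abstract argument gives the structure, and the explicit Iwasawa computation fills the gap where transversality in degrees $p\le 2$ is unproven.
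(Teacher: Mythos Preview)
Your proposal follows the same Bryant--Griffiths route as the paper, and Step~1 is essentially identical: expand $u_t$ in the symplectic basis, use the $p=3$ transversality $\partial u_t/\partial z_i\in F^2{\cal H}^3_{[\gamma]}$ together with $Q(F^2,F^2)=0$ (bidegree reasons) to get the symmetry $\partial w_i/\partial z_j=\partial w_j/\partial z_i$, hence the potential $F$.

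Where you diverge from the paper is in Step~2. You correctly flag that $\nabla F^p\subset F^{p-1}$ is only established for $p=3$, and you propose to compensate by brute-force computation with $u_t=\alpha_t\wedge\beta_t\wedge\gamma_t$. That would work, but it is unnecessary: the paper observes that the single orthogonality relation actually needed, namely $Q(u_t,\partial^2 u_t/\partial z_i\partial z_j)=0$, already follows from $p=3$ transversality alone. Indeed, differentiating $Q(u_t,\partial u_t/\partial z_i)=0$ (itself the derivative of $Q(u_t,u_t)=0$) gives
\[
Q\!\Big(\frac{\partial u}{\partial z_j},\,\frac{\partial u}{\partial z_i}\Big)+Q\!\Big(u,\,\frac{\partial^2 u}{\partial z_i\partial z_j}\Big)=0,
\]
and the first term vanishes because both entries lie in $F^2$. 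This is precisely the content of the ``orthogonality relations'' line (\ref{ortho}) in Theorem~\ref{The:VHS_3_Delta}(iii). From there one differentiates once more to obtain $Y_2^{(u)}=-\,Q(\partial u/\partial z_k,\,\partial^2 u/\partial z_i\partial z_j)$, and this last pairing is computed \emph{directly} from the symplectic-basis expansion of $u_t$ (no transversality of any kind is invoked in that computation), yielding $-\partial^2\Psi_k/\partial z_i\partial z_j=-\partial^3F/\partial z_i\partial z_j\partial z_k$.

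So your instinct that the abstract argument needs patching is right, but the patch is a two-line observation rather than a return to the explicit period formulae of \S\ref{subsubsection:abstract-construction}.
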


\begin{proof}
The arguments are standard (see e.g. [Voi96, $\S.3.1.2$]), but we spell them out for the reader's convenience and to show that they adapt to our non-standard situation.

\vspace{1ex}

{\it Step $1$.} For all $i\in\{1,\dots, 4\}$, put $\Psi_i:\Delta_{[\gamma]}\longrightarrow\C$, $\Psi_i(z_1(t),\dots , z_4(t)):=Q(u_t,\,\nu_i)$. Prove that
\begin{equation}\label{eqn:Psi_i_derivatives}\frac{\partial\Psi_i}{\partial z_j} = \frac{\partial\Psi_j}{\partial z_i} \hspace{2ex} 
\mbox{for all}\hspace{1ex} i,j\in\{1,\dots , 4\}.\end{equation}

\noindent This is proved by writing $u_t = a_0\nu_0 + \sum\limits_{j=1}^4a_j\nu_j + \sum\limits_{j=1}^4b_j\eta_j + b_0\eta_0$ 
and computing the coefficients $a_j, b_j$ by using the relation $Q(u_t,\,\eta_0) = 1$ 
and the symplectic property of the basis $\{\eta_0,\eta_1,\dots , \eta_4,\,\nu_0,\nu_1,\dots , \nu_4\}$. 
We get $u_t = \nu_0 + \sum\limits_{j=1}^4z_j\nu_j - \sum\limits_{j=1}^4\Psi_j\eta_j - Q(u_t,\,\nu_0)\,\eta_0$. 
Taking the derivative $\partial/\partial z_i$, we get
\begin{equation*}\label{eqn:del_u_del_z_i}\frac{\partial u}{\partial z_i}
= \nu_i - \sum\limits_{j=1}^4\frac{\partial\Psi_j}{\partial z_i}\,\eta_j - \frac{\partial Q(u,\,\nu_0)}{\partial z_i}\,\eta_0.\end{equation*}
From this and the symplectic property of the basis $\eta_j,\nu_k$, we infer
$$ Q\bigg(\frac{\partial u}{\partial z_i},\, \frac{\partial u}{\partial z_j}\bigg) = -\frac{\partial\Psi_i}{\partial z_j}  + \frac{\partial\Psi_j}{\partial z_i}.$$ 
\noindent On the other hand, $\frac{\partial u}{\partial z_i} = \rho(\frac{\partial}{\partial z_i})\lrcorner u \in F^2_{[\gamma]}H^3(X,\,\C)$ 
for all $\frac{\partial}{\partial z_i}\in T^{1,\,0}_0\Delta_{[\gamma]} \stackrel{\rho}{\simeq} H^{0,\,1}_{[\gamma]}(X_0,\,T^{1,\,0}X_0)$ 
by Griffiths's transversality [Gri68] (see (\ref{eqn:transversality_Kuranishi}), our version of it), so for bidegree reasons we get:
$0 = Q\bigg(\frac{\partial u}{\partial z_i},\, \frac{\partial u}{\partial z_j}\bigg).$
 This proves (\ref{eqn:Psi_i_derivatives}).

It follows from (\ref{eqn:Psi_i_derivatives}) that there exists a $C^{\infty}$ function $F = F(z_1,\dots , z_4)\,:\,\Delta_{[\gamma]}\longrightarrow\C$ such that
\begin{equation*}
\frac{\partial F}{\partial z_i} = \Psi_i \hspace{2ex} \mbox{for all} \hspace{1ex} i\in\{1,\dots , 4\}.\end{equation*}

\vspace{1ex}

{\it Step $2$.} Prove (\ref{eqn:Yukawa2-potential}) for this choice of $F$.

By the orthogonality relations ~\eqref{ortho}, we have
$$\bigg\langle u,\,\frac{\partial^2 u}{\partial z_i\partial z_j}\bigg\rangle = 0$$
\noindent since $u_t\in H^{3,\,0}(X_t,\,\C)$ for all $t$. Applying $\partial/\partial z_k$, we get
\begin{equation*}\label{eqn:3rd_derivatives_u1}
\bigg\langle \frac{\partial u}{\partial z_k},\,\frac{\partial^2 u}{\partial z_i\partial z_j}\bigg\rangle 
+ \bigg\langle  u,\,\frac{\partial^3 u}{\partial z_i\partial z_j\partial z_k}\bigg\rangle = 0, \hspace{2ex} 
\mbox{hence} \hspace{1ex} Y_2^{(u)}\bigg(\frac{\partial}{\partial z_i},\,\frac{\partial}{\partial z_j},\,\frac{\partial}{\partial z_k}\bigg) 
= - \bigg\langle \frac{\partial u}{\partial z_k},\,\frac{\partial^2 u}{\partial z_i\partial z_j}\bigg\rangle.\end{equation*}
\noindent On the other hand, from the identities $u_t = \nu_0 + \sum\limits_{l=1}^4z_l\,\nu_l - \sum\limits_{l=1}^4\Psi_l\,\eta_l - Q(u_t,\,\nu_0)\,\eta_0$ 
 seen at Step $1$, we compute
\begin{eqnarray*}\label{eqn:3rd_derivatives_u2}\nonumber\bigg\langle \frac{\partial u}{\partial z_k},\,\frac{\partial^2 u}{\partial z_i\partial z_j}\bigg\rangle 
& = & - \bigg\langle \nu_k - \sum\limits_{l=1}^4\frac{\partial\Psi_l}{\partial z_k}\,\eta_l - \frac{\partial Q(u,\,\nu_0)}{\partial z_k}\,\eta_0,\, 
\sum\limits_{l=1}^4\frac{\partial^2\Psi_l}{\partial z_i\partial z_j}\,\eta_l + \frac{\partial^2 Q(u,\,\nu_0)}{\partial z_i\partial z_j}\,\eta_0\bigg\rangle \\
  & = & -\frac{\partial^2\Psi_k}{\partial z_i\partial z_j} = -  \frac{\partial^3F}{\partial z_i\partial z_j\partial z_k}.\end{eqnarray*}
  \noindent The last two main identities combined prove (\ref{eqn:Yukawa2-potential}).  \end{proof}

\section{The metric side of the mirror}\label{section:metric-side}

As usual, we let $(X_t)_{t\in\Delta_{[\gamma]}}$ stand for the Kuranishi family of the Iwasawa manifold $X=X_0$.

\subsection{Constructing Gauduchon metrics}
\subsubsection{A smooth family $(\omega_t)_{t\in\Delta_{[\gamma]}}$ of Gauduchon metrics on $(X_t)_{t\in\Delta_{[\gamma]}}$}\label{subsubsection:G_Xt}

Recall that (\ref{eqn:omega_t_def}) provides us with a $C^{\infty}$ family of canonical Hermitian metrics $(\omega_t)_{t\in\Delta}$ on the fibres $(X_t)_{t\in\Delta}$ 
after possibly shrinking $\Delta$ about $0$. 
Simple calculations enable us to prove the following.
\begin{Lem}\label{Lem:omega_t_Gauduchon} Let $(X_t)_{t\in\Delta}$ be the Kuranishi family of the Iwasawa manifold $X=X_0$. 
Then, for every $t\in\Delta_{[\gamma]}$, the metric $\omega_t= i\alpha_t\wedge\bar\alpha_t + i\beta_t\wedge\bar\beta_t + i\gamma_t\wedge\bar\gamma_t$ 
is a Gauduchon metric on $X_t$, hence $[\omega_t^2]_A$ defines an element in the Gauduchon cone ${\cal G}_{X_t}$ of $X_t$.
\end{Lem}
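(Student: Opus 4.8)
The plan is to verify directly that $\partial_t\bar\partial_t(\omega_t^2) = 0$ for every $t\in\Delta_{[\gamma]}$, using the explicit expression $\omega_t = i\alpha_t\wedge\bar\alpha_t + i\beta_t\wedge\bar\beta_t + i\gamma_t\wedge\bar\gamma_t$ together with the structure equations \eqref{eq:structure}. Since $\dim_{\C}X_t = 3$, Gauduchon-ness of $\omega_t$ means precisely $\partial_t\bar\partial_t(\omega_t^{2}) = 0$, and once this is established the class $[\omega_t^2]_A \in H^{2,\,2}_A(X_t,\,\C)$ is well-defined and lies in ${\cal G}_{X_t}$ by the very definition \eqref{eqn:G-cone_def} of the Gauduchon cone.

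First I would expand $\omega_t^2$. Writing $a := i\alpha_t\wedge\bar\alpha_t$, $b := i\beta_t\wedge\bar\beta_t$, $c := i\gamma_t\wedge\bar\gamma_t$, we have $\omega_t^2 = a^2 + b^2 + c^2 + 2ab + 2ac + 2bc = 2(ab + ac + bc)$ since $a^2 = b^2 = c^2 = 0$ (each is the square of a $(1,1)$-form that is a single wedge of a $(1,0)$ and a $(0,1)$ form). Thus $\omega_t^2 = 2i^2\big(\alpha_t\wedge\bar\alpha_t\wedge\beta_t\wedge\bar\beta_t + \alpha_t\wedge\bar\alpha_t\wedge\gamma_t\wedge\bar\gamma_t + \beta_t\wedge\bar\beta_t\wedge\gamma_t\wedge\bar\gamma_t\big)$. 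The first term, $\alpha_t\wedge\bar\alpha_t\wedge\beta_t\wedge\bar\beta_t$, is pulled back from the Albanese torus $B_t$ via $\pi_t$ (it only involves $\alpha_t,\beta_t$ and their conjugates, which are $d$-closed by \eqref{eqn:alpha-beta-gamma_t_def}), hence is $d$-closed and in particular killed by $\partial_t\bar\partial_t$. It remains to handle the two terms containing $\gamma_t\wedge\bar\gamma_t$.

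For those, I would apply $\bar\partial_t$ first. From \eqref{eq:structure}, $\bar\partial_t\gamma_t$ is a combination of $\alpha_t\wedge\bar\alpha_t$, $\alpha_t\wedge\bar\beta_t$, $\beta_t\wedge\bar\alpha_t$, $\beta_t\wedge\bar\beta_t$ with constant coefficients, and $\bar\partial_t\bar\gamma_t = \overline{\partial_t\gamma_t} = \overline{\sigma_{12}(t)}\,\bar\alpha_t\wedge\bar\beta_t$, while $\alpha_t,\beta_t,\bar\alpha_t,\bar\beta_t$ are all $d$-closed. So $\bar\partial_t(\alpha_t\wedge\bar\alpha_t\wedge\gamma_t\wedge\bar\gamma_t) = -\alpha_t\wedge\bar\alpha_t\wedge(\bar\partial_t\gamma_t)\wedge\bar\gamma_t + \alpha_t\wedge\bar\alpha_t\wedge\gamma_t\wedge(\bar\partial_t\bar\gamma_t)$; in the first summand every term of $\bar\partial_t\gamma_t$ wedged with $\alpha_t\wedge\bar\alpha_t$ contains a repeated $\alpha_t$ or $\bar\alpha_t$, hence vanishes, and the second summand contains $\bar\alpha_t\wedge\bar\alpha_t\wedge\bar\beta_t = 0$; so this term is already $\bar\partial_t$-closed. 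The analogous computation for $\beta_t\wedge\bar\beta_t\wedge\gamma_t\wedge\bar\gamma_t$ likewise gives zero. Therefore $\bar\partial_t(\omega_t^2) = 0$, so a fortiori $\partial_t\bar\partial_t(\omega_t^2) = 0$, which proves the claim — and in fact shows $\omega_t^2$ is $\bar\partial_t$-closed, a stronger statement consistent with the sGG/balanced-type behaviour exploited later.

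I do not anticipate a genuine obstacle here: the only mild care needed is bookkeeping of which wedge products vanish by repetition of a factor, and making sure the coefficients $\sigma_{i\bar j}(t), \sigma_{12}(t)$ being constant (i.e. depending only on $t$, not on the point of $X_t$) is used so that no extra $d$-derivative of a coefficient appears — this is exactly what the parenthetical remark after \eqref{eq:structure} guarantees. If one prefers, the same conclusion can be packaged more cleanly by noting that $\gamma_t\wedge\bar\gamma_t$ times any pullback from $B_t$ of a $(1,1)$-form is $\bar\partial_t$-closed because $\bar\partial_t(\gamma_t\wedge\bar\gamma_t)$ lands in $\pi_t^\star\Lambda^{2}(B_t)\wedge(\text{span of }\bar\gamma_t,\gamma_t)$ whose relevant components are absorbed, but the direct expansion above is the most transparent route.
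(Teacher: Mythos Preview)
Your computation of $\bar\partial_t(\alpha_t\wedge\bar\alpha_t\wedge\gamma_t\wedge\bar\gamma_t)$ contains an error. You assert that ``every term of $\bar\partial_t\gamma_t$ wedged with $\alpha_t\wedge\bar\alpha_t$ contains a repeated $\alpha_t$ or $\bar\alpha_t$, hence vanishes''. But this is false: the structure equation \eqref{eq:structure} has a term $\sigma_{2\bar 2}(t)\,\beta_t\wedge\bar\beta_t$, which contains neither $\alpha_t$ nor $\bar\alpha_t$. Thus
\[
\bar\partial_t(\alpha_t\wedge\bar\alpha_t\wedge\gamma_t\wedge\bar\gamma_t)
=\sigma_{2\bar 2}(t)\,\alpha_t\wedge\bar\alpha_t\wedge\beta_t\wedge\bar\beta_t\wedge\bar\gamma_t,
\]
and similarly $\bar\partial_t(\beta_t\wedge\bar\beta_t\wedge\gamma_t\wedge\bar\gamma_t)=\sigma_{1\bar 1}(t)\,\alpha_t\wedge\bar\alpha_t\wedge\beta_t\wedge\bar\beta_t\wedge\bar\gamma_t$. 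Hence $\bar\partial_t\omega_t^2=-2(\sigma_{1\bar 1}(t)+\sigma_{2\bar 2}(t))\,\alpha_t\wedge\bar\alpha_t\wedge\beta_t\wedge\bar\beta_t\wedge\bar\gamma_t$, which is generically nonzero on $\Delta_{[\gamma]}$. In particular your ``stronger statement'' that $\omega_t^2$ is $\bar\partial_t$-closed is false for $t\neq 0$.

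The fix is easy and is exactly what the paper does (via Lemma~\ref{lem:pluri-closed}): apply the remaining operator. Since $\partial_t\bar\gamma_t=\overline{\bar\partial_t\gamma_t}$ is a combination of $\bar\alpha_t\wedge\alpha_t,\,\bar\alpha_t\wedge\beta_t,\,\bar\beta_t\wedge\alpha_t,\,\bar\beta_t\wedge\beta_t$, wedging it with $\alpha_t\wedge\bar\alpha_t\wedge\beta_t\wedge\bar\beta_t$ always produces a repeated factor, so $\partial_t\bar\partial_t\omega_t^2=0$. Your overall strategy (expand $\omega_t^2$, handle the base term as $d$-closed, use the structure equations for the rest) matches the paper's; the only issue is the premature claim of $\bar\partial_t$-closedness instead of $\partial_t\bar\partial_t$-closedness.
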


\begin{proof}
Since $\dim_{\C}X_t = 3$, we have to show that $\partial_t\bar\partial_t\omega_t^2 = 0$ for $t\in\Delta_{[\gamma]}$. For all $t\in\Delta$, 
$$\omega_t^2 = -2\,\alpha_t\wedge\bar\alpha_t\wedge\beta_t\wedge\bar\beta_t - 2\,\alpha_t\wedge\bar\alpha_t\wedge\gamma_t\wedge\bar\gamma_t - 2\,
\beta_t\wedge\bar\beta_t\wedge\gamma_t\wedge\bar\gamma_t.$$
It now follows from lemma~\ref{lem:pluri-closed} that
$\partial_t\bar\partial_t\omega_t^2 = 0$ for all $t\in\Delta_{[\gamma]}$.  \end{proof}

\subsubsection{A smooth family $(\omega_t^{1,\,1})_{t\in\Delta_{[\gamma]}}$ of Gauduchon metrics on $X_0$}\label{subsubsection:G_X0}

 We will implicitly construct a smooth family of Aeppli-Gauduchon classes in ${\cal G}_{X_0}$ naturally induced by the structure of the family $(X_t)_{t\in\Delta}$. Each Hermitian metric $\omega_t=i\alpha_t\wedge\bar\alpha_t + i\beta_t\wedge\bar\beta_t + i\gamma_t\wedge\bar\gamma_t$ (proved in Lemma \ref{Lem:omega_t_Gauduchon} to be even a Gauduchon metric on $X_t$ for $t\in\Delta_{[\gamma]}$) can be viewed as a real $2$-form on the $C^{\infty}$ manifold $X$ underlying the fibres $X_t$. As such, $\omega_t$ has a component of bidegree $(1,\,1)$ w.r.t. the complex structure $J_0$ of $X_0$. We denote it by $\omega_t^{1,\,1}\in C^{\infty}_{1,\,1}(X_0,\,\R)$.

\begin{Prop}\label{Prop:omega_t_11} Let $(X_t)_{t\in\Delta}$ be the Kuranishi family of the Iwasawa manifold $X_0$. 
Then, the $J_0$-$(1,\,1)$-form $\omega_t^{1,\,1}$ is a Gauduchon metric on $X_0$ for every $t\in\Delta$ sufficiently close to $0$. 
Moreover, $\omega_0^{1,\,1} = \omega_0$ and $\omega_t^{1,\,1}$ varies in a $C^{\infty}$ way with $t$. 
\end{Prop}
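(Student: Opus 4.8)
The plan is to reduce the statement to one elementary observation about the Iwasawa manifold $X_0$: every $J_0$-form of bidegree $(2,\,2)$ with \emph{constant} coefficients in the invariant coframe $\alpha,\beta,\gamma$ is $d$-closed on $X_0$, hence a fortiori $\partial_0\bar\partial_0$-closed. Granting this, the Gauduchon condition $\partial_0\bar\partial_0(\cdot)^2=0$ is automatic for any constant-coefficient Hermitian $(1,\,1)$-form on $X_0$, and in particular for $\omega_t^{1,1}$, so the real content is just the smoothness and positivity of $\omega_t^{1,1}$.

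First I would invoke the explicit formulae (\ref{eqn:forms_t-forms_0}): for each fixed $t\in\Delta$ the forms $\alpha_t,\beta_t,\gamma_t$ are linear combinations of $\alpha,\beta,\gamma,\bar\alpha,\bar\beta,\bar\gamma$ whose coefficients are polynomials in $(t,\bar t)$ (through the entries $t_{i\lambda}$ and $D(t)=t_{11}t_{22}-t_{12}t_{21}$). Hence $\omega_t=i\alpha_t\wedge\bar\alpha_t+i\beta_t\wedge\bar\beta_t+i\gamma_t\wedge\bar\gamma_t$, viewed as a real $2$-form on the $C^\infty$ manifold $X$ underlying the $X_t$, has coefficients polynomial in $(t,\bar t)$ in the fixed $J_0$-frame; splitting it into $J_0$-bidegrees, its $(1,\,1)$-component $\omega_t^{1,1}$ is a $J_0$-$(1,\,1)$-form with constant (in the point of $X_0$) coefficients that depend smoothly — in fact polynomially — on $t$. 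Setting $t=0$ gives $\alpha_0=\alpha$, $\beta_0=\beta$, $\gamma_0=\gamma$, so $\omega_0$ is already of pure type $(1,\,1)$ and $\omega_0^{1,1}=\omega_0$. Since positive definiteness of a $(1,\,1)$-form is an open condition and $\omega_0^{1,1}=\omega_0>0$, the smooth dependence on $t$ forces $\omega_t^{1,1}>0$ for all $t$ near $0$; after shrinking $\Delta$, $\omega_t^{1,1}$ is therefore a Hermitian metric on $X_0$ for every $t\in\Delta$.

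It then remains to verify $\partial_0\bar\partial_0(\omega_t^{1,1})^2=0$ on $X_0$. By the previous step $(\omega_t^{1,1})^2$ is a constant-coefficient $J_0$-$(2,\,2)$-form, hence a linear combination of wedge products $\mu\wedge\bar\nu$ with $\mu\in\{\alpha\wedge\beta,\ \alpha\wedge\gamma,\ \beta\wedge\gamma\}$ and $\bar\nu\in\{\bar\alpha\wedge\bar\beta,\ \bar\alpha\wedge\bar\gamma,\ \bar\beta\wedge\bar\gamma\}$. Using (\ref{eqn:alpha,beta,gamma_d}) — namely $d\alpha=d\beta=0$ and $d\gamma=-\alpha\wedge\beta$, together with $\alpha\wedge\alpha=\beta\wedge\beta=0$ and conjugation — one checks that each such $\mu$ and each such $\bar\nu$ is $d$-closed, whence $d(\mu\wedge\bar\nu)=0$ and thus $(\omega_t^{1,1})^2$ is $d$-closed, in particular $\partial_0\bar\partial_0$-closed. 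This is exactly the mechanism already used in Lemma~\ref{Lem:omega_t_Gauduchon} (via Lemma~\ref{lem:pluri-closed}). Hence $\omega_t^{1,1}$ is a Gauduchon metric on $X_0$ for all $t$ sufficiently close to $0$.

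I do not expect a genuine obstacle here; the only point that warrants a little care is the bidegree bookkeeping of the second paragraph — i.e. confirming that the $J_0$-$(1,\,1)$-part of $\omega_t$ really has constant coefficients in the $X_0$-variable — but this is transparent from (\ref{eqn:forms_t-forms_0}). It is worth noting the contrast with Lemma~\ref{Lem:omega_t_Gauduchon}: for the family $(\omega_t)_{t}$ on the \emph{varying} fibres $X_t$ one must restrict to $\Delta_{[\gamma]}$ for the analogous pluriclosedness, whereas here, working entirely on the fixed complex manifold $X_0$, the conclusion persists for all small $t\in\Delta$.
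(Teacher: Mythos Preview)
Your proof is correct and follows essentially the same mechanism as the paper's: both use the formulas (\ref{eqn:forms_t-forms_0}) to see that $\omega_t^{1,1}$ has constant coefficients in the invariant coframe of $X_0$, deduce smoothness and positivity by continuity from $\omega_0^{1,1}=\omega_0$, and then verify the Gauduchon condition via the structure equations (\ref{eqn:alpha,beta,gamma_d}).

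The packaging differs slightly. The paper computes $\omega_t^{1,1}$ and $(\omega_t^{1,1})^2$ explicitly (formulas (\ref{eqn:omega_t_11_J0}) and (\ref{eqn:omega_t_11_J0_square})) and checks each of the five surviving $(2,2)$-terms is $\bar\partial_0$-closed. You instead make the single cleaner observation that \emph{every} invariant $(2,0)$-form $\mu\in\{\alpha\wedge\beta,\alpha\wedge\gamma,\beta\wedge\gamma\}$ is $d$-closed (since $d\gamma=-\alpha\wedge\beta$ is killed by wedging with $\alpha$ or $\beta$), hence every invariant $(2,2)$-form $\mu\wedge\bar\nu$ is $d$-closed, which dispatches the Gauduchon condition without writing out $(\omega_t^{1,1})^2$. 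This is a genuine economy for the present proposition; the paper's explicit expressions, however, are not wasted effort, as they are reused downstream (e.g.\ in Lemma~\ref{Lem:Aeppli_omega_t_11^2} and the formula (\ref{eqn:positive_mirror-map_formula}) for the positive mirror map).
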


\begin{proof}

Recall that $\omega_t = i\,\alpha_t\wedge\bar\alpha_t + i\,\beta_t\wedge\bar\beta_t + i\,\gamma_t\wedge\bar\gamma_t$. Hence, using the identities~\eqref{eqn:forms_t-forms_0}, 
we get
\begin{eqnarray}\nonumber\omega_t & = & i\,(\alpha + t_{11}\,\bar\alpha + t_{12}\,\bar\beta)\wedge(\bar\alpha + \bar{t}_{11}\,\alpha + \bar{t}_{12}\,\beta) + i\,( \beta + t_{21}
\,\bar\alpha + t_{22}\,\bar\beta)\wedge(\bar\beta + \bar{t}_{21}\,\alpha + \bar{t}_{22}\,\beta) \\
\nonumber & + & i\,[\gamma + t_{31}\,\bar\alpha + t_{32}\,\bar\beta -D(t)\,\bar\gamma]\wedge[\bar\gamma + \bar{t}_{31}\,\alpha + \bar{t}_{32}\,\beta 
- \overline{D(t)}\,\gamma].\end{eqnarray} 
\noindent Hence, the $J_0$-type $(1,\,1)$-component of $\omega_t$ is
\begin{equation}\label{eqn:omega_t_11_J0}\omega_t^{1,\,1} = (1+ c_1(t))\,i\alpha\wedge\bar\alpha + (1+c_2(t))\,i\beta\wedge\bar\beta +(1+c_3(t))\,i\gamma\wedge\bar\gamma
+ d(t)\,i\alpha\wedge\bar\beta 
+ \overline{d(t)}\,i\beta\wedge\bar\alpha,\end{equation}
\noindent where 
\begin{eqnarray}\label{eqn:c_j-d_t}
\nonumber c_1(t) & = & -(|t_{11}|^2 + |t_{21}|^2 + |t_{31}|^2) \\ 
\nonumber c_2(t) & = & -(|t_{12}|^2 + |t_{22}|^2 + |t_{32}|^2), \\
\nonumber c_3(t) & =& - |D(t)|^2 = -|t_{11}\,t_{22} - t_{12}\,t_{21}|^2, \\
        d(t) & = & -(t_{12}\,\bar{t}_{11} + t_{22}\,\bar{t}_{21} + t_{32}\,\bar{t}_{31}).
\end{eqnarray}

We see that $\omega_t^{1,\,1}$ varies in a $C^{\infty}$ way with $t$ and that $\omega_0^{1,\,1} = \omega_0$. In particular, since $\omega>0$, by continuity we get $\omega_t^{1,\,1} > 0$ for all $t$ sufficiently close to $0$, so $(\omega_t^{1,\,1})_{t\in\Delta}$ 
is a $C^{\infty}$ family of Hermitian metrics on $X_0$ after possibly shrinking $\Delta$ about $0$.


It remains to show that $\partial\bar\partial(\omega_t^{1,\,1})^2 = 0$, where $\partial=\partial_0$ and $\bar\partial = \bar\partial_0$, i.e. that each $\omega_t^{1,\,1}$ is a Gauduchon metric on $X_0$. Taking squares in (\ref{eqn:omega_t_11_J0}), we get
\begin{eqnarray}\nonumber (\omega_t^{1,\,1})^2 & = & \omega_0^2 + 2c_1(t)\, \omega_0\wedge i\,\alpha\wedge\bar\alpha + 2c_2(t)\, \omega_0\wedge i\,\beta\wedge\bar\beta + 2c_3(t)\, \omega_0\wedge i\,\gamma\wedge\bar\gamma \\
\nonumber & + & 2d(t)\, \omega_0\wedge i\,\alpha\wedge\bar\beta + 2\overline{d(t)}\, \omega_0\wedge i\,\beta\wedge\bar\alpha \\
\nonumber & + & 2c_1(t)\,c_2(t)\,i\,\alpha\wedge\bar\alpha\wedge i\,\beta\wedge\bar\beta + 2c_2(t)\,c_3(t)\,i\,\beta\wedge\bar\beta\wedge i\,\gamma\wedge\bar\gamma + 2c_1(t)\,c_3(t)\,i\,\alpha\wedge\bar\alpha\wedge i\,\gamma\wedge\bar\gamma
\\
\nonumber & + & 2c_1(t)\,d(t)\,i\,\alpha\wedge\bar\alpha\wedge i\,\alpha\wedge\bar\beta + 2c_1(t)\,\overline{d(t)}\,i\,\alpha\wedge\bar\alpha\wedge i\,\beta\wedge\bar\alpha \\
 \nonumber   & + & 2c_2(t)\,d(t)\,i\,\beta\wedge\bar\beta\wedge i\,\alpha\wedge\bar\beta + 2c_2(t)\,\overline{d(t)}\,i\,\beta\wedge\bar\beta\wedge i\,\beta\wedge\bar\alpha - 2|d(t)|^2\,i\,\alpha\wedge\bar\alpha\wedge i\,\beta\wedge\bar\beta\\
\nonumber & + & 2c_3(t)\,d(t)\,i\,\alpha\wedge\bar\beta\wedge i\,\gamma\wedge\bar\gamma + 2c_3(t)\,\overline{d(t)}\,i\,\beta\wedge\bar\alpha\wedge i\,\gamma\wedge\bar\gamma.\end{eqnarray}

\noindent After removing the vanishing terms (that are products containing two equal factors chosen from $\alpha, \beta, \bar\alpha, \bar\beta$) and regrouping the remaining ones, we get
\begin{eqnarray}\label{eqn:omega_t_11_J0_square}\nonumber (\omega_t^{1,\,1})^2 & = & \omega_0^2 + 2\,[c_1(t) + c_2(t) + c_1(t)\,c_2(t) - |d(t)|^2]\,i\,\alpha\wedge\bar\alpha\wedge i\,\beta\wedge\bar\beta \\
\nonumber & + & 2\,[c_1(t) + c_3(t) + c_1(t)\,c_3(t)]\,i\,\alpha\wedge\bar\alpha\wedge i\,\gamma\wedge\bar\gamma + 2\,[c_2(t) +c_3(t) + c_2(t)\,c_3(t)]\,i\,\beta\wedge\bar\beta\wedge i\,\gamma\wedge\bar\gamma \\
     & + & 2\,d(t)\,[1 + c_3(t)]\,i\,\alpha\wedge\bar\beta\wedge i\,\gamma\wedge\bar\gamma + 2\,\overline{d(t)}\,[1 + c_3(t)]\,i\,\beta\wedge\bar\alpha\wedge i\,\gamma\wedge\bar\gamma.\end{eqnarray}

We can now show, using the identities $d\alpha = d\beta = 0$, $\bar\partial\gamma = 0$ and $\partial\gamma = -\alpha\wedge\beta$ (cf. (\ref{eqn:alpha,beta,gamma_d})), that every term on the r.h.s. of (\ref{eqn:omega_t_11_J0_square}) is at least $\partial\bar\partial$-closed. We have already seen that $\partial\bar\partial\omega_0^2=0$. We get furthermore
\vspace{1ex}

$\bar\partial(i\,\alpha\wedge\bar\alpha\wedge i\,\beta\wedge\bar\beta) = 0$ since the forms $\alpha,\,\bar\alpha,\,\beta,\,\bar\beta$ are all $\bar\partial$-closed,

\vspace{1ex}

$\bar\partial(i\,\alpha\wedge\bar\alpha\wedge i\,\gamma\wedge\bar\gamma) = -i\,\alpha\wedge\bar\alpha\wedge i\,\gamma\wedge\overline{\partial\gamma} = i\,\alpha\wedge\bar\alpha\wedge i\,\gamma\wedge\bar\alpha\wedge\bar\beta =0$ since $\bar\alpha\wedge\bar\alpha = 0$,

\vspace{1ex}

$\bar\partial(i\,\beta\wedge\bar\beta\wedge i\,\gamma\wedge\bar\gamma) = -i\,\beta\wedge\bar\beta\wedge i\,\gamma\wedge\overline{\partial\gamma} = i\,\beta\wedge\bar\beta\wedge i\,\gamma\wedge\bar\alpha\wedge\bar\beta =0$ since $\bar\beta\wedge\bar\beta = 0$,

\vspace{1ex}

$\bar\partial(i\,\alpha\wedge\bar\beta\wedge i\,\gamma\wedge\bar\gamma) = -i\,\alpha\wedge\bar\beta\wedge i\,\gamma\wedge\overline{\partial\gamma} = i\,\alpha\wedge\bar\beta\wedge i\,\gamma\wedge\bar\alpha\wedge\bar\beta =0$ since $\bar\beta\wedge\bar\beta = 0$,

\vspace{1ex}

$\bar\partial(i\,\beta\wedge\bar\alpha\wedge i\,\gamma\wedge\bar\gamma) = -i\,\beta\wedge\bar\alpha\wedge i\,\gamma\wedge\overline{\partial\gamma} = i\,\beta\wedge\bar\alpha\wedge i\,\gamma\wedge\bar\alpha\wedge\bar\beta =0$ since $\bar\alpha\wedge\bar\alpha = 0$.

\vspace{2ex}

\noindent We conclude from these identities and from (\ref{eqn:omega_t_11_J0_square}) that $\partial\bar\partial(\omega_t^{1,\,1})^2 = 0$, so $\omega_t^{1,\,1}$ is indeed a Gauduchon metric on $X_0$ for all $t\in\Delta$ close to $0$.  \end{proof}

We now observe that, in a certain sense, there are as ``many'' Aeppli-Gauduchon classes of the type $[(\omega_t^{1,\,1})^2]_A$ as elements in the Gauduchon cone ${\cal G}_{X_0}$. 
\begin{Lem}\label{Lem:Aeppli_omega_t_11^2} For every $t\in\Delta$ sufficiently close to $0$, the Aeppli-Gauduchon class $[(\omega_t^{1,\,1})^2]_A\in{\cal G}_{X_0}$ 
satisfies the following identity  
\begin{eqnarray}\label{eqn:Aeppli_omega_t_11^2}
\nonumber\frac{1}{2}[(\omega_t^{1,\,1})^2]_A & 
= &  (1+c_1(t))(1+c_3(t))\,[i\,\alpha\wedge\bar\alpha\wedge i\,\gamma\wedge\bar\gamma]_A 
+ (1+c_2(t))(1+c_3(t))\,[i\,\beta\wedge\bar\beta\wedge i\,\gamma\wedge\bar\gamma]_A \\
   & + & d(t)(1+c_3(t))\,[i\,\alpha\wedge\bar\beta\wedge i\,\gamma\wedge\bar\gamma ]_A 
  +  \overline{d(t)}(1+c_3(t))\,[ i\,\beta\wedge\bar\alpha\wedge i\,\gamma\wedge\bar\gamma]_A.
  \end{eqnarray}
\noindent Note that since the classes $[i\,\alpha\wedge\bar\alpha\wedge i\,\gamma\wedge\bar\gamma]_A$, $[i\,\beta\wedge\bar\beta\wedge i\,\gamma\wedge\bar\gamma]_A$,
$[i\,\alpha\wedge\bar\beta\wedge i\,\gamma\wedge\bar\gamma]_A$, $[i\,\beta\wedge\bar\alpha\wedge i\,\gamma\wedge\bar\gamma]_A$ generate $H^{2,\,2}_A(X_0,\,\C)$ over $\C$, 
the real classes $[i\,\alpha\wedge\bar\alpha\wedge i\,\gamma\wedge\bar\gamma]_A$, $[i\,\beta\wedge\bar\beta\wedge i\,\gamma\wedge\bar\gamma]_A$, 
$[i\,\alpha\wedge\bar\beta\wedge i\,\gamma\wedge\bar\gamma + i\,\beta\wedge\bar\alpha\wedge i\,\gamma\wedge\bar\gamma]_A$ 
and $\frac{1}{2i}\,([i\,\alpha\wedge\bar\beta\wedge i\,\gamma\wedge\bar\gamma - i\,\beta\wedge\bar\alpha\wedge i\,\gamma\wedge\bar\gamma]_A)$ generate $H^{2,\,2}_A(X_0,\,\R)$
over $\R$.

\end{Lem}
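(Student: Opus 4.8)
The plan is to derive \eqref{eqn:Aeppli_omega_t_11^2} directly from the explicit expression \eqref{eqn:omega_t_11_J0_square} for $(\omega_t^{1,\,1})^2$ by discarding, at the level of Aeppli cohomology classes, every term that is $\partial$-exact or $\bar\partial$-exact. First I would recall that $\bar\alpha, \bar\beta$ are $\bar\partial$-closed and, more to the point, $\bar\partial$-exact up to the fact that $\overline{\partial\gamma} = \bar\alpha\wedge\bar\beta$ (equivalently $-\bar\partial\bar\gamma = \bar\alpha\wedge\bar\beta$ by conjugating \eqref{eqn:alpha,beta,gamma_d}), while $\alpha\wedge\beta = -\partial\gamma$ is $\partial$-exact. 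The strategy is then: among the five terms on the right-hand side of \eqref{eqn:omega_t_11_J0_square}, the term $2[c_1(t)+c_2(t)+c_1(t)c_2(t)-|d(t)|^2]\,i\alpha\wedge\bar\alpha\wedge i\beta\wedge\bar\beta$ is a multiple of $i\alpha\wedge i\bar\alpha\wedge i\beta\wedge i\bar\beta$, which is Aeppli-trivial because $\alpha\wedge\beta\wedge\bar\alpha\wedge\bar\beta = -(\partial\gamma)\wedge\bar\alpha\wedge\bar\beta = \partial(\gamma\wedge\bar\alpha\wedge\bar\beta)$ (up to sign, using $\bar\partial(\gamma\wedge\bar\alpha\wedge\bar\beta)=0$); hence it lies in $\mathrm{Im}\,\partial$ and vanishes in $H^{2,\,2}_A$. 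The remaining four terms are exactly the four surviving summands, and the coefficients $1+c_1(t)$, $1+c_2(t)$, $1+c_3(t)$, $d(t)$ reassemble from the products $c_i+c_j+c_ic_j = (1+c_i)(1+c_j)-1$ and $d(t)(1+c_3(t))$, matching the claimed formula after dividing by $2$.

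Next I would address the two parenthetical assertions. For the claim that the four classes $[i\alpha\wedge\bar\alpha\wedge i\gamma\wedge\bar\gamma]_A$, $[i\beta\wedge\bar\beta\wedge i\gamma\wedge\bar\gamma]_A$, $[i\alpha\wedge\bar\beta\wedge i\gamma\wedge\bar\gamma]_A$, $[i\beta\wedge\bar\alpha\wedge i\gamma\wedge\bar\gamma]_A$ generate $H^{2,\,2}_A(X_0,\,\C)$, I would invoke the known computation of the Aeppli cohomology of the Iwasawa manifold (the same source that gives \eqref{eqn:cohomology_explicit}, e.g. [Sch07], [Ang11]): $h^{2,\,2}_A = h^{1,\,1}_{BC} = 4$ by the duality pairing \eqref{eqn:duality}, and the four displayed forms are $\partial\bar\partial$-closed (each is a wedge of $d$-closed and $\bar\partial$-closed forms, and one checks $\partial\bar\partial$ annihilates them using $\partial\gamma=-\alpha\wedge\beta$, $\bar\partial\gamma=0$) and represent linearly independent Aeppli classes — linear independence following, for instance, from pairing against the four generators $[\alpha\wedge\bar\alpha]_{BC},[\alpha\wedge\bar\beta]_{BC},[\beta\wedge\bar\alpha]_{BC},[\beta\wedge\bar\beta]_{BC}$ of $H^{1,\,1}_{BC}(X_0,\,\C)$ via \eqref{eqn:duality}, which gives a nondegenerate $4\times 4$ matrix. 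Since the space has dimension $4$, they form a basis. The statement about the four real combinations generating $H^{2,\,2}_A(X_0,\,\R)$ is then a routine linear-algebra observation: $H^{2,\,2}_A(X_0,\,\R)\otimes\C = H^{2,\,2}_A(X_0,\,\C)$, conjugation fixes the first two classes and swaps the last two, so the real and imaginary parts of the basis furnish a real basis.

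The main obstacle, such as it is, is bookkeeping rather than conceptual: one must verify carefully that the single ``missing'' term — the $i\alpha\wedge\bar\alpha\wedge i\beta\wedge\bar\beta$ term — is genuinely zero in Aeppli cohomology and that no other term is. For the four surviving terms this is immediate once one observes that each contains the factor $i\gamma\wedge\bar\gamma$ together with two of $\alpha,\bar\alpha,\beta,\bar\beta$, none of which can be written as $\partial$ or $\bar\partial$ of anything lying in the relevant space without reintroducing a $\gamma$ or $\bar\gamma$ (this is essentially the orthogonality argument already used in Lemma \ref{Lem:del-star-closedness}). I would also double-check the overall factor of $\tfrac12$: squaring \eqref{eqn:omega_t_11_J0} produces each cross term with a coefficient $2$, and $\omega_0^2 = -2(\alpha\wedge\bar\alpha\wedge\beta\wedge\bar\beta + \alpha\wedge\bar\alpha\wedge\gamma\wedge\bar\gamma + \beta\wedge\bar\beta\wedge\gamma\wedge\bar\gamma)$ contributes the ``$1$'' inside each $(1+c_i(t))(1+c_j(t))$; modulo the Aeppli-trivial $\alpha\wedge\bar\alpha\wedge\beta\wedge\bar\beta$ piece, dividing by $2$ yields exactly \eqref{eqn:Aeppli_omega_t_11^2}. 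No PDE or deformation-theoretic input is needed here — everything reduces to the explicit structure equations \eqref{eqn:alpha,beta,gamma_d} and the known shape of the Iwasawa manifold's cohomology.
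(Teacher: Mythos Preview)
Your proposal is correct and follows essentially the same approach as the paper: both proofs start from the explicit expression \eqref{eqn:omega_t_11_J0_square} and observe that the single term $i\,\alpha\wedge\bar\alpha\wedge i\,\beta\wedge\bar\beta$ is $\partial$-exact (via $\alpha\wedge\beta = -\partial\gamma$), hence Aeppli-trivial, while the remaining four terms give \eqref{eqn:Aeppli_omega_t_11^2} after absorbing the contribution of $\omega_0^2$ into the coefficients $(1+c_i(t))(1+c_j(t))$. Your additional discussion of the basis claim for $H^{2,\,2}_A(X_0,\,\C)$ via the pairing \eqref{eqn:duality} against the generators of $H^{1,\,1}_{BC}(X_0,\,\C)$ is a reasonable elaboration of the ``Note'' in the statement, which the paper treats elsewhere (Lemma~\ref{lem:pluri-closed}).
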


\begin{proof} Identity (\ref{eqn:Aeppli_omega_t_11^2}) follows from (\ref{eqn:omega_t_11_J0_square}) after noticing that, since $\alpha\wedge\beta = -\partial\gamma$, we have

\vspace{1ex}

\hspace{6ex}  $i\,\alpha\wedge\bar\alpha\wedge i\,\beta\wedge\bar\beta = \partial\gamma\wedge\bar\partial\bar\gamma = \partial(\gamma\wedge\bar\partial\bar\gamma) \in\mbox{Im}\,\partial\subset\mbox{Im}\,\partial + \mbox{Im}\,\bar\partial$,

\vspace{1ex}

\noindent hence $[i\,\alpha\wedge\bar\alpha\wedge i\,\beta\wedge\bar\beta]_A=0$.  \end{proof}

\subsection{Use of the $sGG$ property}

Recall that the Iwasawa manifold $X_0$ and all its small deformations $X_t$ are {\it sGG manifolds} ([PU14]). As such, there are {\it canonical surjections}
\begin{equation}\label{eqn:can-surj_Xt}P_t : H^4_{DR}(X,\,\R)\twoheadrightarrow H^{2,\,2}_A(X_t,\,\R), \hspace{2ex} \{\Omega\}_{DR}\mapsto [\Omega^{2,\,2}_t]_A,\end{equation}
\noindent where $\Omega^{2,\,2}_t$ is the component of $J_t$-bidegree $(2,\,2)$ of $\Omega$, while $X$ is the $C^{\infty}$ manifold underlying the fibres $X_t$.
Moreover, for every fixed Hermitian metric $\omega_t$ on $X_t$, 
there is a lift of $P_t$ naturally associated with $\omega_t$, namely an injection
\begin{equation}\label{eqn:injection_A-DR_X_t}Q_{\omega_t}: H^{2,\,2}_A(X_t,\,\R) \hookrightarrow H^4_{DR}(X,\,\R), \hspace{2ex} [\Omega^{2,\,2}]_A\mapsto\{\Omega\}_{DR},\end{equation}
\noindent such that $P_t\circ Q_{\omega_t}: H^{2,\,2}_A(X_t,\,\R) \longrightarrow H^{2,\,2}_A(X_t,\,\R)$ is the identity map, defined in the following way (cf. [PU14, $\S. 5.1$). For every class $[\Omega^{2,\,2}]_A\in H^{2,\,2}_A(X_t,\,\R)$, let $\Omega^{2,\,2}_A$ be the (unique) Aeppli-harmonic representative of $[\Omega^{2,\,2}]_A$ w.r.t. the Aeppli Laplacian $\Delta_{A,\,\omega_t}$ associated with the metric $\omega_t$.\footnote{See [Sch07] for the definition of the Aeppli Laplacian.} Let $\Omega^{3,\,1}_A$ be the (unique) minimal $L^2_{\omega_t}$-norm solution of the $\bar\partial$-equation
\begin{equation}\label{eqn:equation_3-1}\bar\partial_t\Omega^{3,\,1}_A = -\partial_t\Omega^{2,\,2}_A.\end{equation}
\noindent This equation is solvable thanks to the sGG property of the manifold $X_t$ for all $t\in\Delta$ sufficiently close to $0$. 
Indeed, $n$-dimensional sGG manifolds are characterised by the fact that every $d$-closed $\partial$-exact $(n,\,n-1)$-form is $\bar\partial$-exact ([PU14, Lemma 1.2]). Here $n=3$, so $\partial_t\Omega^{2,\,2}_A$ is $\bar\partial_t$-exact. Thus, $\Omega^{3,\,1}_A$ exists and is given by the Neumann formula $\Omega^{3,\,1}_A = -\Delta_t^{''-1}\bar\partial_t^{\star}(\partial_t\Omega^{2,\,2}_A)$, where the formal adjoint $\bar\partial_t^{\star}$ of $\bar\partial_t$ and the Laplacian $\Delta''_t = \bar\partial_t\bar\partial_t^{\star} + \bar\partial_t^{\star}\bar\partial_t$ 
are computed w.r.t. the $L^2$ inner product induced by $\omega_t$, while $\Delta_t^{''-1}$ is the Green operator of $\Delta''_t$. 
Finally, we put  
\begin{equation*}
\Omega = \Omega_{\omega_t}:= \Omega^{3,\,1}_A + \Omega^{2,\,2}_A + \overline{\Omega^{3,\,1}_A}\end{equation*}
\noindent which is easily seen to be $d$-closed, to complete the definition (\ref{eqn:injection_A-DR_X_t}) of $Q_{\omega_t}$ (cf. [PU14]).

\begin{Conc}\label{Conc:metric_4-space} With every Hermitian metric $\omega_t$ on a small deformation $X_t$ of the Iwasawa manifold $X = X_0$ there is associated a $4$-dimensional real vector subspace of $H^4_{DR}(X,\,\R)$ as follows
\begin{equation}\label{eqn:metric_4-space}\omega_t\mapsto Q_{\omega_t}(H^{2,\,2}_A(X_t,\,\R))\subset H^4_{DR}(X,\,\R).\end{equation}

\end{Conc}

\vspace{3ex}

Besides the metric-induced injections $Q_{\omega_t}$  of (\ref{eqn:injection_A-DR_X_t}), there are canonical injections as follows.

\begin{Lem}\label{Lem:canonical-injections} $(a)$\,Let $X = X_0$ be the Iwasawa manifold. There is a {\bf canonical} linear {\bf injection}
\begin{equation}\label{eqn:can_inj_X_0}I_0: H^{2,\,2}_A(X_0,\,\C) \longrightarrow H^4_{DR}(X,\,\C).\end{equation}

$(b)$\, Let $\omega = \omega_0: =i\alpha\wedge\bar\alpha + i\beta\wedge\bar\beta + i\gamma\wedge\bar\gamma$ be the metric on the Iwasawa manifold $X = X_0$ 
canonically induced by the complex parallelisable structure of $X$ (cf. (\ref{eqn:omega_t_def})).

The injection $Q_{\omega_0}: H^{2,\,2}_A(X_0,\,\R) \hookrightarrow H^4_{DR}(X,\,\R)$ of (\ref{eqn:injection_A-DR_X_t}) induced by $\omega_0$ 
coincides with the canonical injection $I_0: H^{2,\,2}_A(X_0,\,\R) \hookrightarrow H^4_{DR}(X,\,\R)$ of (\ref{eqn:can_inj_X_0}).
\end{Lem}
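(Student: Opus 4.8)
The plan is to prove both statements essentially by unwinding the definitions of $I_0$ and $Q_{\omega_0}$ and exploiting the explicit harmonic theory on the Iwasawa manifold established in earlier sections.

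\textbf{Part $(a)$: construction of the canonical injection $I_0$.} First I would describe the generators of $H^{2,\,2}_A(X_0,\,\C)$ explicitly. By Lemma~\ref{Lem:Aeppli_omega_t_11^2} (and the remark following it), $H^{2,\,2}_A(X_0,\,\C)$ is generated over $\C$ by the Aeppli classes of $i\,\alpha\wedge\bar\alpha\wedge i\,\gamma\wedge\bar\gamma$, $i\,\beta\wedge\bar\beta\wedge i\,\gamma\wedge\bar\gamma$, $i\,\alpha\wedge\bar\beta\wedge i\,\gamma\wedge\bar\gamma$, $i\,\beta\wedge\bar\alpha\wedge i\,\gamma\wedge\bar\gamma$, each of which is a product of $\bar\partial$-closed and $\partial$-closed forms, hence $d$-closed; moreover none of these $4$-forms is $d$-exact, as one checks against the explicit description of $H^4_{DR}(X,\,\C)$ in \eqref{eqn:DeRham}. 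So one sets
$$I_0:\ [\Omega^{2,\,2}]_A\ \longmapsto\ \{\Omega^{2,\,2}\}_{DR}$$
on these generators. Well-definedness requires two things: that a $d$-closed $(2,\,2)$-form representing the zero Aeppli class is $d$-exact (which would kill the map on $\mathrm{Im}\,\partial + \mathrm{Im}\,\bar\partial$), and that the map is injective. The first is not true for arbitrary $d$-closed $(2,\,2)$-forms on a general manifold, so here I would instead work with the canonical harmonic representatives: every Aeppli class has a unique $\Delta_{A}$-harmonic representative, and for the four generators above these harmonic representatives are precisely the displayed forms — this is where the $\Delta''$- and $\Delta$-harmonicity computations of Lemma~\ref{Lem:del-star-closedness} and Lemma~\ref{Lem:star_generators_21_gamma} (applied now in bidegree $(2,\,2)$, or by Hodge-$\star$ duality from bidegree $(1,\,1)$) do the work. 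Injectivity then follows by the dimension count $h^{2,\,2}_A(X_0)=4$ together with the linear independence of $\{\alpha\wedge\bar\alpha\wedge\gamma\wedge\bar\gamma,\ \beta\wedge\bar\beta\wedge\gamma\wedge\bar\gamma,\ \alpha\wedge\bar\beta\wedge\gamma\wedge\bar\gamma,\ \beta\wedge\bar\alpha\wedge\gamma\wedge\bar\gamma\}$ inside $H^4_{DR}(X,\,\C)$, again read off from \eqref{eqn:DeRham}.

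\textbf{Part $(b)$: identification $Q_{\omega_0}=I_0$.} Here I would simply run the recipe defining $Q_{\omega_0}$ on each generator and check it returns the same $d$-closed $4$-form. Fix $[\Omega^{2,\,2}]_A$ one of the four generators above. Its $\Delta_{A,\,\omega_0}$-harmonic representative is the displayed form $\Omega^{2,\,2}_A$ itself (the point just argued in $(a)$). Next one computes $\partial_0\Omega^{2,\,2}_A$: for $\Omega^{2,\,2}_A=i\,\alpha\wedge\bar\alpha\wedge i\,\gamma\wedge\bar\gamma$ one gets $\partial_0\Omega^{2,\,2}_A=-\,i\,\alpha\wedge\bar\alpha\wedge i\,(\partial_0\gamma)\wedge\bar\gamma = i\,\alpha\wedge\bar\alpha\wedge i\,\alpha\wedge\beta\wedge\bar\gamma=0$ because $\alpha\wedge\alpha=0$; the same vanishing holds for the $\beta\wedge\bar\beta$, $\alpha\wedge\bar\beta$, $\beta\wedge\bar\alpha$ generators for the identical reason (each already contains either $\alpha$ twice or $\beta$ twice after inserting $\partial_0\gamma=-\alpha\wedge\beta$). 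Hence the $\bar\partial$-equation \eqref{eqn:equation_3-1} reads $\bar\partial_0\Omega^{3,\,1}_A=0$, whose minimal $L^2_{\omega_0}$-norm solution is $\Omega^{3,\,1}_A=0$. Therefore $\Omega_{\omega_0}=0+\Omega^{2,\,2}_A+0=\Omega^{2,\,2}_A$, so $Q_{\omega_0}([\Omega^{2,\,2}]_A)=\{\Omega^{2,\,2}_A\}_{DR}=I_0([\Omega^{2,\,2}]_A)$. Since the two linear maps agree on a basis, they are equal; the real statement follows by restriction.

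\textbf{Main obstacle.} The only genuinely non-formal point is the claim that the four displayed $(2,\,2)$-forms are the Aeppli-harmonic representatives of their classes with respect to $\omega_0$ — i.e. that they lie in $\ker\Delta_{A,\,\omega_0}$. This is what makes the recipe collapse so cleanly (harmonic representative $=$ the obvious form, and $\partial_0$ of it vanishes). I would establish it by the same orthogonality bookkeeping used in Lemma~\ref{Lem:del-star-closedness}: against the $\omega_0$-orthonormal coframe $\alpha,\beta,\gamma$, one checks $\partial_0$-, $\bar\partial_0$-, $\partial_0^\star$- and $\bar\partial_0^\star$-closedness of each generator (equivalently, invoke the Hodge-$\star$ images of the bidegree-$(1,\,1)$ computations already in hand), which forces membership in the kernel of the Aeppli Laplacian. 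Everything else is dimension-counting against \eqref{eqn:DeRham} and is routine.
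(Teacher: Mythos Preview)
Your proposal is correct and follows essentially the same route as the paper: identify the four explicit $(2,2)$-generators, verify they are Aeppli-harmonic for $\omega_0$ via $\partial^\star$- and $\bar\partial^\star$-closedness against the orthonormal coframe, observe they are $d$-closed so that $\Omega^{3,1}_A=0$ in the $Q_{\omega_0}$-recipe, and conclude. One small correction: the phrase ``a product of $\bar\partial$-closed and $\partial$-closed forms, hence $d$-closed'' is not a valid justification (e.g.\ $\gamma\wedge\bar\gamma$ itself is not $d$-closed); the $d$-closedness of each generator really requires the direct computation you carry out in part~$(b)$, using $\partial\gamma=-\alpha\wedge\beta$ together with $\alpha\wedge\alpha=0$ or $\beta\wedge\beta=0$ (and the conjugate computation for $\bar\partial$).
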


\begin{proof}
$(a)$\, The contention follows from the explicit descriptions\eqref{eqn:DeRham} and \eqref{eqn:H22_A_t} of the cohomology groups involved. Specifically, $I_0$ is defined by letting
\begin{equation}\label{eqn:I_0_def}H^{2,\,2}_A(X_0,\,\C)\ni [\Omega^{2,\,2}]_A\mapsto\{\Omega^{2,\,2}\}_{DR} := I_0([\Omega^{2,\,2}]_A)\in H^4_{DR}(X,\,\C)  \end{equation}
\noindent for every $\Omega^{2,\,2}\in\{\alpha\wedge\gamma\wedge\bar\alpha\wedge\bar\gamma,\,\alpha\wedge\gamma\wedge\bar\beta\wedge\bar\gamma,
\,\beta\wedge\gamma\wedge\bar\alpha\wedge\bar\gamma,\,\beta\wedge\gamma\wedge\bar\beta\wedge\bar\gamma\} $ and extending by linearity. 
It is implicit that the forms $\alpha\wedge\gamma\wedge\bar\alpha\wedge\bar\gamma,\,\alpha\wedge\gamma\wedge\bar\beta\wedge\bar\gamma,
\,\beta\wedge\gamma\wedge\bar\alpha\wedge\bar\gamma,\,\beta\wedge\gamma\wedge\bar\beta\wedge\bar\gamma$ are all $d$-closed, as can be readily checked.

\vspace{1ex}

$(b)$\, The representatives $\alpha\wedge\gamma\wedge\bar\alpha\wedge\bar\gamma,\, \alpha\wedge\gamma\wedge\bar\beta\wedge\bar\gamma,
\, \beta\wedge\gamma\wedge\bar\alpha\wedge\bar\gamma,\, \beta\wedge\gamma\wedge\bar\beta\wedge\bar\gamma$ of the four Aeppli classes generating 
$H^{2,\,2}_A(X_0,\,\C)$ are all in $\ker\partial^{\star}\cap\ker\bar\partial^{\star}$ when the adjoints $\partial^{\star}$ 
and $\bar\partial^{\star}$ are computed w.r.t. $\omega_0$. Indeed,

\vspace{1ex}

$(1)$\, the identity $\partial^{\star}(\alpha\wedge\gamma\wedge\bar\alpha\wedge\bar\gamma) = 0$ is equivalent 
to $\langle\langle\alpha\wedge\gamma\wedge\bar\alpha\wedge\bar\gamma,\, \partial u\rangle\rangle = 0$ for all forms $u\in C^{\infty}_{1,\,2}(X,\,\C)$. 
Now, the only generators of $C^{\infty}_{1,\,2}(X,\,\C)$ that are not $\partial$-closed are $\gamma\wedge\bar\alpha\wedge\bar\beta$, $\gamma\wedge\bar\alpha\wedge\bar\gamma$ 
and $\gamma\wedge\bar\beta\wedge\bar\gamma$. When $u$ is one of these forms, we have $\partial u = -\alpha\wedge\beta\wedge\bar\alpha\wedge\bar\beta$, 
or $\partial u = -\alpha\wedge\beta\wedge\bar\alpha\wedge\bar\gamma$, or $\partial u = -\alpha\wedge\beta\wedge\bar\beta\wedge\bar\gamma$ 
and the inner product of any of these forms against $\alpha\wedge\gamma\wedge\bar\alpha\wedge\bar\gamma$ vanishes because they are all part of an $\omega_0$-orthonormal basis 
and the ones do not contain $\gamma$ while the other does. The same argument proves the $\partial^{\star}$-closedness of the 
remaining forms $\alpha\wedge\gamma\wedge\bar\beta\wedge\bar\gamma,\, \beta\wedge\gamma\wedge\bar\alpha\wedge\bar\gamma,\, \beta\wedge\gamma\wedge\bar\beta\wedge\bar\gamma$ 
since they all contain $\gamma$.

\vspace{1ex}

$(2)$\, the identity $\bar\partial^{\star}(\alpha\wedge\gamma\wedge\bar\alpha\wedge\bar\gamma) = 0$ 
is equivalent to $\langle\langle\alpha\wedge\gamma\wedge\bar\alpha\wedge\bar\gamma,\, \bar\partial v\rangle\rangle = 0$ 
for all forms $v\in C^{\infty}_{2,\,1}(X,\,\C)$. The only generators of $C^{\infty}_{2,\,1}(X,\,\C)$ 
that are not $\bar\partial$-closed are $\alpha\wedge\beta\wedge\bar\gamma$, $\alpha\wedge\gamma\wedge\bar\gamma$ 
and $\beta\wedge\gamma\wedge\bar\gamma$. When $v$ is one of these forms, we have $\bar\partial v = -\alpha\wedge\beta\wedge\bar\alpha\wedge\bar\beta$, 
or $\bar\partial v = -\alpha\wedge\gamma\wedge\bar\alpha\wedge\bar\beta$, or $\bar\partial v = -\beta\wedge\gamma\wedge\bar\alpha\wedge\bar\beta$ 
and the inner product of any of these forms against $\alpha\wedge\gamma\wedge\bar\alpha\wedge\bar\gamma$ vanishes 
because they are all part of an $\omega_0$-orthonormal basis and the ones do not contain $\bar\gamma$ while the other does. 
The same argument proves the $\bar\partial^{\star}$-
closedness of the remaining forms $\alpha\wedge\gamma\wedge\bar\beta\wedge\bar\gamma,\, \beta\wedge\gamma\wedge\bar\alpha\wedge\bar\gamma,
\, \beta\wedge\gamma\wedge\bar\beta\wedge\bar\gamma$ since they all contain $\bar\gamma$.

\vspace{1ex}

Now, the forms $\alpha\wedge\gamma\wedge\bar\alpha\wedge\bar\gamma,\, \alpha\wedge\gamma\wedge\bar\beta\wedge\bar\gamma,
\, \beta\wedge\gamma\wedge\bar\alpha\wedge\bar\gamma,\, \beta\wedge\gamma\wedge\bar\beta\wedge\bar\gamma$ are also $\partial\bar\partial$-closed (see lemma~\ref{lem:pluri-closed}), so they must be {\bf Aeppli-harmonic}
\footnote{For the definition of the Aeppli Laplacian $\Delta_A$ (an elliptic operator of order $4$ whose kernel is isomorphic to the corresponding Aeppli cohomology group) and the description of its kernel used here, see [Sch07].} w.r.t. $\omega_0$, i.e.
$$\alpha\wedge\gamma\wedge\bar\alpha\wedge\bar\gamma,\, \alpha\wedge\gamma\wedge\bar\beta\wedge\bar\gamma,\, \beta\wedge\gamma\wedge\bar\alpha\wedge\bar\gamma,
\, \beta\wedge\gamma\wedge\bar\beta\wedge\bar\gamma \in\ker\Delta_{A,\,\omega_0} = \ker(\partial\bar\partial)\cap\ker\partial^{\star}_{\omega_0}\cap\ker\bar\partial^{\star}_{\omega_0}.$$
\noindent Thus, for any class $[\Omega^{2,\,2}]_A = c_1\,[\alpha\wedge\gamma\wedge\bar\alpha\wedge\bar\gamma]_A + c_2\,[\alpha\wedge\gamma\wedge\bar\beta\wedge\bar\gamma]_A
+ c_3\,[\beta\wedge\gamma\wedge\bar\alpha\wedge\bar\gamma]_A + c_4\,[\beta\wedge\gamma\wedge\bar\beta\wedge\bar\gamma]_A \in H^{2,\,2}_A(X_0,\,\R)$ 
with coefficients $c_1,\dots , c_4\in\R$, the Aeppli-harmonic representative w.r.t. $\omega_0$ is
$$\Omega^{2,\,2}_A = c_1\,\alpha\wedge\gamma\wedge\bar\alpha\wedge\bar\gamma + c_2\,\alpha\wedge\gamma\wedge\bar\beta\wedge\bar\gamma
+ c_3\,\beta\wedge\gamma\wedge\bar\alpha\wedge\bar\gamma + c_4\,\beta\wedge\gamma\wedge\bar\beta\wedge\bar\gamma.$$

\noindent Meanwhile, the forms $\alpha\wedge\gamma\wedge\bar\alpha\wedge\bar\gamma,\, \alpha\wedge\gamma\wedge\bar\beta\wedge\bar\gamma,
\, \beta\wedge\gamma\wedge\bar\alpha\wedge\bar\gamma,\, \beta\wedge\gamma\wedge\bar\beta\wedge\bar\gamma$ are all $d$-closed, 
hence $d\Omega^{2,\,2}_A = 0$. Since $\Omega^{2,\,2}_A$ is of pure type, this implies that $\partial_0\Omega^{2,\,2}_A = 0$. 
Consequently, the minimal $L^2$-norm solution $\Omega^{3,\,1}_A$ of equation $\bar\partial_0\Omega^{3,\,1}_A = -\partial_0\Omega^{2,\,2}_A$ (cf. (\ref{eqn:equation_3-1}))
is the zero form. From (\ref{eqn:injection_A-DR_X_t}) and (\ref{eqn:Omega_def}) we get
$$Q_{\omega_0}([\Omega^{2,\,2}]_A) = Q_{\omega_0}([\Omega^{2,\,2}_A]_A) = \{\Omega^{2,\,2}_A\}_{DR}.$$

\noindent Comparing with (\ref{eqn:I_0_def}), we see that $Q_{\omega_0}([\Omega^{2,\,2}]_A) = I_0([\Omega^{2,\,2}]_A)$. \end{proof}

\begin{Cor}\label{Cor:subbundle_H22_H4} Let $(X_t)_{t\in\Delta}$ be the Kuranishi family of the Iwasawa manifold $X=X_0$. Then
$$\Delta\ni t\mapsto H^{2,\,2}_A(X_t,\,\C)$$
\noindent is a $C^{\infty}$ vector bundle of rank $4$ that we shall denote by ${\cal H}^{2,\,2}_A$. 
Moreover, ${\cal H}^{2,\,2}_A$ injects {\bf canonically} as a $C^{\infty}$ vector subbundle of the constant bundle ${\cal H}^4\rightarrow\Delta$ of fibre $H^4_{DR}(X,\,\C)$
in the following way: for every $t\in\Delta$ sufficiently close to $0$, we define the {\bf canonical} linear {\bf injection}
\begin{equation}\label{eqn:can_inj_X_t}I_t: H^{2,\,2}_A(X_t,\,\C) \longrightarrow H^4_{DR}(X,\,\C)   \hspace{2ex} \mbox{by}\hspace{2ex} I_t=Q_{\omega_t},\end{equation}
\noindent the injection (\ref{eqn:injection_A-DR_X_t}) induced by the canonical metric $\omega_t= i\alpha_t\wedge\bar\alpha_t + i\beta_t\wedge\bar\beta_t 
+ i\gamma_t\wedge\bar\gamma_t$ of (\ref{eqn:omega_t_def}) on $X_t$. 
\end{Cor}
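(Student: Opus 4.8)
The plan is to establish Corollary \ref{Cor:subbundle_H22_H4} in two stages: first the statement that $\Delta\ni t\mapsto H^{2,\,2}_A(X_t,\,\C)$ is a $C^{\infty}$ vector bundle of rank $4$, then the fact that the family of injections $I_t = Q_{\omega_t}$ glues into a $C^{\infty}$ vector-subbundle embedding into the constant bundle $\mathcal{H}^4$. For the rank statement, I would first record the explicit description of $H^{2,\,2}_A(X_t,\,\C)$ (referenced in the proof as \eqref{eqn:H22_A_t}): for $t$ in a neighbourhood of $0$, the four Aeppli classes represented by $\alpha_t\wedge\gamma_t\wedge\bar\alpha_t\wedge\bar\gamma_t$, $\alpha_t\wedge\gamma_t\wedge\bar\beta_t\wedge\bar\gamma_t$, $\beta_t\wedge\gamma_t\wedge\bar\alpha_t\wedge\bar\gamma_t$, $\beta_t\wedge\gamma_t\wedge\bar\beta_t\wedge\bar\gamma_t$ are a basis, so $\dim_{\C} H^{2,\,2}_A(X_t,\,\C)=4$ is constant in $t$. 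Since the Aeppli Laplacian $\Delta_{A,\,\omega_t}$ is an elliptic operator of order $4$ depending smoothly on $t$ (both the complex structure $J_t$ and the metric $\omega_t$ vary in a $C^{\infty}$ manner, by the explicit formulae \eqref{eqn:alpha-beta-gamma_t_def} and \eqref{eqn:omega_t_def}) and its kernel has locally constant dimension, the standard Kodaira–Spencer argument (as in [KS60], exactly as applied elsewhere in this excerpt) shows that $t\mapsto \ker\Delta_{A,\,\omega_t}\cong H^{2,\,2}_A(X_t,\,\C)$ is a $C^{\infty}$ vector bundle $\mathcal{H}^{2,\,2}_A$ of rank $4$ over $\Delta$ (after possibly shrinking $\Delta$).

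For the injection into $\mathcal{H}^4$, I would proceed pointwise and then check smoothness. For each fixed $t$, the map $I_t := Q_{\omega_t}$ of \eqref{eqn:injection_A-DR_X_t} is injective because $P_t\circ Q_{\omega_t} = \mathrm{id}$ on $H^{2,\,2}_A(X_t,\,\C)$, where $P_t$ is the canonical surjection \eqref{eqn:can-surj_Xt} guaranteed by the sGG property of $X_t$ — and all fibres $X_t$ of the Kuranishi family of the Iwasawa manifold are sGG by [PU14], so this applies for all $t$ sufficiently close to $0$. That $I_t$ is \emph{canonical}, i.e. independent of the metric chosen, at least at $t=0$, is exactly Lemma \ref{Lem:canonical-injections}(b), which identifies $Q_{\omega_0}$ with the intrinsic injection $I_0$ of \eqref{eqn:can_inj_X_0}; I would remark that the analogous identification holds at each $t\in\Delta_{[\gamma]}$ because the forms $\alpha_t\wedge\gamma_t\wedge\bar\alpha_t\wedge\bar\gamma_t$, etc., are simultaneously $d$-closed (hence $\partial_t$- and $\bar\partial_t$-closed, so $\Omega^{3,\,1}_A = 0$ in the construction of $Q_{\omega_t}$) and $\partial_t\bar\partial_t$-closed (hence $\Delta_{A,\,\omega_t}$-harmonic, using the pluri-closedness of such products, i.e. Lemma \ref{lem:pluri-closed}). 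Thus $Q_{\omega_t}$ sends the harmonic basis of $H^{2,\,2}_A(X_t,\,\C)$ to the De Rham classes of the same forms, which identifies $I_t$ with the obvious inclusion on representatives.

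To see that the $I_t$ assemble into a $C^{\infty}$ subbundle embedding $\mathcal{H}^{2,\,2}_A\hookrightarrow\mathcal{H}^4$, I would argue that, in the local frame $\{[\alpha_t\wedge\gamma_t\wedge\bar\alpha_t\wedge\bar\gamma_t]_A,\dots\}$ of $\mathcal{H}^{2,\,2}_A$, the image under $I_t$ is the De Rham class of the corresponding $d$-closed $4$-form, which by \eqref{eqn:forms_t-forms_0} is a polynomial (hence $C^{\infty}$) expression in $t,\bar t$ with values in the fixed finite-dimensional space of invariant $4$-forms on the underlying $C^{\infty}$ manifold $X$; passing to De Rham classes in the constant bundle $\mathcal{H}^4$ preserves smoothness. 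Injectivity of $I_t$ is fibrewise, so the image is a rank-$4$ $C^{\infty}$ subbundle. The main obstacle, and the point deserving the most care, is the verification that the $\Delta_{A,\,\omega_t}$-harmonic representatives are precisely these invariant forms for \emph{all} $t$ near $0$ (not merely $t=0$ or $t\in\Delta_{[\gamma]}$) — this rests on the twin facts that such products are $\partial_t\bar\partial_t$-closed (Lemma \ref{lem:pluri-closed}) and $\partial_t^\star,\bar\partial_t^\star$-closed w.r.t. $\omega_t$ (the orthonormality argument of Lemma \ref{Lem:canonical-injections}, which transplants verbatim to $t\neq 0$ since $\alpha_t,\beta_t,\gamma_t$ form an $\omega_t$-orthonormal coframe), together with the dimension count ensuring these four are all of $\ker\Delta_{A,\,\omega_t}$ in bidegree $(2,\,2)$.
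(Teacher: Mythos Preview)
Your argument for the rank-$4$ $C^\infty$ bundle structure is correct and matches the paper's proof exactly: constant Aeppli dimension (here via \eqref{eqn:H22_A_t}, in the paper via [Ang11]) combined with the Kodaira--Spencer ellipticity argument for the smooth family of Aeppli Laplacians.

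The second half, however, contains a genuine error. You assert that for $t\in\Delta_{[\gamma]}$ the forms $\alpha_t\wedge\gamma_t\wedge\bar\alpha_t\wedge\bar\gamma_t$, etc., are $d$-closed, so that $\Omega^{3,1}_A=0$ in the construction of $Q_{\omega_t}$ and hence $I_t$ sends each Aeppli class to the De Rham class of the \emph{same} form. This is false for $t\neq 0$: the computation in the proof of Lemma~\ref{lem:pluri-closed} gives
\[
\partial_t(\alpha_t\wedge\gamma_t\wedge\bar\alpha_t\wedge\bar\gamma_t) = -\,\overline{\sigma_{2\bar 2}(t)}\,\alpha_t\wedge\bar\alpha_t\wedge\beta_t\wedge\bar\beta_t\wedge\gamma_t,
\]
which is nonzero when $\sigma_{2\bar 2}(t)\neq 0$ (i.e.\ generically on $\Delta_{[\gamma]}\setminus\{0\}$). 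The paper makes this point explicitly in Remark~\ref{Rem:canonical_injection_metric_t}: the naive map \eqref{eqn:I_0_def} does \emph{not} extend to $t\neq 0$, because those representatives are not $d$-closed. Consequently your description of the image of $I_t$ as ``the De Rham class of the corresponding $d$-closed $4$-form'' breaks down, and so does your smoothness argument based on it.

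The paper's route is different and avoids any explicit identification of $I_t$ on representatives for $t\neq 0$. One argues that $Q_{\omega_t}$, built from the $\omega_t$-Aeppli-harmonic projection followed by the correction $\Omega^{3,1}_A = -\Delta_t''^{-1}\bar\partial_t^\star(\partial_t\Omega^{2,2}_A)$, depends $C^\infty$ on $t$ because all its ingredients (the metric $\omega_t$, hence $\partial_t,\bar\partial_t^\star$, the Aeppli and $\bar\partial$-Laplacians and their Green operators) do, ultimately because $\alpha_t,\beta_t,\gamma_t$ vary smoothly. The word ``canonical'' refers to the fact that $\omega_t$ itself is canonically attached to the complex structure $J_t$ via the canonical coframe $\alpha_t,\beta_t,\gamma_t$; it is only at $t=0$ that one has the stronger statement $Q_{\omega_0}=I_0$ of Lemma~\ref{Lem:canonical-injections}(b). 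So the fix is to drop the (incorrect) $d$-closedness claim and instead invoke smooth dependence of the Green-operator construction on the parameter.
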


\begin{proof}
Let $(\gamma_t)_{t\in\Delta}$ be any $C^{\infty}$ family of Hermitian metrics on the fibres $(X_t)_{t\in\Delta}$ and let $(\Delta_{A,\, t})_{t\in\Delta}$ 
be the associated $C^{\infty}$ family of elliptic Aeppli Laplacians inducing Hodge isomorphisms $\ker\Delta_{A,\, t}\simeq H^{2,\,2}_A(X_t,\,\C)$ for $t\in\Delta$ ([Sch07]). 
Meanwhile, $\mbox{dim}_{\C}H^{2,\,2}_A(X_t,\,\C) = 4$ for all $t\in\Delta$ ([Ang11, $\S.4.3$]). Since the dimension of the kernel of $\Delta_{A,\, t}$ is independent of $t\in\Delta$,
we infer by ellipticity from [KS60] that $\Delta\ni t\mapsto\ker\Delta_{A,\, t}\simeq H^{2,\,2}_A(X_t,\,\C)$ is a $C^{\infty}$ vector bundle of rank $4$.

The last statement follows from Lemma \ref{Lem:canonical-injections} and from the $C^{\infty}$ dependence on $t$ of the injections $I_t$ 
(itself a consequence of the $C^{\infty}$ dependence on $t$ of each of the forms $\alpha_t, \beta_t, \gamma_t$).   \end{proof}

\begin{Rem}\label{Rem:canonical_injection_metric_t} 
Note that for $t\in\Delta_{[\gamma]}\setminus\{0\}$, $I_t$ cannot be defined by analogy with definition (\ref{eqn:I_0_def}) of $I_0$ 
since the representatives $\alpha_t\wedge\gamma_t\wedge\bar\alpha_t\wedge\bar\gamma_t,\, \alpha_t\wedge\gamma_t\wedge\bar\beta_t\wedge\bar\gamma_t,
\, \beta_t\wedge\gamma_t\wedge\bar\alpha_t\wedge\bar\gamma_t,\, \beta_t\wedge\gamma_t\wedge\bar\beta_t\wedge\bar\gamma_t$ of the classes generating $H^{2,\,2}_A(X_t,\,\C)$
(cf. \eqref{eqn:H22_A_t} ) are not $d$-closed.
\end{Rem}

For future reference, we notice the following trivialisation of the vector bundle $\Delta\ni t\mapsto H^{2,\,2}_A(X_t,\,\C)$. The following definition is meaningful thanks to Lemma \ref{lem:pluri-closed} of the following subsection.

\begin{Def}\label{Def:H22_A_t-H22_A_0} For every $t\in\Delta$, we consider the isomorphism of complex vector spaces
\begin{equation}\label{eqn:H22_A_t-H22_A_0}B_t: H^{2,\,2}_A(X_t,\,\C) \longrightarrow H^{2,\,2}_A(X_0,\,\C)\end{equation}
\noindent defined by $[\alpha_t\wedge\gamma_t\wedge\bar\alpha_t\wedge\bar\gamma_t]_A\mapsto [\alpha\wedge\gamma\wedge\bar\alpha\wedge\bar\gamma]_A$, 
$[\alpha_t\wedge\gamma_t\wedge\bar\beta_t\wedge\bar\gamma_t]_A\mapsto [\alpha\wedge\gamma\wedge\bar\beta\wedge\bar\gamma]_A$,
$[\beta_t\wedge\gamma_t\wedge\bar\alpha_t\wedge\bar\gamma_t]_A\mapsto [\beta\wedge\gamma\wedge\bar\alpha\wedge\bar\gamma]_A$,
$[\beta_t\wedge\gamma_t\wedge\bar\beta_t\wedge\bar\gamma_t]_A\mapsto [\beta\wedge\gamma\wedge\bar\beta\wedge\bar\gamma]_A$.
\end{Def} 

\begin{Cor}\label{Cor:metric_4-space_t} With every Aeppli-Gauduchon class of the shape $[(\omega_t^{1,\,1})^2]_A\in{\cal G}_{X_0}$ 
(for $t\in\Delta$) on the Iwasawa manifold $X = X_0$ there is associated a $4$-dimensional real vector subspace of $H^4_{DR}(X,\,\R)$ as follows
\begin{equation}\label{eqn:metric_4-space_t}{\cal G}_{X_0}\ni[(\omega_t^{1,\,1})^2]_A\mapsto \widetilde{H^{2,\,2}_t}
:=Q_{\omega_t^{1,\,1}}\bigg(H^{2,\,2}_A(X_0,\,\R)\bigg)\subset H^4_{DR}(X,\,\R),\end{equation}
\noindent where $Q_{\omega_t^{1,\,1}}: H^{2,\,2}_A(X_0,\,\R)\hookrightarrow H^4_{DR}(X,\,\R)$ is the injective linear map of (\ref{eqn:injection_A-DR_X_t}) 
defined by the metric $\omega_t^{1,\,1}$ on $X_0$. 

\end{Cor}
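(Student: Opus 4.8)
The corollary makes two assertions: that each $\widetilde{H^{2,\,2}_t}$ is genuinely a $4$-dimensional \emph{real} subspace of $H^4_{DR}(X,\,\R)$, and that the resulting assignment on $\mathcal G_{X_0}$ is \emph{well defined}, i.e. that $\widetilde{H^{2,\,2}_t}$ depends on $t$ only through the Aeppli-Gauduchon class $[(\omega_t^{1,\,1})^2]_A$. The first assertion is immediate: by Proposition \ref{Prop:omega_t_11} each $\omega_t^{1,\,1}$ is a Hermitian (in fact Gauduchon) metric on $X_0$, and since $X_0$ is an sGG manifold the map $Q_{\omega_t^{1,\,1}}$ of (\ref{eqn:injection_A-DR_X_t}) is a right inverse of the canonical surjection $P_0$ of (\ref{eqn:can-surj_Xt}), hence injective; since $\dim_\R H^{2,\,2}_A(X_0,\,\R)=4$, $\widetilde{H^{2,\,2}_t}=Q_{\omega_t^{1,\,1}}\!\big(H^{2,\,2}_A(X_0,\,\R)\big)$ is a $4$-dimensional real subspace of $H^4_{DR}(X,\,\R)$ for all $t$ close to $0$.

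For well-definedness, the plan is to prove the stronger statement that $\widetilde{H^{2,\,2}_t}$ is \emph{independent of} $t$, being equal for all small $t$ to the fixed $4$-plane $I_0\!\big(H^{2,\,2}_A(X_0,\,\R)\big)$ singled out in Lemma \ref{Lem:canonical-injections}; the assignment is then constant on the relevant subset of $\mathcal G_{X_0}$, hence trivially well defined. The key observation is that, by formula (\ref{eqn:omega_t_11_J0}), $\omega_t^{1,\,1}$ is an invariant metric whose associated Hermitian matrix in the coframe $(\alpha,\,\beta,\,\gamma)$ is \emph{block diagonal}: the line $\langle\gamma\rangle$ is $\omega_t^{1,\,1}$-orthogonal to the plane $\langle\alpha,\,\beta\rangle$. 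Consequently there is an $\omega_t^{1,\,1}$-orthonormal coframe $(\widetilde\alpha_t,\,\widetilde\beta_t,\,\widetilde\gamma_t)$ with $\widetilde\gamma_t=\sqrt{1+c_3(t)}\;\gamma$ (and $1+c_3(t)>0$ for $t$ near $0$ since $\omega_t^{1,\,1}>0$) and $\widetilde\alpha_t,\,\widetilde\beta_t\in\langle\alpha,\,\beta\rangle$, still satisfying $d\widetilde\alpha_t=d\widetilde\beta_t=0$, $\bar\partial_0\widetilde\gamma_t=0$ and $\partial_0\widetilde\gamma_t\in\C\,\widetilde\alpha_t\wedge\widetilde\beta_t$ — the same structure equations, up to nonzero scalars, as $(\alpha,\,\beta,\,\gamma)$. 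I would then run the argument of the proof of Lemma \ref{Lem:canonical-injections}$(b)$ \emph{mutatis mutandis}, with $(\widetilde\alpha_t,\,\widetilde\beta_t,\,\widetilde\gamma_t)$ in place of $(\alpha,\,\beta,\,\gamma)$: the four generators $\alpha\wedge\gamma\wedge\bar\alpha\wedge\bar\gamma,\dots,\beta\wedge\gamma\wedge\bar\beta\wedge\bar\gamma$ of $H^{2,\,2}_A(X_0,\,\C)$ span exactly the space of $(2,\,2)$-forms divisible by $\widetilde\gamma_t\wedge\bar{\widetilde\gamma}_t$, and the orthogonality computations there — which use only the orthonormality of the coframe and the structure equations — show these forms are $\partial^\star_{\omega_t^{1,\,1}}$-closed and $\bar\partial^\star_{\omega_t^{1,\,1}}$-closed; being also $d$-closed, hence $\partial\bar\partial$-closed, they lie in $\ker\Delta_{A,\,\omega_t^{1,\,1}}$ and, forming a basis of $H^{2,\,2}_A(X_0,\,\C)$, they are precisely the $\omega_t^{1,\,1}$-Aeppli-harmonic representatives.

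It then follows, exactly as in Lemma \ref{Lem:canonical-injections}$(b)$, that for any $[\Omega^{2,\,2}]_A\in H^{2,\,2}_A(X_0,\,\R)$ the $\omega_t^{1,\,1}$-Aeppli-harmonic representative $\Omega^{2,\,2}_A$ is a real linear combination of these $d$-closed forms, so $\partial_0\Omega^{2,\,2}_A=0$; the minimal $L^2_{\omega_t^{1,\,1}}$-norm solution of $\bar\partial_0\Omega^{3,\,1}_A=-\partial_0\Omega^{2,\,2}_A=0$ is then $\Omega^{3,\,1}_A=0$, whence $Q_{\omega_t^{1,\,1}}([\Omega^{2,\,2}]_A)=\{\Omega^{2,\,2}_A\}_{DR}=I_0([\Omega^{2,\,2}]_A)$ and therefore $\widetilde{H^{2,\,2}_t}=I_0\!\big(H^{2,\,2}_A(X_0,\,\R)\big)$ for every small $t$. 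The one step needing genuine care — and the only real obstacle — is the passage to the orthonormal coframe: one must check that $\langle\alpha,\,\beta\rangle=\langle\widetilde\alpha_t,\,\widetilde\beta_t\rangle$ as spaces of $(1,\,0)$-forms (so that the span of the four basis $(2,\,2)$-forms, and the list of $\partial$-closed and $\bar\partial$-closed generators used in Lemma \ref{Lem:canonical-injections}$(b)$, are unaffected) and that $\partial_0\widetilde\gamma_t$ really is a multiple of $\widetilde\alpha_t\wedge\widetilde\beta_t$ (which holds because $\alpha\wedge\beta$ spans $\Lambda^2\langle\alpha,\,\beta\rangle=\Lambda^2\langle\widetilde\alpha_t,\,\widetilde\beta_t\rangle$). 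Once this bookkeeping is in place, no new estimates beyond those already carried out for $\omega_0$ in Lemma \ref{Lem:canonical-injections} are required.
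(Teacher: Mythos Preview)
Your argument is correct, and it actually establishes more than what the paper does. In the paper, this corollary is stated without proof: it is treated as an immediate specialisation of Conclusion~\ref{Conc:metric_4-space} to the fixed manifold $X_0$ equipped with the Gauduchon metrics $\omega_t^{1,\,1}$ of Proposition~\ref{Prop:omega_t_11}. The paper does not explicitly address whether the assignment $[(\omega_t^{1,\,1})^2]_A\mapsto\widetilde{H^{2,\,2}_t}$ is well defined on Aeppli classes; the display~(\ref{eqn:metric_4-space_t}) is really shorthand for a map parametrised by $t\in\Delta$.

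Your approach is genuinely different and stronger. By exploiting the block-diagonal shape of $\omega_t^{1,\,1}$ in~(\ref{eqn:omega_t_11_J0}) --- the $\gamma$-direction being $\omega_t^{1,\,1}$-orthogonal to the $\langle\alpha,\beta\rangle$-plane --- you pass to an $\omega_t^{1,\,1}$-orthonormal coframe preserving both the span $\langle\alpha,\beta\rangle$ and the structure equations up to scalars, and rerun Lemma~\ref{Lem:canonical-injections}$(b)$ verbatim. This shows that the four $d$-closed generators of $H^{2,\,2}_A(X_0,\,\C)$ remain Aeppli-harmonic for \emph{every} $\omega_t^{1,\,1}$, hence $Q_{\omega_t^{1,\,1}}=I_0$ and $\widetilde{H^{2,\,2}_t}=I_0\big(H^{2,\,2}_A(X_0,\,\R)\big)$ is constant in $t$. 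What you gain is an explicit identification of $\widetilde{H^{2,\,2}_t}$ with the canonical $4$-plane, making well-definedness automatic; the paper's route simply invokes the general sGG machinery and leaves this identification implicit. Your bookkeeping step (checking that $\Lambda^2\langle\alpha,\beta\rangle$ is one-dimensional, so $\partial_0\widetilde\gamma_t$ remains a multiple of $\widetilde\alpha_t\wedge\widetilde\beta_t$) is exactly the point that needs to be made, and you have made it.
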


\subsection{The Hodge bundles ${\cal H}^{2,\,1}_{[\gamma]}\simeq{\cal H}^{2,\,2}_A$ and ${\cal H}^4$ over $\Delta_{[\gamma]}$}\label{subsection:Hodge-bundles}

The following description of the Aeppli cohomology groups of bidegree $(2,\,2)$ of the small deformations $X_t$ with $t\in\Delta$ of the Iwasawa manifold $X=X_0$ will be used several times in this section. 

\begin{Lem}\label{lem:pluri-closed}

 For every $t\in\Delta$, the forms $\alpha_t\wedge\bar\alpha_t\wedge\gamma_t\wedge\bar\gamma_t,\beta_t\wedge\bar\beta_t\wedge\gamma_t\wedge\bar\gamma_t, \alpha_t\wedge\bar\beta_t\wedge\gamma_t\wedge\bar\gamma_t$ and $\beta_t\wedge\bar\alpha_t\wedge\gamma_t\wedge\bar\gamma_t$ are $\partial_t\bar\partial_t$-closed and
\begin{equation}\label{eqn:H22_A_t}H^{2,\,2}_A(X_t,\,\C) = \bigg\langle[\alpha_t\wedge\gamma_t\wedge\bar\alpha_t\wedge\bar\gamma_t]_A,
\, [\alpha_t\wedge\gamma_t\wedge\bar\beta_t\wedge\bar\gamma_t]_A,\, [\beta_t\wedge\gamma_t\wedge\bar\alpha_t\wedge\bar\gamma_t]_A,
\, [\beta_t\wedge\gamma_t\wedge\bar\beta_t\wedge\bar\gamma_t]_A\bigg\rangle.\end{equation}

\end{Lem}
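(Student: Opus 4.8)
The plan is to verify directly that the four $(2,2)$-forms in question are $\partial_t\bar\partial_t$-closed and then to identify the Aeppli cohomology group of bidegree $(2,2)$ by a dimension count together with the known structure of $H^{2,2}_A(X_t,\C)$. First I would compute $\partial_t$ and $\bar\partial_t$ of each of the four forms using only the structure equations \eqref{eq:structure}: we have $d\alpha_t=d\beta_t=0$, while $\bar\partial_t\gamma_t$ is a $\pi_t$-pullback of a $(1,1)$-form on $B_t$ (a combination of $\alpha_t\wedge\bar\alpha_t,\alpha_t\wedge\bar\beta_t,\beta_t\wedge\bar\alpha_t,\beta_t\wedge\bar\beta_t$) and $\partial_t\gamma_t=\sigma_{12}(t)\,\alpha_t\wedge\beta_t$. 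Conjugating the first equation gives $\partial_t\bar\gamma_t$ as a combination of the same four $(1,1)$-forms with conjugated coefficients, and $\bar\partial_t\bar\gamma_t=\overline{\sigma_{12}(t)}\,\bar\alpha_t\wedge\bar\beta_t$. Plugging these in, for a form such as $\alpha_t\wedge\bar\alpha_t\wedge\gamma_t\wedge\bar\gamma_t$ one finds $\bar\partial_t(\alpha_t\wedge\bar\alpha_t\wedge\gamma_t\wedge\bar\gamma_t)=\alpha_t\wedge\bar\alpha_t\wedge\gamma_t\wedge(\partial_t\bar\gamma_t)$ up to sign (the $\bar\partial_t\gamma_t$ term is killed since each of its summands contains either $\alpha_t\wedge\alpha_t=0$ or $\bar\alpha_t\wedge\bar\alpha_t=0$ against the existing factors), and then a further $\partial_t$ annihilates it for the same bidegree/repetition reasons — every surviving monomial contains a repeated $\alpha_t$ or $\bar\alpha_t$ (or the corresponding $\beta_t,\bar\beta_t$ for the other three forms). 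The key point making all of this work is the special shape of \eqref{eq:structure}: $\bar\partial_t\gamma_t$ has no $\bar\gamma_t$ and $\partial_t\gamma_t$ has no $\gamma_t$, so any wedge of $\partial_t\gamma_t$ or $\partial_t\bar\gamma_t$ against two of $\{\alpha_t,\bar\alpha_t,\beta_t,\bar\beta_t\}$ that already appear in the form vanishes.

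Once $\partial_t\bar\partial_t$-closedness is established, the four forms define Aeppli classes $[\alpha_t\wedge\gamma_t\wedge\bar\alpha_t\wedge\bar\gamma_t]_A$, etc. To see that these span $H^{2,2}_A(X_t,\C)$, I would invoke $\dim_\C H^{2,2}_A(X_t,\C)=4$ for all $t\in\Delta$ (from [Ang11, $\S.4.3$], already cited in the proof of Corollary \ref{Cor:subbundle_H22_H4}), so it suffices to show the four classes are linearly independent in Aeppli cohomology. This last independence can be checked via the non-degenerate Aeppli--Bott-Chern pairing of \eqref{eqn:duality} (in bidegree $(1,1)$ against $(2,2)$, or rather its $(2,2)$--$(1,1)$ analogue), pairing the four classes against the Bott--Chern classes $[\alpha_t\wedge\bar\alpha_t]_{BC}$, $[\alpha_t\wedge\bar\beta_t]_{BC}$, $[\beta_t\wedge\bar\alpha_t]_{BC}$, $[\beta_t\wedge\bar\beta_t]_{BC}$ (coming from $B_t$), since $\int_{X_t}\alpha_t\wedge\bar\alpha_t\wedge\gamma_t\wedge\bar\gamma_t\cdot(\text{conjugate-type }(1,1)\text{ form})$ gives a non-singular $4\times 4$ matrix by the orthonormality of $\alpha_t,\beta_t,\gamma_t$; alternatively one simply quotes the explicit basis from [Ang11, Appendix A].

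The main obstacle, modest as it is, is the bookkeeping in the first step: one must be careful that the coefficients $\sigma_{i\bar j}(t)$ and $\sigma_{12}(t)$ are \emph{constants} on $X_t$ (constant in the fibre variables, depending only on $t$), so they pass freely through $\partial_t$ and $\bar\partial_t$ and the only mechanism for vanishing is a repeated one-form factor — there is no derivative-of-coefficient term to worry about. Granting that, every computation reduces to the combinatorics of wedging six one-forms $\alpha_t,\beta_t,\gamma_t,\bar\alpha_t,\bar\beta_t,\bar\gamma_t$ with at most one of each allowed, and the verification is routine. I would present the $\alpha_t\wedge\bar\alpha_t\wedge\gamma_t\wedge\bar\gamma_t$ case in detail and note that the other three are identical after the obvious substitutions.
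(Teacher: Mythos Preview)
Your plan matches the paper's proof almost exactly: a direct computation using the structure equations \eqref{eq:structure}, followed by the identification of $H^{2,\,2}_A(X_t,\C)$ via the Bott--Chern/Aeppli duality \eqref{eqn:duality} together with the explicit basis of $H^{1,\,1}_{BC}(X_t,\C)$ from [Ang11]. The paper applies $\partial_t$ first and $\bar\partial_t$ second, but this is immaterial.

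There is, however, a concrete slip in your intermediate computation. You assert that the $\bar\partial_t\gamma_t$ contribution is killed against $\alpha_t\wedge\bar\alpha_t$, but the summand $\sigma_{2\bar 2}(t)\,\beta_t\wedge\bar\beta_t$ of $\bar\partial_t\gamma_t$ contains neither $\alpha_t$ nor $\bar\alpha_t$ and survives. What one actually obtains is
\[
\bar\partial_t(\alpha_t\wedge\bar\alpha_t\wedge\gamma_t\wedge\bar\gamma_t)=\sigma_{2\bar 2}(t)\,\alpha_t\wedge\bar\alpha_t\wedge\beta_t\wedge\bar\beta_t\wedge\bar\gamma_t,
\]
while it is the $\bar\partial_t\bar\gamma_t=\overline{\sigma_{12}(t)}\,\bar\alpha_t\wedge\bar\beta_t$ term that vanishes (repeated $\bar\alpha_t$). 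This does not damage the argument: applying $\partial_t$ to the surviving term annihilates it because $\partial_t\bar\gamma_t$ is a combination of $\alpha_t\wedge\bar\alpha_t,\alpha_t\wedge\bar\beta_t,\beta_t\wedge\bar\alpha_t,\beta_t\wedge\bar\beta_t$, all of whose one-form factors already appear. This is precisely the paper's observation (with $\partial_t$ and $\bar\partial_t$ interchanged, yielding $-\overline{\sigma_{2\bar 2}(t)}\,\alpha_t\wedge\bar\alpha_t\wedge\beta_t\wedge\bar\beta_t\wedge\gamma_t$ at the intermediate stage). The same pattern holds for the other three forms, with $\sigma_{2\bar 2}$ replaced by the appropriate $\sigma_{i\bar j}$; so your write-up only needs the detailed case corrected.
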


\begin{proof}
We spell out the details of the pluriclosedness argument, that is similar for the four forms, when $t\in\Delta_{[\gamma]}$. It goes
\begin{eqnarray}
\nonumber\partial_t(\alpha_t\wedge\gamma_t\wedge\bar\alpha_t\wedge\bar\gamma_t) & = & -\alpha_t\wedge\partial_t\gamma_t\wedge\bar\alpha_t\wedge\bar\gamma_t
- \alpha_t\wedge\gamma_t\wedge\bar\alpha_t\wedge\overline{\bar\partial_t\gamma_t}\\
\nonumber & = & -\sigma_{12}(t)\, \alpha_t\wedge(\alpha_t\wedge\beta_t)\wedge\bar\alpha_t\wedge\bar\gamma_t
-  \alpha_t\wedge\gamma_t\wedge\bar\alpha_t\wedge(\overline{\sigma_{2\bar{2}}(t)}\,\bar\beta_t\wedge\beta_t) \\
\nonumber & = & - \overline{\sigma_{2\bar{2}}(t)}\, \alpha_t\wedge\bar\alpha_t\wedge\beta_t\wedge\bar\beta_t\wedge\gamma_t, \end{eqnarray}
\noindent where we used the structure equations~\eqref{eq:structure} to get the second line above. So, $\partial_t(\alpha_t\wedge\gamma_t\wedge\bar\alpha_t\wedge\bar\gamma_t)\neq 0$ when $t\neq 0$, but ~\eqref{eq:structure} implies that $\bar\partial_t\gamma_t$ comes from a $2$-form on $B_t$, so applying $\bar\partial_t$ we get $\bar\partial_t\partial_t(\alpha_t\wedge\gamma_t\wedge\bar\alpha_t\wedge\bar\gamma_t)= 0$.

 Now, it was shown in [Ang11, Remark 5.2] that 
\begin{equation*}
H^{1,\,1}_{BC}(X_t,\,\C) = \bigg\langle[\alpha_t\wedge\bar\alpha_t]_{BC},\, [\alpha_t\wedge\bar\beta_t]_{BC},
\, [\beta_t\wedge\bar\alpha_t]_{BC},\, [\beta_t\wedge\bar\beta_t]_{BC}\bigg\rangle, \hspace{3ex} t\in\Delta.\end{equation*}
\noindent This implies (\ref{eqn:H22_A_t}) via the non-degenerate duality $H^{1,\,1}_{BC}(X_t,\,\C)\times H^{2,\,2}_A(X_t,\,\C)\longrightarrow\C$, $([u]_{BC},\,[v]_A)\mapsto\int_Xu\wedge v$, recalled in (\ref{eqn:duality}).
\end{proof}

\begin{Obs}\label{Obs:H21_gamma-H22_isomorphism} 
\begin{enumerate}
 \item[$(a)$]On the Iwasawa manifold $X_0$, there is a canonical isomorphism
\begin{equation}\label{eqn:H21_gamma-H22_isomorphism_0}H^{2,\,1}_{[\gamma]}(X_0,\,\C) \stackrel{\simeq}{\longrightarrow}H^{2,\,2}_A(X_0,\,\C)\end{equation}
\noindent defined by $[\Gamma]_{\bar\partial}\mapsto[\Gamma\wedge\bar\gamma]_A$ for $\Gamma\in\{\alpha\wedge\gamma\wedge\bar\alpha, \alpha\wedge\gamma\wedge\bar\beta, 
\beta\wedge\gamma\wedge\bar\alpha, \beta\wedge\gamma\wedge\bar\beta\}$.
\item[$(b)$] In the Kuranishi family $(X_t)_{t\in\Delta}$ of the Iwasawa manifold $X_0$, there is a canonical isomorphism
\begin{equation}\label{eqn:H21_gamma-H22_isomorphism_t}A_t : H^{2,\,1}_{[\gamma]}(X_t,\,\C) \stackrel{\simeq}{\longrightarrow}H^{2,\,2}_A(X_t,\,\C) \hspace{3ex}
\mbox{for every} \hspace{2ex} t\in\Delta_{[\gamma]} \end{equation}
\noindent defined by $[\Gamma]_{\bar\partial}\mapsto[\Gamma\wedge\bar\gamma_t]_A$ for $\Gamma\in\{\Gamma_1(t),\,\Gamma_2(t),\, \Gamma_3(t),\, \Gamma_4(t)\}$ (see (\ref{eqn:Gamma_j-forms_t}) and (\ref{eqn:H21_gamma_t})). (Note that $A_t$ depends anti-holomorphically on $t$.)

In particular, the rank-four $C^{\infty}$ vector bundles $\Delta_{[\gamma]}\ni t\mapsto H^{2,\,1}_{[\gamma]}(X_t,\,\C)$ (of Definition \ref{Def:H21_gamma_t}) and $\Delta_{[\gamma]}\ni t\mapsto H^{2,\,2}_A(X_t,\,\C)$ (of Corollary \ref{Cor:subbundle_H22_H4}) are canonically isomorphic, i.e. ${\cal H}^{2,\,1}_{[\gamma]}\simeq{\cal H}^{2,\,2}_A$.
\end{enumerate}
\end{Obs}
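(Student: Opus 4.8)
The plan is to verify the two claimed isomorphisms by explicit computation on generators, then assemble them into an isomorphism of $C^\infty$ vector bundles. For part $(a)$, I would first check that the proposed map $[\Gamma]_{\bar\partial}\mapsto[\Gamma\wedge\bar\gamma]_A$ is well defined: if $\Gamma\in\{\alpha\wedge\gamma\wedge\bar\alpha,\,\alpha\wedge\gamma\wedge\bar\beta,\,\beta\wedge\gamma\wedge\bar\alpha,\,\beta\wedge\gamma\wedge\bar\beta\}$, then $\Gamma\wedge\bar\gamma$ is $\partial\bar\partial$-closed by Lemma \ref{lem:pluri-closed} (these are exactly the four generating $(2,2)$-forms appearing in \eqref{eqn:H22_A_t}), so it defines an Aeppli class. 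To see that the map descends to Dolbeault classes, one notes that $H^{2,\,1}_{[\gamma]}(X_0,\,\C)$ is spanned by the Dolbeault classes of these four $\Delta''$-harmonic forms (cf.\ \eqref{eqn:H21_gamma_def}), and that adding a $\bar\partial$-exact $(2,1)$-form $\bar\partial\eta$ changes $\Gamma\wedge\bar\gamma$ by $\bar\partial\eta\wedge\bar\gamma = -\bar\partial(\eta\wedge\bar\gamma)$ (since $\bar\partial\bar\gamma=0$), hence by an $\operatorname{Im}\bar\partial$-term, which is zero in Aeppli cohomology. The map sends a basis of $H^{2,\,1}_{[\gamma]}(X_0,\,\C)$ to the basis of $H^{2,\,2}_A(X_0,\,\C)$ exhibited in \eqref{eqn:H22_A_t} (namely $[\alpha\wedge\gamma\wedge\bar\alpha\wedge\bar\gamma]_A$, etc.), so it is a linear isomorphism; its canonicity follows from that of both sides once $[\gamma]$ is fixed modulo $H^0(\pi^\star\Omega^1_B)$, exactly as in the Definition of $H^{2,\,1}_{[\gamma]}$.

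For part $(b)$, the same scheme applies fibrewise with $\gamma$ replaced by $\gamma_t$ and the four harmonic generators replaced by $\Gamma_1(t),\dots,\Gamma_4(t)$ of \eqref{eqn:Gamma_j-forms_t}. The key points to check are: (i) well-definedness, i.e.\ $\Gamma_j(t)\wedge\bar\gamma_t$ is $\partial_t\bar\partial_t$-closed for $t\in\Delta_{[\gamma]}$ — since $\Gamma_j(t)$ is a combination of $\alpha_t\wedge\gamma_t\wedge\bar\alpha_t$-type forms and $\alpha_t\wedge\beta_t\wedge\bar\gamma_t$, wedging with $\bar\gamma_t$ lands in the span of the four forms of \eqref{eqn:H22_A_t} (the $\alpha_t\wedge\beta_t\wedge\bar\gamma_t\wedge\bar\gamma_t$ contribution vanishes), so pluriclosedness follows from Lemma \ref{lem:pluri-closed}; independence of the representative again uses $\bar\partial_t\bar\gamma_t = 0$, which holds because $\bar\gamma_t$ is conjugate to $\gamma_t$ and $\partial_t\gamma_t$ is a multiple of $\alpha_t\wedge\beta_t$ so $\bar\partial_t\bar\gamma_t$ is a multiple of $\bar\alpha_t\wedge\bar\beta_t$; actually one needs $\bar\partial_t(\Gamma_j(t)\wedge\bar\gamma_t)\in\operatorname{Im}\bar\partial_t$ when perturbing, which is automatic. (ii) The map $A_t$ sends the basis $\{[\Gamma_j(t)]_{\bar\partial}\}$ of $H^{2,\,1}_{[\gamma]}(X_t,\,\C)$ to a basis of $H^{2,\,2}_A(X_t,\,\C)$: modulo $\operatorname{Im}\partial_t+\operatorname{Im}\bar\partial_t$ one has $\Gamma_j(t)\wedge\bar\gamma_t \equiv (\text{leading }\alpha_t\wedge\gamma_t\wedge\bar\alpha_t\text{-type term})\wedge\bar\gamma_t$ up to the correction term $\propto \alpha_t\wedge\beta_t\wedge\bar\gamma_t\wedge\bar\gamma_t = 0$, so $A_t$ is upper-triangular with nonzero diagonal with respect to the bases of \eqref{eqn:H21_gamma_t} and \eqref{eqn:H22_A_t}, hence an isomorphism. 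The anti-holomorphic dependence on $t$ is inherited from the $\sigma_{i\bar j}/\bar\sigma_{12}$ coefficients in $\Gamma_j(t)$ together with the conjugate forms $\bar\alpha_t,\bar\beta_t,\bar\gamma_t$; since both $(\Gamma_j(t))_{t}$ and $(\bar\gamma_t)_t$ vary smoothly, $A_t$ varies smoothly, and the bundle statement ${\cal H}^{2,\,1}_{[\gamma]}\simeq{\cal H}^{2,\,2}_A$ follows from Proposition \ref{Prop:H2-1_gamma_vbundle} and Corollary \ref{Cor:subbundle_H22_H4}, both bundles having rank $4$.

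The only real subtlety — and the step I would treat most carefully — is the independence-of-representative claim in the twisted setting of $(b)$: one must confirm that for $t\neq 0$ the $(2,1)$-form $\bar\partial_t\eta$ produces, after wedging with $\bar\gamma_t$, a form that is genuinely in $\operatorname{Im}\partial_t + \operatorname{Im}\bar\partial_t$ rather than merely $\partial_t\bar\partial_t$-closed, given that $\bar\gamma_t$ is no longer $\bar\partial_t$-closed in general (only its conjugate statement is constrained). Here the structure equations \eqref{eq:structure} are decisive: $\bar\partial_t\gamma_t$ lies in $\pi_t^\star\mathcal{C}^\infty_{1,1}(B_t,\mathbb{C})$, so $\bar\partial_t\bar\gamma_t = \partial_t\bar\gamma_t$'s conjugate lies in the span of $\bar\alpha_t\wedge\bar\beta_t$, and any extra term created by $\bar\partial_t(\bar\partial_t\eta\wedge\bar\gamma_t)= \pm\,\bar\partial_t\eta\wedge\bar\partial_t\bar\gamma_t$ is supported on $\bar\alpha_t\wedge\bar\beta_t$ and vanishes after the relevant wedge, or is itself manifestly exact. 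Once this bookkeeping is done the rest is routine linear algebra on the explicit bases already recorded in the paper.
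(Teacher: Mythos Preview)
Your proposal is correct and follows the same route as the paper: compute $\Gamma_j(t)\wedge\bar\gamma_t$, observe that the correction term $\propto\alpha_t\wedge\beta_t\wedge\bar\gamma_t$ disappears upon wedging with $\bar\gamma_t$, so the map sends the basis $\{[\Gamma_j(t)]_{\bar\partial}\}$ of $H^{2,1}_{[\gamma]}(X_t,\C)$ \emph{exactly} (not merely upper-triangularly) to the basis of $H^{2,2}_A(X_t,\C)$ given in \eqref{eqn:H22_A_t}, with pluriclosedness supplied by Lemma~\ref{lem:pluri-closed}.

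The only difference is that you do more work than the paper. The statement defines $A_t$ on the \emph{specific harmonic generators} $\Gamma_j(t)$ and extends by linearity; it is not presented as a map $[\Gamma]_{\bar\partial}\mapsto[\Gamma\wedge\bar\gamma_t]_A$ for arbitrary representatives $\Gamma$. Consequently your entire discussion of independence-of-representative (and the worry in your last paragraph about $\bar\partial_t\bar\gamma_t\neq 0$) is unnecessary: the paper's proof is three lines of direct computation on the explicit forms, with no descent issue to check. Your extra care is harmless, but the ``real subtlety'' you flag is not actually present in the argument as the paper sets it up.
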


\begin{proof}
Part $(a)$ is a special case of part $(b)$. To prove $(b)$, we note that in conjunction with the description of $H^{2,\,1}_{[\gamma]}(X_t,\,\C)$ given at the end of $\S.$\ref{subsection:2-1} as $H^{2,\,1}_{[\gamma]}(X_t,\,\C)= \langle [\Gamma_1(t)]_{\bar\partial},\, [\Gamma_2(t)]_{\bar\partial},\, [\Gamma_3(t)]_{\bar\partial},
\, [\Gamma_4(t)]_{\bar\partial}\rangle \subset H^{2,\,1}_{\bar\partial}(X_t,\,\C)$ for all $t\in\Delta_{[\gamma]}$, (\ref{eqn:H22_A_t}) proves the isomorphism (\ref{eqn:H21_gamma-H22_isomorphism_t}). Indeed, $\Gamma_1(t)\wedge\bar\gamma_t = \alpha_t\wedge\gamma_t\wedge\bar\alpha_t\wedge\bar\gamma_t$, 
$\Gamma_2(t)\wedge\bar\gamma_t = \alpha_t\wedge\gamma_t\wedge\bar\beta_t\wedge\bar\gamma_t$, $\Gamma_3(t)\wedge\bar\gamma_t 
= \beta_t\wedge\gamma_t\wedge\bar\alpha_t\wedge\bar\gamma_t$, $\Gamma_4(t)\wedge\bar\gamma_t = \beta_t\wedge\gamma_t\wedge\bar\beta_t\wedge\bar\gamma_t$
and all these forms are $\partial\bar\partial$-closed as proved in Lemma \ref{lem:pluri-closed}.

\end{proof}

\subsection{Bringing the families of metrics $(\omega_t)_{t\in\Delta_{[\gamma]}}$ and $(\omega_t^{1,\,1})_{t\in\Delta_{[\gamma]}}$ together}

We can now describe a VHS parametrised by Aeppli-Gauduchon classes on $X_0$. It is related to the VHS of weight $2$ 
induced by the holomorphic family $(B_t)_{t\in\Delta_{[\gamma]}}$ of $2$-dimensional complex tori. Since the $B_t$'s are K\"ahler, we get a weight-two Hodge decomposition

\begin{equation}\label{eqn:VHS_weight2_tori}H^2(B,\,\C) \simeq H^{2,\,0}(B_t,\,\C)\oplus H^{1,\,1}(B_t,\,\C) \oplus H^{0,\,2}(B_t,\,\C),  \hspace{3ex} t\in\Delta,\end{equation}   
\noindent where $B$ stands for the $C^{\infty}$ manifold underlying the complex tori $B_t$ and $H^2(B,\,\C): = H^2_{DR}(B_t,\,\C)$ is the fibre of the constant bundle ${\cal H}^2(B)$ over $\Delta$ defined by the De Rham cohomology of degree $2$ of the tori $B_t$ with $t\in\Delta$. As usual, we get holomorphic vector bundles
\begin{equation}\label{eqn:Hodge-filtration_B}F^1{\cal H}^2(B):=\bigg(\Delta\ni t\mapsto H^{2,\,0}(B_t,\,\C)\oplus H^{1,\,1}(B_t,\,\C)\bigg) 
\supset F^2{\cal H}^2(B):=\bigg(\Delta\ni t\mapsto H^{2,\,0}(B_t,\,\C)\bigg)\end{equation}

\noindent that constitute the Hodge filtration associated with the VHS (\ref{eqn:VHS_weight2_tori}). 
Let $D$ be the Gauss-Manin connection of the constant bundle ${\cal H}^2(B)$. It satisfies the transversality condition
\begin{equation}\label{eqn:transversality_B}D_{[\theta]}F^2{\cal H}^2(B)_t\subset  F^1{\cal H}^2(B)_t, \hspace{3ex}  t\in\Delta_{[\gamma]},\end{equation}
\noindent for all $[\theta]\in T^{1,\,0}_t\Delta_{[\gamma]} \simeq H^{1,\,1}(B_t,\,\C)\simeq H^{2,\,2}_A(X_t,\,\C) \simeq H^{2,\,1}_{[\gamma]}(X_t,\,\C)$.

 The second isomorphism of vector spaces on the previous line is a consequence of the description of $H^{1,\,1}(B_t,\,\C)$ as \begin{eqnarray}\label{eqn:H11_Bt_description}\nonumber H^{1,\,1}(B_t,\,\C) & = & \bigg\langle [\alpha_t\wedge\bar\alpha_t]_{\bar\partial},\, [\alpha_t\wedge\bar\beta_t]_{\bar\partial},
\, [\beta_t\wedge\bar\alpha_t]_{\bar\partial},\, [\beta_t\wedge\bar\beta_t]_{\bar\partial}\bigg\rangle \\
\nonumber & \simeq & \bigg\langle[\alpha_t\wedge\gamma_t\wedge\bar\alpha_t\wedge\bar\gamma_t]_A,\, [\alpha_t\wedge\gamma_t\wedge\bar\beta_t\wedge\bar\gamma_t]_A,
\, [\beta_t\wedge\gamma_t\wedge\bar\alpha_t\wedge\bar\gamma_t]_A,\, [\beta_t\wedge\gamma_t\wedge\bar\beta_t\wedge\bar\gamma_t]_A\bigg\rangle\\
  & = & H^{2,\,2}_A(X_t,\,\C)\simeq Q_{\omega_t}(H^{2,\,2}_A(X_t,\,\C)):=\widetilde{H^{2,\,2}_{\omega_t}}\subset H^4_{DR}(X,\,\C),  \hspace{3ex} t\in\Delta_{[\gamma]}, 
\end{eqnarray}

\noindent where the first identity on the last line is (\ref{eqn:H22_A_t}) and  $Q_{\omega_t}: H^{2,\,2}_A(X_t,\,\C)\hookrightarrow H^4_{DR}(X,\,\C)$ 
is the complexification of the injective linear map of (\ref{eqn:injection_A-DR_X_t}) defined by the Gauduchon metric $\omega_t$ of (\ref{eqn:omega_t_def}) on $X_t$ 
(and also denoted by $I_t$ in (\ref{eqn:can_inj_X_t})).

On the other hand,
\begin{eqnarray}\label{eqn:H20_B_H3_X} H^{2,\,0}(B_t,\,\C) \simeq  H^{3,\,0}(X_t,\,\C)\hookrightarrow H^3_{DR}(X,\,\C),   \hspace{3ex} t\in\Delta,\end{eqnarray}   
\noindent since $H^{2,\,0}(B_t,\,\C) = \langle[\alpha_t\wedge\beta_t]_{\bar\partial}\rangle$ and $H^{3,\,0}(X_t,\,\C) 
= \langle[\alpha_t\wedge\beta_t\wedge\gamma_t]_{\bar\partial}\rangle$, while the $\C$-line $H^{3,\,0}(X_t,\,\C)$ injects canonically into $H^3_{DR}(X,\,\C)$
as observed in Lemma \ref{Lem:Hn0_injection}. We get a canonical injection of holomorphic vector bundles
\begin{eqnarray}\label{eqn:F2H2_bundle-inj_H3}\nonumber j:F^2{\cal H}^2(B)\hookrightarrow {\cal H}^3\end{eqnarray}

\noindent such that $j_t:H^{2,\,0}(B_t,\,\C)\hookrightarrow H^3(X,\,\C)$ is the composition of the maps (\ref{eqn:H20_B_H3_X}) for every $t\in\Delta$.

 Together with (\ref{eqn:H11_Bt_description}), this gives an injection of holomorphic vector bundles

\begin{eqnarray}\label{eqn:F1H2_bundle-inj_H3-H4}\nonumber j\oplus Q :F^1{\cal H}^2(B)\hookrightarrow {\cal H}^3\oplus{\cal H}^4\end{eqnarray}

\noindent such that $(j\oplus Q)_t= j_t \oplus Q_{\omega_t}$ for all $t\in\Delta_{[\gamma]}$.

\vspace{2ex}

 We now anticipate the definition of what will be called later the {\bf complexified parameter set}:

$$\widetilde{{\cal G}_0} := \{[\omega_0^2]_A - t_{11}\,[i\beta\wedge\bar\alpha\wedge i\gamma\wedge\bar\gamma]_A + t_{22}\,[i\alpha\wedge\bar\beta\wedge i\gamma\wedge\bar\gamma]_A - t_{12}\,[i\beta\wedge\bar\beta\wedge i\gamma\wedge\bar\gamma]_A + t_{21}\,[i\alpha\wedge\bar\alpha\wedge i\gamma\wedge\bar\gamma]_A\,\mid\, t\in\Delta_{[\gamma]}\}$$
\hspace{3ex} $ \subset  H^{2,\,2}_A(X_0,\,\C).$

\vspace{2ex}

\noindent Recall the identification $\Delta_{[\gamma]} = \{t=(t_{11},\,t_{12},\,t_{21},\,t_{22})\in H^{0,\,1}_{[\gamma]}(X_0,\,T^{1,\,0}X_0) \,;\, |t|<\varepsilon\}$ for some small $\varepsilon>0$ when $H^{0,\,1}_{[\gamma]}(X_0,\,T^{1,\,0}X_0)$ is identified with $\C^4$ by the basis specified in (\ref{eqn:H^01_gamma_generators}). The set $\widetilde{{\cal G}_0}$ is a complexification of the {\bf parameter set} 

\begin{equation}\label{eqn:parameter-set_def}{\cal G}_0:= \{[(\omega_t^{1,\,1})^2]_A \,\mid\, t\in\Delta_{[\gamma]}\}\subset{\cal G}_{X_0}.\end{equation}

\noindent Thus, $\widetilde{{\cal G}_0}$ is a subset of the complexified Gauduchon cone $\widetilde{{\cal G}_{X_0}}\subset  H^{2,\,2}_A(X_0,\,\C)$ (cf. Defintion \ref{Def:G-cone-complexified}) of the Iwasawa manifold $X=X_0$.

\begin{Conc}\label{Conc:VHS_two-families-metrics} Let $(X_t)_{t\in\Delta_{[\gamma]}}$ be the local universal family of essential deformations of the Iwasawa manifold $X=X_0$.

$(i)$\, Our discussion so far can be summed up in the following diagram for all $t\in\Delta_{[\gamma]}$.

\vspace{3ex}

\noindent $\begin{CD}
H^{2,\,2}_A(X_t,\,\C)              @>\simeq>B_t>          H^{2,\,2}_A(X_0,\,\C) \\
@V\simeq VQ_{\omega_t}V                                       @V\simeq VQ_{\omega_t^{1,\,1}}V \\
H^4(X,\,\C)\supset H^{2,\,2}_A(X_t,\,\C)\simeq Q_{\omega_t}(H^{2,\,2}_A(X_t,\,\C)):=\widetilde{H^{2,\,2}_{\omega_t}}  @>\simeq>>      
Q_{\omega_t^{1,\,1}}(H^{2,\,2}_A(X_0,\,\C)):=\widetilde{H^{2,\,2}_t}\subset H^4(X,\,\C),
\end{CD}$

\vspace{3ex}
\noindent where the isomorphism $\widetilde{H^{2,\,2}_{\omega_t}}\to \widetilde{H^{2,\,2}_t}$ is the composition $Q_{\omega_t^{1,\,1}}\circ B_t\circ Q_{\omega_t}^{-1}$.

 $(ii)$\, Moreover, we get a $C^{\infty}$ vector subbundle of rank $4$ of the constant bundle ${\cal H}^4$:
\begin{equation}\label{eqn:subbundle_G_rank4_X0}\Delta_{[\gamma]}\simeq\widetilde{{\cal G}_0}\ni t\mapsto Q_{\omega_t}(H^{2,\,2}_A(X_t,\,\C))\subset H^4(X,\,\C),\end{equation}
\noindent denoted henceforth by $\widetilde{{\cal H}^{2,\,2}_\omega}$, and a {\bf holomorphic} vector subbundle of rank $1$ of the constant bundle ${\cal H}^3(X)$:

\begin{equation}\label{eqn:subbundle_G_rank1_X0}\Delta_{[\gamma]}\simeq\widetilde{{\cal G}_0}\ni t\mapsto H^{2,\,0}(B_t,\,\C)\stackrel{j_t}{\hookrightarrow} H^3(X,\,\C),\end{equation}
\noindent denoted henceforth by ${\cal H}^{2,\,0}(B) = F'_{{\cal G}}{\cal H}$, such that the following complex vector bundle of rank $5$, denoted henceforth by $F_{{\cal G}}{\cal H}^4:={\cal H}^{2,\,0}(B)\oplus\widetilde{{\cal H}^{2,\,2}_\omega}$,
\begin{equation}\label{eqn:subbundle_G_rank5_X0}\Delta_{[\gamma]}\simeq\widetilde{{\cal G}_0}\ni t\mapsto H^{2,\,0}(B_t,\,\C)\oplus Q_{\omega_t}(H^{2,\,2}_A(X_t,\,\C))
\subset H^3(X,\,\C) \oplus H^4(X,\,\C)\end{equation}
\noindent is a {\bf holomorphic} subbundle of the constant bundle ${\cal H}^3\oplus{\cal H}^4$ of fibre $ H^3(X,\,\C) \oplus H^4(X,\,\C)$ and is $C^\infty$ {\bf isomorphic} to $F^1{\cal H}^2(B)$.

 $(iii)$\, In particular, the vector bundles (\ref{eqn:subbundle_G_rank1_X0}) and (\ref{eqn:subbundle_G_rank4_X0}) define a VHS parametrised by the subset 
\begin{equation}\label{eqn:parametrising-subset}\Delta_{[\gamma]}\simeq\widetilde{{\cal G}_0}\subset\widetilde{{\cal G}_{X_0}}\end{equation} 
\noindent whose corresponding Hodge filtration $F_{{\cal G}}{\cal H}^4\supset F'_{{\cal G}}{\cal H}^4$ is $C^\infty$ {\bf isomorphic} to the Hodge filtration $F^1{\cal H}^2(B)\supset F^2{\cal H}^2(B)$ 
associated with the holomorphic family $(B_t)_{t\in\Delta_{[\gamma]}}$ of base tori of the family  $(X_t)_{t\in\Delta_{[\gamma]}}$.

\end{Conc}

 Only the {\it holomorphic} nature of the above vector bundle isomorphisms still needs a proof that is provided in the next subsection.

\subsection{Holomorphicity of the Hodge filtration parametrised by ${\cal G}_0$}\label{subsection:holomorphicity_Hodge_G}

We prove in this subsection that the Hodge filtration

$${\cal H}^3 \oplus {\cal H}^4 \supset F_{{\cal G}}{\cal H}^4 \supset F'_{{\cal G}}{\cal H}^4$$

\noindent constructed in the previous subsection (cf. Conclusion \ref{Conc:VHS_two-families-metrics}) consists of {\bf holomorphic vector subbundles} of the constant bundle ${\cal H}^3 \oplus {\cal H}^4$ of fibre $H^3(X,\,\C) \oplus H^4(X,\,\C)$ over $\overline{{\cal G}_0}$.

Our starting point is the following simple observation.

\begin{Lem}\label{Lem:H31_BC_injects_H22_A} For every $t\in\Delta$, there is a canonical linear injection

\begin{equation}\label{eqn:H31_BC_injects_H22_A}H^{3,\,1}_{BC}(X_t,\,\C)\hookrightarrow H^{2,\,2}_A(X_t,\,\C).\end{equation}

\end{Lem}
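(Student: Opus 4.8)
The statement to prove is that for every $t\in\Delta$ (close to $0$), there is a canonical linear injection $H^{3,\,1}_{BC}(X_t,\,\C)\hookrightarrow H^{2,\,2}_A(X_t,\,\C)$. The natural candidate for this map is the one induced by the identity on forms: a Bott--Chern class $[u]_{BC}$ of bidegree $(3,\,1)$ is represented by a form $u$ with $\partial u=\bar\partial u=0$, and such a form in particular satisfies $\partial\bar\partial u=0$, so it defines an Aeppli class $[u]_A\in H^{3,\,1}_A(X_t,\,\C)$. Since $n=3$ we have $H^{3,\,1}_A(X_t,\,\C)=H^{2,\,2}_A(X_t,\,\C)$? — no, the bidegrees differ; so I must be careful. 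The intended target is genuinely $H^{2,\,2}_A$, so the map cannot be ``the identity on forms of the same bidegree.'' Instead the plan is to use a duality/pairing argument: the natural map is the one adjoint, via the non-degenerate pairings recalled in the excerpt, to the canonical surjection going the other way in complementary bidegree.

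First I would set up the two relevant dualities from \eqref{eqn:duality} (and its analogues): on an $n$-dimensional compact complex manifold the pairing $H^{p,\,q}_{BC}(X_t,\,\C)\times H^{n-p,\,n-q}_A(X_t,\,\C)\to\C$, $([u]_{BC},[v]_A)\mapsto\int_{X_t}u\wedge v$, is non-degenerate. With $n=3$: for $(p,q)=(3,1)$ this pairs $H^{3,\,1}_{BC}(X_t,\,\C)$ with $H^{0,\,2}_A(X_t,\,\C)$, and for $(p,q)=(1,1)$ it pairs $H^{1,\,1}_{BC}(X_t,\,\C)$ with $H^{2,\,2}_A(X_t,\,\C)$. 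So an injection $H^{3,\,1}_{BC}\hookrightarrow H^{2,\,2}_A$ is equivalent, by dualizing, to a surjection $H^{1,\,1}_{BC}(X_t,\,\C)\twoheadrightarrow H^{0,\,2}_A(X_t,\,\C)$, or alternatively one produces the map directly. The cleanest route is to exhibit the map concretely on generators using the explicit bases: by Lemma~\ref{lem:pluri-closed}, $H^{2,\,2}_A(X_t,\,\C)=\langle[\alpha_t\wedge\gamma_t\wedge\bar\alpha_t\wedge\bar\gamma_t]_A,\dots\rangle$ is $4$-dimensional, and from \cite{Ang11} (or a direct count via the Hodge numbers $b_4=8=2+5+1$) one has $h^{3,\,1}_{BC}(X_t)=\dim H^{3,\,1}_{BC}(X_t,\,\C)$; I would check $h^{3,\,1}_{BC}(X_t)\le 4$ and that the map below is injective.

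The concrete map I would propose: send the Bott--Chern class of a $d$-closed (hence $\partial$- and $\bar\partial$-closed) $(3,\,1)$-form $u$ to... this still has the wrong bidegree. The resolution, and the actual content of the lemma, must be that one contracts or wedges to shift bidegree using the canonical data of $X_t$. Since $X_t$ has trivial canonical bundle with holomorphic volume form $\Omega_t=\alpha_t\wedge\beta_t\wedge\gamma_t$ (see the Remark after Proposition~\ref{Prop:H2-1_gamma_vbundle}), contraction $u\mapsto u\lrcorner\,(\text{dual of }\Omega_t)$, or rather the isomorphism $\Lambda^{3,1}\cong\Lambda^{0,1}\otimes K_{X_t}^{-1}\cong\Lambda^{0,1}$ followed by $\iota_t$-type operations, could adjust degrees — but a bidegree $(3,1)$ form twisted by $K^{-1}$ lands in $(0,1)$, not $(2,2)$. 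So instead I believe the genuine map is: $H^{3,\,1}_{BC}(X_t,\,\C)\to H^{2,\,2}_A(X_t,\,\C)$ is induced by the identity, using that on the Iwasawa manifold and its deformations \emph{every} $\partial$-, $\bar\partial$-closed $(3,1)$-form is automatically also... no. The honest plan: I would prove it by the pairing argument — define $\Phi:H^{3,\,1}_{BC}(X_t,\,\C)\to (H^{1,\,1}_{BC}(X_t,\,\C))^{*}\cong H^{2,\,2}_A(X_t,\,\C)$ by $\Phi([u]_{BC})([v]_{BC}):=\int_{X_t}u\wedge v$ (note $(3,1)+(1,1)=(4,2)$, top degree, so this integral makes sense and is well-defined on Bott--Chern classes since $u,v$ are $d$-closed and the ambiguities are $\partial\bar\partial$-exact, which integrate to zero against $d$-closed forms). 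Then $\Phi$ is injective iff this pairing $H^{3,\,1}_{BC}\times H^{1,\,1}_{BC}\to\C$ is non-degenerate in the first variable, and the identification of $(H^{1,\,1}_{BC})^{*}$ with $H^{2,\,2}_A$ is exactly \eqref{eqn:duality}.

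\textbf{Key steps, in order:} (1) Recall the non-degenerate Bott--Chern/Aeppli pairing \eqref{eqn:duality} in bidegree $(1,1)$ vs.\ $(2,2)$, giving $H^{2,\,2}_A(X_t,\,\C)\cong (H^{1,\,1}_{BC}(X_t,\,\C))^{*}$. (2) Define $\Phi:H^{3,\,1}_{BC}(X_t,\,\C)\to (H^{1,\,1}_{BC}(X_t,\,\C))^{*}$ by $([u]_{BC},[v]_{BC})\mapsto\int_{X_t}u\wedge v$, and check well-definedness: both representatives are $d$-closed, changing $u$ by $\partial\bar\partial\psi$ changes the integral by $\int_{X_t}\partial\bar\partial\psi\wedge v=\pm\int_{X_t}\psi\wedge\partial\bar\partial v=0$ since $v$ is $\partial$- and $\bar\partial$-closed; similarly for $v$. (3) Compose with the isomorphism from (1) to get the canonical linear map $H^{3,\,1}_{BC}(X_t,\,\C)\to H^{2,\,2}_A(X_t,\,\C)$. (4) Prove injectivity. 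Here I would use the explicit description: $H^{1,\,1}_{BC}(X_t,\,\C)=\langle[\alpha_t\wedge\bar\alpha_t]_{BC},[\alpha_t\wedge\bar\beta_t]_{BC},[\beta_t\wedge\bar\alpha_t]_{BC},[\beta_t\wedge\bar\beta_t]_{BC}\rangle$ (from \cite{Ang11}, Remark 4.2, cited in Lemma~\ref{lem:pluri-closed}), and $H^{3,\,1}_{BC}(X_t,\,\C)$ is spanned by classes of the form $[\alpha_t\wedge\beta_t\wedge\gamma_t\wedge\bar\alpha_t]_{BC}$ and $[\alpha_t\wedge\beta_t\wedge\gamma_t\wedge\bar\beta_t]_{BC}$; pairing these against the $H^{1,\,1}_{BC}$ generators via $\int_{X_t}(\cdot)\wedge(\cdot)$ and using $\omega_t$-orthonormality of $\alpha_t,\beta_t,\gamma_t$ gives a rank-$2$ pairing matrix with no left kernel, so $\Phi$ is injective (and has image of dimension $2$, a rank-two subbundle of $\mathcal{H}^{2,\,2}_A$ — which is presumably what the paper wants next, e.g.\ to recover $\mathcal{H}^{2,\,0}(B)$-type data). (5) Observe that the construction is manifestly canonical (no choice of metric enters) and that, since every object ($\alpha_t,\beta_t,\gamma_t$, the cohomology groups) varies smoothly in $t$, the injection is a $C^\infty$ bundle morphism, so it also upgrades to an injection of $C^\infty$ vector bundles $\mathcal{H}^{3,\,1}_{BC}\hookrightarrow\mathcal{H}^{2,\,2}_A$ over $\Delta$.

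\textbf{Main obstacle.} The delicate point is step~(4): confirming injectivity rather than just having a well-defined map. This requires either (a) the explicit computation of $H^{3,\,1}_{BC}(X_t,\,\C)$ and the pairing matrix against $H^{1,\,1}_{BC}(X_t,\,\C)$ for $t$ in each of Nakamura's classes — routine but needs the structure equations \eqref{eq:structure} to handle the $\partial\bar\partial$-exact ambiguities at $t\neq 0$ — or (b) a cleaner abstract argument: the map $\Phi$ fails to be injective precisely when some nonzero $[u]_{BC}\in H^{3,\,1}_{BC}$ pairs to zero with all of $H^{1,\,1}_{BC}$, i.e.\ $u\wedge v$ is $d$-exact for all $d$-closed $(1,1)$-forms $v$; one would like to rule this out by a Hodge-theoretic positivity argument analogous to Lemma~\ref{Lem:Hn0_injection} (taking $v=\bar u$-type partners and invoking positivity of $i^{n^2}u\wedge\bar u$), but the bidegree mismatch $(3,1)+(1,1)=(4,2)$ means $\bar u$ is of type $(1,3)$, not $(1,1)$, so the naive positivity trick does not directly apply and one genuinely needs the explicit generators. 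I expect the paper resolves this via the explicit bases already assembled in Section~\ref{section:preliminaries} and Lemma~\ref{lem:pluri-closed}, so the ``obstacle'' is really just organizing that bookkeeping.
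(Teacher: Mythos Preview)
Your pairing argument contains a fatal bidegree error. You define $\Phi([u]_{BC})([v]_{BC}):=\int_{X_t}u\wedge v$ with $u$ of type $(3,1)$ and $v$ of type $(1,1)$, and claim ``$(3,1)+(1,1)=(4,2)$, top degree.'' But $\dim_\C X_t=3$, so the top bidegree is $(3,3)$, and there are no $(4,2)$-forms at all: $u\wedge v=0$ identically. Hence your map $\Phi$ is the zero map, and no injectivity check can rescue it. The duality you correctly recalled in step~(1), $H^{2,2}_A\cong(H^{1,1}_{BC})^*$, is fine; the problem is that $H^{3,1}_{BC}$ does not map into $(H^{1,1}_{BC})^*$ by integration of the wedge --- that role is played by $H^{2,2}_{BC}$, not $H^{3,1}_{BC}$. (The natural pairing partner of $H^{3,1}_{BC}$ is $H^{0,2}_A$, as you yourself noted before abandoning that thread.)

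The paper's proof takes an entirely different, and much more down-to-earth, route: it simply writes down the map on explicit generators. From [Ang11] one has $H^{3,1}_{BC}(X_t,\C)=\langle[\alpha_t\wedge\beta_t\wedge\gamma_t\wedge\bar\alpha_t]_{BC},\,[\alpha_t\wedge\beta_t\wedge\gamma_t\wedge\bar\beta_t]_{BC}\rangle$, and from \eqref{eqn:H22_A_t} the four generators of $H^{2,2}_A(X_t,\C)$; the injection is then \emph{defined} by $[\alpha_t\wedge\beta_t\wedge\gamma_t\wedge\bar\alpha_t]_{BC}\mapsto[\alpha_t\wedge\bar\alpha_t\wedge\gamma_t\wedge\bar\gamma_t]_A$ and $[\alpha_t\wedge\beta_t\wedge\gamma_t\wedge\bar\beta_t]_{BC}\mapsto[\beta_t\wedge\bar\beta_t\wedge\gamma_t\wedge\bar\gamma_t]_A$. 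Injectivity is immediate since two basis vectors are sent to two linearly independent basis vectors; ``canonical'' is justified on the grounds that $\alpha_t,\beta_t,\gamma_t$ are canonically attached to the complex structure of $X_t$. There is no abstract cohomological mechanism here --- the lemma is a statement about these specific manifolds, proved by inspection of their known cohomology bases.
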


\noindent {\it Proof.} From [Ang14, p. 83] we infer that $H^{3,\,1}_{BC}(X_t,\,\C) =\langle[\alpha_t\wedge\beta_t\wedge\gamma_t\wedge\bar\alpha_t]_{BC},\,[\alpha_t\wedge\beta_t\wedge\gamma_t\wedge\bar\beta_t]_{BC}\rangle$ for all $t\in\Delta$. Coupled with (\ref{eqn:H22_A_t}), this allows us to explicitly define the canonical linear injection by

$$[\alpha_t\wedge\beta_t\wedge\gamma_t\wedge\bar\alpha_t]_{BC}\mapsto[\alpha_t\wedge\bar\alpha_t\wedge\gamma_t\wedge\bar\gamma_t]_A \hspace{2ex} \mbox{and} \hspace{2ex} [\alpha_t\wedge\beta_t\wedge\gamma_t\wedge\bar\beta_t]_{BC}\mapsto[\beta_t\wedge\bar\beta_t\wedge\gamma_t\wedge\bar\gamma_t]_A.$$

\noindent The forms $\alpha_t, \beta_t, \gamma_t$ are canonically associated with the complex structure of $X_t$, which makes the above linear injection canonical.  \hfill $\Box$

\vspace{2ex}

 Since $F'_{{\cal G}}{\cal H} = {\cal H}^{2,\,0}(B)$ is a holomorphic subbundle of ${\cal H}^3(X)$, we are reduced to proving the following

\begin{Lem}\label{Lem:F_cal-G_H4_hol} The holomorphic structure of the vector bundle $F_{{\cal G}}{\cal H}^4:={\cal H}^{2,\,0}(B)\oplus\widetilde{{\cal H}^{2,\,2}_\omega}$ is the restriction of the holomorphic structure of the constant bundle ${\cal H}^3\oplus{\cal H}^4$.

\end{Lem}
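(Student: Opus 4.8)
The plan is to show that a local holomorphic frame of $F_{{\cal G}}{\cal H}^4$ over $\Delta_{[\gamma]}\simeq\widetilde{{\cal G}_0}$ exists, consisting of sections that are holomorphic when regarded as sections of the constant bundle ${\cal H}^3\oplus{\cal H}^4$. Since $F_{{\cal G}}{\cal H}^4 = {\cal H}^{2,\,0}(B)\oplus\widetilde{{\cal H}^{2,\,2}_\omega}$ and the summand ${\cal H}^{2,\,0}(B) = F'_{{\cal G}}{\cal H}$ is already known to be a holomorphic subbundle of ${\cal H}^3$ (it is spanned by $[\alpha_t\wedge\beta_t\wedge\gamma_t]_{\bar\partial}$, which depends holomorphically on $t$), it suffices to treat the rank-$4$ summand $\widetilde{{\cal H}^{2,\,2}_\omega}$, i.e. the bundle $\Delta_{[\gamma]}\ni t\mapsto Q_{\omega_t}(H^{2,\,2}_A(X_t,\,\C))\subset H^4(X,\,\C)$. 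Concretely, for a $C^\infty$ section $s = (s_t)_{t\in\Delta_{[\gamma]}}$ of $\widetilde{{\cal H}^{2,\,2}_\omega}$ I must verify that $D''s$ — where $D''$ is the $(0,1)$-part of the Gauss–Manin (trivial) connection on ${\cal H}^4$ — is again a section of $\widetilde{{\cal H}^{2,\,2}_\omega}$, equivalently that $\partial s/\partial\bar t_{i\lambda}\in Q_{\omega_t}(H^{2,\,2}_A(X_t,\,\C))$ for all $t$ and all $i,\lambda$. As in the proof of part $(ii)$ of Theorem \ref{The:VHS_3_Delta}, it is enough to check this at $t=0$ for a spanning set of sections.

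First I would fix, for each $t\in\Delta_{[\gamma]}$, the natural trivialising sections of $\widetilde{{\cal H}^{2,\,2}_\omega}$: by the explicit computation underlying Lemma \ref{Lem:canonical-injections}$(b)$ and Corollary \ref{Cor:subbundle_H22_H4}, $Q_{\omega_t}=I_t$ sends the Aeppli-harmonic representatives to the $d$-closed $4$-forms $\Omega_j(t):=Q_{\omega_t}([\cdot]_A)$, which are of the shape $\Phi_j(t) = \Omega^{3,1}_A(t) + \Omega^{2,2}_{A,j}(t) + \overline{\Omega^{3,1}_A(t)}$ with $\Omega^{2,2}_{A,j}(t)\in\{\alpha_t\wedge\gamma_t\wedge\bar\alpha_t\wedge\bar\gamma_t,\,\dots,\,\beta_t\wedge\gamma_t\wedge\bar\beta_t\wedge\bar\gamma_t\}$ and $\Omega^{3,1}_A(t) = -\Delta_t^{''-1}\bar\partial_t^\star(\partial_t\Omega^{2,2}_{A,j}(t))$ the minimal-norm solution of $\bar\partial_t\Omega^{3,1}_A = -\partial_t\Omega^{2,2}_{A,j}$. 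From Lemma \ref{lem:pluri-closed} we know $\partial_t(\alpha_t\wedge\gamma_t\wedge\bar\alpha_t\wedge\bar\gamma_t) = -\overline{\sigma_{2\bar 2}(t)}\,\alpha_t\wedge\bar\alpha_t\wedge\beta_t\wedge\bar\beta_t\wedge\gamma_t$ and its analogues, so the $\Omega^{3,1}_A(t)$ vanish to first order at $t=0$ precisely when the functions $\overline{\sigma_{i\bar j}(t)}$ do — and this is where I would invoke Lemma \ref{Lem:sigma_22bar_vanishing-anti-hol-deriv} (cited in the appendix): the $\sigma_{i\bar j}$ have vanishing {\it anti}-holomorphic first derivatives at $0$ (they vanish holomorphically to order $\geq 1$ in $t$), so $\partial_t\Omega^{2,2}_{A,j}(t)$, hence $\Omega^{3,1}_A(t)$, contributes nothing to $(\partial\Phi_j/\partial\bar t_{i\lambda})(0)$. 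Therefore at $t=0$ the only surviving anti-holomorphic derivatives come from differentiating $\Omega^{2,2}_{A,j}(t)$ itself, and using the expansions of $\alpha_t,\beta_t,\gamma_t$ in (\ref{eqn:forms_t-forms_0}) one reads off that each $(\partial\Phi_j/\partial\bar t_{i\lambda})(0)$ is a $d$-closed $J_0$-$(2,2)$-form lying in the span of $\alpha\wedge\gamma\wedge\bar\alpha\wedge\bar\gamma,\dots,\beta\wedge\gamma\wedge\bar\beta\wedge\bar\gamma$, i.e. in $Q_{\omega_0}(H^{2,\,2}_A(X_0,\,\C)) = I_0(H^{2,\,2}_A(X_0,\,\C))$ by Lemma \ref{Lem:canonical-injections}$(a)$.

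To run the same argument at an arbitrary $t_0\in\Delta_{[\gamma]}$ rather than only at $0$, I would either repeat the computation after recentring the Kuranishi family at $t_0$ (the structure equations (\ref{eq:structure}) keep the same shape on the classes $(ii)$, $(iii)$), or — cleaner — package it via Lemma \ref{Lem:H31_BC_injects_H22_A}: Griffiths transversality for the metric-side VHS, which follows from the transversality of the $B$-side filtration (\ref{eqn:transversality_B}) through the $C^\infty$ isomorphism $F_{{\cal G}}{\cal H}^4\simeq F^1{\cal H}^2(B)$, forces $D''$ of a section of $\widetilde{{\cal H}^{2,\,2}_\omega}$ to land in $\widetilde{{\cal H}^{2,\,2}_\omega}$ plus the complexified image of $H^{3,\,1}_{BC}$, and the latter is itself contained in $\widetilde{{\cal H}^{2,\,2}_\omega}$ by the injection (\ref{eqn:H31_BC_injects_H22_A}); combined with the $(0,1)$-part of $D$ being the $\bar\partial$-type contraction this pins $D''s$ inside $\widetilde{{\cal H}^{2,\,2}_\omega}$. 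Either way one concludes that $\widetilde{{\cal H}^{2,\,2}_\omega}$, and hence $F_{{\cal G}}{\cal H}^4 = {\cal H}^{2,\,0}(B)\oplus\widetilde{{\cal H}^{2,\,2}_\omega}$, is a holomorphic subbundle of ${\cal H}^3\oplus{\cal H}^4$. The main obstacle I anticipate is bookkeeping rather than conceptual: controlling the minimal-norm solution $\Omega^{3,1}_A(t)$ and its $\bar t$-derivative at $t_0\neq 0$, for which the vanishing of the anti-holomorphic derivatives of the structure functions $\sigma_{i\bar j}$ (Lemma \ref{Lem:sigma_22bar_vanishing-anti-hol-deriv}) is the essential input; without it the $\Omega^{3,1}_A$-terms would genuinely obstruct holomorphicity.
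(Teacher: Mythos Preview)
There is a genuine gap in your computation of the anti-holomorphic derivatives at $t=0$. You assert that differentiating the $(2,2)$-part $\Omega^{2,2}_{A,j}(t)$ in $\bar t_{i\lambda}$ at $t=0$ yields ``a $d$-closed $J_0$-$(2,2)$-form lying in the span of $\alpha\wedge\gamma\wedge\bar\alpha\wedge\bar\gamma,\dots,\beta\wedge\gamma\wedge\bar\beta\wedge\bar\gamma$''. This is false. Take $j=1$, so $\Omega^{2,2}_{A,1}(t)=\alpha_t\wedge\gamma_t\wedge\bar\alpha_t\wedge\bar\gamma_t$. Using (\ref{eqn:forms_t-forms_0}), only $\bar\alpha_t=\bar\alpha+\bar t_{11}\alpha+\bar t_{12}\beta$ carries an isolated $\bar t_{12}$, so
\[
\frac{\partial}{\partial\bar t_{12}}\bigl(\alpha_t\wedge\gamma_t\wedge\bar\alpha_t\wedge\bar\gamma_t\bigr)\Big|_{t=0}=\alpha\wedge\gamma\wedge\beta\wedge\bar\gamma=-\alpha\wedge\beta\wedge\gamma\wedge\bar\gamma,
\]
which is a $J_0$-$(3,1)$-form, \emph{not} a $(2,2)$-form, and is not even $d$-closed (indeed $\bar\partial(\alpha\wedge\beta\wedge\gamma\wedge\bar\gamma)=\alpha\wedge\beta\wedge\gamma\wedge\bar\alpha\wedge\bar\beta\neq 0$). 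So the derivative does not land in $\widetilde{{\cal H}^{2,2}_\omega}$ alone. The paper's proof shows precisely that such terms must be absorbed by the ${\cal H}^{2,0}(B)$ summand: $\alpha\wedge\beta\wedge\gamma\wedge\bar\gamma$ is the image of the generator $[\alpha\wedge\beta]\in H^{2,0}(B_0,\C)$ under $\cdot\wedge\gamma\wedge\bar\gamma$, and the remaining $(3,1)$-derivatives (e.g.\ $\alpha\wedge\beta\wedge\gamma\wedge\bar\alpha$) are handled via the injection $H^{3,1}_{BC}\hookrightarrow H^{2,2}_A$ of Lemma~\ref{Lem:H31_BC_injects_H22_A}. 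Your target $D''s\in\widetilde{{\cal H}^{2,2}_\omega}$ is thus too strong; only $D''s\in F_{{\cal G}}{\cal H}^4={\cal H}^{2,0}(B)\oplus\widetilde{{\cal H}^{2,2}_\omega}$ holds, and the ${\cal H}^{2,0}(B)$ summand is genuinely needed.

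There is a second, related error in your handling of $\Omega^{3,1}_A(t)$. You invoke Lemma~\ref{Lem:sigma_22bar_vanishing-anti-hol-deriv}, which gives $\partial\sigma_{i\bar j}/\partial\bar t_{k\lambda}(0)=0$. But $\partial_t\Omega^{2,2}_{A,1}(t)=-\overline{\sigma_{2\bar 2}(t)}\,\alpha_t\wedge\bar\alpha_t\wedge\beta_t\wedge\bar\beta_t\wedge\gamma_t$ involves the \emph{conjugates} $\overline{\sigma_{i\bar j}(t)}$, whose $\bar t$-derivatives are the conjugates of the \emph{holomorphic} derivatives of $\sigma_{i\bar j}$. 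From (\ref{eqn:sigma_22bar_explicit-formula}) one has $\partial\sigma_{2\bar 2}/\partial t_{12}(0)=-1$, hence $\partial\overline{\sigma_{2\bar 2}}/\partial\bar t_{12}(0)=-1\neq 0$. So $\Omega^{3,1}_A(t)$ \emph{does} contribute a nonzero first-order $\bar t$-term at $0$; this is in fact what renders the total $\partial\Phi_1/\partial\bar t_{12}(0)$ $d$-closed despite the $(2,2)$-piece alone not being so. The lemma you cite controls the wrong direction and does not kill this contribution.
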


\noindent {\it Proof.} We have to show that for any $C^\infty$ section $s$ of $\widetilde{{\cal H}^{2,\,2}_\omega}$, the a priori ${\cal H}^3(X)\oplus{\cal H}^4(X)$-valued $(0,\,1)$-form $D''s$ is actually $F_{{\cal G}}{\cal H}^4$-valued, where $D''$ is the canonical $(0,\,1)$-connection of the constant bundle ${\cal H}^3(X)\oplus{\cal H}^4(X)$. Thanks to (\ref{eqn:H22_A_t}), it suffices to prove that all the anti-holomorphic first-order derivatives of each of the classes $[\alpha_t\wedge\gamma_t\wedge\bar\alpha_t\wedge\bar\gamma_t]_A,\,[\alpha_t\wedge\gamma_t\wedge\bar\beta_t\wedge\bar\gamma_t]_A,\, [\beta_t\wedge\gamma_t\wedge\bar\alpha_t\wedge\bar\gamma_t]_A,\, [\beta_t\wedge\gamma_t\wedge\bar\beta_t\wedge\bar\gamma_t]_A$ lie in $F_{{\cal G}}{\cal H}^4$.  

 We now study these classes individually. By way of example, we compute derivatives at $t=0$.

From (\ref{eqn:forms_t-forms_0}), we infer that the only terms in

$$\alpha_t\wedge\gamma_t\wedge\bar\alpha_t\wedge\bar\gamma_t = (\alpha + t_{11}\,\bar\alpha + t_{12}\,\bar\beta)\wedge(\gamma + t_{31}\,\bar\alpha + t_{32}\,\bar\beta - D(t)\,\bar\gamma)\wedge(\bar\alpha + \bar{t}_{11}\,\alpha + \bar{t}_{12}\,\beta)\wedge(\bar\gamma + \bar{t}_{31}\,\alpha + \bar{t}_{32}\,\beta - \overline{D(t)}\,\gamma)$$

\noindent that are linear in the $\bar{t}_{i\lambda}$'s are

$$\bar{t}_{12}\,\alpha\wedge\gamma\wedge\beta\wedge\bar\gamma  \hspace{3ex} \mbox{and} \hspace{3ex} \bar{t}_{32}\,\alpha\wedge\gamma\wedge\bar\alpha\wedge\beta.$$

\noindent So, the non-trivial anti-holomorphic first-order derivatives at $t=0$ are

\begin{equation}\label{eqn:1st_deriv_G}\frac{\partial(\alpha_t\wedge\gamma_t\wedge\bar\alpha_t\wedge\bar\gamma_t)}{\partial\bar{t}_{12}}_{|t=0} = -\alpha\wedge\beta\wedge\gamma\wedge\bar\gamma \hspace{3ex} \mbox{and} \hspace{3ex} \frac{\partial(\alpha_t\wedge\gamma_t\wedge\bar\alpha_t\wedge\bar\gamma_t)}{\partial\bar{t}_{32}}_{|t=0} = \alpha\wedge\beta\wedge\gamma\wedge\bar\alpha.\end{equation}

\noindent Note that $\alpha\wedge\beta\wedge\gamma\wedge\bar\gamma$ is not $d$-closed, so it defines no class in $H^{3,\,1}_{BC}(X_0,\,\C)$. However, $\alpha\wedge\beta\wedge\gamma\wedge\bar\gamma$ is the image under the multiplication by $\gamma\wedge\bar\gamma$ of $\alpha\wedge\beta$ whose Dolbeault cohomology class $[\alpha\wedge\beta]$ is the (unique up to a multiplicative constant) generator of $H^{2,\,0}(B_0,\,\C)$. Meanwhile, $\alpha\wedge\beta\wedge\gamma\wedge\bar\alpha$ is $d$-closed and its Bott-Chern cohomology class is one of the generators of $H^{3,\,1}_{BC}(X_0,\,\C)$ (cf. proof of Lemma \ref{Lem:H31_BC_injects_H22_A}) which injects canonically into $H^{2,\,2}_A(X_0,\,\C)$ by Lemma \ref{Lem:H31_BC_injects_H22_A}. Under this injection, $[\alpha\wedge\beta\wedge\gamma\wedge\bar\alpha]_{BC}$ identifies with its image $[\alpha\wedge\bar\alpha\wedge\gamma\wedge\bar\gamma]_A$ in $H^{2,\,2}_A(X_0,\,\C)$, which in turn identifies with its image in $\widetilde{H^{2,\,2}_{\omega_0}}=Q_{\omega_0}(H^{2,\,2}_A(X_0,\,\C))$ under the canonical injection $Q_{\omega_0} = I_0:H^{2,\,2}_A(X_0,\,\C)\hookrightarrow H^4(X,\,\C)$ of Lemma \ref{Lem:canonical-injections}.

 The upshot is that after all these identifications, we have

 $$\frac{\partial\,[\alpha_t\wedge\gamma_t\wedge\bar\alpha_t\wedge\bar\gamma_t]_A}{\partial\bar{t}_{i\lambda}}_{|t=0}\in (F_{{\cal G}}{\cal H}^4)_0 = H^{2,\,0}(B_0,\,\C)\oplus\widetilde{H^{2,\,2}_{\omega_0}}$$ 

\noindent for all indices $i, \lambda$. 

Similarly, for the remaining 3 generators of $H^{2,\,2}_A(X_t,\,\C)$, we get from (\ref{eqn:forms_t_explicit}) that the only terms linear in the $\bar{t}_{i\lambda}$'s in $\alpha_t\wedge\gamma_t\wedge\bar\beta_t\wedge\bar\gamma_t$ are $\bar{t}_{22}\,\alpha\wedge\gamma\wedge\beta\wedge\bar\gamma$ and $\bar{t}_{32}\,\alpha\wedge\gamma\wedge\bar\beta\wedge\beta$; in $\beta_t\wedge\gamma_t\wedge\bar\alpha_t\wedge\bar\gamma_t$ are $\bar{t}_{11}\,\beta\wedge\gamma\wedge\alpha\wedge\bar\gamma$ and $\bar{t}_{31}\,\beta\wedge\gamma\wedge\bar\alpha\wedge\alpha$; and in $\beta_t\wedge\gamma_t\wedge\bar\beta_t\wedge\bar\gamma_t$ are $\bar{t}_{21}\,\beta\wedge\gamma\wedge\alpha\wedge\bar\gamma$ and $\bar{t}_{31}\,\beta\wedge\gamma\wedge\bar\beta\wedge\alpha$. Thus, the only non-zero anti-holomorphic first-order derivatives at $t=0$ of these terms are 

$$\pm\,\alpha\wedge\beta\wedge\gamma\wedge\bar\gamma, \hspace{3ex} \pm\,\alpha\wedge\beta\wedge\gamma\wedge\bar\alpha \hspace{3ex} \mbox{and} \hspace{3ex} \pm\,\alpha\wedge\beta\wedge\gamma\wedge\bar\beta.$$

\noindent Note that the only new quantity compared to (\ref{eqn:1st_deriv_G}) is $\alpha\wedge\beta\wedge\gamma\wedge\bar\beta$. It has the same properties as $\alpha\wedge\beta\wedge\gamma\wedge\bar\alpha$, i.e. it is $d$-closed and its Bott-Chern cohomology class is a generator of $H^{3,\,1}_{BC}(X_0,\,\C)$ (cf. proof of Lemma \ref{Lem:H31_BC_injects_H22_A}). This vector space injects canonically into $H^{2,\,2}_A(X_0,\,\C)$ by Lemma \ref{Lem:H31_BC_injects_H22_A}. So the above argument applies again and yields

$$\frac{\partial\,[\alpha_t\wedge\gamma_t\wedge\bar\beta_t\wedge\bar\gamma_t]_A}{\partial\bar{t}_{i\lambda}}_{|t=0},\,  \frac{\partial\,[\beta_t\wedge\gamma_t\wedge\bar\alpha_t\wedge\bar\gamma_t]_A}{\partial\bar{t}_{i\lambda}}_{|t=0},\,  \frac{\partial\,[\beta_t\wedge\gamma_t\wedge\bar\beta_t\wedge\bar\gamma_t]_A}{\partial\bar{t}_{i\lambda}}_{|t=0}\in (F_{{\cal G}}{\cal H}^4)_0$$ 

\noindent for all indices $i, \lambda$.

 \hfill $\Box$

\subsection{Construction of coordinates on the Gauduchon cone}\label{subsection:coordinates_Gcone}

 Recall the isomorphisms

\vspace{2ex}

$\begin{CD} 
H^{1,\,1}(B_0,\,\C) & = & \bigg\langle[\alpha\wedge\bar\alpha]_{\bar\partial},\, [\alpha\wedge\bar\beta]_{\bar\partial},\, [\beta\wedge\bar\alpha]_{\bar\partial},\, [\beta\wedge\bar\beta]_{\bar\partial}\bigg\rangle\\
@V\simeq V \cdot\wedge\gamma V \\         
H^{2,\,1}_{[\gamma]}(X_0,\,\C)  & = &  \bigg\langle[\alpha\wedge\gamma\wedge\bar\alpha]_{\bar\partial},\, [\alpha\wedge\gamma\wedge\bar\beta]_{\bar\partial},\, [\beta\wedge\gamma\wedge\bar\alpha]_{\bar\partial},\, [\beta\wedge\gamma\wedge\bar\beta]_{\bar\partial}\bigg\rangle\\
@V\simeq V \cdot\wedge\bar\gamma V  \\
 H^{2,\,2}_A(X_0,\,\C) & = & \bigg\langle[\alpha\wedge\gamma\wedge\bar\alpha\wedge\bar\gamma]_A,\, [\alpha\wedge\gamma\wedge\bar\beta\wedge\bar\gamma]_A,\, [\beta\wedge\gamma\wedge\bar\alpha\wedge\bar\gamma]_A,\, [\beta\wedge\gamma\wedge\bar\beta\wedge\bar\gamma]_A\bigg\rangle.
\end{CD}$

\vspace{2ex}

 On the other hand, on the vector space
$$H^{3,\,0}(X_t,\,\C)\oplus H^{2,\,1}_{[\gamma]}(X_t,\,\C)\simeq H^{2,\,0}(B_t,\,\C)\oplus H^{1,\,1}(B_t,\,\C), \hspace{3ex} t\in\Delta,$$
\noindent we have two sesquilinear intersection forms (the first of which was considered in (\ref{eqn:H-def})). The first one is obtained by restriction from $H^3_{DR}(X,\,\C)\times H^3_{DR}(X,\,\C)$ (where $X$ is the differentiable manifold underlying the $X_t$'s) when $H^{3,\,0}(X_t,\,\C)\oplus H^{2,\,1}_{[\gamma]}(X_t,\,\C)$ is viewed as a vector subspace of $H^3_{DR}(X,\,\C)$:
\begin{eqnarray}\label{eqn:H_recall}\nonumber H: \bigg(H^{3,\,0}(X_t,\,\C)\oplus H^{2,\,1}_{[\gamma]}(X_t,\,\C)\bigg) & \times & \bigg(H^{3,\,0}(X_t,\,\C)\oplus H^{2,\,1}_{[\gamma]}(X_t,\,\C)\bigg)\longrightarrow \C, \\
\nonumber (\{u\},\,\{v\}) & \mapsto & -i\,\int\limits_X u\wedge\bar{v}.\end{eqnarray}  
\noindent Its signature is $(-,\,-,\,+,\,+,\,+)$ (cf. Corollary \ref{Cor:H_signature}).

\noindent The second sesquilinear intersection form is obtained by restriction from $H^2_{DR}(B,\,\C)$ (where $B$ is the differentiable manifold underlying the tori $B_t$) when $H^{2,\,0}(B_t,\,\C)\oplus H^{1,\,1}(B_t,\,\C)$ is viewed as a vector subspace of $H^2_{DR}(B,\,\C)$:
\begin{eqnarray}\label{eqn:H_B_def}\nonumber H_B: \bigg(H^{2,\,0}(B_t,\,\C)\oplus H^{1,\,1}(B_t,\,\C)\bigg) & \times & \bigg(H^{2,\,0}(B_t,\,\C)\oplus H^{1,\,1}(B_t,\,\C)\bigg)\longrightarrow \C, \\
    (\{\xi\},\,\{\zeta\}) & \mapsto & \int\limits_B \xi\wedge\bar\zeta.\end{eqnarray}
\noindent Indeed, the coefficient of the integral $\int_B \xi\wedge\bar\zeta$ in the defintion of the sesquilinear intersection form in degree $n$ on an $n$-dimensional compact complex manifold is $(-1)^{\frac{n(n+1)}{2}}\,i^n$, so in the case of $H_B$, where $n=\mbox{dim}_\C B_t=2$, this coefficient equals $1$.

 In particular, on the vector space

\begin{equation}\label{eqn:H20_H11_B_0_isom} H^{2,\,0}(B_0,\,\C)\oplus H^{1,\,1}(B_0,\,\C)\stackrel{\cdot\wedge\gamma}{\simeq} H^{3,\,0}(X_0,\,\C)\oplus H^{2,\,1}_{[\gamma]}(X_0,\,\C),\end{equation}

\noindent the two sesquilinear intersection forms are given by
$$H_B(\{u\},\,\{v\}) = \int\limits_B u\wedge\bar{v} \hspace{3ex} \mbox{and} \hspace{3ex} H(\{u\wedge\gamma\},\,\{v\wedge\gamma\}) = -\int\limits_B(u\wedge\bar{v})\wedge(i\gamma\wedge\bar\gamma).$$

\begin{Prop}\label{Prop:H_B_signature} The signature of the sesquilinear intersection form $H_B$ defined in (\ref{eqn:H_B_def}) is 

\noindent $(+,\,+,\,-,\,-,\,-)$.

 Specifically, for any Hermitian metric $\rho_t$ on $B_t$, $H^{2,\,0}(B_t,\,\C)\subset H^2_{+}(B_t,\,\C)$, while $H_B$ has signature $(+,\,-,\,-,\,-)$ on $H^{1,\,1}(B_t,\,\C)$.

\end{Prop}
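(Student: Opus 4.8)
The plan is to compute the signature of $H_B$ on $B_t$ by exploiting the fact that $B_t$ is a compact \emph{K\"ahler} surface, so that the classical Hodge--Riemann bilinear relations apply directly, and then to transport this information via the isomorphism $(\ref{eqn:H20_H11_B_0_isom})$. First I would recall the Hodge star signature decomposition $H^2_{DR}(B_t,\,\C) = H^2_{+}(B_t,\,\C)\oplus H^2_{-}(B_t,\,\C)$ exactly as in $(\ref{eqn:H3_sign_decomp})$, but adapted to the even-dimensional (here $n=2$) case: for a Hermitian metric $\rho_t$ on $B_t$, the Hodge star $\star_{\rho_t}$ acting on harmonic $2$-forms satisfies $\star^2 = (-1)^2 = \mathrm{Id}$, so its eigenvalues are $\pm 1$ and $H^2_{\pm}(B_t,\,\C)$ are the corresponding eigenspaces. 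The sesquilinear form $H_B(\{u\},\{v\}) = \int_{B_t} u\wedge\bar v = \int_{B_t} u\wedge\star\overline{(\star^{-1} v)}$ is, by the standard argument (the analogue of [Pop13b, Lemmas 5.1, 5.2] in even degree), positive definite on $H^2_{+}$ and negative definite on $H^2_{-}$, and the two eigenspaces are $H_B$-orthogonal.

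Next I would pin down which bidegree pieces land in $H^2_{+}$ versus $H^2_{-}$. For the $(2,\,0)$-part: any holomorphic $(2,\,0)$-form on a surface is automatically harmonic and of constant pointwise norm type $\star u = \bar u$ up to a positive factor; concretely, on $B_t$ with the flat metric one has $\star(\alpha_t\wedge\beta_t) = \overline{\alpha_t\wedge\beta_t}$, so $H_B(\{\alpha_t\wedge\beta_t\},\{\alpha_t\wedge\beta_t\}) = \int_{B_t}\alpha_t\wedge\beta_t\wedge\bar\alpha_t\wedge\bar\beta_t > 0$, hence $H^{2,\,0}(B_t,\,\C)\subset H^2_{+}(B_t,\,\C)$. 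The same holds for $H^{0,\,2}(B_t,\,\C)$ by conjugation. For the $(1,\,1)$-part: since $B_t$ is K\"ahler with K\"ahler class $\{\omega_{B_t}\}$, the Lefschetz decomposition gives $H^{1,\,1}(B_t,\,\R) = \R\,\{\omega_{B_t}\}\oplus P^{1,\,1}(B_t,\,\R)$ where $P^{1,\,1}$ is the primitive part; the Hodge--Riemann relations on a surface state that $H_B$ (which is $\int u\wedge \bar v$, with the sign convention $(-1)^{n(n+1)/2}i^n = (-1)^3 i^2 = +1$ for $n=2$) is positive on $\R\{\omega_{B_t}\}$ and negative definite on $P^{1,\,1}$. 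Since $h^{1,\,1}(B_t) = 4$ for an abelian surface, the primitive part has dimension $3$, giving signature $(+,\,-,\,-,\,-)$ on $H^{1,\,1}(B_t,\,\C)$. Combining, $H_B$ has signature $(+,\,+,\,-,\,-,\,-)$ on the $5$-dimensional space $H^{2,\,0}\oplus H^{1,\,1}$, with the two $+$'s coming from $H^{2,\,0}$ and from $\R\{\omega_{B_t}\}\subset H^{1,\,1}$.

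The one point requiring a little care — and I expect this to be the main (mild) obstacle — is checking that the sign of the pairing on $H^{2,\,0}(B_t,\,\C)$ is genuinely $+$ with \emph{this} particular coefficient convention in $(\ref{eqn:H_B_def})$, since the analogous statement for $H^3$ of the Iwasawa manifold in $(\ref{eqn:H30_sign})$ gave $H^{3,\,0}\subset H^3_{-}$ with a \emph{negative} sign; the difference is entirely accounted for by the parity of $n$ in the coefficient $(-1)^{n(n+1)/2}i^n$, which is $-i$ for $n=3$ but $+1$ for $n=2$. I would verify this by the direct computation $H_B(\{\alpha_t\wedge\beta_t\},\{\alpha_t\wedge\beta_t\}) = \int_{B_t}\alpha_t\wedge\beta_t\wedge\bar\alpha_t\wedge\bar\beta_t = \int_{B_t}(i\alpha_t\wedge\bar\alpha_t)\wedge(i\beta_t\wedge\bar\beta_t) > 0$, which settles it unambiguously. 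The final assertion that ``for any Hermitian metric $\rho_t$ on $X_t$'' the statement holds is automatic because the signature of a non-degenerate sesquilinear form is a topological invariant of the form, independent of the auxiliary metric used to compute the eigenspace decomposition; I would note this explicitly. Combining $(\ref{eqn:H20_H11_B_0_isom})$ with Corollary \ref{Cor:H_signature} then also re-derives, in a pleasantly symmetric way, that $H$ on $F^2_{[\gamma]}H^3(X,\,\C)$ has the opposite signature $(-,\,-,\,+,\,+,\,+)$, since $H(\{u\wedge\gamma\},\{v\wedge\gamma\}) = -\int_X(u\wedge\bar v)\wedge(i\gamma\wedge\bar\gamma)$ is, up to the positive factor contributed by $i\gamma\wedge\bar\gamma$ on the fibre, the negative of $H_B(\{u\},\{v\})$.
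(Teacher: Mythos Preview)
Your approach is correct for the signature claim, and on $H^{1,1}$ it takes a genuinely different route from the paper's. One slip to flag: with the paper's convention $u\wedge\star\bar{v}=\langle u,v\rangle\,dV$, the Hodge star sends $(p,q)$-forms to $(n-q,n-p)$-forms, so on a surface it preserves the bidegree $(2,0)$ and in fact $\star(\alpha_t\wedge\beta_t)=\alpha_t\wedge\beta_t$, not $\overline{\alpha_t\wedge\beta_t}$. This matters for the eigenspace assertion $H^{2,0}\subset H^2_{+}$: your direct computation $H_B(\{\alpha_t\wedge\beta_t\},\{\alpha_t\wedge\beta_t\})>0$ establishes positivity of $H_B$ on $H^{2,0}$ (enough for the signature), but membership in the $+1$-eigenspace of $\star$ requires the identity $\star v=v$ itself. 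The paper obtains this from the primitive-form formula $\star v=(-1)^{k(k+1)/2}\,i^{p-q}\,\omega^{n-p-q}\wedge v/(n-p-q)!$ specialised to $(p,q)=(2,0)$, $k=n=2$.

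On $H^{1,1}$ the two arguments diverge. You invoke the Hodge--Riemann relations on the compact K\"ahler surface $B_t$: Lefschetz splits $H^{1,1}=\C\{\omega_{B_t}\}\oplus P^{1,1}$, with the intersection form positive on the K\"ahler line and negative on the $3$-dimensional primitive part. The paper instead fixes the flat metric $\rho_t=i\alpha_t\wedge\bar\alpha_t+i\beta_t\wedge\bar\beta_t$ and computes $\star$ explicitly on each of the four basis $(1,1)$-forms, reading off one $(+1)$-eigenvector ($\rho_t$ itself) and three $(-1)$-eigenvectors. Your route is more conceptual and manifestly metric-independent; the paper's buys an explicit $\star$-eigenbasis, in exact parallel with Lemma~\ref{Lem:star_generators_21_gamma} and Corollary~\ref{Cor:H_signature}, which is what is needed for the coordinate construction in Proposition~\ref{Prop:coordinates_A}.
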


\noindent {\it Proof.} Every class in $H^{2,\,0}(B_t,\,\C)$ has a unique representative which, for bidegree reasons, is a {\it primitive} $(2,\,0)$-form w.r.t. any Hermitian metric we equip $B_t$ with. On the other hand, a well-known formula (cf. e.g. [Voi02, Proposition 6.29, p. 150]) asserts that for any {\it primitive} $(p,\,q)$-form $v$ w.r.t. any Hermitian metric $\omega$ on a complex manifold of dimension $n$, we have

\begin{equation}\label{eqn:star_primitive_formula}\nonumber\star\,v = (-1)^{k(k+1)/2}\,i^{p-q}\,\frac{\omega^{n-p-q}\wedge v}{(n-p-q)!}, \hspace{3ex} \mbox{where} \hspace{1ex} k:=p+q.\end{equation}

\noindent When $p+q=n$ and $(p,\,q)=(2,\,0)$, we get $\star\,v = v$. Therefore, $H^{2,\,0}(B_t,\,\C)\subset H^2_{+}(B_t,\,\C)$.

 Now, recall that $H^{1,\,1}(B_t,\,\C)$ is generated by the classes $[\alpha_t\wedge\bar\alpha_t]_{\bar\partial},\, [\alpha_t\wedge\bar\beta_t]_{\bar\partial},\, [\beta_t\wedge\bar\alpha_t]_{\bar\partial},\, [\beta_t\wedge\bar\beta_t]_{\bar\partial}$. Let us equip $B_t$ with the Hermitian metric

$$\rho_t:=i\alpha_t\wedge\bar\alpha_t + i\beta_t\wedge\bar\beta_t, \hspace{3ex} t\in\Delta.$$

\noindent The associated volume form is $dV_{\rho_t}=\rho_t^2/2! = i\alpha_t\wedge\bar\alpha_t\wedge i\beta_t\wedge\bar\beta_t$. Denoting by $\star = \star_{\rho_t}$ the Hodge star operator induced by $\rho_t$, we can check as in Lemma \ref{Lem:star_generators_21_gamma} that the following identities hold

\begin{eqnarray}\label{eqn:star_generators_H11_B}\nonumber & & \star\,(i\alpha_t\wedge\bar\alpha_t) = i\beta_t\wedge\bar\beta_t,  \hspace{6ex} \star\,(i\beta_t\wedge\bar\beta_t) = i\alpha_t\wedge\bar\alpha_t, \\
& & \star\,(i\alpha_t\wedge\bar\beta_t) = -i\alpha_t\wedge\bar\beta_t,  \hspace{4.5ex} \star\,(i\beta_t\wedge\bar\alpha_t) = -i\beta_t\wedge\bar\alpha_t  \end{eqnarray}

\noindent for every $t\in\Delta$. 

 Indeed, from the definition of the Hodge star operator, we know that

$$u\wedge\overline{\star\,(i\alpha_t\wedge\bar\alpha_t)} = \langle u,\, i\alpha_t\wedge\bar\alpha_t\rangle\,dV_{\rho_t}.$$

\noindent When $u$ is the product of a form chosen from $\alpha_t$, $\beta_t$ and a form chosen from $\bar\alpha_t$, $\bar\beta_t$, the two sides of this identity are non-zero only when $u=i\alpha_t\wedge\bar\alpha_t$. In this case, we get

$$(i\alpha_t\wedge\bar\alpha_t)\wedge\overline{\star\,(i\alpha_t\wedge\bar\alpha_t)} = i\alpha_t\wedge\bar\alpha_t\wedge i\beta_t\wedge\bar\beta_t,$$

\noindent so $\overline{\star\,(i\alpha_t\wedge\bar\alpha_t)}$ must be the form complementary to $i\alpha_t\wedge\bar\alpha_t$. We get $\star\,(i\alpha_t\wedge\bar\alpha_t) = i\beta_t\wedge\bar\beta_t$. The remaining identities in (\ref{eqn:star_generators_H11_B}) are proved in an analogous way.

 The last two identities in (\ref{eqn:star_generators_H11_B}) show that $i\alpha_t\wedge\bar\beta_t$ and $i\beta_t\wedge\bar\alpha_t$ are eigenvectors of $\star$ corresponding to the eigenvalue $-1$, so they represent classes lying in $H^2_{-}(B_t,\,\C)$. Meanwhile, the first two identities in (\ref{eqn:star_generators_H11_B}) can be re-written as

$$\star\,(i\alpha_t\wedge\bar\alpha_t + i\beta_t\wedge\bar\beta_t) = i\alpha_t\wedge\bar\alpha_t + i\beta_t\wedge\bar\beta_t \hspace{2ex}  \mbox{and} \hspace{2ex} \star\,(i\alpha_t\wedge\bar\alpha_t - i\beta_t\wedge\bar\beta_t) = -(i\alpha_t\wedge\bar\alpha_t - i\beta_t\wedge\bar\beta_t).$$

\noindent Therefore, $i\alpha_t\wedge\bar\alpha_t + i\beta_t\wedge\bar\beta_t$ represents a class lying in $H^2_{+}(B_t,\,\C)$ and $i\alpha_t\wedge\bar\alpha_t - i\beta_t\wedge\bar\beta_t$ represents a class lying in $H^2_{-}(B_t,\,\C)$.  \hfill  $\Box$

\vspace{2ex}

A consequence of these considerations is that Proposition \ref{Prop:coordinates} can now be used to construct coordinates on the complexification $\widetilde{{\cal G}_0}\subset\widetilde{{\cal G}_{X_0}}$ of the parameter set ${\cal G}_0 = \{[(\omega_t^{1,\,1})^2]_A\,|\, t\in\Delta_{[\gamma]}\}\subset{\cal G}_{X_0}$ using the symplectic vector space $(H^2(B,\,\C),\,Q_B(\cdot\,,\,\cdot))$ equipped with the bilinear intersection form $Q_B:H^2(B,\,\C)\times H^2(B,\,\C)\to\C$ defined by $Q_B(\{u\},\,\{v\}):=-\int_B u\wedge v$. Consider the following

\vspace{3ex}

\noindent {\bf Setup.} {\it Let $(X_t)_{t\in\Delta}$ be the Kuranishi family of the Iwasawa manifold $X=X_0$ and let $(B_t)_{t\in\Delta}$ be the associated family of $2$-dimensional Albanese tori. Let $v=(v_t)_{t\in\Delta_{[\gamma]}}$ be a holomorphic section of the vector bundle $\Delta_{[\gamma]}\ni t\mapsto H^{2,\,0}(B_t,\,\C)$ such that each $(2,\,0)$-form $v_t$ is non-vanishing on $B_t$. (We may choose $v_t:=\alpha_t\wedge\beta_t$.)

 Let $\eta_0 = \eta_0^{3,\,0} + \eta_0^{2,\,1} + \overline{\eta_0^{2,\,1}} + \overline{\eta_0^{3,\,0}}\in H^3(X,\,\R)$ be a real class with $\eta_0^{3,\,0}\in H^{3,\,0}(X_0,\,\C), \eta_0^{2,\,1}\in H^{2,\,1}_{[\gamma]}(X_0,\,\C)$ satisfying conditions (\ref{eqn:eta_0_assumption}) of Proposition \ref{Prop:coordinates}. Thanks to isomorphism (\ref{eqn:H20_H11_B_0_isom}), there exist unique classes

$$\eta_{0,\,B}^{2,\,0}\in H^{2,\,0}(B_0,\,\C) \hspace{3ex} \mbox{and} \hspace{3ex} \eta_{0,\,B}^{1,\,1}\in H^{1,\,1}(B_0,\,\C)$$

\noindent such that $\eta_0^{3,\,0} = \eta_{0,\,B}^{2,\,0}\wedge\gamma$ and $\eta_0^{2,\,1} = \eta_{0,\,B}^{1,\,1}\wedge\gamma$. Put

$$\eta_{0,\,B}:= \eta_{0,\,B}^{2,\,0} + \frac{\eta_{0,\,B}^{1,\,1} + \overline{\eta_{0,\,B}^{1,\,1}}}{2} + \overline{\eta_{0,\,B}^{2,\,0}}\in H^2(B_0,\,\R).$$

 Complete $\eta_{0,\,B}$ to a symplectic basis $\{\eta_{0,\,B},\eta_{1,\,B},\dots , \eta_{4,\,B},\,\nu_{0,\,B}, \nu_{1,\,B},\dots , \nu_{4,\,B}\}$ of $(H^2(B_0,\,\R),\,Q_B(\cdot\,,\,\cdot))$. Normalise such that

 $$Q_B(v_t,\,\eta_{0,\,B}) = 1 \hspace{2ex} \mbox{for all}\hspace{1ex} t\in\Delta \hspace{2ex} \mbox{sufficiently close to}\,\,\, 0.$$}

We can now state the result we have been aiming at.

\begin{Prop}\label{Prop:coordinates_A} In the setup described above, the functions

\begin{equation}\label{eqn:coordinates_A_def}\nonumber w_i(t):= Q_B(v_t,\,\eta_{i,\,B}) \hspace{2ex} \mbox{for}\hspace{1ex} t\in\Delta_{[\gamma]} \hspace{2ex} 
\mbox{and}\hspace{1ex}  i\in\{1,\dots , 4\}\end{equation}
\noindent define {\bf holomorphic coordinates} on $\Delta_{[\gamma]}$ in a neighbourhood of\, $0$ and implicitly on the complexified parameter set $\widetilde{{\cal G}_0}$, the complexification of

$${\cal G}_0 = \bigg\{[(\omega_t^{1,\,1})^2]_A\,\,\bigg |\,\, t\in\Delta_{[\gamma]}\bigg\}\subset{\cal G}_{X_0},$$

\noindent in a neighbourhood of $[\omega_0^2]_A$.

\end{Prop}

\noindent {\it Proof.} It runs along the lines of the proof of Proposition \ref{Prop:coordinates}.  \hfill $\Box$

\section{The mirror map}\label{section:mirror-map}

We can now associate with every small deformation $X_t$ of $X_0$ an element in the Gauduchon cone of $X_0$ in which the canonical class $[\omega_0^2]_A = [(\omega_0^{1,\,1})^2]_A$ is a marked point.
\begin{Def}\label{Def:mirror-map_def} Let $(X_t)_{t\in\Delta}$ be the Kuranishi family of the Iwasawa manifold $X=X_0$ and let $(\omega_t^{1,\,1})_{t\in\Delta_{[\gamma]}}$ be the smooth family of canonical Gauduchon metrics on $X_0$ constructed in Proposition \ref{Prop:omega_t_11}. For every $t\in\Delta_{[\gamma]}$, let $[(\omega_t^{1,\,1})^2]_A\in{\cal G}_{X_0} = {\cal G}_X$ be the associated Aeppli cohomology class.

We define the {\bf positive mirror map} of $X = X_0$ by
\begin{equation}\label{eqn:positive_mirror-map_def1} {\cal M} : \Delta_{[\gamma]}\longrightarrow{\cal G}_X, \hspace{3ex} t\mapsto [(\omega_t^{1,\,1})^2]_A,\end{equation}
\noindent where ${\cal G}_X$ is the Gauduchon cone of $X = X_0$ (i.e. the open subset of $H^{2,\,2}_A(X,\,\R)$ consisting of real {\bf positive} classes). Thus, the parameter subset of the Gauduchon cone of $X$ defined in (\ref{eqn:parameter-set_def}) is ${\cal G}_0 = {\cal M}(\Delta_{[\gamma]})$.

\end{Def}

\vspace{3ex}

From (\ref{eqn:c_j-d_t}) and from Lemma \ref{Lem:Aeppli_omega_t_11^2} we get the following formula for the positive mirror map after recalling that $t_{3,\,1} = t_{3,\,2} =0$ when $t\in\Delta_{[\gamma]}$:

\begin{eqnarray}\label{eqn:positive_mirror-map_formula}
\nonumber {\cal M}(t) & = &  2\,\bigg(1-|t_{11}|^2 - |t_{21}|^2\bigg)\,\bigg(1-|t_{11}\,t_{22} - t_{12}\,t_{21}|^2\bigg)\,\bigg[i\,\alpha\wedge\bar\alpha\wedge i\,\gamma\wedge\bar\gamma\bigg]_A \\
\nonumber & + & 2\,\bigg(1-|t_{12}|^2 - |t_{22}|^2\bigg)\,\bigg(1-|t_{11}\,t_{22} - t_{12}\,t_{21}|^2\bigg)\,\bigg[i\,\beta\wedge\bar\beta\wedge i\,\gamma\wedge\bar\gamma\bigg]_A \end{eqnarray}
\begin{eqnarray}\nonumber  & - & 2\,\bigg(t_{12}\,\bar{t}_{11} + t_{22}\,\bar{t}_{21}\bigg)\,\bigg(1-|t_{11}\,t_{22} - t_{12}\,t_{21}|^2\bigg)\,\bigg[i\,\alpha\wedge\bar\beta\wedge i\,\gamma\wedge\bar\gamma\bigg]_A \\
     & - & 2\,\bigg(t_{11}\,\bar{t}_{12} + t_{21}\,\bar{t}_{22}\bigg)\,\bigg(1-|t_{11}\,t_{22} - t_{12}\,t_{21}|^2\bigg)\,\bigg[i\,\beta\wedge\bar\alpha\wedge i\,\gamma\wedge\bar\gamma\bigg]_A,     \hspace{3ex} t\in\Delta_{[\gamma]}.\end{eqnarray}

\noindent Alternatively, formula (\ref{eqn:omega_t_11_J0_square}) yields for every $t\in\Delta_{[\gamma]}$

\begin{eqnarray}\label{eqn:positive_mirror-map_formula_bis}\nonumber {\cal M}(t) & = & [\omega_0^2]_A  \\
\nonumber & + & 2\,\bigg(c_1(t) + c_3(t) + c_1(t)\,c_3(t)\bigg)\,\bigg[i\,\alpha\wedge\bar\alpha\wedge i\,\gamma\wedge\bar\gamma\bigg]_A \\
\nonumber & + & 2\,\bigg(c_2(t) +c_3(t) + c_2(t)\,c_3(t)\bigg)\,\bigg[i\,\beta\wedge\bar\beta\wedge i\,\gamma\wedge\bar\gamma\bigg]_A \\
     & + & 2\,d(t)\,\bigg(1 + c_3(t)\bigg)\,\bigg[i\,\alpha\wedge\bar\beta\wedge i\,\gamma\wedge\bar\gamma\bigg]_A + 2\,\overline{d(t)}\,\bigg(1 + c_3(t)\bigg)\,\bigg[i\,\beta\wedge\bar\alpha\wedge i\,\gamma\wedge\bar\gamma\bigg]_A,  \end{eqnarray} 

\noindent where $c_j(t)$ and $d(t)$ are defined by (\ref{eqn:c_j-d_t}) with $t_{3,\,1} = t_{3,\,2} =0$ when $t\in\Delta_{[\gamma]}$.


Since $\Delta_{[\gamma]}$ is an open subset in a vector space of complex dimension $4$ (see Definition \ref{Def:Delta_gamma}) while ${\cal G}_X$ is an open subset in a vector space of real dimension $4$, we rebalance the two sides of (\ref{eqn:positive_mirror-map_def1}) by complexifying the latter set.

\begin{Def}\label{Def:G-cone-complexified} Let $X = X_0$ be the Iwasawa manifold.

\vspace{1ex}

$(i)$\, We know from (\ref{eqn:H22_A_t}) that $H^{2,\,2}_A(X_0,\,\C)$ injects canonically (and $\C$-linearly) into $H^4_{DR}(X,\,\C)$. 
Similarly, $H^{2,\,2}_A(X_0,\,\R)$ injects canonically (and $\R$-linearly) into $H^4_{DR}(X,\,\R)$. 
On the other hand, we know that the image of $H^4(X,\,\Z)$ in $H^4_{DR}(X,\,\R)$ under the natural map $H^4(X,\,\Z)\hookrightarrow H^4_{DR}(X,\,\R)$ is a {\bf lattice}. 
We put

\begin{equation*}\label{eqn:H22_Z_def}H^{2,\,2}_A(X_0,\,\Z):= H^{2,\,2}_A(X_0,\,\R)\cap H^4(X,\,\Z)\subset H^{2,\,2}_A(X_0,\,\R).\end{equation*}

\noindent Thus $H^{2,\,2}_A(X_0,\,\Z)$ is a {\bf lattice} in $H^{2,\,2}_A(X_0,\,\R)$.

\vspace{2ex}

$(ii)$\, We define the {\bf complexified Gauduchon cone} of the Iwasawa manifold $X=X_0$ by

\begin{equation*}\label{eqn:G-cone-complexified}\widetilde{\cal G}_{X_0}:= {\cal G}_{X_0}\oplus H^{2,\,2}_A(X_0,\,\R)\slash 2\pi i\,H^{2,\,2}_A(X_0,\,\Z).\end{equation*}

\vspace{2ex}

$(iii)$\, We define the {\bf mirror map} $\widetilde{\cal M}:\Delta_{[\gamma]}\longrightarrow\widetilde{\cal G}_{X_0}$ of $X=X_0$ by

\begin{eqnarray*}\label{mirror-map_def1}\nonumber\widetilde{\cal M}(t) & = & [\omega_0^2]_A - t_{11}\,[i\,\beta\wedge\bar\alpha\wedge i\,\gamma\wedge\bar\gamma]_A 
+ t_{22}\,[i\,\alpha\wedge\bar\beta\wedge i\,\gamma\wedge\bar\gamma]_A \\
& - & t_{12} \,[i\,\beta\wedge\bar\beta\wedge i\,\gamma\wedge\bar\gamma]_A + t_{21}\,[i\,\alpha\wedge\bar\alpha\wedge i\,\gamma\wedge\bar\gamma]_A.\end{eqnarray*}

\noindent Thus, the positive mirror map ${\cal M}$ is a kind of ``squared absolute value'' of $\widetilde{\cal M}$.

\vspace{2ex}

$(iv)$\, We define the {\bf complexified parameter set} by $\widetilde{{\cal G}_0}:= \widetilde{\cal M}(\Delta_{[\gamma]})$. It contains the marked point $[\omega_0^2]_A$ of the Gauduchon cone ${\cal G}_X$. 

\end{Def}

 Thus, if the radius of $\Delta_{[\gamma]}$ as an open ball about the origin in $H^{0,\,1}(X,\,T^{1,\,0}_X)$ is small enough, $\widetilde{\cal M}$ defines a biholomorphism between $\Delta_{[\gamma]}$ and the open subset $\widetilde{{\cal G}_0}\subset\widetilde{\cal G}_X\subset H^{2,\,2}_A(X_0,\,\C)$.

\vspace{3ex}

Our discussion can be summed up as follows.

\begin{The}\label{The:mirror-map} The mirror map $\widetilde{\cal M} : \Delta_{[\gamma]}\longrightarrow\widetilde{\cal G}_X$ of the Iwasawa manifold $X=X_0$ 
enjoys the following properties.

\vspace{1ex}

$(i)$\, $\widetilde{\cal M}$ is holomorphic and defines a biholomorphism onto its image if the radius of $\Delta_{[\gamma]}$ as an open ball in $H^{0,\,1}(X_0,\,T^{1,\,0}X_0)$ is small enough;

\vspace{1ex}

$(ii)$\, $\widetilde{\cal M}(0) = [\omega_0^2]_A\in{\cal G}_X$, where $\omega_0$ is the Gauduchon metric on $X$ canonically induced by the complex parallelisable structure of $X$ 
(cf. (\ref{eqn:omega_t_def}));

\vspace{1ex}

$(iii)$\, The composition of the canonical isomorphism $A_t$ observed in (\ref{eqn:H21_gamma-H22_isomorphism_t}) with $B_t$ defined in (\ref{eqn:H22_A_t-H22_A_0}) and with the Kodaira-Spencer and the Calabi-Yau isomorphisms is the following canonical isomorphism \begin{equation*}\label{eqn:dM_t}T^{1,\,0}_t\Delta_{[\gamma]}\simeq H^{2,\,1}_{[\gamma]}(X_t,\,\C) \substack{\simeq \\ 
\longrightarrow \\ A_t}H^{2,\,2}_A(X_t,\,\C)  \substack{\simeq \\ \longrightarrow \\ B_t}H^{2,\,2}_A(X_0,\,\C) = T^{1,\,0}_{\widetilde{\cal M}(t)}\widetilde{{\cal G}_X}, 
\hspace{3ex} [\Gamma]_{\bar\partial}\mapsto[\Gamma\wedge\bar\gamma_t]_A = A_t([\Gamma]_{\bar\partial}), \end{equation*}

\noindent that coincides at $t=0$ with the differential map $d\widetilde{\cal M}_0$ of $\widetilde{\cal M}$ and depends anti-holomorphically on $t$;

\vspace{1ex}

$(iv)$\, On the {\bf metric side of the mirror}, there is a variation of Hodge structures (VHS)

$${\cal H}^3\oplus{\cal H}^4\supset F_{{\cal G}}{\cal H}^4={\cal H}^{2,\,0}(B)\oplus\widetilde{{\cal H}^{2,\,2}_\omega} \supset F'_{{\cal G}}{\cal H}^4 = {\cal H}^{2,\,0}(B)$$

\noindent parametrised by $\widetilde{{\cal G}_0} = \widetilde{{\cal M}}(\Delta_{[\gamma]})\simeq\Delta_{[\gamma]}$ whose $4$-dimensional fibre over any point $\widetilde{{\cal M}}(t)\in\widetilde{{\cal G}_0}$ is the vector subspace $\widetilde{H^{2,\,2}_{\omega_t}}:=Q_{\omega_t}(H^{2,\,2}_A(X_t,\,\C))\subset H^4(X,\,\C)$ defined in Conclusion \ref{Conc:VHS_two-families-metrics}. Moreover, there exists a $C^\infty$ isomorphism of VHS between this VHS and the VHS

$${\cal H}^3\supset F^2{\cal H}^3_{[\gamma]}\supset F^3{\cal H}^3$$

\noindent parametrised by $\Delta_{[\gamma]}$ and defined on the {\bf complex-structure side of the mirror} in Theorem \ref{The:VHS_3_Delta}. 

This isomorphism is {\bf holomorphic} between the $1$-dimensional parts ${\cal H}^{2,\,0}(B)$, resp. $F^3{\cal H}^3$ (it is the multiplication by $\gamma_t$), while the isomorphism between the rank-$4$ vector bundles ${\cal H}^{2,\,1}_{[\gamma]}$ and ${\cal H}^{2,\,2}_A$ (defining, up to identifications, the $4$-dimensional parts of these VHS's) is {\bf anti-holomorphic} (given by the $A_t$'s, the multiplication by $\bar\gamma_t$).

 Moreover, each of the two Hodge filtrations $F^2{\cal H}^3_{[\gamma]}\supset F^3{\cal H}^3$ and $F_{{\cal G}}{\cal H}^4\supset F'_{{\cal G}}{\cal H}^4$ is $C^\infty$  isomorphic to the Hodge filtration $F^1{\cal H}^2(B)\supset F^2{\cal H}^2(B)$ associated with the family $(B_t)_{t\in\Delta_{[\gamma]}}$ of Albanese tori of the small essential deformations $(X_t)_{t\in\Delta_{[\gamma]}}$ of the Iwasawa manifold $X=X_0$.

\vspace{1ex}

$(v)$\, There is a bijection 

\begin{equation}\label{eqn:bijection_coordinates}\bigg(z_1(t),\,z_2(t),\,z_3(t),\,z_4(t)\bigg) \mapsto \bigg(w_1(t),\,w_2(t),\,w_3(t),\,w_4(t) \bigg),  \hspace{3ex} t\in\Delta_{[\gamma]}\end{equation}

\noindent depending holomorphically on $t$ between the holomorphic coordinates defined in Proposition \ref{Prop:coordinates} on $\Delta_{[\gamma]}$ in a neighbourhood of $0$ and the holomorphic coordinates defined in Proposition \ref{Prop:coordinates_A} on $\{[(\omega_t^{1,\,1})^2]_A\,\slash\, t\in\Delta_{[\gamma]}\}\subset{\cal G}_{X_0}$ in a neighbourhood of $[\omega_0^2]_A$.

\end{The}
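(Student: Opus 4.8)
The plan is to assemble Theorem \ref{The:mirror-map} as a synthesis of the results already established, adding only the small amount of new verification needed for parts $(i)$, $(iii)$ and $(v)$. I would organise the proof so that each of the five claims is reduced to a precise earlier statement, and only the genuinely new content (the biholomorphism claim and the change-of-coordinates bijection) is proved in detail.

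For $(i)$, the map $\widetilde{\cal M}$ is visibly affine-linear in the coordinates $(t_{11},t_{12},t_{21},t_{22})$ on $\Delta_{[\gamma]}$ — indeed $\widetilde{\cal M}(t) = [\omega_0^2]_A - t_{11}[i\beta\wedge\bar\alpha\wedge i\gamma\wedge\bar\gamma]_A + t_{22}[i\alpha\wedge\bar\beta\wedge i\gamma\wedge\bar\gamma]_A - t_{12}[i\beta\wedge\bar\beta\wedge i\gamma\wedge\bar\gamma]_A + t_{21}[i\alpha\wedge\bar\alpha\wedge i\gamma\wedge\bar\gamma]_A$ — so it is plainly holomorphic, and its differential at any point sends the four basis vectors of $T^{1,0}_t\Delta_{[\gamma]}$ to the four generators of $H^{2,2}_A(X_0,\C)$ listed in Lemma \ref{lem:pluri-closed}, which form a basis; hence $d\widetilde{\cal M}$ is a linear isomorphism everywhere, and since $\Delta_{[\gamma]}$ is an open ball one gets a biholomorphism onto the open subset $\widetilde{\cal M}(\Delta_{[\gamma]}) = \widetilde{{\cal G}_0}$ of $\widetilde{\cal G}_X$ once the radius $\varepsilon$ is small enough that this image lands inside the complexified Gauduchon cone (which contains $[\omega_0^2]_A$, so a neighbourhood of $0$ works). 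Part $(ii)$ is then immediate: setting $t=0$ kills all four affine terms and leaves $[\omega_0^2]_A = [(\omega_0^{1,1})^2]_A$, which is the Aeppli class of the canonical Gauduchon metric of \eqref{eqn:omega_t_def}. For $(iii)$, I would quote Observation \ref{Obs:H21_gamma-H22_isomorphism} for the isomorphism $A_t:H^{2,1}_{[\gamma]}(X_t,\C)\xrightarrow{\simeq}H^{2,2}_A(X_t,\C)$ (multiplication by $\bar\gamma_t$, anti-holomorphic in $t$), Definition \ref{Def:H22_A_t-H22_A_0} for $B_t$, and compose with the Kodaira–Spencer and Calabi–Yau isomorphisms; the only thing to check is that at $t=0$ this composite equals $d\widetilde{\cal M}_0$, which is a matter of matching the generators $[\overline{\alpha}\otimes\xi_\alpha],\dots$ of \eqref{eqn:H^01_gamma_generators} with the four affine coefficients of $\widetilde{\cal M}$ — a direct bookkeeping computation using the explicit form of the Calabi–Yau isomorphism in \S\ref{subsection:C-Y_isomorphism} and the definitions $\Gamma_j(0)$. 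Part $(iv)$ is a verbatim restatement of Conclusion \ref{Conc:VHS_two-families-metrics}, Theorem \ref{The:VHS_3_Delta}, Corollary \ref{Cor:holomorphic-bundle-isomorphisms}, and the holomorphicity proved in \S\ref{subsection:holomorphicity_Hodge_G}, together with Observation \ref{Obs:H21_gamma-H22_isomorphism}(b) identifying the rank-$4$ parts; the anti-holomorphicity of the identification ${\cal H}^{2,1}_{[\gamma]}\simeq{\cal H}^{2,2}_A$ is exactly the parenthetical remark in Observation \ref{Obs:H21_gamma-H22_isomorphism}(b).

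The one piece requiring a small genuine argument is $(v)$. Both coordinate systems are built by the same recipe (Proposition \ref{Prop:coordinates} on the symplectic space $(H^3(X,\C),Q)$ versus Proposition \ref{Prop:coordinates_A} on $(H^2(B,\C),Q_B)$), and the two symplectic spaces are linked by the injection $j:F^2{\cal H}^2(B)\hookrightarrow{\cal H}^3$, i.e. multiplication by $\gamma_t$, which carries $v_t=\alpha_t\wedge\beta_t$ to $u_t=\alpha_t\wedge\beta_t\wedge\gamma_t$ and $\eta_{0,B}$ to $\eta_0$. I would argue that under the wedge-with-$\gamma$ identification \eqref{eqn:H20_H11_B_0_isom} and the relation $H(\{u\wedge\gamma\},\{v\wedge\gamma\}) = -\int_X(u\wedge\bar v)\wedge(i\gamma\wedge\bar\gamma)$ versus $H_B(\{u\},\{v\}) = \int_X u\wedge\bar v$ displayed just before Proposition \ref{Prop:H_B_signature}, the pairings $Q(u_t,\eta_i)$ and $Q_B(v_t,\eta_{i,B})$ are related by an invertible constant matrix (the Gram matrix of multiplication-by-$i\gamma\wedge\bar\gamma$), so that $(z_1(t),\dots,z_4(t))$ and $(w_1(t),\dots,w_4(t))$ differ by the action of an element of $GL_4(\C)$ composed with the normalisations; in particular the assignment $(z_i(t))\mapsto(w_i(t))$ is a biholomorphism between the two coordinate neighbourhoods, depending holomorphically on $t$ since both families $u_t$ and $v_t$ do. I expect this last step to be the main obstacle, not because it is deep but because one must carefully track how the two normalisations $Q(u_t,\eta_0)=1$ and $Q_B(v_t,\eta_{0,B})=1$ interact with the $\gamma$-multiplication — the cleanest route is to show directly that $t\mapsto(w_1(t),\dots,w_4(t))$ is, like $t\mapsto(z_1(t),\dots,z_4(t))$, a system of holomorphic coordinates vanishing to first order in a linearly independent way at $0$, whence the composite of one with the inverse of the other is the desired holomorphic bijection.
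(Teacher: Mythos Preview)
Your proposal is correct and follows essentially the same route as the paper: $(i)$ and $(ii)$ follow from the affine-linear definition of $\widetilde{\cal M}$, $(iii)$ is the explicit bookkeeping check that $d\widetilde{\cal M}_0 = A_0$ via \eqref{eqn:H^01_gamma_generators} and the Calabi--Yau isomorphism, and $(iv)$ is a direct citation of Theorem~\ref{The:VHS_3_Delta}, Corollary~\ref{Cor:holomorphic-bundle-isomorphisms} and Conclusion~\ref{Conc:VHS_two-families-metrics}. For $(v)$ the paper simply invokes Propositions~\ref{Prop:coordinates} and~\ref{Prop:coordinates_A} --- i.e.\ exactly your ``cleanest route'' (two holomorphic coordinate systems on the same $\Delta_{[\gamma]}$, hence a holomorphic transition map); your preliminary attempt to relate $Q$ and $Q_B$ through a Gram matrix of multiplication by $i\gamma\wedge\bar\gamma$ is unnecessary and somewhat fragile (that relation is stated for the \emph{sesquilinear} forms $H$, $H_B$, not for the bilinear $Q$, $Q_B$), so drop it and keep only the final sentence.
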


\begin{proof} $(i)$ and $(ii)$ follow from the construction. To prove $(iii)$, we start by recalling that with the notation $\alpha_1:=\alpha$, $\alpha_2:=\beta$, $\xi_1:=\xi_\alpha$, $\xi_2:=\xi_\beta$, $\xi_3:=\xi_\gamma$, 
the space $H^{0,\,1}(X,\,T^{1,\,0}X)$ consists of the objects 
$$\sum\limits_{\substack{i=1,2,3\\ \lambda=1,2}}t_{i\lambda}\,\xi_i\otimes\bar\alpha_{\lambda}$$
\noindent where the $t_{i\lambda}$ define holomorphic coordinates on $\Delta$. Also recall that $t_{31}=t_{32}=0$ on $\Delta_{[\gamma]}$. 
Thus, the holomorphic tangent space to $\Delta_{[\gamma]}$ at $0$ is generated by $\partial/\partial t_{11}$, $\partial/\partial t_{12}$, 
$\partial/\partial t_{21}$, $\partial/\partial t_{22}$ and the images of these vector fields under the composition of the Kodaira-Spencer map $\rho$ 
with the Calabi-Yau isomorphism $T_\Omega$ ($=\cdot\lrcorner(\alpha\wedge\beta\wedge\gamma$))

\begin{eqnarray}\label{eqn:images_CY_isomorphism}\nonumber T^{1,\,0}_0\Delta_{[\gamma]} \underset{\simeq}{\overset{\rho}{\longrightarrow}}
H_{[\gamma]}^{0,\,1}(X,\,T^{1,\,0}X) & \underset{\simeq}{\overset{T_{\Omega}}{\longrightarrow}} & H^{2,\,1}_{[\gamma]}(X,\,\C)\end{eqnarray}
\noindent (cf. (\ref{eqn:CY_isomorphism})) are spelt out as follows
\begin{eqnarray}\label{eqn:vector-fields_images}\nonumber \frac{\partial}{\partial t_{11}} \mapsto [\xi_1\otimes\bar\alpha_1
= \xi_\alpha\otimes\bar\alpha] \mapsto -[\beta\wedge\gamma\wedge\bar\alpha]_{\bar\partial}, \hspace{3ex} \frac{\partial}{\partial t_{12}}
\mapsto [\xi_1\otimes\bar\alpha_2 = \xi_\alpha\otimes\bar\beta] \mapsto -[\beta\wedge\gamma\wedge\bar\beta]_{\bar\partial}, \\
  \frac{\partial}{\partial t_{21}} \mapsto [\xi_2\otimes\bar\alpha_1 = \xi_\beta\otimes\bar\alpha] 
  \mapsto [\alpha\wedge\gamma\wedge\bar\alpha]_{\bar\partial}, \hspace{3ex} \frac{\partial}{\partial t_{22}} \mapsto [\xi_2\otimes\bar\alpha_2 
  = \xi_\beta\otimes\bar\beta] \mapsto [\alpha\wedge\gamma\wedge\bar\beta]_{\bar\partial}.\end{eqnarray}
\noindent We get
\begin{eqnarray}\nonumber d\widetilde{\cal M}\bigg(\frac{\partial}{\partial t_{11}}\bigg) & = & \frac{\partial\widetilde{\cal M}}{\partial t_{11}} 
= -[i\,\beta\wedge\bar\alpha\wedge i\,\gamma\wedge\bar\gamma]_A = A_0(-[\beta\wedge\gamma\wedge\bar\alpha]_{\bar\partial}) \simeq A_0\bigg(\frac{\partial}{\partial t_{11}}\bigg), \\
\nonumber d\widetilde{\cal M}\bigg(\frac{\partial}{\partial t_{22}}\bigg) & = & \frac{\partial\widetilde{\cal M}}{\partial t_{22}}
= [i\,\alpha\wedge\bar\beta\wedge i\,\gamma\wedge\bar\gamma]_A = A_0([\alpha\wedge\gamma\wedge\bar\beta]_{\bar\partial}) \simeq A_0\bigg(\frac{\partial}{\partial t_{22}}\bigg), \\  
\nonumber d\widetilde{\cal M}\bigg(\frac{\partial}{\partial t_{12}}\bigg) & = & \frac{\partial\widetilde{\cal M}}{\partial t_{12}}
= -\,[i\,\beta\wedge\bar\beta\wedge i\,\gamma\wedge\bar\gamma]_A = A_0(-[\beta\wedge\gamma\wedge\bar\beta]_{\bar\partial}) \simeq A_0\bigg(\frac{\partial}{\partial t_{12}}\bigg),\\
\nonumber d\widetilde{\cal M}\bigg(\frac{\partial}{\partial t_{21}}\bigg) & = & \frac{\partial\widetilde{\cal M}}{\partial t_{21}}
= [i\,\alpha\wedge\bar\alpha\wedge i\,\gamma\wedge\bar\gamma]_A = A_0([\alpha\wedge\gamma\wedge\bar\alpha]_{\bar\partial}) 
\simeq A_0\bigg(\frac{\partial}{\partial t_{21}}\bigg),\end{eqnarray}    
\noindent where $\simeq$ stands for the identifications under (\ref{eqn:vector-fields_images}).

We conclude that $d\widetilde{\cal M}_0 = A_0$, so part $(iii)$ is proved at $t=0$.

$(iv)$\, is contained in Theorem \ref{The:VHS_3_Delta}, Corollary \ref{Cor:holomorphic-bundle-isomorphisms} and Conclusion \ref{Conc:VHS_two-families-metrics}.

$(v)$\, is contained in Propositions \ref{Prop:coordinates} and \ref{Prop:coordinates_A}.  

\end{proof}

\section{Appendix}\label{section:appendix}

 We spell out the details of the computations of the first-order anti-holomorphic partial derivatives of the forms $\Gamma_j(t)$ defined in (\ref{eqn:Gamma_j-forms_t}) for $j\in\{1,2,3,4\}$. 

 Recall the following identities proved in (\ref{eqn:forms_t-forms_0}): 

\begin{equation}\label{eqn:forms_t_explicit}\nonumber\alpha_t = \alpha + t_{11}\,\bar\alpha + t_{12}\,\bar\beta, \hspace{2ex} \beta_t = \beta + t_{21}\,\bar\alpha + t_{22}\,\bar\beta, \hspace{2ex} \gamma_t = \gamma + t_{31}\,\bar\alpha + t_{32}\,\bar\beta - D(t)\,\bar\gamma.\end{equation}

\noindent So we get

\begin{eqnarray}\nonumber \Gamma_1(t) & = & (\alpha + t_{11}\,\bar\alpha + t_{12}\,\bar\beta)\wedge(\gamma + t_{31}\,\bar\alpha + t_{32}\,\bar\beta - D(t)\,\bar\gamma)\wedge(\bar\alpha + \bar{t}_{11}\,\alpha + \bar{t}_{12}\,\beta) \\
\nonumber & - & \frac{\sigma_{2\bar{2}}(t)}{\bar\sigma_{12}(t)}\,(\alpha + t_{11}\,\bar\alpha + t_{12}\,\bar\beta)\wedge(\beta + t_{21}\,\bar\alpha + t_{22}\,\bar\beta)\wedge(\bar\gamma + \bar{t}_{31}\,\alpha + \bar{t}_{32}\,\beta - \overline{D(t)}\,\gamma) \\
 \nonumber & = & -\bigg[\alpha\wedge\bar\alpha + \bar{t}_{12}\,\alpha\wedge\beta - |t_{11}|^2\,\alpha\wedge\bar\alpha + t_{11}\,\bar{t}_{12}\,\bar\alpha\wedge\beta - t_{12}\,\bar\alpha\wedge\bar\beta - t_{12}\,\bar{t}_{11}\,\alpha\wedge\bar\beta -|t_{12}|^2\,\beta\wedge\bar\beta\bigg] \\
\nonumber &  & \hspace{60ex} \wedge  (\gamma + t_{31}\,\bar\alpha + t_{32}\,\bar\beta - D(t)\,\bar\gamma)  \\
\nonumber & - & \frac{\sigma_{2\bar{2}}(t)}{\bar\sigma_{12}(t)}\,\bigg[\alpha\wedge\beta + t_{21}\,\alpha\wedge\bar\alpha + t_{22}\,\alpha\wedge\bar\beta + t_{11}\,\bar\alpha\wedge\beta + t_{11}\,t_{22}\,\bar\alpha\wedge\bar\beta - t_{12}\,\beta\wedge\bar\beta - t_{12}\,t_{21}\,\bar\alpha\wedge\bar\beta\bigg] \\
\nonumber & &  \hspace{60ex} \wedge (\bar\gamma + \bar{t}_{31}\,\alpha + \bar{t}_{32}\,\beta - \overline{D(t)}\,\gamma).\end{eqnarray} 

\noindent After expanding and grouping the terms, we get

\begin{Lem}\label{Lem:Gamma_1_t_explicit} For every $t\in\Delta_{[\gamma]}$, the $J_t$-$(2,\,1)$-form $\Gamma_1(t)$ of (\ref{eqn:Gamma_j-forms_t}) is explicitly given by the following formula in terms of a basis of $3$-forms generated by $\alpha,\beta,\gamma$ and their conjugates:

\begin{eqnarray}\label{eqn:Gamma_1_t_explicit}\nonumber \Gamma_1(t) & = & \nonumber - \bar{t}_{12} \,\alpha\wedge\beta\wedge\gamma  \\
\nonumber & - & D(t)\,\bigg(t_{12} + \frac{\sigma_{2\bar{2}}(t)}{\bar\sigma_{12}(t)}\bigg)\,\bar\alpha\wedge\bar\beta\wedge\bar\gamma - \bigg(1 - |t_{11}|^2 - \frac{\sigma_{2\bar{2}}(t)}{\bar\sigma_{12}(t)}\,t_{21}\,\overline{D(t)}\bigg)\,\alpha\wedge\bar\alpha\wedge\gamma \\
\nonumber & - & \bigg[t_{32}\,(1-|t_{11}|^2) + t_{12}\,\bar{t}_{11}\,t_{31} - \frac{\sigma_{2\bar{2}}(t)}{\bar\sigma_{12}(t)}\,D(t)\,\bar{t}_{31}\bigg]\,\alpha\wedge\bar\alpha\wedge\bar\beta \\
\nonumber & - & \bigg[\bar{t}_{12}\,t_{31} - \frac{\sigma_{2\bar{2}}(t)}{\bar\sigma_{12}(t)}\,(t_{21}\,\bar{t}_{32} + t_{11}\,\bar{t}_{31})\bigg]\,\alpha\wedge\beta\wedge\bar\alpha - \bigg[(|t_{11}|^2-1)\,D(t) + \frac{\sigma_{2\bar{2}}(t)}{\bar\sigma_{12}(t)}\,t_{21}\bigg]\,\alpha\wedge\bar\alpha\wedge\bar\gamma \\ 
\nonumber & - & \bigg[\bar{t}_{12}\,t_{32} - \frac{\sigma_{2\bar{2}}(t)}{\bar\sigma_{12}(t)}\,(t_{22}\,\bar{t}_{32} + t_{12}\,\bar{t}_{31})\bigg]\,\alpha\wedge\beta\wedge\bar\beta + \bigg[\bar{t}_{12}\,D(t) - \frac{\sigma_{2\bar{2}}(t)}{\bar\sigma_{12}(t)}\bigg]\,\alpha\wedge\beta\wedge\bar\gamma  \\
\nonumber & - & \bigg[t_{11}\,\bar{t}_{12} - \frac{\sigma_{2\bar{2}}(t)}{\bar\sigma_{12}(t)}\,t_{11}\,\overline{D(t)}\bigg]\,\bar\alpha\wedge\beta\wedge\gamma - \bigg[t_{11}\,\bar{t}_{12}\,t_{32} - |t_{12}|^2\,t_{31} - \frac{\sigma_{2\bar{2}}(t)}{\bar\sigma_{12}(t)}\,D(t)\,\bar{t}_{32}\bigg]\,\bar\alpha\wedge\beta\wedge\bar\beta \\
\nonumber & + & \bigg[D(t)\,t_{11}\,\bar{t}_{12} - \frac{\sigma_{2\bar{2}}(t)}{\bar\sigma_{12}(t)}\,t_{11}\bigg]\,\bar\alpha\wedge\beta\wedge\bar\gamma + \bigg[t_{12}+ \frac{\sigma_{2\bar{2}}(t)}{\bar\sigma_{12}(t)}\,|D(t)|^2\bigg]\,\bar\alpha\wedge\bar\beta\wedge\gamma \\
\nonumber & + & \bigg[t_{12}\,\bar{t}_{11} + \frac{\sigma_{2\bar{2}}(t)}{\bar\sigma_{12}(t)}\,t_{22}\,\overline{D(t)}\bigg]\,\alpha\wedge\bar\beta\wedge\gamma + \frac{\sigma_{2\bar{2}}(t)}{\bar\sigma_{12}(t)}\,\overline{D(t)}\,\alpha\wedge\beta\wedge\gamma + \bigg[t_{12}\,\bar{t}_{11}\,D(t) + \frac{\sigma_{2\bar{2}}(t)}{\bar\sigma_{12}(t)}\,t_{22}\bigg]\,\alpha\wedge\bar\beta\wedge\bar\gamma \\
 \nonumber  & + & \bigg[|t_{12}|^2 - \frac{\sigma_{2\bar{2}}(t)}{\bar\sigma_{12}(t)}\,t_{12}\,\overline{D(t)}\bigg]\,\beta\wedge\bar\beta\wedge\gamma - \bigg[|t_{12}|^2\,D(t) - \frac{\sigma_{2\bar{2}}(t)}{\bar\sigma_{12}(t)}\,t_{12}\bigg]\,\beta\wedge\bar\beta\wedge\bar\gamma.\end{eqnarray}

\vspace{2ex}

 Analogous formulae hold for the $J_t$-$(2,\,1)$-forms $\Gamma_2(t)$, $\Gamma_3(t)$, $\Gamma_4(t)$ of (\ref{eqn:Gamma_j-forms_t}). Each formula contains on the r.h.s. a single term featuring an isolated anti-holomorphic factor $\bar{t}_{i\lambda}$ (i.e. an anti-holomorphic factor $\bar{t}_{i\lambda}$ that is not multiplied by any other $t_{j\mu}$ or $\bar{t}_{j\mu}$). These terms are, respectively,

$$-\bar{t}_{22}\,\alpha\wedge\beta\wedge\gamma,  \hspace{3ex} \bar{t}_{11}\,\alpha\wedge\beta\wedge\gamma, \hspace{3ex} \bar{t}_{21}\,\alpha\wedge\beta\wedge\gamma.$$

\end{Lem}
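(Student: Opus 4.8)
The statement is a computation carried out term by term, so the plan is simply to substitute and expand. First I would write $\Gamma_1(t)$ as in its definition (\ref{eqn:Gamma_j-forms_t}), namely
\[
\Gamma_1(t) = \alpha_t\wedge\gamma_t\wedge\bar\alpha_t \;-\; \frac{\sigma_{2\bar 2}(t)}{\bar\sigma_{12}(t)}\,\alpha_t\wedge\beta_t\wedge\bar\gamma_t,
\]
and insert the identities (\ref{eqn:forms_t_explicit}) for $\alpha_t,\beta_t,\gamma_t$ together with their conjugates $\bar\alpha_t=\bar\alpha+\bar t_{11}\alpha+\bar t_{12}\beta$, $\bar\beta_t=\bar\beta+\bar t_{21}\alpha+\bar t_{22}\beta$, $\bar\gamma_t=\bar\gamma+\bar t_{31}\alpha+\bar t_{32}\beta-\overline{D(t)}\,\gamma$. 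The bookkeeping is kept manageable by expanding first the $(2,0)$-type factor $\alpha_t\wedge\bar\alpha_t$ (respectively $\alpha_t\wedge\beta_t$) as a $2$-form, discarding the monomials containing a repeated $1$-form via $\alpha\wedge\alpha=\beta\wedge\beta=\bar\alpha\wedge\bar\alpha=\bar\beta\wedge\bar\beta=0$, and only afterwards wedging the result with $\gamma_t$ (respectively $\bar\gamma_t$). On $\Delta_{[\gamma]}$ one has $t_{31}=t_{32}=0$, so $\gamma_t=\gamma-D(t)\,\bar\gamma$ and $\bar\gamma_t=\bar\gamma-\overline{D(t)}\,\gamma$, which shortens the expansion further; collecting the surviving monomials against the basis of $\Lambda^3$ generated by $\alpha,\beta,\gamma$ and their conjugates, with careful sign tracking, produces formula (\ref{eqn:Gamma_1_t_explicit}).

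For the companion forms $\Gamma_2(t),\Gamma_3(t),\Gamma_4(t)$ I would run the identical computation with $\bar\alpha_t$ replaced by $\bar\beta_t$ (for $\Gamma_2$), with $\alpha_t$ replaced by $\beta_t$ (for $\Gamma_3$), with both replacements at once (for $\Gamma_4$), and with the structure coefficients $\sigma_{2\bar 1}(t),\sigma_{1\bar 2}(t),\sigma_{1\bar 1}(t)$ in place of $\sigma_{2\bar 2}(t)$, cf. (\ref{eq:structure}); the output has exactly the same shape. To pin down the unique term carrying an isolated $\bar t_{i\lambda}$, note that a monomial in which some $\bar t_{i\lambda}$ is not bound to any other deformation parameter can only arise by selecting the $\bar t$-part of exactly one conjugated co-frame factor and the $t$-free leading part of every remaining factor: the un-conjugated factors $\alpha_t,\beta_t$ carry only isolated holomorphic parameters, the factor $\gamma_t$ (hence $\bar\gamma_t$) carries none at all on $\Delta_{[\gamma]}$ since $D(t)$ is quadratic, and the summand prefixed by $\sigma_{i\bar j}(t)/\bar\sigma_{12}(t)$ contributes no such term, as its coefficient vanishes at $t=0$ and, by Lemma \ref{Lem:sigma_22bar_vanishing-anti-hol-deriv} and its analogues, has no first-order anti-holomorphic part. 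For $\Gamma_1$ the sole conjugated factor is $\bar\alpha_t=\bar\alpha+\bar t_{11}\alpha+\bar t_{12}\beta$; wedging $\bar t_{11}\alpha$ against $\alpha\wedge\gamma$ vanishes while $\bar t_{12}\beta$ gives $-\bar t_{12}\,\alpha\wedge\beta\wedge\gamma$, and the same inspection yields $-\bar t_{22}\,\alpha\wedge\beta\wedge\gamma$, $\bar t_{11}\,\alpha\wedge\beta\wedge\gamma$, $\bar t_{21}\,\alpha\wedge\beta\wedge\gamma$ for $\Gamma_2,\Gamma_3,\Gamma_4$ respectively.

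The only obstacle is clerical: there is no conceptual difficulty here, merely the volume of terms and the sign accounting in the full expansion of $\Gamma_1(t)$. Threading the calculation through the intermediate $2$-form factor, and using $t_{31}=t_{32}=0$ from the outset, is what keeps the list of surviving monomials down to the size displayed in (\ref{eqn:Gamma_1_t_explicit}).
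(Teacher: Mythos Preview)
Your approach is essentially identical to the paper's: substitute the expressions (\ref{eqn:forms_t_explicit}) for $\alpha_t,\beta_t,\gamma_t$ and their conjugates into the definition (\ref{eqn:Gamma_j-forms_t}), expand the two-form factor first, then wedge with the remaining factor and collect monomials. Your identification of the isolated $\bar t_{i\lambda}$ terms is also the one the paper uses when it applies the Lemma in the proof of Theorem \ref{The:VHS_3_Delta}\,(ii).

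One small discrepancy: you set $t_{31}=t_{32}=0$ from the outset, whereas the paper retains $t_{31},t_{32},\bar t_{31},\bar t_{32}$ throughout the expansion (the displayed formula in the Lemma still contains such terms, even though the statement is for $t\in\Delta_{[\gamma]}$). Your simplification is legitimate on $\Delta_{[\gamma]}$ and changes nothing for the application, but it means your output will be a strictly shorter formula than the one printed in (\ref{eqn:Gamma_1_t_explicit}); if the goal is to reproduce that display literally, keep $\gamma_t=\gamma+t_{31}\bar\alpha+t_{32}\bar\beta-D(t)\bar\gamma$ during the expansion.
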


\vspace{2ex}

 On the other hand, the dependence on $t$ of the $C^\infty$ functions $\sigma_{12}(t)$, $\sigma_{1\bar{1}}(t), \sigma_{1\bar{2}}(t), \sigma_{2\bar{1}}(t), \sigma_{2\bar{2}}(t)$ can be made explicit using computations from [Ang14]. Indeed, consider the following functions of $t$ (cf. [Ang14, p. 76] where the notation $\alpha,\beta,\gamma$ was used instead of $a(t),b(t),c(t)$ featuring below):

\vspace{3ex}

$\displaystyle a(t)= \frac{1}{1 - |t_{22}|^2 -t_{21}\,\bar{t}_{12}}, \hspace{2ex} b(t)=t_{21}\,\bar{t}_{11} + t_{22}\,\bar{t}_{21}, \hspace{2ex} c(t)=\frac{1}{1 - |t_{11}|^2 -a(t)\,b(t)\,(t_{11}\,\bar{t}_{12} + t_{12}\,\bar{t}_{22}) -  \bar{t}_{12}\,\bar{t}_{21}},$

\vspace{1ex}

$\displaystyle \lambda_1(t) = - t_{11}\,(1 + a(t)\,\bar{t}_{12}\,t_{21} + a(t)\,|t_{22}|^2), \hspace{2ex} \lambda_2(t) = a(t)\,(t_{11}\,\bar{t}_{12} + t_{12}\,\bar{t}_{22}),$

\vspace{1ex}

$\displaystyle \lambda_3(t) = -t_{12}\,(1 + a(t)\,\bar{t}_{12}\,t_{21} + a(t)\,|t_{22}|^2), \hspace{2ex} \mu_0(t)= b(t)\,c(t), \hspace{2ex} \mu_1(t) = \lambda_1(t)\,b(t)\,c(t) - t_{21},$

\vspace{1ex}

$\displaystyle \mu_2(t) = 1 + \lambda_2(t)\,b(t)\,c(t), \hspace{2ex} \mu_3(t) = \lambda_3(t)\,b(t)\,c(t) - t_{22}.$

\vspace{3ex}

\noindent Then, for all $t$ in Nakamura's class $(ii)$, we have the explicit formulae (cf. e.g. [Ang14, p.77]):

$$\sigma_{12}(t) = -c(t) + t_{21}\,\bar\lambda_3(t)\,\bar{c}(t) + t_{22}\,\bar{a}(t)\,\bar\mu_3(t), \hspace{2ex}$$ 

\noindent and

\begin{eqnarray}\nonumber \sigma_{1\bar{1}}(t) = t_{21}\,\overline{c(t)\,(1 + t_{21}\,\bar{t}_{12}\,a(t) + |t_{22}|^2\,a(t))}, & & \sigma_{1\bar{2}}(t) = t_{22}\,\overline{c(t)\,(1 + t_{21}\,\bar{t}_{12}\,a(t) + |t_{22}|^2\,a(t))}, \\
\nonumber \sigma_{2\bar{1}}(t) = -t_{11}\,c(t)\,(1 + t_{21}\,\bar{t}_{12}\,a(t) + |t_{22}|^2\,a(t)), & & \sigma_{2\bar{2}}(t) = -t_{12}\,c(t)\,(1 + t_{21}\,\bar{t}_{12}\,a(t) + |t_{22}|^2\,a(t)).\end{eqnarray}

\noindent This explicitly yields

\begin{equation}\label{eqn:sigma_22bar_explicit-formula}\sigma_{2\bar{2}}(t) = - t_{12}\,\,\frac{1 + \frac{t_{21}\,\bar{t}_{12}}{1 - |t_{22}|^2 - t_{21}\,\bar{t}_{12}} + \frac{|t_{22}|^2}{1 - |t_{22}|^2 - t_{21}\,\bar{t}_{12}}}{1 - |t_{11}|^2 - \frac{(t_{11}\,\bar{t}_{12} + t_{12}\,\bar{t}_{22})( t_{21}\,\bar{t}_{11} + t_{22}\,\bar{t}_{21})}{1 - |t_{22}|^2 - t_{21}\,\bar{t}_{12}} - t_{12}\,\bar{t}_{21}}\end{equation}

\noindent and analogous formulae for $\sigma_{1\bar{1}}(t)$, $\sigma_{1\bar{2}}(t)$, $\sigma_{2\bar{1}}(t)$ with a different holomorphic factor $\pm t_{i\lambda}$ and a possibly conjugated big fraction. 

 The conclusion is the following

\begin{Lem}\label{Lem:sigma_22bar_vanishing-anti-hol-deriv} For all $t$ in Nakamura's class $(ii)$ and for all $i,\lambda$, we have

\begin{equation}\label{eqn:sigma_22bar_vanishing-anti-hol-deriv} \frac{\partial\sigma_{1\bar{1}}}{\partial\bar{t}_{i\lambda}}(0) = \frac{\partial\sigma_{1\bar{2}}}{\partial\bar{t}_{i\lambda}}(0) = \frac{\partial\sigma_{2\bar{1}}}{\partial\bar{t}_{i\lambda}}(0) = \frac{\partial\sigma_{2\bar{2}}}{\partial\bar{t}_{i\lambda}}(0) = 0.\end{equation}

The same conclusion holds for all $t$ in Nakamura's class $(iii)$, hence in particular for all $t\in\Delta_{[\gamma]}$, by very similar computations.

\end{Lem}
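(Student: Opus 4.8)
The final statement to prove is Lemma~\ref{Lem:sigma_22bar_vanishing-anti-hol-deriv}: the first-order anti-holomorphic derivatives at $t=0$ of the structure functions $\sigma_{i\bar j}(t)$ all vanish, for $t$ in Nakamura's class $(ii)$ (and, by analogous computation, class $(iii)$, hence for $t\in\Delta_{[\gamma]}$).

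The plan is to read off the vanishing directly from the explicit closed-form expressions for the $\sigma_{i\bar j}$ exhibited just above the statement, most notably formula~\eqref{eqn:sigma_22bar_explicit-formula} for $\sigma_{2\bar 2}(t)$ and its stated analogues for $\sigma_{1\bar 1},\sigma_{1\bar 2},\sigma_{2\bar 1}$. Each such formula has the shape $\sigma_{i\bar j}(t) = \pm\, t_{k\ell}\cdot R(t)$, where $t_{k\ell}$ is a single \emph{holomorphic} coordinate and $R(t)$ is a rational function of the $t_{p\mu}$ and $\bar t_{p\mu}$ whose numerator and denominator both equal $1$ at $t=0$; explicitly $R(0)=1$. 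First I would record that an anti-holomorphic derivative $\partial/\partial\bar t_{i\lambda}$ annihilates the holomorphic prefactor $t_{k\ell}$, so by the Leibniz rule
$$\frac{\partial\sigma_{i\bar j}}{\partial\bar t_{i\lambda}}(0) = \pm\Big( \frac{\partial t_{k\ell}}{\partial\bar t_{i\lambda}}\Big)(0)\cdot R(0) \;\pm\; t_{k\ell}(0)\cdot\frac{\partial R}{\partial\bar t_{i\lambda}}(0) = \pm\,0\cdot 1 \;\pm\; 0\cdot\frac{\partial R}{\partial\bar t_{i\lambda}}(0) = 0,$$
since $\partial t_{k\ell}/\partial\bar t_{i\lambda}\equiv 0$ and $t_{k\ell}(0)=0$. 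The only thing that needs a word of justification is that $\partial R/\partial\bar t_{i\lambda}(0)$ is finite, which is clear because the denominator of $R$ is a polynomial in the $t,\bar t$ equal to $1$ at the origin, hence nonvanishing in a neighbourhood of $0$, so $R$ is smooth there.

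For the remaining three functions $\sigma_{1\bar 1},\sigma_{1\bar 2},\sigma_{2\bar 1}$ I would simply note that the displayed formulae give them the same structural form $\pm t_{k\ell}\cdot(\text{common factor or its conjugate})$: from the list above, $\sigma_{1\bar 1}(t)=t_{21}\,\overline{c(t)(1+t_{21}\bar t_{12}a(t)+|t_{22}|^2a(t))}$, $\sigma_{1\bar 2}(t)=t_{22}\,\overline{(\cdots)}$, $\sigma_{2\bar 1}(t)=-t_{11}\,c(t)(1+\cdots)$, each again a single holomorphic coordinate times a smooth function equal to $1$ at $0$ (the conjugation does not affect the value at the origin, nor the vanishing of the holomorphic prefactor's anti-holomorphic derivative). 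So the same one-line Leibniz argument applies verbatim. Finally, for class $(iii)$ one uses the corresponding explicit formulae from [Ang14] (referenced in the paper as yielding "very similar computations"), which have the identical shape, so the conclusion carries over; since $\Delta_{[\gamma]}\setminus\{0\}$ is contained in the union of Nakamura's classes $(ii)$ and $(iii)$, the statement holds on all of $\Delta_{[\gamma]}$.

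I do not expect a genuine obstacle here: the lemma is purely a bookkeeping consequence of the explicit rational expressions already assembled in the Appendix. The only mild care needed is to make the structural claim — "each $\sigma_{i\bar j}$ is a holomorphic monomial $\pm t_{k\ell}$ times a smooth function taking the value $1$ at the origin" — precise enough that the one-line derivative computation is unambiguous; writing $R(t)=N(t)/D(t)$ with $N,D$ polynomials, $N(0)=D(0)=1$, and invoking smoothness of $1/D$ near $0$ makes this airtight. No deformation theory, no harmonic analysis, and no appeal to the earlier structural results of the paper are required beyond the explicit formulae quoted immediately before the statement.
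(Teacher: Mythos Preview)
Your proof is correct and takes essentially the same approach as the paper's own argument. The paper phrases it as ``every $\bar t_{i\lambda}$ appearing in the formula is multiplied by some $t_{j\mu}$ or $\bar t_{j\mu}$ which vanishes at $0$, while the denominators equal $1$ there''; your Leibniz-rule packaging via the factorisation $\sigma_{i\bar j}(t)=\pm\,t_{k\ell}\cdot R(t)$ with $R$ smooth near the origin is the same observation, arguably more cleanly organised.
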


\noindent {\it Proof.} Whenever some $\bar{t}_{i\lambda}$ features in formula (\ref{eqn:sigma_22bar_explicit-formula}) for $\sigma_{2\bar{2}}(t)$ or in one of its analogues for $\sigma_{1\bar{1}}(t)$, $\sigma_{1\bar{2}}(t)$ and $\sigma_{2\bar{1}}(t)$, it is multiplied by a factor $t_{j\mu}$ or $\bar{t}_{j\mu}$ which vanishes at $t=0$, while the denominators on the r.h.s. of (\ref{eqn:sigma_22bar_explicit-formula}) equal $1$ at $t=0$.  \hfill $\Box$

\vspace{3ex}

\noindent {\bf References.} \\

\noindent [Aep62]\, A. Aeppli --- {\it  Some Exact Sequences in Cohomology Theory for K\"ahler Manifolds} --- Pacific J. Math., {\bf 12} (1962).

\vspace{1ex}

\noindent [AB91]\, L. Alessandrini, G. Bassanelli --- {\it Compact $p$-K\"ahler Manifolds} --- Geometriae Dedicata {\bf 38} (1991) 199-210.

\vspace{1ex}

\noindent [Ang11]\, D. Angella --- {\it The Cohomologies of the Iwasawa Manifold and of Its Small Deformations} --- J. Geom. Anal. (2011) DOI: 10.1007/s12220-011-9291-z.

\vspace{1ex}

\noindent [Ang14]\, D. Angella --- {\it Cohomological Aspects in Complex Non-K\"ahler Geometry} --- LNM 2095, Springer (2014).

\vspace{1ex}

\noindent [Bog78]\, F. Bogomolov --- {\it Hamiltonian K\"ahler Manifolds} --- Soviet Math. Dokl. {\bf 19} (1978), 1462-1465.

\vspace{1ex}

\noindent [BG83]\, R. Bryant, P. Griffiths --- {\it Some Observations on the Infinitesimal Period Relations for Regular Threefolds with Trivial Canonical Bundle} --- in ``Arithmetic and Geometry'' (papers dedicated to Shafarevich), vol. II, 77-102, Progr. Math. {\bf 36} Birkh\"auser (1983).

\vspace{1ex}

\noindent [COUV16]\, M. Ceballos, A. Otal, L. Ugarte, R. Villacampa --- {\it Invariant Complex Structures on $6$-nilmanifolds: Classification, Fr\"olicher Spectral Sequence and Special Hermitian Metrics} --- J. Geom. Anal. {\bf 26} (2016), no. 1, 252-286.

\vspace{1ex}

\noindent [DGMS75]\, P. Deligne, Ph. Griffiths, J. Morgan, D. Sullivan --- {\it Real Homotopy Theory of K\"ahler Manifolds} --- Invent. Math. {\bf 29} (1975), 245-274.

\vspace{1ex}

\noindent [DP04]\, J.-P. Demailly, M. Paun --- {\it Numerical Characterization of the K\"ahler Cone of a Compact K\"ahler Manifold} --- Ann. of Math. {\bf 159} (2004), 1247-1274.

\vspace{1ex}

\noindent [Gau77]\, P. Gauduchon --- {\it Le th\'eor\`eme de l'excentricit\'e nulle} --- C.R. Acad. Sc. Paris, S\'erie A, t. {\bf 285} (1977), 387-390.

\vspace{1ex}

\noindent [Gri68]\, P. Griffiths --- {\it Periods of Integrals on Algebraic Manifolds I, II} --- Amer. J. Math. {\bf 90} (1968), 568-626 and 805-865.

\vspace{1ex}

\noindent [KS60]\, K. Kodaira, D.C. Spencer --- {\it On Deformations of Complex Analytic Structures, III. Stability Theorems for Complex Structures} -- Ann. of Math. {\bf 71}, no. 1 (1960), 43-76.

\vspace{1ex}

\noindent [LTY15]\, S.-C. Lau, L.-S. Tseng, S.-T. Yau --- {\it Non-K\"ahler SYZ Mirror Symmetry} --- Commun. Math. Phys. {\bf 340} (2015), 145-170.

\vspace{1ex}

\noindent [Nak75]\, I. Nakamura --- {\it Complex Parallelisable Manifolds and their Small Deformations} --- J. Diff. Geom. {\bf 10} (1975) 85-112.

\vspace{1ex}

\noindent [Pop13a]\, D. Popovici --- {\it Deformation Limits of Projective Manifolds\!\!: Hodge Numbers and Strongly Gauduchon Metrics} --- Invent. Math. {\bf 194} (2013), 515-534.

\vspace{1ex}

\noindent [Pop13b]\, D. Popovici --- {\it Holomorphic Deformations of Balanced Calabi-Yau $\partial\bar\partial$-Manifolds} --- arXiv e-print AG 1304.0331v1, to appear in the Annales de l'Institut Fourier.

\vspace{1ex}

\noindent [Pop14]\, D. Popovici --- {\it Deformation Openness and Closedness of Various Classes of Compact Complex Manifolds; Examples} --- Ann. Sc. Norm. Super. Pisa Cl. Sci. (5), Vol. XIII (2014), 255-305.

\vspace{1ex}

\noindent [Pop15]\, D. Popovici --- {\it Aeppli Cohomology Classes Associated with Gauduchon Metrics on Compact Complex Manifolds} --- Bull. Soc. Math. France {\bf 143} (3), (2015), p. 1-37.

\vspace{1ex}

\noindent [Pop17]\, D. Popovici --- {\it The Albanese Map of sGG Manifolds and Self-Duality of the Iwasawa Manifold} --- arXiv e-print AG 1706.09746v1, to appear in the Rivista di Matematica della Universit\`a di Parma.

\vspace{1ex}

\noindent [PU14]\, D. Popovici, L. Ugarte --- {\it Compact Complex Manifolds with Small Gauduchon Cone} --- Proc. London Math. Soc. (3) {\bf 116} (2018), no. 5, 1161-1186.

\vspace{1ex}

\noindent [Rei87]\,M. Reid --- {\it The Moduli Space of $3$-folds with $K=0$ May Nevertheless be Irreducible} --- Math. Ann. {\bf 278} (1987), 329-334.

\vspace{1ex}

\noindent [Sch07]\, M. Schweitzer --- {\it Autour de la cohomologie de Bott-Chern} --- arXiv e-print math. AG/0709.3528v1.

\vspace{1ex}

\noindent [Tia87]\, G. Tian --- {\it Smoothness of the Universal Deformation Space of Compact Calabi-Yau Manifolds and Its Petersson-Weil Metric} --- Mathematical Aspects of String Theory (San Diego, 1986), Adv. Ser. Math. Phys. 1, World Sci. Publishing, Singapore (1987), 629--646.

\vspace{1ex}

\noindent [Tod89]\, A. N. Todorov --- {\it The Weil-Petersson Geometry of the Moduli Space of $SU(n\geq 3)$ (Calabi-Yau) Manifolds I} --- Comm. Math. Phys. {\bf 126} (1989), 325-346.

\vspace{1ex}

\noindent [Voi96]\, C. Voisin --- {\it Sym\'etrie miroir} --- Panoramas et Synth\`eses, {\bf 2}, Soci\'et\'e Math\'ematique de France, Paris, 1996.

\vspace{1ex}

\noindent [Voi02]\, C. Voisin --- {\it Hodge Theory and Complex Algebraic Geometry. I.} --- Cambridge Studies in Advanced Mathematics, 76, Cambridge University Press, Cambridge, 2002.

\vspace{1ex}

\noindent [Wu06]\, C.-C. Wu --- {\it On the Geometry of Superstrings with Torsion} --- thesis, Department of Mathematics, Harvard University, Cambridge MA 02138, (April 2006).

\vspace{6ex}

\noindent Universit\'e Paul Sabatier, Institut de Math\'ematiques de Toulouse,

\noindent 118 route de Narbonne, 31062 Toulouse, France

\noindent Email: popovici@math.univ-toulouse.fr

\end{document}